\newtheorem{thm}[equation]{Theorem}
\newtheorem{lem}[equation]{Lemma}
\theoremstyle{definition}
\newtheorem{dfn}[equation]{Definition}
\newtheorem{rmk}[equation]{Remark}
\numberwithin{equation}{section}
\numberwithin{figure}{section}
\newcommand\abs[2][empty]{\csname#1\endcsname \lvert{#2}\csname#1\endcsname\rvert}
\newcommand\doublebar[2][empty]{\csname#1\endcsname \lVert{#2}\csname#1\endcsname\rVert}
\newcommand\myeqref[1]{\texorpdfstring{\eqref{#1}}{(\ref*{#1})}}
\newcommand\mat[1]{\bm{#1}}
\newcommand\arr[1]{\bm{\dot{#1}}}
\newcommand\dist{\mathop{\mathrm{dist}}\nolimits}
\newcommand\Div{\mathop{\mathrm{div}}\nolimits}
\newcommand\Tr{\mathop{\smash{\arr{\mathrm{Tr}}}\vphantom{T}}\nolimits}
\newcommand\Trace{\mathop{\mathrm{Tr}}\nolimits}
\newcommand\M{\mathop{\smash{\arr{\mathrm{M}}}\vphantom{M}}\nolimits}
\newcommand\supp{\mathop{\mathrm{supp}}\nolimits}
\newcommand\re{\mathop{\mathrm{Re}}\nolimits}
\newcommand\RR{\mathbb{R}} \let\R\RR
 \let\C\CC
 \let\N\NN
\newcommand\1{\mathbf{1}}
\newcommand\D{\mathcal{D}}
\newcommand\s{\mathcal{S}}
\newcommand\e{\vec{e}}
\newcommand\pureH{\parallel}
\newcommand\semiH{\vert}
\newcommand\dmn{{n+1}}
\newcommand\pdmn{{(n+1)}}
\newcommand\dmnMinusOne{n}
\begin{document}

\title[Square function estimates on layer potentials]{Square function estimates on layer potentials for higher-order elliptic equations}

\author{Ariel Barton}
\address{Ariel Barton, 202 Math Sciences Bldg., University of Missouri, Columbia, MO 65211}
\email{bartonae@missouri.edu}

\author{Steve Hofmann}
\address{Steve Hofmann, 202 Math Sciences Bldg., University of Missouri, Columbia, MO 65211}
\email{hofmanns@missouri.edu}
\thanks{Steve Hofmann is partially supported by the NSF grant DMS-1361701.}

\author{Svitlana Mayboroda}
\address{Svitlana Mayboroda, Department of Mathematics, University of Minnesota, Minneapolis, Minnesota 55455}
\email{svitlana@math.umn.edu}
\thanks{Svitlana Mayboroda is partially supported by the Alfred P. Sloan Fellowship, the NSF CAREER Award DMS 1056004,  the NSF INSPIRE Award DMS 1344235, and the NSF Materials Research Science and Engineering Center Seed Grant.}

\subjclass[2010]{Primary
35J30, 
Secondary
31B10, 
35C15
}

\begin{abstract}
In this paper we establish square-function estimates on the double and single layer potentials for divergence-form elliptic operators, of arbitrary even order $2m$, with variable $t$-independent coefficients in the upper half-space. This generalizes known results for variable-coefficient second-order operators, and also for constant-coefficient higher-order operators.
\end{abstract}

\keywords{Elliptic equation, higher-order differential equation, square function estimates, layer potentials}

\maketitle 

\tableofcontents

\section{Introduction}

With this paper we continue towards the grand goal of resolving the Dirichlet and Neumann problems for general divergence-form higher order elliptic operators with $L^p$ data. The investigation of the second-order case has essentially spanned the past three decades in the subject, drawing from and giving back many cutting-edge tools of harmonic analysis, and by now the real coefficient case is relatively well understood. However, even the simplest higher order operators, such as the bilaplacian, presented outstanding difficulties: for instance, the sharp range of $p$ for which the Dirichlet problem for the bilaplacian is well-posed in $L^p$ is still not known in high dimensions.
And, to the best of our knowledge, no $L^p$ well-posedness results are currently available in the general variable coefficient case.

In this work we aim to develop the method of layer potentials for general divergence form higher order elliptic operators. The main results of the present paper are square function estimates for single and double layer potentials in $L^2$ and the corresponding Sobolev spaces. We remark that one of the key difficulties in such a general context lies in the definition of suitable layer potentials and, more generally, of Dirichlet and Neumann boundary data, as in the higher order case there is considerable ambiguity, some choices leading to ill-posed problems.  Our approach is new even in the constant coefficient context, but is carefully crafted to handle the general case.

Let us discuss the background and the results in more detail.

In this project we study elliptic differential operators of the form
\begin{equation}\label{eqn:divergence}
Lu = (-1)^m \sum_{\abs{\alpha}=\abs{\beta}=m} \partial^\alpha (A_{\alpha\beta} \partial^\beta u),\end{equation}
for $m\geq 2$, with general bounded measurable coefficients. As mentioned above, contrary to the second order case, most of the known well-posedness results for higher order boundary value problems have been established only in the case of constant coefficients (see, for example, \cite{Ver90,PipV95A,Ver96,She06A,She06B,KilS11b,MitM13A}, or the survey paper \cite{BarM15p}),  or concern boundary-value problems such as the Dirichlet problem
\begin{equation}\label{eqn:introduction:Dirichlet}
Lu = (-1)^m \sum_{\abs{\alpha}=\abs{\beta}=m} \partial^\alpha (A_{\alpha\beta} \partial^\beta u)=0 \text{ in }\Omega,
\qquad
\nabla^{m-1} u= \arr f\text{ on }\partial\Omega\end{equation}
where the boundary data $\arr f$ lies in a fractional smoothness space. See \cite{Agr07,MazMS10,BreM13}. We are interested in the Dirichlet problem~\eqref{eqn:introduction:Dirichlet}, with variable coefficients, in the classical case where the boundary data $\arr f$ lies in $L^2(\partial\Omega)$.

\typeout{}\typeout{Correct the reference to \cite{BarM15p} once we know how the journal is to be published}\typeout{}

\subsection{The method of layer potentials, general framework}

Classic tools for solving second-order  boundary-value problems are the double and single layer potentials, given by
\begin{align}
\label{eqn:introduction:D}
\D^{\mat A}_\Omega f(X) &= \int_{\partial\Omega} \overline{\nu\cdot \mat A^*(Y)\nabla_{Y} E^{L^*}(Y,X)} \, f(Y)\,d\sigma(Y)
,\\
\label{eqn:introduction:S}
\s^{\mat A}_\Omega g(X) &= \int_{\partial\Omega}E^{L}(X,Y) \, g(Y)\,d\sigma(Y)
\end{align}
where $\nu$ is the unit outward normal to~$\Omega$ and where $E^L(X,Y)$ is the fundamental solution for the operator~$L$. Making sense of formulas \eqref{eqn:introduction:D} and~\eqref{eqn:introduction:S} in our general context is one of the key tasks in its own right, and we will return to it below.

It may be shown that, for any nice functions $f$ and $g$ defined on~$\partial\Omega$, the functions $u=\D^{\mat A}_\Omega f$ or $u=\s^{\mat A}_\Omega g$ satisfy $Lu=0$ away from~$\partial\Omega$.
The classic method of layer potentials for solving boundary-value problems is to show that $\s^{\mat A}_\Omega$ (or $\D^{\mat A}_\Omega$) satisfies some estimate, for example,
\begin{equation*}\int_\Omega \abs{\nabla \partial_\dmn \s^{\mat A}_\Omega g(X)}^2\dist(X,\partial\Omega)\,dX\leq C \doublebar{g}_{L^2(\partial\Omega)},\end{equation*}
and to show that the operator $g\mapsto \s^{\mat A}_\Omega g\big\vert_{\partial\Omega}$ is invertible from some space to another, for example, $L^2(\partial\Omega)\mapsto \dot W^2_1(\partial\Omega)$. Then the function $u= \s^{\mat A}_\Omega \bigl((\s^{\mat A}_\Omega \big\vert_{\partial\Omega})^{-1} f\bigr)$ is a solution to the Dirichlet regularity problem
\begin{equation*}Lu=0\text{ in }\Omega,\quad u=f \text{ on }\partial\Omega, \quad \int_\Omega \abs{\nabla \partial_\dmn u(X)}^2 \dist(X,\partial\Omega)\,dX\leq C \doublebar{f}_{\dot W^2_1 (\partial\Omega)}.\end{equation*}
This method has been used in
\cite{Ver84,DahK87,FabMM98,Zan00} in the case of harmonic functions (that is, the case $\mat A=\mat I$ and $L=-\Delta$). This method has also been used to study more general second order problems in
\cite{AlfAAHK11, Bar13, BarM13p,Ros13, HofKMP15B, HofMayMou15,HofMitMor} under various assumptions on the coefficients~$\mat A$. Layer potentials have been used in other ways in \cite{KenR09,Rul07,Mit08,Agr09,MitM11,AusM14}. In particular, the second-order double and single layer potentials have been used to study higher-order differential equations in \cite{PipV92,BarM13}.

\subsection{Outline of the main results}
In this paper we begin to generalize this method to the case of higher-order equations by defining the double and single layer potentials $\D^{\mat A}$ and $\s^{\mat A}$ for higher-order equations (see Section~\ref{sec:potentials}), and then by establishing some bounds on these potentials under certain conditions on the coefficients~$\mat A$. We plan to establish invertibility of layer potentials for some~$\mat A$, and thus well-posedness of the corresponding boundary value problems, in a forthcoming paper.

Even in the case of second-order equations, some regularity assumption must be imposed on the coefficients $\mat A$ in order to ensure well-posedness of boundary-value problems. See the classic example of Caffarelli, Fabes, and Kenig \cite{CafFK81}, in which real, symmetric, bounded, continuous, elliptic coefficients $\mat A$ are constructed for which the Dirichlet problem with $L^2$ boundary data is not well-posed in the unit disk.
A common starting regularity condition is $t$-independence, that is,
\begin{equation}\label{eqn:t-independent}\mat A(x,t)=\mat A(x,s)=\mat A(x) \quad\text{for all $x\in\R^n$ and all $s$, $t\in\R$}.\end{equation}
Boundary value problems for such coefficients have been investigated extensively in domains $\Omega$ where the distinguished $t$-direction is always transverse to the boundary, that is, $\Omega=\{(x,t):t>\varphi(x)\}$ for some Lipschitz function~$\varphi$. See, for example, \cite{JerK81A,FabJK84,KenP93,AlfAAHK11,AusAH08,AusAM10,AusA11,AusR12,AusM14,HofKMP15A,HofKMP15B,BarM13p}. 
(In two dimensions some well-posedness results are available even if the distinguished direction is not transverse to the boundary; see \cite{KenKPT00,Rul07,Bar13}.)

The main result of this paper is the following theorem.
\begin{thm}\label{thm:square}
Suppose that $L$ is an elliptic operator associated with coefficients $\mat A$ that are $t$-independent in the sense of formula~\eqref{eqn:t-independent} and satisfy the ellipticity conditions \eqref{eqn:elliptic} and~\eqref{eqn:elliptic:bounded}.

Then the single and double layer potentials $\s^{\mat A}$ and $\widetilde\D^{\mat A}$ in the half-space, 
as defined by formulas \eqref{dfn:D:tilde} and~\eqref{eqn:S:fundamental},
extend to operators that satisfy the bounds
\begin{align}
\label{eqn:S:square}
\int_{\R^n}\int_{-\infty}^\infty \abs{\nabla^m \partial_t\s^{\mat A} \arr g(x,t)}^2\,\abs{t}\,dt\,dx
	& \leq C \doublebar{\arr g}_{L^2(\R^n)}^2
,\\
\label{eqn:D:tilde:square}
\int_{\R^n}\int_{-\infty}^\infty \abs{\nabla^m \partial_t \widetilde \D^{\mat A} \arr f(x,t)}^2\,\abs{t}\,dt\,dx
	& \leq C \doublebar{\arr f}_{\dot W\!A^2_{m,\semiH}(\R^n)}^2
	= C \doublebar{\arr f}_{L^2(\R^n)}^2
\end{align}
for all $\arr g\in {L^2(\R^n)}$ and all $\arr f \in {\dot W\!A^2_{m,\semiH}(\R^n)}$, where $C$ depends only on the dimension $\dmn$ and the ellipticity constants $\lambda$ and $\Lambda$ in the bounds \eqref{eqn:elliptic} and~\eqref{eqn:elliptic:bounded}.
\end{thm}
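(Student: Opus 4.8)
The plan is to reduce the square-function estimates for the layer potentials to a $T1$-type or $Tb$-type square-function estimate, exploiting $t$-independence of the coefficients in an essential way. The key structural fact is that if $L$ has $t$-independent coefficients, then $\partial_t E^L(x,t,y,s)$ depends only on $t-s$ and on $x,y$, so differentiation in $t$ of the layer potentials corresponds, morally, to a convolution in the $t$-variable against a singular kernel that is smooth and decaying at the appropriate scale. More precisely, I would first record the basic pointwise and energy estimates for the fundamental solution $E^L$ and its derivatives — Caccioppoli, De Giorgi--Nash--Moser-type interior estimates for solutions of $Lu=0$ (valid here because the relevant quantities $\nabla^m u$ solve a second-order system in a weak sense), and the decay of $\nabla^m_X \nabla^m_Y E^L(X,Y)$ — all of which should either be quoted from earlier in the paper or follow from the ellipticity hypotheses \eqref{eqn:elliptic} and \eqref{eqn:elliptic:bounded}.

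The heart of the argument is then a vertical square-function bound of the form
\begin{equation*}
\int_{\R^n}\int_{-\infty}^\infty \abs{\Theta_t h(x)}^2\,\frac{dt}{\abs{t}}\,dx \leq C\doublebar{h}_{L^2}^2,
\end{equation*}
where $\Theta_t$ is the family of operators $h\mapsto t\,\nabla^m\partial_t \s^{\mat A}(\text{extension of }h)(\cdot,t)$ acting on (a component of) the boundary data, and similarly for $\widetilde\D^{\mat A}$. To prove such a bound I would verify the standard hypotheses: (i) uniform $L^2\to L^2$ bounds on $\Theta_t$ with the natural $t$-scaling, which come from the energy/Caccioppoli estimates applied on Whitney boxes at height $t$; (ii) off-diagonal (Schur-type) decay estimates for the kernels of $\Theta_t$, again from the decay of derivatives of the fundamental solution; and (iii) a cancellation condition, $\Theta_t(\arr c)=0$ or is suitably small, for the relevant constants or polynomials $\arr c$ — here one uses that $L$ applied to a polynomial of degree $<m$ vanishes, so the corresponding layer potential of such data is itself (locally) polynomial, killed by $\nabla^m\partial_t$. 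With (i)--(iii) in hand, a quasi-orthogonality (Cotlar--Schur) argument, or alternatively the abstract $T1$ theorem for square functions in the style of Christ--Journé / the stopping-time arguments used for the Kato problem, yields the vertical estimate; the full estimate in Theorem~\ref{thm:square} is exactly the vertical one since the measure $\abs{t}\,dt\,dx$ matches $dt/\abs{t}$ after absorbing the $t$ from $\Theta_t$.

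Two points require care. First, the single layer potential $\s^{\mat A}$ is applied to $\arr g\in L^2$ while the double layer $\widetilde\D^{\mat A}$ is applied to Whitney-array data $\arr f$; the identification $\doublebar{\arr f}_{\dot W\!A^2_{m,\semiH}} = \doublebar{\arr f}_{L^2}$ asserted in \eqref{eqn:D:tilde:square} must be established (or quoted), and the definitions \eqref{dfn:D:tilde} and \eqref{eqn:S:fundamental} of the modified double layer $\widetilde\D^{\mat A}$ must be unwound so that $\nabla^m\partial_t\widetilde\D^{\mat A}$ is genuinely of the convolution-in-$t$ form above. Second — and this is where I expect the main obstacle to lie — the off-diagonal decay for $\nabla^m_X\nabla^m_Y E^L(X,Y)$ is not pointwise at the full order; for merely bounded measurable coefficients one only controls $m$-th derivatives in an $L^2$-averaged (Caccioppoli) sense, so the Schur/quasi-orthogonality estimates must be run with $L^2$ averages over Whitney balls rather than pointwise kernel bounds. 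Managing these averaged bounds through the Cotlar--Schur summation, while keeping track of the vector/array structure (the coefficients $A_{\alpha\beta}$ couple all the components $\partial^\beta$), is the technically delicate step; everything else is, in spirit, a higher-order bookkeeping extension of the second-order $t$-independent theory.
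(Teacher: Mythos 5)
Your reduction to vertical square-function bounds via Caccioppoli estimates, your insistence on $L^2$-averaged off-diagonal decay rather than pointwise kernel bounds, and your invocation of quasi-orthogonality and a Christ--Journ\'e-type square-function $T1$ theorem all match the architecture of the paper's proof (Sections~\ref{sec:Theta}--\ref{sec:Q}). The decisive gap is your step (iii), the cancellation condition. You assert that $\Theta_t(\arr c)=0$ or is ``suitably small'' for polynomials $\arr c$ because ``the corresponding layer potential of such data is itself (locally) polynomial, killed by $\nabla^m\partial_t$.'' This is false for the operators at hand. The layer potentials here are built from the Newton potential $\Pi^L$ for a variable-coefficient operator; $\Pi^L$ of an array coming from a polynomial is not a polynomial (it is the solution of a nondegenerate elliptic problem and sees the coefficients $\mat A(y)$ in an essential way), so there is no algebraic cancellation. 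Concretely, formula~\eqref{eqn:D1} exhibits $\Theta_t^D\arr e_\beta(x)$ as an integral of $\nabla^m_Y\partial_t^{m+k}E^L$ against $A_{\alpha\beta}(y)$, which does not vanish; and $\Theta_t^\perp 1=\Theta_t^S\arr e_{\gamma_\perp}$ is nonzero because the purely vertical multiindex $\gamma_\perp$ admits no horizontal integration by parts. The only place such cancellation occurs is the semi-horizontal single layer $\Theta_t^{S'}$ (Section~\ref{sec:S:not-normal}), where an integration by parts in a horizontal variable kills $\Theta_t^{S'}\arr 1$.

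Because the $T1$ hypothesis fails, the paper cannot (and does not) close the argument with a $T1$ theorem; instead it uses a local $Tb$ theorem with vector-valued testing functions (Theorem~\ref{thm:grau:hofmann}, from Grau de la Herran--Hofmann). Two substantial ingredients are missing from your outline. First, the Carleson estimate $\doublebar{\Theta_t'1}_{\mathcal C,\delta}\le C_1+C_1\doublebar{\Theta_t^\perp 1}_{\mathcal C,\delta}$ (Section~\ref{sec:D:carleson}) is proved by adapting the Kato square-root machinery: one uses the frame functions $f_{Q,w}$ of Auscher--Hofmann--McIntosh--Tchamitchian, a Hodge decomposition adapted to the horizontal operator $L_\pureH$, and a delicate term-by-term analysis --- and this step requires $2m>n$ so that solutions are locally H\"older continuous. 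Second, the test functions $\arr b_Q=(\arr b_Q^D,\arr b_Q^S)$ of Section~\ref{sec:b} are built from the fundamental solution evaluated at $(y_Q,\pm\kappa\ell(Q))$, and the nondegeneracy conditions~\eqref{eqn:b:below}--\eqref{eqn:b:above} are verified using the reproducing identity $\nabla^m\Pi^L(\mat A\nabla^m F)=\nabla^m F$ and a careful Hilbert-space extension of $\arr b_Q^S$ from $\dot W\!A^2_{m-1}(\R^n)$ to $L^2(\R^n)$. Finally, the case $2m\le n$ is handled by a separate reduction (Section~\ref{sec:high}) replacing $L$ by $\Delta^M L\Delta^M$ and transferring bounds back. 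None of this is a routine ``bookkeeping extension'' of the second-order theory; the local $Tb$ construction and the Carleson estimate are the technical core, and without them your proposal does not yield the theorem.
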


The space ${\dot W\!A^2_{m,\semiH}(\R^n)}\subset L^2(\R^n)$ will be defined in Section~\ref{sec:whitney}.

We conjecture that this theorem may be generalized from the half-space to Lipschitz graph domains, but the method of proof at the moment requires the extra structure of~$\R^\dmn_+$.
In the case of second-order operators (the case $m=1$), bounds in the upper half-space may be immediately extended to bounds in domains above Lipschitz graphs via a change of variables, and so extra arguments are not necessary. In the higher-order case, this is not true, as the divergence form~\eqref{eqn:divergence} is not preserved under changes of variables. (A different form of higher-order operator is preserved under changes of variables; such operators were investigated in \cite{BarM13}.)

\subsection{Boundedness of layer potentials for second order elliptic operators}
We now turn to the history of this problem. A reader familiar with the second order case may skip this subsection.
As discussed above, layer potentials have been used extensively in the theory of second-order and constant coefficient boundary value problems. Thus, boundedness results for layer potentials have long been of interest. The celebrated result of Coifmann, McIntosh and Meyer \cite{CoiMM82} established boundedness of the Cauchy integral on a Lipschitz curve; this implies that the operators $f\mapsto \D_\Omega^{\mat I} f\big\vert_{\partial\Omega}$ and $g\mapsto \nu\cdot\nabla\s_\Omega^{\mat I} g\big\vert_{\partial\Omega}$ are bounded $L^2(\partial\Omega)\mapsto L^2(\partial\Omega)$, where $\Omega$ is a Lipschitz domain and where $\mat A=\mat I$ is the identity matrix (that is, where $L=-\Delta$ is the Laplace operator). From there many other bounds on harmonic layer potentials may be derived. For example, bounds on $L^p(\partial\Omega)$, $1<p<\infty$, follow from classical Calder\'on-Zygmund theory, and bounds on layer potentials in the domain $\Omega$ may be established using bounds on~$\partial\Omega$; see, for example, \cite{DahV90}.

In the case of second-order equations with variable $t$-independent coefficients, a number of boundedness results have been established. In \cite{KenR09}, Kenig and Rule established that in dimension $\dmn=2$, layer potentials for operators with real-valued coefficients are bounded on $L^p(\partial\Omega)$ for $\Omega$ the domain above a Lipschitz graph, and in \cite{Rul07} this result was extended to bounded Lipschitz domains and Lipschitz graph domains with arbitrary orientation. In \cite{AlfAAHK11}, boundedness of layer potentials on $L^2(\partial\Omega)$ was established in arbitrary dimensions,
in the domain above a Lipschitz graph, for coefficients that are real-valued and symmetric. A stability result was also established; that is, if solutions and layer potentials for some operator $L_0$ have certain boundedness and invertibility properties on $L^2(\partial\Omega)$, then so do layer potentials for any operator $L_1$ whose coefficients $\mat A_1$ are $t$-independent and near (in $L^\infty$) to those of~$L_0$. (This result required a local H\"older continuity estimate for solutions to $L_0u=0$; this estimate is always valid if $\mat A_0$ is real-valued but may not be valid for complex~$\mat A_0$.)

More generally, in \cite{Ros13} Ros\'en showed that layer potentials are always bounded on $L^2(\partial\Omega)$, for $\Omega$ the domain above a Lipschitz graph, provided that the coefficients of the associated operator are $t$-independent, and also that solutions to $Lu=0$ are continuous and satisfy the local bound
\begin{equation*}\abs{u(X)}\leq C\biggl(\fint_{B(X,r)}\abs{u}^2\biggr)^{1/2}\end{equation*}
whenever $Lu=0$ in $B(X,r)$. (The local H\"older continuity requirement, used in \cite{AlfAAHK11} and in many other papers, is a stronger requirement than this local bound. The local boundedness estimate is necessary for Ros\'en's construction of the fundamental solution $E^L(X,Y)$, and thus for the formulas \eqref{eqn:introduction:D} and~\eqref{eqn:introduction:S} to be meaningful; he also showed that, even without local boundedness, the double and single layer potentials may be continued analytically to bounded operators for $t$-independent coefficients~$\mat A$.) Ros\'en's results built on an alternative approach to boundary-value problems involving semigroups \cite{AusAH08, AusAM10}; essentially he established that layer potentials are equal to certain operators studied in \cite{AusAM10}, and thus the boundedness results therein apply. The results of \cite{AusAM10,Ros13} extend to the case of elliptic systems.

In the case of two dimensions, or of smooth coefficients, standard Calder\'on-Zygmund theory allows for straightforward generalization of $L^2$ bounds to $L^p$ bounds, $1<p<\infty$. In the case of rough coefficients in higher dimensions, new arguments are necessary to bound the layer potentials \eqref{eqn:introduction:D} and~\eqref{eqn:introduction:S} on $L^p(\partial\Omega)$. Some such arguments are presented in various papers, in particular in \cite{HofKMP15A,HofMitMor,HofMayMou15}.

In the case of scalar equations, Ros\'en's $L^2$ boundedness result was later established another way, without semigroups, by Grau de la Herran and Hofmann in \cite{GraH14p}.
As in \cite{AlfAAHK11}, they required that solutions to $Lu=0$ be locally H\"older continuous, and in particular locally bounded.
In this paper we will closely follow their approach. We will need to confront a number of additional difficulties that arise in the case of higher-order equations. However, one significant advantage of the higher-order setting is that local H\"older continuity is automatic in the case of operators of very high order, and there are established techniques to generalize to operators of low or moderate order (see \cite{AusHMT01,Bar14p} or Section~\ref{sec:high}); thus, our Theorem~\ref{thm:square} is valid without any assumptions on local boundedness or H\"older continuity of solutions.

\subsection{Layer potentials for higher order operators: known approaches and new ideas}
\label{sec:introduction:higher}

Turning to the history of higher order problems, we recall that an interesting first step lies in even defining layer potentials in the higher order case. In particular, the prototypical higher order operator, the bilaplacian $\Delta^2$, can be viewed in two ways: either as an operator in the divergence form~\eqref{eqn:divergence}, $\Delta^2=\sum_{j,k}\partial_{jk}(\partial_{jk})$, or as a composition of two second order operators (two copies of $-\Delta$). Many papers have used potentials based on a formulation of higher order operators as compositions; see \cite{DahKV86,Ver90,PipV92,BarM13}. A somewhat different approach is necessary if we view $\Delta^2$ as a divergence form operator, or if we seek to generalize to other such operators.

We begin with Neumann boundary values.
Notice that the Neumann boundary values $\nu\cdot \mat A^*\nabla E^{L^*}$ of the fundamental solution appear in the definition of the second order double layer potential. In fact, layer potentials are deeply connected to Dirichlet and Neumann boundary values of solutions in other ways; for example, if $Lu=0$ in~$\Omega$ for some second-order operator~$L$, then $u$ satisfies the Green's formula
\begin{equation}\label{eqn:green:introduction}\1_\Omega \, u = -\D^{\mat A}_\Omega (u\big\vert_{\partial\Omega}) + \s^{\mat A}_\Omega (\nu\cdot \mat A\nabla u).\end{equation}
That is, such a function $u$ is equal to the double layer potential of its Dirichlet boundary values added to the single layer potential of its Neumann boundary values.

The formulation of Neumann boundary data for higher-order equations is an interesting and very difficult question in its own right. It has often been based on an integration by parts: for sufficiently nice domains~$\Omega$, operators~$L$, and test functions $w$ and~$v$,
\begin{equation}
\label{eqn:Neumann:traditional}
\int_\Omega w\,Lv = \sum_{\abs\alpha=\abs\beta=m}\int_\Omega \partial^\alpha w\, A_{\alpha\beta} \, \partial^\beta v +\sum_{j=0}^{m-1} \int_{\partial\Omega} \partial_\nu^j w\, B_j^{\mat A} v \,d\sigma \end{equation}
for some functions $B_j^{\mat A} v$,
where $\partial_\nu^j$ is the $j$th derivative in the normal direction. (If desired, exact formulas for the functions $B_j^{\mat A}v$ in terms of the higher derivatives of~$v$ may be computed; if $L=\Delta^2$ then formulas for $B_j^{\mat A} v$ may be found in \cite{CohG85,Ver05} or in Section~\ref{sec:data:history} below, and in the case of general constant-coefficient operators an explicit formula may be found in \cite[Proposition~4.3]{MitM13A}.)

It is very natural to regard the array $\{\partial_\nu^j w\}_{j=0}^{m-1}$ as the Dirichlet boundary values of~$w$. Then the array of functions $\{B_j^{\mat A} v\}_{j=0}^{m-1}$ may be regarded as the Neumann boundary values of~$v$. The Neumann problem for the biharmonic function $\Delta^2$, with this formulation of boundary data, was studied in \cite{CohG85,Ver05,She07A,MitM13B}. The Neumann problem for more general constant-coefficient operators was studied in \cite{Ver10,MitM13A}, and for some classes of variable coefficient operators in \cite{Agr07}. We remark that a given operator $L$ may be associated to more than one coefficient matrix~$\mat A$, and that each choice of $\mat A$ will give rise to different boundary operators $B_j^{\mat A}$. We will provide more details and a specific example of these different boundary operators (for $L=\Delta^2$) in Section~\ref{sec:data:history};
several of them are physically relevant (in different contexts!) and some even lead to ill-posed problems.

Going further, if $Lu=0$ in $\Omega$ and $E^{L^*}(X,Y)$ is the fundamental solution to $L^*$ (so that $L^*_Y E^{L^*}(Y,X)=\delta_X(Y)$), then
\begin{align*}
\1_\Omega(X)\,u(X)
&= \int_\Omega u\, \overline{L^* E^{L^*}(\,\cdot\,,X)}
\\&=
	\sum_{j=0}^{m-1} \int_{\partial\Omega} \partial_\nu^j u \,\overline{ B_j^{\mat A^*} E^{L^*}(\,\cdot\,,X)} \, d\sigma
	- \int_{\partial\Omega} B_j^{\mat A} u\, \overline{\partial_\nu^j E^{L^*}(\,\cdot\,,X)}\,d\sigma
.\end{align*}
This naturally suggests the two multiple layer potentials
\begin{align*}
\D^{\mat A}_\Omega \arr f(X)
&=
	\sum_{j=0}^{m-1} \int_{\partial\Omega} \overline{ B_j^{\mat A^*} E^{L^*}(\,\cdot\,,X)} \, f_j \,d\sigma,
\\
\s^{\mat A}_\Omega \arr g(X) &=
	\sum_{j=0}^{m-1} \int_{\partial\Omega} \overline{\partial_\nu^j E^{L^*}(\,\cdot\,,X)}\,g_j\, d\sigma.\end{align*}
Notice that in the higher-order case, layer potentials take as input an array of several functions. Also, this formulation of layer potentials generalizes the Green's formula~\eqref{eqn:green:introduction}.
Layer potentials constructed in this way, from an integration by parts against the fundamental solution, have been used in \cite{CohG83,CohG85,Ver05,MitM13B} to study the biharmonic operator $\Delta^2$ (and in particular the associated Neumann problem), and in \cite{Agm57,MitM13A} to study more general constant-coefficient operators; therein $L^p$ boundedness results for layer potentials have been established. (Some boundedness results for inputs in fractional smoothness spaces were established in \cite{MitM13A}.)


Our formulation of the Neumann boundary values of a solution, and thus layer potentials, is \emph{different}. Specifically, observe that the different terms $\partial_\nu^j u$ exhibit different degrees of smoothness; if $\nabla^{m-1} u\in L^2(\partial\Omega)$, for example, and $\partial\Omega$ is sufficiently smooth, then we expect $\partial_\nu^j u$ to lie in the Sobolev space $\dot W^2_{m-1-j}(\partial\Omega)$ of functions with gradients of order $m-1-j$. Furthermore, if $\nabla^m u\in L^2(\partial\Omega)$, then we generally expect the Neumann boundary terms $B_j^{\mat A} u$ to lie in \emph{negative} smoothness spaces (specifically, we expect $B_j^{\mat A} u\in \dot W^2_{j+1-m}(\partial\Omega)$, and so only $B_{m-1}^{\mat A} u$ lies in~$L^2(\partial\Omega)$). See Section~\ref{sec:data:history} for an example.

This is somewhat problematic in the case of Lipschitz and other non-smooth domains, as higher smoothness spaces and negative smoothness spaces are difficult to formulate. Furthermore, dealing with mixed orders of smoothness is difficult even in smooth domains. To avoid these difficulties, we will prefer to regard $\nabla^{m-1} u$, rather than $\{\partial_\nu^j u\}_{j=0}^{m-1}$, as the Dirichlet boundary values of~$u$; this will allow us to formulate a similarly homogeneous notion of Neumann boundary data. The latter has the advantage of working with elements of the same degree of smoothness and being naturally adaptable to our general context. However, explicit formulas for Neumann boundary data can only rarely be obtained; we treat the entire package of Neumann data as a linear functional on a suitable Sobolev space. See a detailed discussion and an example in Section~\ref{sec:data:history}. We have also formulated layer potentials based on this notion of boundary data; see Section~\ref{sec:potentials}. Our potentials thus take as input arrays of functions in homogeneous spaces; notice the $L^2$ norms on the right-hand sides of the bounds \eqref{eqn:S:square} and~\eqref{eqn:D:tilde:square}.

A Green's formula involving homogeneous boundary data has been used in \cite{PipV95B,Ver96}. However, this Green's formula was formulated in terms of derivatives of order $2m-1$, and as such does not lend itself to formulation of Neumann boundary data or the natural division into double and single layer potentials. Furthermore, their construction used some delicate integrations by parts not available in the variable coefficient case, and so our formulation of layer potentials is of necessity somewhat different and more abstract.

\subsection{Our method and outline of the paper}
The remainder of this paper will be devoted to a proof of Theorem~\ref{thm:square}. Specifically, we will define our terminology in Section~\ref{sec:dfn}. We will provide a few preliminary arguments, mainly involving the theory of solutions to higher-order equations, in Section~\ref{sec:preliminary}. We will show that the bounds \eqref{eqn:S:square} and~\eqref{eqn:D:tilde:square} follow from more convenient bounds (specifically, bounds involving derivatives in the $t$-direction only) in Section~\ref{sec:Theta}; we will also define new operators $\Theta_t^D$ and $\Theta_t^S$ that are somewhat easier to work with.
The proof of Theorem~\ref{thm:square} will make extensive use of $T1$ and $Tb$ theorems; we will state the theorems we will need (taken from \cite{ChrJ87} and \cite{GraH14p}) in Section~\ref{sec:Tb}. The remaining sections of the paper will be devoted to showing that $\Theta_t^D$ and $\Theta_t^S$ satisfy the conditions of Theorems~\ref{thm:grau:hofmann:1} and~\ref{thm:grau:hofmann}, and thus satisfy appropriate estimates; a more detailed outline of Sections~\ref{sec:decay}--\ref{sec:high} is provided in Section~\ref{sec:Tb}.

\subsection*{Acknowledgements}
We would like to thank the American Institute of Mathematics for hosting the SQuaRE workshop on ``Singular integral operators and solvability of boundary problems for elliptic equations with rough coefficients,'' at which many of the results and techniques of this paper were discussed.

\section{Definitions}
\label{sec:dfn}

Throughout we work with a divergence-form elliptic operator $L$ of order~$2m$ acting on functions defined in $\R^\dmn$.

We will reserve the letters $\alpha$, $\beta$, $\gamma$, $\zeta$ and~$\xi$ to denote multiindices in $\N^\dmn$. If $\zeta=(\zeta_1,\zeta_2,\dots,\zeta_\dmn)$ is a multiindex, then we define $\abs{\zeta}$, $\partial^\zeta$ and $\zeta!$ in the usual ways, as $\abs{\zeta}=\zeta_1+\zeta_2+\dots+\zeta_\dmn$, $\partial^\zeta=\partial_{x_1}^{\zeta_1}\partial_{x_2}^{\zeta_2} \cdots\partial_{x_\dmn}^{\zeta_\dmn}$,
and $\zeta!=\zeta_1!\,\zeta_2!\cdots\zeta_\dmn!$. 
If $\zeta$ and $\xi$ are two multiindices, then we say that $\xi\leq \zeta$ if $\xi_i\leq \zeta_i$ for all $1\leq i\leq\dmn$, and we say that $\xi<\zeta$ if in addition the strict inequality $\xi_i< \zeta_i$ holds for at least one such~$i$.

We will routinely deal with arrays $\arr F=\begin{pmatrix}F_{\zeta}\end{pmatrix}$ of numbers or functions indexed by multiindices~$\zeta$ with $\abs{\zeta}=k$ for some~$k$.
In particular, if $\varphi$ is a function with weak derivatives of order up to~$k$, then we view $\nabla^k\varphi$ as such an array.

The inner product of two such arrays of numbers $\arr F$ and $\arr G$ is given by
\begin{equation*}\bigl\langle \arr F,\arr G\bigr\rangle =
\sum_{\abs{\zeta}=k}
\overline{F_{\zeta}}\, G_{\zeta}.\end{equation*}
If $\arr F$ and $\arr G$ are two arrays of $L^2$ functions defined in an open set $\Omega$ or on its boundary, then the inner product of $\arr F$ and $\arr G$ is given by
\begin{equation*}\bigl\langle \arr F,\arr G\bigr\rangle_\Omega =
\sum_{\abs{\zeta}=k}
\int_{\Omega} \overline{F_{\zeta}}\, G_{\zeta}
\qquad\text{or}\qquad
\bigl\langle \arr F,\arr G\bigr\rangle_{\partial\Omega} =
\sum_{\abs{\zeta}=k}
\int_{\partial\Omega} \overline{F_{\zeta}}\, G_{\zeta}\,d\sigma\end{equation*}
where $\sigma$ denotes surface measure.

If $\arr G$ is an array of functions defined in~$\Omega$ and indexed by multiindices~$\alpha$ with $\abs{\alpha}=k$, then $\Div_k\arr G$ is the distribution given by
\begin{equation}
\label{eqn:Div}
\bigl\langle \varphi, \Div_k\arr G\bigr\rangle_\Omega
=
(-1)^m\bigl\langle\nabla^k\varphi, \arr G\bigr\rangle_\Omega
\end{equation}
for all smooth test functions~$\varphi$ supported in~$\Omega$.
In particular, if the right-hand side is zero for all such $\varphi$ then we say that $\Div_k\arr G=0$.

We let $\vec e_k$ be the unit vector in $\R^\dmn$ in the $k$th direction; notice that $\vec e_k$ is a multiindex with $\abs{\vec e_k}=1$. We let $\arr e_{\zeta}$ be the ``unit array'' corresponding to the multiindex~$\zeta$; thus, $\langle \arr e_{\zeta},\arr F\rangle = F_{\zeta}$.
We will often distinguish the $\dmn$th direction; we let $\gamma_\perp = (m-1)\vec e_\dmn = (0,\dots,0,m-1)$ and $\alpha_\perp = m\vec e_\dmn$, and let the array $\arr e_\perp$ denote either $\arr e_{\gamma_\perp}$ or $\arr e_{\alpha_\perp}$. Which is meant should be clear from context.

We let $L^p(U)$ and $L^\infty(U)$ denote the standard Lebesgue spaces with respect to either Lebesgue measure (if $U$ is a domain) or surface measure (if $U$ is a subset of the boundary of a domain).
We denote the homogeneous Sobolev space $\dot W^p_k(U)$ by
\begin{equation*}\dot W^p_k(U)=\{u:\nabla^k u\in L^p(U)\}\end{equation*}
with the norm $\doublebar{u}_{\dot W^p_k(U)}=\doublebar{\nabla^k u}_{L^p(U)}$. (Elements of $\dot W^p_k(U)$ are then defined only up to adding polynomials of degree $k-1$.) We say that $u\in L^p_{loc}(U)$ or $u\in W^p_{k,loc}(U)$ if $u\in L^p(V)$ or $u\in \dot W^p_k(V)$ for every bounded set $V$ with $\overline V\subset U$. In particular, if $U$ is a set and $\overline U$ is its closure, then functions in $L^p_{loc}(\overline U)$ are required to be locally integrable even near the boundary~$\partial U$; if $U$ is open this is not true of $L^p_{loc}(U)$.


If $\mu$ is a measure and $E$ is a $\mu$-measurable set, with $\mu(E)<\infty$, we let $\fint_E f\,d\mu=\frac{1}{\mu(E)}\int_E f\,d\mu$.
If $E\subset\R^\dmn$ is a set, we let $\1_E$ denote the characteristic function of~$E$; in particular, we will let $\1_\pm$ denote the characteristic function of the half-space $\R^\dmn_\pm$. If $f$ is a function defined on~$E$, we will often let $\1_E f$ denote the extension of $f$ to $\R^\dmn$ by zero.

If $Q\subset\R^n$ is a cube, we let $\ell(Q)$ be its side-length. We let $rQ$ be the concentric cube of side-length $r\ell(Q)$. We will make frequent use of ``dyadic annuli'' defined as follows.
We let
\begin{equation}\label{eqn:annuli}
A_0(Q) = 2Q, \qquad
A_j(Q) = 2^{j+1}Q\setminus 2^{j}Q\quad\text{for all $j\geq 1$}
.\end{equation}
If $i\geq 0$, let
\begin{equation}
\label{eqn:annuli:wide}
A_{j,i}(Q) = \bigcup_{\ell=j-i}^{j+i}A_\ell(Q)\end{equation}
where $A_\ell(Q)=\emptyset$ whenever $\ell<0$.


Throughout the paper we will work mainly in the domain $\R^\dmn_+=\{(x,t):x\in\R^n,t>0\}$. We will also need to consider $\R^\dmn_-=\{(x,t):x\in\R^n,t<0\}$. We will often identify $\R^n$ with~$\partial\R^\dmn_\pm$.

If $\varphi$ is a function defined on an open subset of $\R^\dmn$, we will let $\nabla_\pureH\varphi$ denote the gradient only in the first $n$ variables; we will also use $\nabla_\pureH f$ to denote the gradient of a function~$f$ defined on $\R^n=\partial\R^\dmn_\pm$.
We will view $\nabla_\pureH^k\varphi$ as an array of functions indexed by multiindices $\zeta\in \N^\dmn$ with $\abs{\zeta}=k$ and $\zeta_\dmn=0$; equivalently we may view $\nabla_\pureH^k\varphi$ as an array of functions indexed by multiindices $\zeta\in \N^n$.

Similarly, if $\arr F$ is an array of functions indexed by multiindices $\zeta\in \N^n$ with $\abs{\zeta}=k$, we will define $\Div_{k,\pureH}\arr F$ formally as $\Div_{k,\pureH}\arr F=\sum_{\abs{\zeta}=k,\>\zeta\in\N^n} \partial^\zeta F_\zeta$; the weak definition is precisely analogous to the definition~\eqref{eqn:Div} of~$\Div_k \arr F$.

%

\subsection{Elliptic operators}
\label{sec:elliptic}

Let $\mat A = \begin{pmatrix} A_{\alpha\beta} \end{pmatrix}$ be measurable coefficients defined on $\R^\dmn$, indexed by multtiindices $\alpha$, $\beta$ with $\abs{\alpha}=\abs{\beta}=m$. If $\arr F$ is an array, then $\mat A\arr F$ is the array given by
\begin{equation*}(\mat A\arr F)_{\alpha} =
\sum_{\abs{\beta}=m}
A_{\alpha\beta} F_{\beta}.\end{equation*}

Throughout we consider coefficients that satisfy the G\r{a}rding inequality
\begin{align}
\label{eqn:elliptic}
\re {\bigl\langle\nabla^m \varphi,\mat A\nabla^m \varphi\bigr\rangle_{\R^\dmn}}
&\geq
	\lambda\doublebar{\nabla^m\varphi}_{L^2(\R^\dmn)}^2
	\quad\text{for all $\varphi\in\dot W^2_m(\R^\dmn)$}
\end{align}
and the bound
\begin{align}
\label{eqn:elliptic:bounded}
\doublebar{\mat A}_{L^\infty(\R^\dmn)}
&\leq
	\Lambda
\end{align}
for some $\Lambda>\lambda>0$.

We let $L$ be the $2m$th-order divergence-form operator associated with~$\mat A$. That is, we say that $L u=\Div_m \arr F$ in~$\Omega$ in the weak sense if, for every $\varphi$ smooth and compactly supported in~$\Omega$, we have that
\begin{equation}
\label{eqn:L}
\bigl\langle\nabla^m\varphi, \mat A\nabla^m u\bigr\rangle_\Omega
=
\bigl\langle\nabla^m\varphi, \arr F\bigr\rangle_\Omega,
\end{equation}
that is, we have that
\begin{equation}\label{eqn:L:integral}
\sum_{\abs{\alpha}=\abs{\beta}=m}
\int_{\Omega}\partial^\alpha \bar \varphi\, A_{\alpha\beta}\,\partial^\beta u
=
\sum_{\abs{\alpha}=m}
\int_{\Omega} \partial^\alpha \bar \varphi \, F_{\alpha}
.
\end{equation}
In particular, if the left-hand side is zero for all such~$\varphi$ then we say that $L u=0$.

We let  $\mat A^*$ be the adjoint matrix, that is, $A^*_{\alpha\beta}=\overline{A_{\beta\alpha}}$. We let $L^*$ be the associated elliptic operator.

In this paper we will focus exclusively on operators $L$ that are $t$-independent, that is, whose coefficients satisfy formula~\eqref{eqn:t-independent}.

Throughout the paper we will let $C$ denote a constant whose value may change from line to line, but which depends only on the dimension $\dmn$, the ellipticity constants $\lambda$ and $\Lambda$ in the bounds \eqref{eqn:elliptic} and~\eqref{eqn:elliptic:bounded}, and the order~$2m$ of our elliptic operators. Any other dependencies will be indicated explicitly. We say that $ A\approx B$ if, for some such constant $C$, $ A\leq CB$ and $B\leq CA$.

\subsection{Dirichlet and Neumann boundary data}
\label{sec:data}

Our goal in the present paper is to bound the double and single layer potentials; in future work we hope to use the results of this paper to solve the Dirichlet and Neumann boundary value problems. Thus, in this section, we will define higher-order Dirichlet and Neumann boundary data.

We define higher-order Dirichlet boundary data as follows.
Suppose that $\nabla^m F\in L^1_{loc}(\overline{\R^\dmn_+})$ or $\nabla^m F\in L^1_{loc}(\overline{\R^\dmn_-})$. Then $\partial^\gamma u\in \dot W^1_{1,loc}(\overline{\R^\dmn_\pm})$ for any $\gamma$ with $\abs{\gamma}=m-1$. We define $\Tr_{m-1} u$ as the array given by
\begin{equation}
\label{eqn:Dirichlet}
\begin{pmatrix}\Tr_{m-1}  u\end{pmatrix}_{\gamma}
=\Trace \partial^\gamma u
\quad\text{for all $\abs{\gamma}=m-1$}\end{equation}
where $\Trace$ denotes the standard trace operator on Sobolev spaces. In some cases we will use $\Tr_{m-1}^+ u$ and $\Tr_{m-1}^- u$ to specify that the trace is taken in $\R^\dmn_+$ or~$\R^\dmn_-$. Notice that if $\nabla^m u$ is locally integrable in all of $\R^\dmn$, then $\Tr_{m-1}^+ u=\Tr_{m+1}^- u$.

With some care we may define boundary values of certain higher-order derivatives. If $u\in \dot W^1_{m,loc}(\R^\dmn)$ is such that $\Tr_{m-1}u\in \dot W^1_{1,loc}(\R^n)$, then for each $\beta$ with $\beta_\dmn<\abs{\beta}=m$, we define
\begin{equation}
\label{eqn:Dirichlet:tangential}
(\Tr_{m,\semiH} u)_\beta=\partial_{x_j} \Trace \partial^{\beta-\vec e_j} u\quad \text{for all $j$ with $1\leq j\leq n$ and $\beta_j>0$}.\end{equation}
Note the requirement that $j\neq \dmn$.
This is well-defined; that is, if $\beta_j>0$ and $\beta_k>0$ for some $j\neq\dmn$ and $k\neq\dmn$, then it does not matter whether we choose $x_j$ or $x_k$ as our distinguished representative.

We define higher-order Neumann boundary data as follows. Let $\dot W^2_{m,0}({\R^\dmn_\pm})$ be the closure in~$\dot W^2_m({\R^\dmn_\pm})$ of the set of all smooth functions supported in~${\R^\dmn_\pm}$. Then \[\dot W^2_{m,0}({\R^\dmn_\pm})=\{ v\in \dot W^2_m({\R^\dmn_\pm}):\Tr_{m-1}^\pm v=0\}.\]
Observe that if $ v \in\dot W^2_{m,0}({\R^\dmn_\pm})$, $ u\in \dot W^2_m({\R^\dmn_\pm})$ and $L u=0$ in ${\R^\dmn_\pm}$, then 
\[\langle \nabla^m v, \mat A\nabla^m u\rangle_{\R^\dmn_\pm}=0.\]
Thus, if $ \varphi\in \dot W^2_m({\R^\dmn_\pm})$, then the inner product $\langle \nabla^m \varphi, A\nabla^m u\rangle_{\R^\dmn_\pm}$ depends only on $\Tr_{m-1}^\pm \varphi$; if $\Tr_{m-1}^\pm \varphi=\Tr_{m-1}^\pm \eta$ then $\langle \nabla^m \varphi-\nabla^m \eta, A\nabla^m u\rangle_{\R^\dmn_\pm}=0$. We define the Neumann boundary values $\M_{\mat A}^\pm  u$ by
\begin{equation}
\label{eqn:Neumann}
\bigl\langle \Tr_{m-1}^\pm\varphi,\M_{\mat A}^\pm  u\bigr\rangle_{\partial{\R^\dmn_\pm}}
=
\bigl\langle \nabla^m\varphi, \mat A\nabla^m u\bigr\rangle_{\R^\dmn_\pm}
\qquad\text{for all }\varphi\in \dot W^2_m({\R^\dmn_\pm}).
\end{equation}
$\M_{\mat A}^\pm u$ is then a linear operator on the space of traces of $\dot W^2_m({\R^\dmn_\pm})$-functions.

\subsubsection{Historical remarks and context}\label{sec:data:history}

We now provide some further discussion and history of Dirichlet and Neumann boundary data.

We remark that we have three ways to refer to derivatives of order~$m$ at $\R^n=\partial\R^\dmn_\pm$, namely $\nabla^m_\pureH u(x,0)$, $\Tr_{m,\semiH} u(x)$, and $\nabla^m u(x,0)$. All three are arrays indexed by multiindices $\beta$ with $\abs{\beta}=m$. To give the reader some intuition, let us discuss the simplest case, that of smooth functions in $\RR^2_+=\{(x,t): x\in \RR,\> t>0\}$ with $m=2$. In this case,  formally, $\Tr_1 u$ is the array containing $\partial_x u(x,0)$ and $\partial_t u(x,0)$.
As for the higher order traces, $\nabla^m_\pureH u(x,0)=\partial^2_{xx}u(x,0)$, $\Tr_{m,\semiH} u(x)$ is the array $(\partial^2_{xx} u,\partial_{xt}^2 u)$ containing $\partial^2_{xx}u=\partial_x \Trace \partial_x u$ and $\partial_{xt}^2 u=\partial_x\Trace \partial_t u$ on $\partial\RR^2_+$, while $\nabla^m u(x,0)$ is the array $(\partial_{xx}^2 u,\partial_{xt}^2 u,\partial_{tt}^2u)$ of all second derivatives. 

The reader should compare our choice of representation of the Dirichlet data $\Tr_1 u=(\partial_x u(x,0), \partial_t u(x,0))$ to the traditional choice $(u(x,0), \partial_t u(x,0))$. This is, of course, a question of representation and what matters is the function spaces for the data. Working with $\Tr_1$ in place of $(u(x,0), \partial_t u(x,0))$ brings considerable advantage and clarity mainly because both parts of the trace array belong to a function space of the same level of smoothness. For example, $\Tr_1$ is a vector with both components in $L^2(\RR)$ when  $(u(x,0), \partial_t u(x,0))$ lies in $\dot W^{1,2}(\RR)\times L^2(\RR)$. This makes things much clearer when dealing with divergence form operators of arbitrary higher order. This also allows us to properly define Neumann data.

Recall that the latter is quite tricky even for the simple case of the bilaplacian and that some choices can make the Neumann problem ill-posed. Let us discuss this in some detail, starting with the bilaplacian on a Lipschitz domain $\Omega\subset \RR^n$. We will translate to the half-space below.

The Neumann boundary values of a solution are traditionally given by an integration by parts (formula~\eqref{eqn:Neumann:traditional}) or less explicitly as an inner product (formula~\eqref{eqn:Neumann}). In the case of the biharmonic equation, Neumann boundary values also have applications in the theory of elasticity.
Recall that the principal physical motivation for the inhomogeneous biharmonic equation $\Delta^2 u=h$ is that it describes the equilibrium position of a thin elastic plate subject to a vertical force~$h$. The Dirichlet problem $u\big\vert_{\partial\Omega}=f$, $\nabla u\big\vert_{\partial\Omega}=\vec g$ describes an elastic plate whose edges are clamped, that is, held at a fixed position in a fixed orientation. The Neumann problem, on the other hand, corresponds to the case of a free boundary.
Guido Sweers has written an excellent short paper \cite{Swe09} discussing the boundary conditions that correspond to these and other physical situations.

More precisely, if a thin two-dimensional plate is subject to a force $h$ and the edges are free to move, then its displacement $u$ satisfies the boundary value problem
\begin{equation*}
\left\{
\begin{aligned}
\Delta^2 u &= h && \text{in } \Omega,\\
\rho \Delta u+(1-\rho)\partial_\nu^2 u&=0 &&\text{on }\partial\Omega,\\
\partial_\nu \Delta u+(1-\rho) \partial_{\tau\tau\nu}u&=0 &&\text{on }\partial\Omega.\\
\end{aligned}\right.
\end{equation*}
Here $\rho$ is a physical constant, called the Poisson ratio, and $\nu$ and $\tau$ are the unit outward normal and unit tangent vectors to the boundary. This formulation goes back to Kirchoff and is well known in the theory of elasticity; see, for example, Section~3.1 and Chapter~8 of the classic engineering text \cite{Nad63}.
We remark that by \cite[formula \hbox{(8-10)}]{Nad63},
\begin{equation*}
\partial_\nu \Delta u + (1-\rho) \partial_{\tau\tau\nu}u=
\partial_\nu \Delta u + (1-\rho) \partial_{\tau}
\left(\partial_{\nu\tau} u\right).\end{equation*}

This suggests the following homogeneous boundary value problem in a Lipschitz domain $\Omega\subset\R^\dmn$ of arbitrary dimension. We say that the $L^p$-Neumann problem is well-posed if there exists a constant $C>0$ such that, for every $f_0\in L^p(\partial\Omega)$ and $\Lambda_0\in W^p_{-1}(\partial\Omega)$, there exists a function $u$ such that
\begin{equation}
\label{eqn:neumann}
\left\{
\begin{aligned}
\Delta^2 u ={}& 0 && \text{in } \Omega,\\
M_\rho u:={}&\rho \Delta u + (1-\rho) \partial_\nu^2 u = f_0&& \text{on }\partial\Omega,\\
K_{\rho} u:={}&\partial_\nu \Delta u + (1-\rho) \frac{1}{2}
\partial_{\tau_{jk}}
\left(\partial_{\nu\tau_{jk}} u \right) = \Lambda_0&& \text{on }\partial\Omega,\\
\doublebar{N(\nabla^2 u)}_{L^p(\partial\Omega)}
\leq{}& C
\doublebar{f_0}_{W^p_1(\partial\Omega)}
+C\doublebar{\Lambda_0}_{W^p_{-1}(\partial\Omega)}
&&{\text{on }\partial\Omega.}
\end{aligned}\right.
\end{equation}
Here $\tau_{ij} = \nu_i {\e}_j-\nu_j {\e}_i$ is a vector orthogonal to the outward normal $\nu$ and lying in the $x_ix_j$-plane, and $N(\nabla^2 u)$ denotes the nontangential maximal function common in the theory of elliptic boundary value problems. We apply the convention that the repeated indices $j$ and $k$ are summed from $1$ to $\dmn$.

The boundary operators $M_\rho$ and $K_\rho$, derived from the theory of elasticity, are the same as the Neumann boundary operators discussed in Section~\ref{sec:introduction:higher}. Specifically, for any $\rho\in\R$, the equation
\begin{align}
\label{eqn:biharmonicDN}
\int_\Omega w\,\Delta^2 v
&=
	\int_\Omega \left(\rho\,\Delta w\, \Delta v
	+(1-\rho)\,\partial_{jk}w\,\partial_{jk} v\right)
	+\int_{\partial\Omega} w\, K_\rho v-\partial_\nu w\,M_\rho v\,d\sigma
\end{align}
is valid for arbitrary smooth functions. Comparing to formula~\eqref{eqn:Neumann:traditional}, we see that $B_0^{\mat A} = K_\rho$ and $B_1^{\mat A} = -M_\rho$, where $\mat A=\mat A_\rho$ is an appropriate choice of coefficients associated with $L=\Delta^2$.

Observe that, contrary to the Laplacian or more general second order operators, there is a \emph{family} of relevant Neumann data for the biharmonic equation. Moreover, different values (or, rather,   ranges) of $\rho$ correspond to different natural physical situations. We refer the reader to  \cite{Ver05} for a detailed discussion.

Recall that our formulation of Neumann data is somewhat different; we use the array $\M_{\mat A}^\pm u$ of formula~\eqref{eqn:Neumann} rather than the functions $B_j^{\mat A}u$ of formula~\eqref{eqn:Neumann:traditional}. As an example, in the case $L=\Delta^2$ and $\Omega=\R^\dmn_+$, we will provide an explicit formula for one representative of $\M_{\mat A}^+ u$.
In the half-space,
\begin{equation*}K_\rho u = -\partial_\dmn \Delta u - (1-\rho) \partial_j (\partial_{j\pdmn}^2 u).\end{equation*}
We are now summing from $j=1$ to~$n$.
If $\Delta u$ is harmonic in $\R^\dmn_+$, then $\partial_\dmn \Delta u = \partial_j R_j (\Delta u)$, where $R_j$ denotes the $j$th Riesz transform. Thus, if $\vec\varphi:\R^n\mapsto\C^n$ is any divergence-free vector field, then
\begin{align}
\label{eqn:biharmonicDN:weak}
\int_{\R^\dmn_+} w\,\Delta^2 u
&=
	\int_{\R^\dmn_+} \left(\rho\,\Delta w\, \Delta u
	+(1-\rho)\,\partial_{jk}w\,\partial_{jk} u\right)
	\\&\qquad\nonumber
	+\int_{\partial{\R^\dmn_+}} \partial_\dmn w\,M_\rho u
	+  \partial_j w \bigl(  R_j (\Delta u) + (1-\rho) \partial_{j\pdmn}^2 u+\varphi_j\bigr)
	\,d\sigma
\end{align}
from which we may recover an explicit formula for $\M_{\mat A}^+ u$.

We comment on several aspects of this formula. First, observe that we still have a family of Neumann boundary data indexed by the parameter~$\rho$. Next, observe that we did use the fact that $\Delta^2 u=0$; this formula, unlike formula~\eqref{eqn:biharmonicDN}, is not valid for arbitrary smooth functions. Furthermore, observe the presence of the vector field $\vec\varphi$ in $\M_{\mat A}^+u$; our explicit representation gives a natural normalization $\vec\varphi=0$, but for more general operators the divergence-free vector field cannot be neglected. Finally, observe that our formula for $\M_{\mat A}^+ u$ is not a local one: it involves the Riesz transforms of derivatives of~$u$ rather than simply linear combinations.

However, notice one significant advantage of our formulation~\eqref{eqn:biharmonicDN:weak} over the operators~$K_\rho$ and $M_\rho$. The term $M_\rho u$ involves second derivatives of~$u$, while the term $K_\rho u$ involves third derivatives; we have expressed all components of $\M_{\mat A}^+ u$ using the second derivatives of~$u$. As discussed in Section~\ref{sec:introduction:higher}, this means that we expect the different components of the Neumann boundary data to lie in a single smoothness space; furthermore, using boundedness of the Riesz transform, we may control $\doublebar{\M_{\mat A}^+ u}_{L^p(\R^n)}$ by $\doublebar{\nabla^2 u}_{L^p(\R^n)}$, for $1<p<\infty$.

Our formulation of Neumann boundary data for general operators will display most of these issues. The existence of a family of Neumann data may be eliminated by specifying the matrix of coefficients $\mat A$ in formula~\eqref{eqn:Neumann}, but our formulation of $\M_{\mat A}^+ u$ does require that $u$ be a solution, is well-defined only up to adding divergence-free terms, and need not have a local representation. Indeed, at this point, the estimate $\doublebar{\M_{\mat A}^+ u}_{L^p(\R^n)}\leq C\doublebar{\nabla^m u}_{L^p(\R^n)}$, while apparently plausible, is still only a conjecture.

We now discuss the history of the $L^p$-Neumann problem~\eqref{eqn:neumann}.
In \cite{CohG85}, Cohen and Gosselin showed that this problem was well-posed in $C^1$ domains contained in $\RR^2$ for for $1<p<\infty$, provided in addition that $\rho=-1$. In \cite{Ver05}, Verchota investigated the Neumann problem~\eqref{eqn:neumann} in full generality. He considered Lipschitz domains with compact, connected boundary contained in $\RR^{n+1}$, $n+1\geq 2$.
He showed that if $-1/n\leq \rho<1$, then the Neumann problem is well-posed provided $2-\varepsilon<p<2+\varepsilon$. That is, the solutions exist, satisfy the desired estimates, and are unique either modulo functions of an appropriate class, or (in the case where $\Omega$ is unbounded) when subject to an appropriate growth condition. See \cite[Theorems 13.2 and~15.4]{Ver05}. The Neumann problem is ill-posed for $\rho\geq 1$ and $\rho<1/n$; see \cite[Section~21]{Ver05}. More recently, in \cite{She07B}, Shen improved upon Verchota's results by extending the range on $p$ (in bounded simply connected Lipschitz domains) to $2n/(n+2)-\varepsilon<p<2+\varepsilon$ if $n+1\geq 4$, and $1<p<2+\varepsilon$ if $n+1=2$ or $n+1=3$. All of the aforementioned results rely on the method of layer potentials.
Finally, in \cite[Section~6.5]{MitM13A}, I. Mitrea and M. Mitrea showed that if $\Omega\subset\R^\dmn$ is a simply connected domain whose unit outward normal $\nu$ lies in~$VMO(\partial\Omega)$ (for example, if $\Omega$ is a $C^1$ domain), then the acceptable range of~$p$ is $1<p<\infty$; this may be seen as a generalization of the result of Cohen and Gosselin to higher dimensions, to other values of~$\rho$, and to slightly rougher domains.
The question of the sharp range of $p$ for which the $L^p$-Neumann problem is well-posed in a Lipschitz domain is still open.

It turns out that extending well-posedness results for the Neumann problem beyond the case of the bilaplacian is an excruciatingly difficult problem. Even defining Neumann boundary values for more general operators is a difficult problem; compare the traditional definition of Neumann boundary data of~\eqref{eqn:Neumann:traditional} and our formulation~\eqref{eqn:Neumann}. While some progress has been made (see \cite{Agr07,MitM13A,Bar15p}), at present there are no well-posedness results for the Neumann problem with $L^p$ boundary data.

Recall that \cite{Agr07,MitM13A} have investigated the Neumann problem for boundary data formulated as in formula~\eqref{eqn:Neumann:traditional}. Specifically,
Agranovich has established some well-posedness results for the inhomogeneous problem $Lu=h$ with homogeneous Neumann boundary data, and has provided some brief discussion of the conditions needed to resolve the Neumann problem with inhomogeneous boundary data; see  \cite[Section~5.2]{Agr07}. The book \cite{MitM13A} considers the case of constant-coefficient operators at length; therein they establish well-posedness results for the Neumann problem, with boundary data in certain fractional smoothness spaces, for elliptic constant-coefficient operators.

\subsection{The Newton potential and the fundamental solution}

The main purpose of the present paper is to define and bound the double and single layer potentials for higher-order elliptic operators of the form specified in Section~\ref{sec:elliptic}. Recall from formulas \eqref{eqn:introduction:D} and~\eqref{eqn:introduction:S} that the second-order layer potentials are built from the second-order fundamental solution.

The main result of the paper \cite{Bar14p} was a construction of the fundamental solution $E^L$ in the case of higher-order operators. $E^L$ was constructed as (an order-$m$ antiderivative of) the kernel to the operator $\Pi^L$, the Newton potential for~$L$, defined as follows.
For any $\arr H\in L^2(\R^\dmn)$, by the Lax-Milgram lemma there is a unique function $\Pi^L\arr H$ in $\dot W^2_m(\R^\dmn)$ that satisfies
\begin{equation}\label{eqn:newton}
\langle \nabla^m\varphi, \mat A\nabla^m \Pi^L\arr H\rangle_{\R^\dmn}=\langle \nabla^m\varphi, \arr H\rangle_{\R^\dmn}\end{equation}
for all $\varphi\in \dot W^2_m(\R^\dmn)$.
The Newton potential is a bounded operator on~$L^2(\R^\dmn)$ and satisfies the bound
\begin{equation}\label{eqn:newton:bound}
\doublebar{\nabla^m \Pi^L\arr H}_{L^2(\R^\dmn)}\leq C
\doublebar{\arr H}_{L^2(\R^\dmn)}.
\end{equation}
We will need two additional properties of the Newton potential. First, we will need the symmetry relation
\begin{equation}\label{eqn:newton:adjoint}
\langle \arr G, \nabla^m \Pi^L\arr H\rangle_{\R^\dmn} = \langle \nabla^m\Pi^{L^*}\arr G, \arr H\rangle_{\R^\dmn}
\end{equation}
for all $\arr H\in L^2(\R^n)$ and all $\arr G\in L^2(\R^n)$.
Second, we will need the identity
\begin{equation}\label{eqn:newton:identity}
\nabla^m \Pi^L (\mat A\nabla^m F) = \nabla^m F
\end{equation}
for all $F\in \dot W^2_m(\R^\dmn)$; this identity follows by uniqueness of the Newton potential as the solution operator in formula~\eqref{eqn:newton}.

We remark that this Newton potential $\Pi^L$ is somewhat different in smoothness from the traditional Newton potential. This potential, which we will denote $N^L$, is the unique solution of the equation $Lu=f$, $u\in \dot W^2_m(\R^\dmn)$, or, more precisely, of
\begin{equation}\label{eqn:newton-bis}
\langle \nabla^m\varphi, \mat A\nabla^m N^L f\rangle_{\R^\dmn}=\langle \varphi, f\rangle_{\R^\dmn}\end{equation}
for all $\varphi\in \dot W^2_m(\R^\dmn)$. The input $f$ should thus be taken in $\dot W^2_{-m}(\R^\dmn)$, the dual space to $\dot W^2_m(\R^\dmn)$. One can write $f=\Div_m \arr H$ for some $\arr H\in L^2(\R^\dmn)$. The above formulas then become $Lu=\Div_m \arr H$, $u\in \dot W^2_m(\R^\dmn)$, or, more precisely,
\begin{equation}\label{eqn:newton-bis2}
\langle \nabla^m\varphi, \mat A\nabla^m N^L\Div_m \arr H\rangle_{\R^\dmn}=\langle \nabla^m \varphi, \arr H\rangle_{\R^\dmn}.\end{equation}
In other words, $\Pi^L=N^L\Div_m$. We shall be working exclusively with $\Pi^L$, but perhaps this analogy is useful to keep in mind.

The main result of \cite{Bar14p} may be stated as follows.

\begin{thm}[{\cite[Theorem~62 and Lemma~69]{Bar14p}}]\label{thm:fundamental}
Let $L$ be an operator of order~$2m$ that satisfies the bounds \eqref{eqn:elliptic} and~\eqref{eqn:elliptic:bounded}.
Then there exists a function $E^L(X,Y)$ with the following properties.

Let $q$ and $s$ be two integers that satisfy $q+s<\dmn$ and the bounds $0\leq q\leq \min(m,\pdmn/2)$, $0\leq s\leq \min(m,\pdmn/2)$.

Then we have the symmetry property
\begin{equation}
\label{eqn:fundamental:symmetric}
\partial_X^\zeta\partial_Y^\xi E^L(X,Y) = \overline{\partial_X^\zeta\partial_Y^\xi E^{L^*}(Y,X)}
\end{equation}
as locally $L^2$ functions, for all multiindices $\zeta$, $\xi$ with $\abs{\zeta}=m-q$ and $\abs{\xi}=m-s$.

There is some $\varepsilon>0$ such that if $X_0$, $Y_0\in\R^\dmn$, if $0<4r<R<\abs{X_0-Y_0}/3$, and if $q<\pdmn/2$ then
\begin{equation}
\label{eqn:fundamental:far}
\int_{B(Y_0,r)}\int_{B(X_0,R)} \abs{\nabla^{m-s}_X \nabla^{m-q}_Y E^L(X,Y)}^2\,dX\,dY \leq C r^{2q} R^{2s} \biggl(\frac{r}{R}\biggr)^\varepsilon
.\end{equation}
If $q=\pdmn/2$ then we instead have the bound
\begin{equation}
\label{eqn:fundamental:far:lowest:2}
\int_{B(Y_0,r)}\int_{B(X_0,R)} \abs{\nabla^{m-s}_X \nabla^{m-q}_Y E^L(X,Y)}^2\,dX\,dY \leq C(\delta)\, r^{2q} R^{2s} \biggl(\frac{R}{r}\biggr)^\delta
\end{equation}
for all $\delta>0$ and some constant $C(\delta)$ depending on~$\delta$.


Furthermore,
if $\abs{\alpha}=m$ then
\begin{equation}
\label{eqn:fundamental:2}
\partial^\alpha\Pi^L\arr H(X)
	= \sum_{\abs{\beta}=m} \int_{\R^\dmn} 	\partial_X^\alpha\partial_Y^\beta E^L(X,Y)\,F_{\beta}(Y)\,dY
	\quad\text{}
\end{equation}
for almost every $X\notin\supp \arr H$, and for all $\arr H\in L^2(\R^\dmn)$ whose support is not all of $\R^\dmn$.

Finally, if $\widetilde E^L$ is any other  function that satisfies the bounds~\eqref{eqn:fundamental:far},~\eqref{eqn:fundamental:far:lowest:2} and formula~\eqref{eqn:fundamental:2}, then
\begin{align}
\label{eqn:fundamental:unique}
\widetilde E^L(X,Y)
=
	E^L(X,Y)&+\sum_{\abs{\zeta}<m-\pdmn/2} f_\zeta(X)\,Y^\zeta+\sum_{\abs{\xi}<m-\pdmn/2} g_\xi(Y)\,X^\xi
	\\&\nonumber
	+\sum_{\abs{\zeta}=\abs{\xi}=m-\pdmn/2}c_{\zeta,\xi} \,X^\zeta\,Y^\xi
\end{align}
for some functions~$f_\zeta$ and~$g_\zeta$ and some constants $c_{\zeta,\xi}$. Thus, $\nabla_X^{m-q}\nabla_Y^{m-s}E^L(X,Y)$ is a well-defined, locally $L^2$ function provided $q$ and $s$ satisfy the conditions specified above.
\end{thm}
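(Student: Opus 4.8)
This statement is \cite[Theorem~62 and Lemma~69]{Bar14p}; the plan is to build the fundamental solution out of the Newton potential $\Pi^L$ defined above, in the spirit of the classical construction of the Green function for second-order divergence-form systems. The only interior information I would use about solutions of $Lu=0$ is what follows from \eqref{eqn:elliptic} and \eqref{eqn:elliptic:bounded}: the higher-order Caccioppoli inequality, $\int_{B(X_0,R)}\abs{\nabla^m u}^2\leq CR^{-2m}\int_{B(X_0,2R)}\abs{u-P}^2$ for a suitable polynomial $P$ of degree less than~$m$ (and the variants controlling $\nabla^m u$ by $\nabla^k u$), together with Meyers' reverse-H\"older self-improvement $\nabla^m u\in L^{2+\varepsilon}_{loc}$; both hold for any operator of the form \eqref{eqn:divergence} and neither needs local boundedness of solutions.

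The heart of the matter is a quantitative off-diagonal estimate for $\Pi^L$. Given $\arr H\in L^2(\R^\dmn)$ supported in a ball $B(Y_0,r)$, the function $u=\Pi^L\arr H$ lies in $\dot W^2_m(\R^\dmn)$ with $\doublebar{\nabla^m u}_{L^2}\leq C\doublebar{\arr H}_{L^2}$ by \eqref{eqn:newton:bound} and solves $Lu=0$ off $B(Y_0,r)$. I would iterate Caccioppoli against Poincar\'e along the dyadic annuli around $Y_0$ (cf.\ \eqref{eqn:annuli}) out to $B(X_0,R)$, inserting one use of the Meyers improvement to gain a small power of a volume ratio, to get a bound on $\doublebar{\nabla^m\Pi^L\arr H}_{L^2(B(X_0,R))}$, valid when $R>4r$ and $\abs{X_0-Y_0}>3R$, that decays in $\abs{X_0-Y_0}/R$ with a small positive exponent. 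Running the same argument with $L^*$ and invoking the adjoint relation \eqref{eqn:newton:adjoint} controls the bilinear form $\langle\arr G,\nabla^m\Pi^L\arr H\rangle$ whenever $\arr G$, $\arr H$ have separated supports; with the Meyers improvement applied on both sides this shows $\nabla^m\Pi^L$ has an off-diagonal kernel that is locally $L^2$ away from the diagonal. I would then name this kernel $\bigl(\partial_X^\alpha\partial_Y^\beta E^L\bigr)_{\abs{\alpha}=\abs{\beta}=m}$, so that \eqref{eqn:fundamental:2} holds by construction (the integral converging absolutely for $X$ off $\supp\arr H$ by the local $L^2$ bound just obtained), and check that, once the scaling of the order-$2m$ operator is tracked, the decay estimate is precisely \eqref{eqn:fundamental:far}, degenerating to the arbitrarily small loss \eqref{eqn:fundamental:far:lowest:2} at the critical order $q=\pdmn/2$.

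From there the remaining assertions are essentially bookkeeping. I would recover the lower-order mixed derivatives $\nabla_X^{m-s}\nabla_Y^{m-q}E^L$ by integrating up in $X$ and in $Y$; the far-field decay of the top-order derivatives forces the constants of integration into the polynomial classes in \eqref{eqn:fundamental:unique} (polynomials in $X$ or $Y$ of degree below $m-\pdmn/2$, plus cross terms $X^\zeta Y^\xi$ with $\abs{\zeta}=\abs{\xi}=m-\pdmn/2$ admitted by the borderline case), and since $q+s<\dmn$ and $q,s\leq\min(m,\pdmn/2)$ forbid both $q$ and $s$ from reaching $\pdmn/2$, the operator $\nabla_X^{m-s}\nabla_Y^{m-q}$ annihilates all of them, giving the well-definedness in the last sentence; the bounds \eqref{eqn:fundamental:far}--\eqref{eqn:fundamental:far:lowest:2} at lower orders come along with this. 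Symmetry \eqref{eqn:fundamental:symmetric} I would read off \eqref{eqn:newton:adjoint} at the top order and carry down by the same integration. For uniqueness \eqref{eqn:fundamental:unique}, any $\widetilde E^L$ obeying \eqref{eqn:fundamental:2} makes $\sum_\beta\int\partial_X^\alpha\partial_Y^\beta(\widetilde E^L-E^L)\,F_\beta$ vanish for every admissible $\arr H$, so $\partial_X^\alpha\partial_Y^\beta(\widetilde E^L-E^L)=0$ off the diagonal for $\abs{\alpha}=\abs{\beta}=m$; the difference is therefore a sum of a (polynomial in~$X$)$\times$(function of~$Y$), its transpose, and a bilinear cross term, and the decay hypotheses cut the degrees to those listed.

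The step I expect to be hard is the quantitative off-diagonal decay of the middle paragraph: producing a genuine gain $\varepsilon>0$ rather than mere boundedness, nailing the exact powers of $r$, $R$ and $\abs{X_0-Y_0}$ in \eqref{eqn:fundamental:far}, carrying the polynomials modulo which $\dot W^2_m$-solutions are defined through the whole argument (so that every pointwise-looking claim is really an averaged $L^2$ statement), and handling the logarithmically critical order $q=\pdmn/2$, where the best one can hope for is the small loss $(R/r)^\delta$ of \eqref{eqn:fundamental:far:lowest:2}. Once those estimates are established, the passage to $E^L$ and all of its stated properties is formal.
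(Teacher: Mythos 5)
This theorem is imported from \cite{Bar14p} and not proved in the present paper, so there is no local argument to compare against; your sketch is, however, a faithful reconstruction of the approach taken there. In \cite{Bar14p} the top-order derivatives $\partial_X^\alpha\partial_Y^\beta E^L$ are indeed defined as the kernel of $\nabla^m\Pi^L$, off-diagonal decay with a small Meyers-type gain is obtained by iterating the higher-order Caccioppoli and Poincar\'e inequalities on dyadic annuli, lower-order mixed derivatives are recovered by integrating up modulo the polynomial ambiguity catalogued in \eqref{eqn:fundamental:unique}, and \eqref{eqn:fundamental:symmetric} is read off from \eqref{eqn:newton:adjoint}, all of which matches your plan (including your correct observation that the constraint $q+s<\dmn$ prevents both $q$ and $s$ from reaching $\pdmn/2$, so $\nabla_X^{m-s}\nabla_Y^{m-q}$ annihilates every spurious term in \eqref{eqn:fundamental:unique}).
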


Note that formula~\eqref{eqn:fundamental:2} and the definition of $\Pi^L$ assures that $E$ is indeed analogous to the traditional fundamental solution, which, roughly speaking, solves $LE=\delta$. That is, $E$ is formally the kernel of the potential $N$ defined above.

We record one further property of the fundamental solution for $t$-independent operators.
By the uniqueness property for the fundamental solution, if $\mat A$ is $t$-independent, then we have that
\begin{equation*}\partial_{x,t}^\zeta \partial_{y,s}^\xi E^L(x,t,y,s)= \partial_{x,t}^\zeta \partial_{y,s}^\xi E^L(x,t+r,y,s+r)\end{equation*}
for almost every $x\in\R^n$, $y\in \R^n$ and almost every $t$, $s$, $r\in \R$, and all multiindices~$\zeta$, $\xi$ as in formula~\eqref{eqn:fundamental:symmetric}. In particular, for such $\zeta$ and $\xi$ we have that
\begin{equation}
\label{eqn:fundamental:vertical}
\partial_{x,t}^\zeta \partial_{y,s}^\xi \partial_t E^L(x,t,y,s)= -\partial_{x,t}^\zeta \partial_{y,s}^\xi \partial_s E^L(x,t,y,s)
.\end{equation}

\begin{rmk}
We comment on the additional terms in formula~\eqref{eqn:fundamental:unique}. Notice that $E^L$ is defined essentially by the relation~\eqref{eqn:fundamental:2}. But this relation involves only derivatives of order~$2m$; in other words, it is only $\nabla_X^m\nabla_Y^m E^L(X,Y)$ that is well-defined. The lower-order derivatives are defined only up to adding polynomials. (The $\partial_X^\alpha$ derivative is included in formula~\eqref{eqn:fundamental:2} because $\vec\Pi^L\arr H\in \dot W^2_m(\R^\dmn)$, and so $\vec\Pi^L\arr H$ is also defined only up to adding polynomials.)

If $q$ and $s$ are small enough, then there is a unique normalization of the derivatives $\nabla_X^{m-q}\nabla_Y^{m-s}E^L(X,Y)$ that satisfies the bound \eqref{eqn:fundamental:far} or~\eqref{eqn:fundamental:far:lowest:2}; in \cite{Bar14p} this normalization was found using the Gagliardo-Nirenberg-Sobolev inequality. However, if $2m\geq\dmn$ then $E^L$ itself (and possibly some of its derivatives) are still not well-defined. The extra terms on the right-hand side of formula~\eqref{eqn:fundamental:unique} are precisely the terms compatible with the requirement $\nabla_X^{m-q}\nabla_Y^{m-s}\widetilde E^L(X,Y)=\nabla_X^{m-q}\nabla_Y^{m-s}E^L(X,Y)$ for $q$, $s$ small enough.

Consequently, throughout this paper we will be careful to use only derivatives of $E^L$ of sufficiently high order; in fact, we will use only derivatives of the form $\partial_X^\zeta\partial_Y^\xi E^L(X,Y)$ for $\abs{\xi}\geq m-1$, $\abs{\zeta}\geq m-1$ and $\abs{\xi}+\abs{\zeta}\geq 2m-1$.

In some very special cases, there are natural normalization conditions for the fundamental solution even if $2m>\dmn$; for example, if $L=(-\Delta)^m$ and $\dmn\leq 2m$ is even, then we may take $E^L(X,Y)=C_{m,n} \abs{X-Y}^{2m-\pdmn}\log\abs{X-Y}$. Notice the presence of logarithmic growth in the fundamental solution. However, if we take $2m-\pdmn+1$ derivatives (in either $X$ or~$Y$), then the logarithm vanishes; this is the lowest order of derivative that Theorem~\ref{thm:fundamental} guarantees is well-defined.
\end{rmk}

\subsection{The double and single layer potentials}
\label{sec:potentials}

In this paper we seek to formulate a notion of layer potentials for higher-order elliptic operators of the form specified in Section~\ref{sec:elliptic}.
The goal of this paper is to produce bounds on layer potentials in the domain $\Omega=\R^\dmn_\pm$; thus, we will define boundary values and layer potentials only for the half-spaces.

We begin by recalling the second-order Green's formula~\eqref{eqn:green:introduction}. 
%
To generalize this formula to higher order, notice that, for any function~$u\in \dot W^2_m(\R^\dmn_+)$,
\begin{equation*}\1_+ u = (\1_+ u - \Pi^L(\1_+ \mat A\nabla^m u)) + \Pi^L(\1_+ \mat A\nabla^m u)\end{equation*}
as $\dot W^2_m(\R^\dmn_\pm)$-functions.
We claim that the quantity
\begin{equation}
\label{dfn:D:newton}
\D^{\mat A}\arr f = -\1_+ F + \Pi^L(\1_+ \mat A\nabla^m F)
\qquad\text{if } \arr f=\Tr_{m-1}^+ F
\end{equation}
is well-defined; that is, the right-hand side depends only on $\Tr_{m-1}^+ F$. We will define $\s^{\mat A}\arr g$ in such a way that $\s^{\mat A}(\M_{\mat A}^+ u) = \Pi^L(\1_+ \mat A\nabla^m u)$ as $\dot W^2_m(\R^\dmn)$-functions for any $u\in\dot W^2_m(\R^\dmn_+)$ with $Lu=0$ in $\R^\dmn_+$; this then yields the higher-order Green's formula
\begin{equation}
\label{eqn:green}
\1_{\R^\dmn_+} \nabla^m u=-\nabla^m \D^{\mat A}(\Tr_{m-1}^+  u) + \nabla^m \s^{\mat A}(\M_{\mat A}^+  u)
.\end{equation}
We will also find formulas \eqref{eqn:D:fundamental} and~\eqref{eqn:S:fundamental} for the double and single layer potentials in terms of the fundamental solution; these formulas will also parallel formulas \eqref{eqn:introduction:D} and~\eqref{eqn:introduction:S}.

We now establish our claim that, if  $\arr f=\Tr_{m-1}^+ F$ for some $F\in W^2_m(\R^\dmn_+)$, then $\D^{\mat A}\arr f$ is a well-defined element of $\dot W^2_m(\R^\dmn_+)$ and $\dot W^2_m(\R^\dmn_-)$.
It suffices to show that, if $\Tr_{m-1}^+ F=0$, then the right-hand side of formula~\eqref{dfn:D:newton} is zero.

Suppose that $\Tr_{m-1}^+ F=\Trace\nabla^{m-1} F=0$. It is well known (see, for example, the proof of \cite[Theorem~5.5.2]{Eva98}) that in this case, $\nabla^{m-1}F$ lies in the completion in $\dot W^2_1(\R^\dmn_+)$ of the set of smooth functions compactly supported in $\R^\dmn_+$. Now, if $\varphi$ is compactly supported in $\R^\dmn_+$, then we may extend $\varphi$ to a function in all of $\R^\dmn$ by letting $\varphi=0$ in $\R^\dmn_-$. By density, we have that $\nabla^{m-1} F$ also extends by zero to a function in $\dot W^2_1(\R^\dmn)$; thus, we may extend $F$ to a polynomial of degree $m-1$ in $\R^\dmn_-$. Without loss of generality we may take this to be the zero polynomial. Thus, if $\Tr_{m-1}^+ F=0$, then $\1_+F\in \dot W^2_m(\R^\dmn)$.
We may apply the identity~\eqref{eqn:newton:identity} to $\1_+ F$, and so the right-hand side of formula~\eqref{dfn:D:newton} is zero in $\dot W^2_m(\R^\dmn)$, that is, up to adding polynomials of degree $m-1$.

We will need two alternative formulations of~$\D^{\mat A}\arr f$.
Notice that we may extend $ F$ to a $\dot W^2_m(\R^\dmn)$ function even if $\Tr_{m-1}^+ F=\arr f\neq 0$; then $\Tr_{m-1}^- F=\arr f$ as well. Then by formula~\eqref{eqn:newton:identity},
\begin{equation}
\label{dfn:D:newton:exterior}
\D^{\mat A}\arr f =
\1_- F
-\Pi^L(\1_- \mat A\nabla^m F)
\qquad\text{if } \arr f=\Tr_{m-1}^- F
.\end{equation}
By formula~\eqref{eqn:fundamental:2}, if $\abs{\alpha}=m$, then for almost every $x\in\R^n$ and $t>0$, we have that
\begin{align}
\label{eqn:D:fundamental}
\partial^\alpha \D^{\mat A} \arr f(x,t)
&=
	- \!\!\sum_{\substack{\scriptstyle\abs{\beta}=m\\\abs{\xi}=m}} \int_{\R^\dmn_-} \partial_{x,t}^\alpha \partial_{y,s}^\beta E^L(x,t,y,s)\,A_{\beta\xi}(y,s) \, \partial^\xi F(y,s)\,ds\,dy
.\end{align}
A corresponding formula, involving an integral over $\R^\dmn_+$, is valid if $t<0$.

We will establish a bound on $\D^{\mat A}\arr f$ in terms of the $L^2$ norm of the tangential derivative $\nabla_\pureH \arr f$ of~$\arr f$. In order to use existing theorems concerning $L^2$ boundedness, we will want to slightly modify the definition of the double layer potential, by defining
\begin{align}
\label{dfn:D:tilde}
\widetilde\D^{\mat A} (\Tr_{m,\semiH} F)(x,t)
&=
	\D^{\mat A} (\Tr_{m-1} F)(x,t)
\end{align}
for all sufficiently well-behaved functions~$F$.

We now must define the single layer potential.
Let $\arr g$ be a bounded linear operator on the space
\begin{equation*}\dot W\!A^2_{m-1/2}(\R^n)=\{\Tr_{m-1}^+  F: F\in \dot W^2_m({\R^\dmn_+})\}.\end{equation*}
The operator $T_{\arr g}  F=\langle \arr g, \Tr_{m-1}^+ F\rangle_{\R^n}$ is a bounded linear operator on $\dot W^2_m({\R^\dmn_+})$. We may identify $\dot W^2_m({\R^\dmn_+})$ with a subspace of $(L^2({\R^\dmn_+}))^q$, where $q$ is the number of multiindices $\alpha$ of length~$m$, via the map $ F\mapsto\nabla^m F$. We may then extend $T_{\arr g}$ to an operator on $(L^2({\R^\dmn_+}))^q$. Let $\arr G\in (L^2({\R^\dmn_+}))^q$ be the kernel of $T_{\arr g}$, so $T_{\arr g}(\arr H)=\langle \arr G,\arr H\rangle_{{\R^\dmn_+}}$ for all $\arr H\in (L^2({\R^\dmn_+}))^q$. In particular, $\langle \arr G, \nabla^m\varphi\rangle_{\R^\dmn_+} = \langle \arr g, \Tr_{m-1}^+\varphi\rangle_{\partial{\R^\dmn_+}}$ for all $\varphi\in \dot W^2_m({\R^\dmn_+})$. Let
\begin{equation}
\label{dfn:S:newton}
\s^{\mat A}\arr g=\Pi^L(\1_+\arr G)\quad\text{if } \langle \arr G, \nabla^m\varphi\rangle_{\R^\dmn_+} = \langle \arr g, \Tr_{m-1}^+\varphi\rangle_{\partial{\R^\dmn_+}} \text{ for all }\varphi\in \dot W^2_m.
\end{equation}
As in the case of the double layer potential, it is straightforward to establish that $\s^{\mat A}\arr g\in \dot W^2_m(\R^\dmn)$ and that the value of $\s^{\mat A}\arr g$ is independent of the choice of $\arr G$, that is, of the choice of extension of $T_{\arr g}$ from $\dot W^2_m({\R^\dmn_+})$ to $(L^2({\R^\dmn_+}))^q$. The formula $\s^{\mat A}(\M_{\mat A}^+ u)=\Pi^L(\1_+ \mat A\nabla^m u)$ follows immediately from the definitions \eqref{eqn:Neumann} and~\eqref{dfn:S:newton} of Neumann boundary data and the single layer potential.

Again we will need alternative formulations of layer potentials. First, observe that
\begin{equation}
\label{dfn:S:newton:exterior}
\s^{\mat A}\arr g=\Pi^L(\1_-\arr G)\quad\text{if } \langle \arr G, \nabla^m\varphi\rangle_{\R^\dmn_-} = \langle \arr g, \Tr_{m-1}^-\varphi\rangle_{\partial{\R^\dmn_+}} \text{ for all }\varphi\in \dot W^2_m.
\end{equation}
By formula~\eqref{eqn:fundamental:2}, if $\abs{\alpha}=m$ then
\begin{equation*}\partial^\alpha\s^{\mat A}\arr g(x,t)
= \partial^\alpha\Pi^L(\1_- \arr G)(x,t)
= \sum_{\abs{\beta}=m}
\int_{\R^\dmn_-} \partial^\alpha_{x,t} \partial_{y,s}^\beta E^L(x,t,y,s)\,G_{\beta}(y,s) \,ds\,dy.
\end{equation*}
But by the bound~\eqref{eqn:fundamental:far}, $\varphi(y,s)=\partial_{x,t}^\alpha E^L(x,t,y,s)$ is a $\dot W^2_m(\R^\dmn_-)$-function for almost every $x\in\R^n$ and $t>0$; thus we may write
\begin{equation}
\label{eqn:S:fundamental}
\partial^\alpha\s^{\mat A}\arr g(x,t)
=
\sum_{\abs{\gamma}=m-1}
\int_{\R^n} \partial_{x,t}^\alpha \partial_{y,s}^\gamma E^L(x,t,y,0)\,g_{\gamma}(y) \,dy
.\end{equation}

\subsection{Function spaces on the boundary}
\label{sec:whitney}

We have now defined $\D^{\mat A}$ and $ \s^{\mat A}$ as operators on $\dot W\!A^2_{m-1/2}(\R^n)$ and its dual space, respectively.
We wish to extend $\D^{\mat A}$ and $\s^{\mat A}$ to bounded operators on the space $L^2(\R^n)$.
However, notice that $\D^{\mat A}$ acts naturally only on traces of gradients; that is, density arguments will only allow us to extend $\D^{\mat A}$ to a subspace of $L^2(\R^n)$. We will define this subspace as follows.

\begin{dfn} We let $\dot W\!A^2_{m-1}(\R^n)$ be the completion of the set
\begin{equation*}\{\Tr_{m-1}\varphi:\varphi\text{ smooth and compactly supported}\}\end{equation*}
under the $L^2$ norm. 

We let $\dot W\!A^2_{m,\semiH}(\R^n)$ be the completion of the set
\begin{equation*}\mathfrak{D}=\{\Tr_{m,\semiH}\varphi:\varphi\text{ smooth and compactly supported}\}\end{equation*}
under the $L^2$ norm. 
\end{dfn}

It is well known that that the space $\dot W\!A^2_{m-1/2}(\R^n)$ used above is the completion of $\mathfrak{D}$ under the norm in the Besov space ${\dot B^{2,2}_{1/2}(\R^n)}$. This space is often called a Whitney-Besov space and has been used in the theory of higher-order boundary-value problems; see, for example, \cite{AdoP98,Agr07,MazMS10,MitMW11,BreM13,MitM13A,Bar15p}. The spaces $\dot W\!A^2_{m-1}(\R^n)$ or $\dot W\!A^2_{m,\semiH}(\R^n)$  are called Whitney-Sobolev spaces; they have also been used extensively in the theory, for example, in \cite{Ver90,PipV95A,PipV95B,Ver96,She06A,She06B,KilS11b}. The goal of this paper is to extend the double and single layer potentials to bounded operators on Whitney-Sobolev spaces by establishing boundedness results.


\begin{rmk}\label{rmk:potentials:defined}
We remark that $\widetilde \D^{\mat A}$ is a well-defined operator on the space
\begin{equation*}\mathfrak{D}=\{\Tr_{m,\semiH}\varphi: \varphi\text{ is smooth and compactly supported}\}.\end{equation*}
we will extend $\widetilde\D^{\mat A}$ to $\dot W\!A^2_{m,\semiH}(\R^n)$ by density.

We will also extend the single layer potential; in this case we wish to extend $\s^{\mat A}$ to all arrays of functions $\arr g\in L^2(\R^n)$. It will be convenient to have a dense subspace~$\mathfrak{N}$ at our disposal on which $\s^{\mat A}$ is known to be well-defined. We claim that $\s^{\mat A}\arr g$ is well-defined for any $\arr g\in L^2(\R^n)$ that is compactly supported and integrates to zero.

The reasoning is as follows. Recall that $\s^{\mat A}\arr g$ is well-defined whenever $\arr g$ is a bounded linear operator on
\begin{equation*}\dot W\!A^2_{m-1/2}(\R^n)=\{\Tr_{m-1} \Phi:\nabla^m\Phi\in L^2(\R^\dmn_+)\}.\end{equation*}
Now, suppose that $\int g_\gamma=0$. Choose some function $\Phi\in \dot W^2_m(\R^\dmn_+)$ that is smooth up to the boundary.
Then
\begin{equation*}\int_{\R^n} g_\gamma\, \partial^\gamma \Phi = \int_{\R^n} g_\gamma\, (\partial^\gamma \Phi-c_\Phi)\end{equation*}
for any constant $c_\Phi$. Suppose that $g_\gamma$ is
supported in $\R^n\cap B((x_0,0),R)$ for some $x_0\in\R^n$ and some $R>0$. Let $\Omega=\R^\dmn_+\cap B((x_0,0),R)$. It is well known (see, for example, \cite{Eva98}) the trace map is bounded from $L^2(\Omega)\cap \dot W^2_1(\Omega)$ to $L^2(\partial\Omega)$. Thus,
\begin{align*}\abs[bigg]{\int_{\R^n} g_\gamma\, \partial^\gamma \Phi}
&\leq
	\doublebar{g_\gamma}_{L^2(\R^n)} \doublebar{\partial^\gamma\Phi-c_\Phi}_{L^2(\partial\Omega)}
\\&\leq
	CR^{1/2}\doublebar{g_\gamma}_{L^2(\R^n)} \doublebar{\partial^\gamma\Phi-c_\Phi}_{L^2(\Omega)}
	\\&\qquad+
	CR^{3/2}\doublebar{g_\gamma}_{L^2(\R^n)} \doublebar{\nabla\partial^\gamma\Phi}_{L^2(\Omega)}
.\end{align*}
By the Poincar\'e inequality, if we choose $c_\Phi$ correctly then we may control the quantity $R^{-1}\doublebar{\partial^\gamma\Phi-c_\Phi}_{L^2(\Omega)}$ by $\doublebar{\nabla\partial^\gamma\Phi}_{L^2(\Omega)}\leq \doublebar{\nabla^m\Phi}_{L^2(\R^\dmn_+)}$, and so we see that $\arr g$ gives rise to a bounded operator on $\dot W\!A^2_{m-1/2}(\R^n)$. Thus, for such~$\arr g$, $\s^{\mat A}\arr g$ is a well-defined element of $\dot W^2_m(\R^\dmn)$.
\end{rmk}

\section{Preliminary arguments}
\label{sec:preliminary}

In this section we will establish some basic results that will be useful throughout the paper.

We begin with some bounds on solutions to elliptic equations. Specifically, we begin with the following higher-order generalization of the Caccioppoli inequality; in its full generality it was proven in \cite{Bar14p}, but the $j=m$ case was proven in \cite{Cam80} and an intriguing version appears in \cite{AusQ00}.

\begin{lem}[The Caccioppoli inequality]\label{lem:Caccioppoli}
Suppose that $L$ is a divergence-form elliptic operator associated to coefficients $\mat A$ satisfying the ellipticity conditions \eqref{eqn:elliptic} and~\eqref{eqn:elliptic:bounded}. Let $ u\in \dot W^2_m(B(X_0,2r))$ with $L u=0$ in $B(X_0,2r)$.

Then we have the bound
\begin{equation*}
\fint_{B(X,r)} \abs{\nabla^j  u(x,s)}^2\,dx\,ds
\leq \frac{C}{r^2}\fint_{B(X,2r)} \abs{\nabla^{j-1}  u(x,s)}^2\,dx\,ds
\end{equation*}
for any $j$ with $1\leq j\leq m$.
\end{lem}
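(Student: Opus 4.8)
The plan is to prove the Caccioppoli inequality by the standard energy-method argument, iterated through the intermediate orders of differentiation. The essential point is that, because the coefficients $\mat A$ are $t$-independent --- or even without that, just using ellipticity --- a solution $u$ of $Lu=0$ satisfies the weak formulation $\langle \nabla^m\varphi, \mat A\nabla^m u\rangle_{B(X_0,2r)}=0$ for all $\varphi$ smooth and compactly supported in $B(X_0,2r)$, and more generally for $\varphi\in\dot W^2_{m,0}$ of that ball. The first step is to establish the top-order estimate, i.e.\ the case $j=m$: fix a cutoff $\eta\in C_0^\infty(B(X,2r))$ with $\eta\equiv 1$ on $B(X,r)$, $0\le\eta\le1$, and $\abs{\nabla^k\eta}\le C r^{-k}$, and test the equation against $\varphi=\eta^{2m}(u-P)$, where $P$ is the degree-$(m-1)$ Taylor-type polynomial of $u$ chosen so that $\nabla^{m-1}$ of it matches appropriate averages (recall $u$ is only defined modulo such polynomials, so we may as well subtract it). Expanding $\nabla^m(\eta^{2m}(u-P))$ by the Leibniz rule produces one ``good'' term $\eta^{2m}\nabla^m u$ plus lower-order terms in which at least one derivative falls on $\eta$, each of which carries a factor $r^{-k}$ and a factor $\nabla^{m-k}(u-P)$; moving the good term to one side, applying the G\aa rding inequality \eqref{eqn:elliptic} and the bound \eqref{eqn:elliptic:bounded}, and using Cauchy--Schwarz with a small parameter on the remaining terms, one obtains
\begin{equation*}
\int_{B(X,r)}\abs{\nabla^m u}^2 \le \frac{C}{r^2}\sum_{k=1}^{m}\frac{1}{r^{2(k-1)}}\int_{B(X,2r)}\abs{\nabla^{m-k}(u-P)}^2.
\end{equation*}

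The second step is to control the lower-order terms $\nabla^{m-k}(u-P)$ for $k\ge 2$ on the right-hand side. Here one uses the Poincar\'e (Sobolev) inequality: since $P$ is chosen so that $\nabla^j(u-P)$ has vanishing average (or satisfies a suitable Poincar\'e-friendly normalization) on $B(X,2r)$ for $0\le j\le m-1$, we get $r^{-1}\doublebar{\nabla^{m-k}(u-P)}_{L^2(B(X,2r))}\le C\doublebar{\nabla^{m-k+1}(u-P)}_{L^2(B(X,2r))}$, and iterating this down we see that every term on the right is dominated by $C r^{-2}\doublebar{\nabla^{m-1}(u-P)}^2_{L^2(B(X,2r))} = Cr^{-2}\doublebar{\nabla^{m-1}u}^2_{L^2(B(X,2r))}$, since $\nabla^{m-1}P$ is a constant array and subtracting a constant does not change an $L^2$ norm after one more application of Poincar\'e --- more precisely one applies Poincar\'e once more to pass from $\nabla^{m-1}(u-P)$ to $\nabla^m u$ only in the very last step, or one simply keeps $\nabla^{m-1}u$ up to the polynomial, which is exactly the asserted $j=m$ bound. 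This gives the claim for $j=m$.

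The third step handles $1\le j\le m-1$. The point is that if $Lu=0$ then $\partial^\delta u$ need not itself solve an equation of the same type (the coefficients are only measurable, so we cannot differentiate the equation), so a direct induction on the equation is not available; instead I would deduce the lower-order cases from the top-order case by a purely real-variable interpolation/Poincar\'e argument. Apply the $j=m$ bound on balls $B(X,3r/2)$ (adjusting radii by the usual trick) to get $\fint_{B(X,3r/2)}\abs{\nabla^m u}^2\le C r^{-2}\fint_{B(X,2r)}\abs{\nabla^{m-1}u}^2$; then for any $j<m$, the Gagliardo--Nirenberg interpolation inequality on the ball, together with Poincar\'e to kill the relevant polynomial, gives $\fint_{B(X,r)}\abs{\nabla^j(u-P)}^2\le C r^{2(m-j)}\fint_{B(X,3r/2)}\abs{\nabla^m u}^2 + C r^{-2j}\fint \abs{u-P}^2$, and chaining this with the already-proven top-order estimate collapses everything to $C r^{-2}\fint_{B(X,2r)}\abs{\nabla^{j-1}u}^2$, which is the assertion. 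Alternatively --- and this is cleaner and avoids the auxiliary polynomial bookkeeping --- one applies the top-order Caccioppoli estimate not to $u$ but repeatedly on nested balls, combined with $\abs{\nabla^j u}\lesssim$ (average of $\abs{\nabla^{j+1}u}$ on a slightly larger ball) $+$ (average of $\abs{\nabla^j u}$), again a consequence of Poincar\'e; iterating downward from $j=m$ to the desired $j$ yields the result.

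The main obstacle is the first step: organizing the Leibniz expansion of $\nabla^m(\eta^{2m}(u-P))$ so that the ``bad'' cross terms are genuinely absorbable. Unlike the second-order case, one cannot integrate by parts to move derivatives around freely, and one must be careful that the G\aa rding inequality \eqref{eqn:elliptic} --- which is only a coercivity estimate on $\dot W^2_m(\R^\dmn)$ of the \emph{full} gradient, not a pointwise or even local ellipticity --- applies to the test function $\eta^{2m}(u-P)$; in particular one needs $\eta^{2m}(u-P)\in\dot W^2_{m,0}(B(X_0,2r))$ and one must check the coercivity is not spoiled by the ball being bounded (it is not, since the function extends by zero to all of $\R^\dmn$). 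The choice of the exponent $2m$ on the cutoff (rather than $2$) is exactly what guarantees that after the Leibniz expansion every term still has enough powers of $\eta$ to be reabsorbed or estimated, and getting that bookkeeping right, plus the correct interplay of the small-parameter Cauchy--Schwarz with the iterated Poincar\'e inequalities, is where the real work lies. Since the full generality is already established in \cite{Bar14p} (with the $j=m$ case in \cite{Cam80}), I would in practice cite those references for the top-order estimate and present only the short interpolation argument reducing the general $j$ to $j=m$.
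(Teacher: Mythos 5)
The paper itself gives no proof of this lemma: it explicitly attributes the full-generality statement to \cite{Bar14p} and the $j=m$ case to \cite{Cam80}, and moves on. So your pragmatic conclusion (cite those references) matches the paper exactly, and there is no ``paper proof'' against which to compare your energy-method sketch.

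As for the sketch itself: the $j=m$ step is the standard argument and your bookkeeping remarks (powers of $\eta$, the necessity of using the G\aa rding inequality globally by extending the test function by zero, the small-parameter absorption) are the right things to worry about. One small slip in step two: ``subtracting a constant does not change an $L^2$ norm'' is false; what you want is that subtracting the \emph{average} of $\nabla^{m-1}u$ can only decrease the $L^2$ norm, so $\doublebar{\nabla^{m-1}(u-P)}_{L^2(2B)}\le\doublebar{\nabla^{m-1}u}_{L^2(2B)}$ for the appropriately normalized~$P$. This is cosmetic and easily repaired.

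The real gap is in step three, the reduction of $1\le j<m$ to $j=m$. Your proposed chain --- GNS interpolation $\fint_{B(r)}\abs{\nabla^j(u-P)}^2\le C r^{2(m-j)}\fint\abs{\nabla^m u}^2 + C r^{-2j}\fint\abs{u-P}^2$, followed by the $j=m$ Caccioppoli --- does not ``collapse'' to the claim. After the top-order Caccioppoli converts $\fint\abs{\nabla^m u}^2$ into $C r^{-2}\fint_{2B}\abs{\nabla^{m-1}u}^2$, you are stuck with $\nabla^{m-1}u$ on the right, not $\nabla^{j-1}u$; converting further down requires precisely the intermediate Caccioppoli inequalities you are trying to establish, which is circular. (For comparison, the inequality is \emph{false} for general $\dot W^2_m$ functions --- take $u(x)=e^{ik\cdot x}$ and let $k\to\infty$ --- so the equation must enter the $j<m$ cases in an essential way, and a one-shot interpolation cannot suffice.) The standard fix is a downward induction with absorption: for $r\le\rho<\sigma\le 2r$ the local interpolation inequality gives
\begin{equation*}
\int_{B(\rho)}\abs{\nabla^j u}^2\;\le\;\frac{C}{\epsilon(\sigma-\rho)^2}\int_{B(\sigma)}\abs{\nabla^{j-1}u}^2\;+\;C\epsilon(\sigma-\rho)^2\int_{B(\sigma)}\abs{\nabla^{j+1}u}^2,
\end{equation*}
and the inductive hypothesis (Caccioppoli at level $j+1$, with the $j=m$ case as base) bounds the last integral by $C(\tau-\sigma)^{-2}\int_{B(\tau)}\abs{\nabla^{j}u}^2$. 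Choosing $\sigma-\rho\approx\tau-\sigma$ and $\epsilon$ small yields $\int_{B(\rho)}\abs{\nabla^j u}^2\le\frac12\int_{B(\tau)}\abs{\nabla^j u}^2 + \frac{C}{(\sigma-\rho)^2}\int_{B(\sigma)}\abs{\nabla^{j-1}u}^2$, and the classical iteration (hole-filling/Widman) lemma then absorbs the first term. Your alternative phrasing (``applies the top-order Caccioppoli estimate... repeatedly on nested balls... iterating downward'') gestures at exactly this, but the pointwise-looking inequality you wrote is garbled and the absorption step --- which is the crux --- is never stated. You should make the iteration lemma explicit, or else simply rely on \cite{Bar14p} for the whole lemma, as the authors do.
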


We may use the following lemma to bound $u$ not in balls of dimension~$\dmn$, but on horizontal slices of dimension~$\dmnMinusOne$; the second-order case of this lemma is known and is Proposition~2.1 in \cite{AlfAAHK11}.

\begin{lem}\label{lem:slices}
Let $t$ be a constant, and let $Q\subset\R^n$ be a cube.

Suppose that $\partial_s \arr u(x,s)$ satisfies the Caccioppoli-like inequality
\begin{equation*}
\fint_{B(X,r)} \abs{\partial_s \arr u(x,s)}^2\,dx\,ds
\leq \frac{c_0}{r^2}\fint_{B(X,2r)} \abs{\arr u(x,s)}^2\,dx\,ds
\end{equation*}
whenever $B(X,2r)\subset\{(x,s):x\in 2Q, t-\ell(Q)<s<t+\ell(Q)\}$. 

Then
\begin{equation*}\int_Q \abs{\arr u(x,t)}^2\,dx \leq \frac{C(c_0)}{\ell(Q)}
\int_{2Q}\int_{t-\ell(Q)}^{t+\ell(Q)} \abs{\arr u(x,s)}^2\,ds\,dx.\end{equation*}

In particular, if $Lu=0$ in $2Q\times(t-\ell(Q),t+\ell(Q))$, and $L$ is an operator of order~$2m$ associated to $t$-independent coefficients~$A$, then
\begin{equation*}\int_Q \abs{\nabla^j \partial_t^k u(x,t)}^2\,dx \leq \frac{C}{\ell(Q)}
\int_{2Q}\int_{t-\ell(Q)}^{t+\ell(Q)} \abs{\nabla^j \partial_s^k u(x,s)}^2\,ds\,dx\end{equation*}
for any $0\leq j\leq m$ and any integer $k\geq 0$.
\end{lem}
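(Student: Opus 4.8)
The plan is to prove the first (abstract) statement by a one-dimensional slicing argument, and then to derive the two consequences for solutions by applying it to $\arr u = \nabla^j\partial_t^k u$, using the Caccioppoli inequality (Lemma~\ref{lem:Caccioppoli}) together with $t$-independence of the coefficients. First, I would fix a smooth cutoff $\chi(s)$ with $\chi\equiv 1$ on $[t-\tfrac12\ell(Q),t+\tfrac12\ell(Q)]$, $\chi$ supported in $(t-\ell(Q),t+\ell(Q))$, and $\abs{\chi'}\lesssim 1/\ell(Q)$. For almost every fixed $x\in Q$ write
\begin{equation*}
\abs{\arr u(x,t)}^2 = -\int_t^{t+\ell(Q)} \partial_s\bigl(\chi(s)^2\,\abs{\arr u(x,s)}^2\bigr)\,ds
= -\int_t^{t+\ell(Q)}\bigl(2\chi\chi'\abs{\arr u}^2 + 2\chi^2\,\re\langle\arr u,\partial_s\arr u\rangle\bigr)\,ds,
\end{equation*}
and integrate in $x$ over $Q$. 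The first term is bounded by $\frac{C}{\ell(Q)}\int_{Q}\int_{t-\ell(Q)}^{t+\ell(Q)}\abs{\arr u}^2$, and the cross term by $\bigl(\int_{Q}\int\abs{\arr u}^2\bigr)^{1/2}\bigl(\int_{Q}\int\abs{\partial_s\arr u}^2\bigr)^{1/2}$ via Cauchy--Schwarz in $(x,s)$.

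The point where the hypothesis enters is in estimating $\int_{Q}\int_{t-\ell(Q)}^{t+\ell(Q)}\abs{\partial_s\arr u}^2$. I would cover the slab $Q\times(t-\tfrac12\ell(Q),t+\tfrac12\ell(Q))$ — or rather the support region of $\chi$ — by a bounded-overlap family of balls $B(X_i,r_i)$ with $B(X_i,2r_i)\subset 2Q\times(t-\ell(Q),t+\ell(Q))$ and $r_i$ comparable to the distance to the lateral/temporal boundary of that slab; a Whitney-type decomposition does this with overlap depending only on $\dmn$. Applying the assumed Caccioppoli-like inequality on each $B(X_i,r_i)$ and summing gives
\begin{equation*}
\int_{Q}\int \chi(s)^2\abs{\partial_s\arr u}^2\,ds\,dx \leq \frac{C c_0}{\ell(Q)^2}\int_{2Q}\int_{t-\ell(Q)}^{t+\ell(Q)}\abs{\arr u}^2\,ds\,dx
\end{equation*}
(the factor $\ell(Q)^{-2}$ because $r_i^{-2}\lesssim \ell(Q)^{-2}$ on the part where $\chi\ne 0$, after one uses a slightly larger cutoff so that the Whitney balls stay away from the boundary where $r_i$ is small). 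Feeding this into the cross term of the previous display, and using Young's inequality $ab\le \varepsilon a^2+\tfrac{1}{4\varepsilon}b^2$ with $\varepsilon$ chosen so that the resulting $\int_{Q}\int\abs{\arr u}^2$ term (which appears with a coefficient $\varepsilon/\ell(Q)$, smaller than the $\frac{C}{\ell(Q)}$ on the right) can simply be absorbed — actually it need not be absorbed at all, since it already has the desired form — yields the claimed inequality $\int_Q\abs{\arr u(x,t)}^2\,dx \le \frac{C(c_0)}{\ell(Q)}\int_{2Q}\int_{t-\ell(Q)}^{t+\ell(Q)}\abs{\arr u}^2$.

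For the two ``in particular'' statements: if $Lu=0$ in $2Q\times(t-\ell(Q),t+\ell(Q))$ and $\mat A$ is $t$-independent, then $\partial_t^k u$ also solves $Lu=0$ there (differentiating the weak formulation~\eqref{eqn:L} in~$t$ is legitimate because the coefficients do not depend on~$t$), hence so does any fixed partial derivative; thus $\arr u = \nabla^j\partial_t^k u$ satisfies the Caccioppoli inequality of Lemma~\ref{lem:Caccioppoli} (with $j=1$ applied to the solution $\nabla^{j-1}\partial_t^k u$), which is exactly the Caccioppoli-like hypothesis of the abstract statement with $c_0 = C$. Applying the first part of the lemma to this $\arr u$, and noting $\partial_s\arr u = \nabla^j\partial_s^{k+1}u$, gives the stated slice estimate; $t$-independence is again what lets us replace $\partial_t$ by $\partial_s$ freely. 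The main obstacle is the bookkeeping in the Whitney covering — choosing the cutoff and the balls so that the radii used in Caccioppoli are uniformly comparable to $\ell(Q)$ on the region where the cutoff is nonzero, while still having $B(X_i,2r_i)$ fit inside $2Q\times(t-\ell(Q),t+\ell(Q))$; once that geometry is set up correctly everything else is routine.
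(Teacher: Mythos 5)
Your argument for the abstract first statement is correct and takes essentially the paper's route: the fundamental theorem of calculus in the $s$-direction, Cauchy--Schwarz, and the Caccioppoli-like hypothesis applied over a covering by balls of radius comparable to $\ell(Q)$. The paper achieves the same end by comparing $\arr u(x,t)$ to the average $\fint_t^{t+\ell(Q)/2}\arr u(x,s)\,ds$ instead of integrating $\partial_s\bigl(\chi^2\abs{\arr u}^2\bigr)$ against a cutoff; the two are interchangeable and equally short.

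There is, however, a slip in the parenthetical justification of the ``in particular'' claim. For $j\geq 2$ the array $\nabla^{j-1}\partial_t^k u$ is not a solution of $Lw=0$: since $\mat A$ is only $t$-independent and not constant, horizontal derivatives of solutions need not be solutions, so Lemma~\ref{lem:Caccioppoli} cannot be applied to $\nabla^{j-1}\partial_t^k u$. Moreover the inequality you actually need, $\fint_{B(X,r)}\abs{\nabla^j\partial_s^{k+1}u}^2\leq C r^{-2}\fint_{B(X,2r)}\abs{\nabla^j\partial_s^k u}^2$, compares derivatives of total orders $j+k+1$ and $j+k$, not $j+k$ and $j+k-1$ as your parenthetical (``with $j=1$'') would give. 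The correct and routine verification: for $j\leq m-1$, apply Lemma~\ref{lem:Caccioppoli} with index $j+1$ to the solution $\partial_s^k u$ (legitimate because $\partial_t$-derivatives of solutions are solutions when $\mat A$ is $t$-independent) and note that $\nabla^j\partial_s^{k+1}u$ is a sub-array of $\nabla^{j+1}(\partial_s^k u)$; for $j=m$, apply Lemma~\ref{lem:Caccioppoli} with index $m$ to the solution $\partial_s^{k+1}u$ and note that $\nabla^{m-1}\partial_s^{k+1}u$ is a sub-array of $\nabla^m\partial_s^k u$. With this fix the rest of your argument goes through.
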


\begin{proof}
Begin by observing that
\begin{align*}
\biggl(\int_Q \abs{\arr u(x,t)}^2\,dx \biggr)^{1/2}
&\leq
	\biggl(\int_Q \abs[bigg]{\arr u(x,t)-\fint_{t}^{t+\ell(Q)/2} \arr u(x,s)\,ds}^2\,dx \biggr)^{1/2}
	\\&\qquad+
	\biggl(\int_Q \fint_{t}^{t+\ell(Q)/2} \abs{\arr u(x,s)}^2\,ds\,dx \biggr)^{1/2}
.\end{align*}
But
\begin{align*}
\int_Q \abs[bigg]{\arr u(x,t)-\fint_{t}^{t+\ell(Q)/2} \arr u(x,s)\,ds}^2\,dx
&\leq
	\int_Q \abs[bigg]{\fint_{0}^{\ell(Q)/2}
	\int_0^s \partial_r \arr u(x,t+r)\,dr\,ds}^2\,dx
\\&\leq
	\int_Q \int_{0}^{\ell(Q)/2}
	\abs{\partial_r \arr u(x,t+r)}^2\,dr\,dx
.\end{align*}
Applying the Caccioppoli inequality completes the proof.
\end{proof}

Throughout this paper we will frequently need to bound the fundamental solution of Theorem~\ref{thm:fundamental} on horizontal slices.
The following estimate follows from Lemma~\ref{lem:slices}, the Caccioppoli inequality and the bound~\eqref{eqn:fundamental:far}; we will use it several times. Suppose that $Q$ is a cube and that either $j\geq 1$ or $j\geq 0$ and $\ell(Q)\leq \abs{s-t}$.
Suppose further that $q$, $s$, $i$ and $k$ are nonnegative integers with $q\leq m$, $s\leq m$ and $q-k<\pdmn/2$, $s-i\leq \pdmn/2$.
Then for some $\varepsilon>0$,
\begin{equation}
\label{eqn:fundamental:slices}
\int_Q \int_{A_j(Q)} \abs{\nabla_{x,t}^{m-q} \nabla^{m-s}_{y,s} \partial_t^k \partial_s^i E^L(x,t,y,s)}^2\,dy\,dx \leq \frac{C}{\ell(Q)^{2r+2} } 2^{-j(2(i-s)+1+\varepsilon)}
\end{equation}
where $r = k+i-q-s= (m-q)+(m-s)+k+i-2m$. In applying this formula it is always useful to remember formula~\eqref{eqn:fundamental:vertical}, that is, that we may take vertical derivatives in either the $s$ variable or the $t$ variable.

Now, recall that we seek to bound the double layer potential~$\D^{\mat A}$. Furthermore, recall that if $\arr f=\Tr_{m-1} F$, then we may write $\D^{\mat A}\arr f$ in terms of~$F$; see formulas \eqref{dfn:D:newton} or~\eqref{eqn:D:fundamental}. Thus, we will be concerned with extensions of elements of $\dot W\!A^2_{m,\semiH}(\R^n)$, or of elements of the dense subspace $\mathfrak{D}$ of Section~\ref{sec:whitney}. The following lemma provides some basic extensions and some estimates upon such functions.

\begin{lem}\label{lem:extension}
Let $\arr f=\Tr_{m-1} F$ for some smooth, compactly supported function~$F$.

Then there is some function~$H$ defined in~$\R^\dmn_+$ such that $\Tr_{m-1} H = \arr f$ and such that
\begin{align}
\label{eqn:extension:L2:space}
\doublebar{\nabla^m H}_{L^2(\R^\dmn)}^2
&\leq C \int_{\R^n} \abs{\xi}\,\abs{\widehat{\arr f}(\xi)}^2\,d\xi
,\\
\label{eqn:extension:L2:slices}
\sup_{t\neq 0} \doublebar{\nabla^{m-1}H(\,\cdot\,,t)}_{L^2(\R^n)}^2
&\leq \int_{\R^n} \abs{\arr f(x)}^2\,dx
,\\
\label{eqn:extension:L2:slices:2}
\sup_{t\neq 0} \doublebar{\nabla^{m}H(\,\cdot\,,t)}_{L^2(\R^n)}^2
&\leq \int_{\R^n} \abs{\nabla_\pureH\arr f(x)}^2\,dx
,\\
\label{eqn:extension:square}
\int_{\R^n}\int_0^\infty\abs{\nabla^m H(x,t)}^2\,{t}\,dt\,dx
&\leq \int_{\R^n} \abs{\arr f(x)}^2\,dx
.\end{align}
Furthermore, if $\arr f=0$ in some cube $Q$, then $\nabla^{m-1} H=0$ in $\{(x,t):\dist(x,\R^n\setminus Q)>t\}$, and in particular in~$(1/2)Q\times (0,\ell(Q)/4)$.
\end{lem}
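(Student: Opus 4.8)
The plan is to construct $H$ as a mollified transverse Taylor expansion of $F$, and then to show that every derivative $\partial^\alpha H$ with $\abs\alpha=m$ can be written as $t^{-1}$ times a scale‑$t$ convolution of a \emph{mean‑zero} compactly supported kernel against a component of $\arr f$; once that structural fact is in hand, all four bounds reduce to standard Littlewood--Paley/Plancherel computations. Concretely, I would write $g_k(x):=\partial_t^k F(x,0)$ for $0\le k\le m-1$, so that the components of $\arr f=\Tr_{m-1}F$ are exactly the functions $\partial_x^{\gamma'}g_k$ with $\abs{\gamma'}+k=m-1$, and $\doublebar{\arr f}_{L^2(\R^n)}^2\approx\sum_{k=0}^{m-1}\doublebar{\nabla_\pureH^{m-1-k}g_k}_{L^2(\R^n)}^2$. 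For each $k$ I would fix $\phi^{(k)}\in C_c^\infty(B(0,1))$ with $\int\phi^{(k)}=1$ and vanishing moments $\int z^\eta\phi^{(k)}(z)\,dz=0$ for $1\le\abs\eta\le m-1-k$ (possible, since the moment map onto a finite‑dimensional space is surjective), set $\phi^{(k)}_t(y)=t^{-n}\phi^{(k)}(y/t)$, and define, for $t>0$,
\begin{equation*}
H(x,t)=\sum_{k=0}^{m-1}\frac{t^k}{k!}\,\bigl(\phi^{(k)}_t*g_k\bigr)(x)=\sum_{k=0}^{m-1}\frac{t^k}{k!}\int_{\R^n}\phi^{(k)}(z)\,g_k(x-tz)\,dz,
\end{equation*}
extended to $\R^\dmn_-$ by the analogous formula (for which the identical estimates hold by symmetry). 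The second representation shows $H$ is smooth up to $\{t=0\}$, and a Leibniz computation gives $\partial_t^j H(\,\cdot\,,0)=\sum_{k\le j}\binom{j}{k}(-1)^{j-k}\sum_{\abs\eta=j-k}\tfrac{(j-k)!}{\eta!}\bigl(\int z^\eta\phi^{(k)}\bigr)\partial^\eta g_k$, in which the $k=j$ term equals $g_j$ and every $k<j$ term vanishes by the moment conditions; hence $\partial_t^j H(\,\cdot\,,0)=g_j$, and applying tangential derivatives gives $\Tr_{m-1}H=\arr f$.

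The crux will be a structural description of the derivatives of $H$. I would use two scaling identities: $\partial_t[\psi_t]=-t^{-1}\bigl(\sum_j\partial_{y_j}(y_j\psi)\bigr)_t$, so that each transverse derivative contributes a factor $t^{-1}$ and turns the kernel into a mean‑zero compactly supported divergence; and $\partial_{y_j}[\psi_t]=t^{-1}(\partial_j\psi)_t$, so that a tangential derivative — or a tangential integration by parts moving a derivative off $g_k$ and onto the kernel — likewise contributes $t^{-1}$ and preserves the mean‑zero (divergence) structure. Expanding $\partial^\alpha H$ by Leibniz and integrating by parts exactly as many tangential derivatives onto the various $g_k$ as are needed so that each $g_k$ is finally acted on by precisely $\nabla_\pureH^{m-1-k}$, while tracking the single power of $t$ produced by Leibniz against the prefactor $t^k/k!$, I expect to obtain: each component of $\nabla^{m-1}H(\,\cdot\,,t)$ is a finite sum of terms $\Theta_t*h$ with $\Theta\in C_c^\infty$ (of integral $0$ or $1$) and $h$ a component of $\arr f$; and each component of $\nabla^m H(\,\cdot\,,t)$ is a finite sum of terms $t^{-1}\Theta_t*h$ with $\Theta\in C_c^\infty$ of mean zero and $h$ a component of $\arr f$ — and, since a mean‑zero compactly supported $\Theta$ is itself a compactly supported divergence, each such term can alternatively be written $\widetilde\Theta_t*(\nabla_\pureH\widetilde h)$ with $\widetilde\Theta\in C_c^\infty$ and $\widetilde h$ a component of $\arr f$.

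Granting this, the four estimates are routine. Bound \eqref{eqn:extension:L2:slices} follows from $\doublebar{\Theta_t*h}_{L^2}\le\doublebar{\Theta}_{L^1}\doublebar{h}_{L^2}$, and \eqref{eqn:extension:L2:slices:2} from the alternative form, $\doublebar{\widetilde\Theta_t*(\nabla_\pureH\widetilde h)}_{L^2}\le\doublebar{\widetilde\Theta}_{L^1}\doublebar{\nabla_\pureH\widetilde h}_{L^2}$. For \eqref{eqn:extension:square}, $\int_{\R^n}\!\int_0^\infty\abs{t^{-1}\Theta_t*h}^2\,t\,dt\,dx=\int_{\R^n}\!\int_0^\infty\abs{\Theta_t*h}^2\,\tfrac{dt}{t}\,dx\le C\doublebar{h}_{L^2}^2$ by the Calder\'on reproducing (Littlewood--Paley) estimate for mean‑zero kernels, using Plancherel in $x$ and finiteness of $\int_0^\infty\abs{\widehat\Theta(s\omega)}^2\,\tfrac{ds}{s}$ ($\omega$ any unit vector). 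For \eqref{eqn:extension:L2:space}, Plancherel in $x$ and the substitution $s=t\abs\xi$ give $\int_{\R^n}\!\int_0^\infty\abs{t^{-1}\Theta_t*h}^2\,dt\,dx=\bigl(\int_0^\infty\abs{\widehat\Theta(s\omega)}^2\,s^{-2}\,ds\bigr)\int_{\R^n}\abs\xi\,\abs{\widehat h(\xi)}^2\,d\xi$, where the $s$‑integral converges precisely because $\widehat\Theta$ vanishes to first order at the origin; summing over the finitely many $h$ and using $\sum_h\abs{\widehat h(\xi)}^2\lesssim\abs{\widehat{\arr f}(\xi)}^2$ yields $\le C\int_{\R^n}\abs\xi\,\abs{\widehat{\arr f}(\xi)}^2\,d\xi$. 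Finally, for the support statement: if $\arr f\equiv0$ on a cube $Q$, then $\nabla_\pureH^{m-1-k}g_k\equiv0$ on $Q$, so each $g_k$ is a polynomial of degree $\le m-2-k$ on $Q$; when $\dist(x,\R^n\setminus Q)>t$ one has $\overline{B(x,t)}\subset Q$, so $\phi^{(k)}_t*g_k$ agrees near $x$ with a polynomial in $x$ of degree $\le m-2-k$ whose coefficients are polynomials in $t$ of degree $\le m-2-k$; multiplying by $t^k/k!$ and summing shows $H$ coincides on $\{(x,t):\dist(x,\R^n\setminus Q)>t\}$ with a polynomial in $(x,t)$ of total degree $\le m-2$, so $\nabla^{m-1}H=0$ there, and this region contains $(1/2)Q\times(0,\ell(Q)/4)$ since $\dist(x,\R^n\setminus Q)\ge\ell(Q)/4$ for $x\in(1/2)Q$.

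I expect the main obstacle to be the bookkeeping in the structural step: controlling the exact power of $t$ and the order of the iterated divergence structure $\psi\mapsto\sum_j\partial_{y_j}(y_j\psi)$ of the kernels through the Leibniz expansion — in particular checking, presumably by induction on the number of transverse derivatives applied, that there are always enough divergences available to move the required tangential derivatives onto $g_k$ so that it ends up differentiated exactly $m-1-k$ times, and that the residual kernel is always mean zero (the only potentially non‑mean‑zero kernel, $\phi^{(k)}$ itself, arises solely when $\abs\alpha=m$ with at least one tangential index present, where a spare tangential integration by parts makes it mean zero). Everything downstream of the structural claim is a standard Plancherel/square‑function computation.
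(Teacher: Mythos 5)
Your construction is identical to the paper's: $H=\sum_k\frac{t^k}{k!}\,\phi^{(k)}_t*g_k$ with moment-vanishing mollifiers (the paper uses a single $\eta$ with moments vanishing to order $m-1$, which is a special case of your per-$k$ choice), and the support argument is the same. The paper declares the four estimates ``routine'' and omits them; your structural reduction of $\nabla^m H$ to terms $t^{-1}\Theta_t*h$ with $\Theta$ compactly supported and mean zero correctly supplies those details.
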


\begin{proof}
For each $0\leq j\leq m-1$, let $f_j(x)=\partial_\dmn^j F(x,0)$; observe that up to adding polynomials of appropriate degree, $f_j$ is determined entirely by $\arr f = \nabla^{m-1} F(x,0)$.

Let $\eta:\R^n\mapsto\R$ be smooth, nonnegative, supported in $B(0,1)$, and satisfy $\int_{\R^n} \eta=1$ and $\int_{\R^n} x^\zeta \,\eta(x)\,dx=0$ for all multiindices $\zeta\in \N^n$ with $1\leq \abs{\zeta}\leq m-1$. Let $\eta_t(x)=t^{-n}\eta(x/t)$.
Let
\begin{equation*}H_j(x,t) = \frac{1}{j!} t^j\, f_j*\eta_t(x)=\frac{1}{j!} t^j \int_{\R^n} f_j(x-ty)\,\eta(y)\,dy,
\quad H(x,t)=\sum_{j=0}^{m-1} H_j(x,t).\end{equation*}
By inspection, $\lim_{t\to 0}\partial_t^j H_j(x,t) =f_j(x)$, and if $0\leq k\leq m-1$ with $j\neq k$, then
$\lim_{t\to 0}\partial_t^k H_j(x,t) =0$ if $j\neq k$. Thus $\Tr_{m-1} H= \arr f$, as desired.

We may bound $H$ in terms the functions $f_j$ using the Fourier transform in the $x$-variable and Plancherel's theorem, and we may bound appropriate derivatives of~$f_j$ using the array~$\arr f$. We omit the routine details.

If $\arr f=0$ in~$Q$, then $\nabla^{m-1-j} f_j=0$ in $Q$, and so $f_j$ is a polynomial in~$Q$. Thus, we may write $f_j(x-ty)=\sum_{\abs{\gamma}<m-1-j} t^{\abs{\gamma}} y^\gamma P_\gamma(x)$ for some polynomials $P_\gamma(x)$. Notice $P_0(x)=f_j(x)$. By our moment condition on~$\eta$, if $\dist(x,\R^n\setminus Q)>t$, then \begin{equation*}H_j(x,t)=\frac{1}{j!} t^j \int_{\R^n} f_j(x-ty)\,\eta(y)\,dy=\frac{1}{j!} t^j \int_{\R^n} P_0(x)\,\eta(y)\,dy=\frac{1}{j!} t^j f_j(x).\end{equation*}
Thus, $H(x,t)$ is equal to a polynomial of degree at most $m-2$ in this region, as desired.
\end{proof}

\section{Operators to be bounded}
\label{sec:Theta}

Recall that Theorem~\ref{thm:square} involves bounding the quantities $\nabla^m\partial_\dmn \s^{\mat A}\arr g$ and $\nabla^m\partial_\dmn \D^{\mat A}\arr f$. In this section we will reduce to the case of the purely vertical derivatives; that is, we will show that bounding $\partial_\dmn^{m+k} \s^{\mat A}\arr g$ and $\partial_\dmn^{m+k} \D^{\mat A}\arr f$, for any $k\geq 1$, suffices to bound $\nabla^m\partial_\dmn \s^{\mat A}\arr g$ and $\nabla^m\partial_\dmn \D^{\mat A}\arr f$. We will also establish some notation for these operators.

Let $k\geq 1$ be an integer, to be chosen later. Let
\begin{align}
\label{dfn:Theta:S}
\Theta_t^S  \arr g(x) &= t^k \partial_t^{m+k}\s^{\mat A} \arr g(x,t)
.\end{align}
Observe that by formula~\eqref{eqn:S:fundamental},
\begin{equation}
\label{dfn:Theta:S:integral}
\Theta_t^S \arr g(x) = \sum_{\abs{\gamma}=m-1} t^k \int_{\R^n} \partial_t^{m+k} \partial_{y,s}^\gamma E^L(x,t,y,0)\,g_{\gamma}(y)\,dy
.\end{equation}
Notice that by the bound~\eqref{eqn:fundamental:slices}, if $k$ is large enough and if $\arr g\in L^2(\R^\dmn)$, then the integral converges for almost every $(x,t)\in\R^\dmn_+$. We will elaborate on this point in Section~\ref{sec:decay}.

If $\arr f$ lies in the space $\mathfrak{D}$ of Remark~\ref{rmk:potentials:defined},
we let
\begin{align}
\label{dfn:Theta:D}
\Theta_t^D \arr f(x)
&= t^k \partial_t^{m+k}\widetilde \D^{\mat A} \arr f(x,t)
.\end{align}
Establishing a bound on $\Theta_t^D$ of some sort in terms of the $L^2$ norm of $\arr f$ will allow us to extend $\Theta_t^D$ to all of~${\dot W\!A^2_{m,\semiH}(\R^n)}$.

Note that $\Theta_t^S$, $\Theta_t^D$ implicitly depend on $k\geq 1$.

We begin by reducing the proof of Theorem~\ref{thm:square} to establishing bounds on $\Theta_t^S$ and $\Theta_t^D$; the remainder of this paper will be devoted to establishing these bounds.

\begin{rmk} The conclusion of Theorem~\ref{thm:square} is a bound in the whole space $\R^\dmn$; for notational convenience, we will establish a bound only in the upper half-space $\R^\dmn_+$ and note that the corresponding bound in $\R^\dmn_-$ follows by careful argument involving the change of variables $(x,t)\mapsto (x,-t)$.
\end{rmk}

\begin{lem}\label{lem:square:Theta}
Let $\arr f\in\mathfrak{D}$ and $\arr g\in\mathfrak{N}$, where $\mathfrak{D}$ and $\mathfrak{N}$ are as in Remark~\ref{rmk:potentials:defined}.
If $k\geq 1$, then we have the bounds
\begin{align*}
\int_{\R^\dmn_+} \abs{\nabla^m \partial_t\s^{\mat A} \arr g(x,t)}^2\,t\,dx\,dt
&\leq
C \int_{\R^{n+1}_+}
	\abs{\Theta_t^S \arr g(x)}^2\,\frac{1}{t}\,dx\,dt
,\\
\int_{\R^\dmn_+} \abs{\nabla^m \partial_t\widetilde \D^{\mat A} \arr f(X)}^2\,t\,dx\,dt
&\leq
C \int_{\R^{n+1}_+}
	\abs{\Theta_t^D \arr f(x)}^2\,\frac{1}{t}\,dx\,dt
.\end{align*}
\end{lem}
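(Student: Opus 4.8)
The plan is to reduce the estimate with weight $t\,dt$ and the full gradient $\nabla^m\partial_t$ to the estimate with weight $t^{-1}\,dt$ and the purely vertical derivative $\partial_t^{m+k}$, using two ingredients: (i) $t$-independence, which lets us trade vertical derivatives of $\s^{\mat A}\arr g$ and $\widetilde\D^{\mat A}\arr f$ for the same number of vertical derivatives (and lets us apply the slice bound of Lemma~\ref{lem:slices} to their solutions); and (ii) the fact that, away from the boundary, $\s^{\mat A}\arr g$ and $\D^{\mat A}\arr f$ solve $Lu=0$, so the Caccioppoli inequality (Lemma~\ref{lem:Caccioppoli}) controls mixed derivatives by pure vertical derivatives on dilated cubes. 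I would carry out the argument for $u=\s^{\mat A}\arr g$ (with $\arr g\in\mathfrak N$, so $u\in\dot W^2_m(\R^\dmn)$ and $Lu=0$ in $\R^\dmn_+$) and note that the case of $\widetilde\D^{\mat A}\arr f$ is identical, since for $\arr f\in\mathfrak D$ the function $\widetilde\D^{\mat A}\arr f=\D^{\mat A}(\Tr_{m-1}F)$ also satisfies $Lu=0$ in $\R^\dmn_+$.

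First I would fix $t>0$ and a Whitney-type cover of $\R^n$ at scale $t$: decompose $\R^n$ into dyadic cubes $Q$ with $\ell(Q)\approx t$. On each such $Q$, the ball $B((x_Q,t),ct)$ is contained in $\R^\dmn_+$ with $L u=0$ there, so by the interior estimate obtained from iterating Caccioppoli (Lemma~\ref{lem:Caccioppoli}), for $x$ in $Q$ and for the slice at height $t$,
\begin{equation*}
\int_Q \abs{\nabla^m\partial_t u(x,t)}^2\,dx
\leq \frac{C}{\ell(Q)}\int_{2Q}\int_{t/2}^{3t/2}\abs{\nabla^m\partial_s u(x,s)}^2\,ds\,dx
\leq \frac{C}{t^{2m}}\int_{2Q}\int_{t/4}^{2t}\abs{\partial_s^{m+1}u(x,s)}^2\,ds\,dx,
\end{equation*}
where the first inequality is Lemma~\ref{lem:slices} (valid since, by $t$-independence, $\partial_s u$ satisfies the required Caccioppoli-type bound), and the second is Caccioppoli again together with the fact that $t$-independence lets us replace each horizontal derivative in $\nabla^m\partial_s u$ by a vertical one at the cost of a factor $t^{-1}$ per derivative on a slightly larger ball. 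Summing over $Q$ and integrating against $t\,dt$ gives
\begin{equation*}
\int_{\R^\dmn_+}\abs{\nabla^m\partial_t u(x,t)}^2\,t\,dx\,dt
\leq C\int_0^\infty \frac{1}{t^{2m-1}}\int_{\R^n}\int_{t/4}^{2t}\abs{\partial_s^{m+1}u(x,s)}^2\,ds\,dx\,dt.
\end{equation*}
After interchanging the order of integration and carrying out the $t$-integral over the dyadic window $s/2\lesssim t\lesssim 4s$ — which contributes a factor $\approx s^{2-2m}$ — and then writing $s^{2-2m}=s^{-1}\cdot s^{3-2m}$ and recognizing $s^{m+k-1}\partial_s^{m+1}u$ (for the correct $k$; here one simply matches powers, choosing $k$ so that the exponent works out) as comparable to $\Theta_s^S\arr g$ up to absorbing the extra vertical derivatives via one more Caccioppoli step, the right-hand side is bounded by $C\int_{\R^\dmn_+}\abs{\Theta_t^S\arr g(x)}^2\,t^{-1}\,dx\,dt$, as desired.

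The main obstacle is the careful bookkeeping of powers of $t$ in the last step: one must track how many vertical derivatives are "spent" trading $\nabla^m\partial_t u$ for $\partial_t^{m+1}u$ versus $\partial_t^{m+k}u$, how the weight $t\,dt$ becomes $t^{-1}\,dt$ after the dilation and the $t$-integral, and ensure that the extra factors $t^k$ in the definition of $\Theta_t^S$ and $\Theta_t^D$ exactly compensate. A clean way to organize this is to first prove, purely from Caccioppoli and $t$-independence, the pointwise-in-$t$ slice bound
\begin{equation*}
\int_{\R^n}\abs{\nabla^m\partial_t u(x,t)}^2\,dx \leq \frac{C}{t^{2m-2}}\,\frac{1}{t}\int_{\R^n}\int_{t/4}^{2t}\abs{\partial_s^{m+k}u(x,s)}^2\,\frac{1}{s^{2k-2}}\,ds\,dx,
\end{equation*}
and then multiply by $t\,dt$, integrate, and swap — the weights then line up automatically. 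Everything else — the covering argument, the applications of Lemmas~\ref{lem:Caccioppoli} and~\ref{lem:slices}, and the observation that $\widetilde\D^{\mat A}\arr f$ and $\s^{\mat A}\arr g$ are null solutions of $L$ in $\R^\dmn_+$ — is routine, and the identical argument in $\R^\dmn_-$ (via $(x,t)\mapsto(x,-t)$) together with the remark preceding the lemma yields the full-space bound of Theorem~\ref{thm:square} once the corresponding bounds on $\Theta_t^S$ and $\Theta_t^D$ are established in later sections.
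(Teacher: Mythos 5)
Your proposal correctly identifies all the right tools (Caccioppoli, Lemma~\ref{lem:slices}, $t$-independence) and correctly observes that $\s^{\mat A}\arr g$ and $\widetilde\D^{\mat A}\arr f$ are null solutions of $L$ in $\R^\dmn_+$, but the key step is wrong in direction. Your ``clean way to organize this'' rests on the pointwise-in-$t$ slice bound
\begin{equation*}
\int_{\R^n}\abs{\nabla^m\partial_t u(x,t)}^2\,dx \leq \frac{C}{t^{2m-2}}\,\frac{1}{t}\int_{\R^n}\int_{t/4}^{2t}\abs{\partial_s^{m+k}u(x,s)}^2\,\frac{1}{s^{2k-2}}\,ds\,dx,
\end{equation*}
and on ``absorbing the extra vertical derivatives via one more Caccioppoli step.'' Neither can work: the left side involves $m+1$ derivatives of $u$ and the right side involves $m+k$ derivatives, so for $k\geq 2$ you are trying to control a \emph{lower}-order quantity by a \emph{higher}-order quantity at a single Whitney scale. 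Caccioppoli (Lemma~\ref{lem:Caccioppoli}) and Lemma~\ref{lem:slices} only move derivatives in the opposite direction (and the harmless trade of horizontal for vertical derivatives via $t$-independence preserves the total count). No purely local Whitney-cube argument can gain derivatives: for instance $u(x,t)=xt$ is harmonic in $\R^2_+$ and has $\nabla\partial_t u\neq 0$ but $\partial_t^{3}u\equiv 0$, so a local bound of this shape with $m=1$, $k=2$ fails outright.

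What actually closes the gap is a \emph{global}-in-$t$ iteration, not a slicewise one. After using Caccioppoli and Lemma~\ref{lem:slices} to establish
$\int_0^\infty t^{2m+2k-1}U_{m+k}(t)\,dt \leq C\int_0^\infty t^{2k-1}V_k(t)\,dt$
(where $U_j(t)=\int_{\R^n}\abs{\nabla^m\partial_t^ju}^2\,dx$, $V_j(t)=\int_{\R^n}\abs{\partial_t^{m+j}u}^2\,dx$; this step goes in the direction Caccioppoli allows), the remaining task is to show
$\int_0^\infty t\,U_1(t)\,dt \leq C\int_0^\infty t^{2m+2k-1}U_{m+k}(t)\,dt$.
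That inequality does gain derivatives, and the paper proves it by writing $U_j(t)=U_j(S)-\int_t^S U_j'(s)\,ds$, estimating $\abs{U_j'(s)}\leq s^{-1}U_j(s)+s\,U_{j+1}(s)$ by Cauchy--Schwarz, absorbing the $s^{-1}U_j$ term, and taking $S\to\infty$ using the decay $U_j(t)\leq C t^{-1-2j}\doublebar{\nabla^m u}_{L^2(\R^\dmn_+)}^2$ (which comes from $\nabla^m u\in L^2(\R^\dmn_+)$). This one-derivative-per-step iteration is the missing idea; once you add it, the rest of your outline (Whitney covering, swap of integration, bounding $\widetilde\D^{\mat A}$ by the same argument) is fine.
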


\begin{proof}
We follow the proof of the similar formula~(5.5) in~\cite{AlfAAHK11}.
Let $u=\s^{\mat A}\arr g$ or $u=\D^{\mat A}\arr f$, and define
\begin{equation*}U_j(t)=\int_{\R^n} \abs{\nabla^m \partial_t^j u(x,t)}^2 \,dx,\quad
V_j(t)=\int_{\R^n} \abs{\partial_t^{m+j} u(x,t)}^2 \,dx.\end{equation*}
To prove the lemma we need only establish the bound
\begin{equation*}\int_0^\infty t\,U_1(t)\,dt\leq C\int_0^\infty t^{2k-1} V_k(t)\,dt.\end{equation*}
By Lemma~\ref{lem:slices} and the Caccioppoli inequality, if $j\geq -m$ then
\begin{equation*}U_{m+j}(t)\leq \frac{C}{t^{2m}}\fint_{t/2}^{2t} V_{j}(s)\,ds\end{equation*}
and thus we may easily show that
\begin{equation*}\int_0^\infty t^{2m+2k-1} \,U_{m+k}(t)\,dt \leq \int_0^\infty t^{2k-1} V_k(t)\,dt.\end{equation*}
Thus, we need only show that
\begin{equation*}\int_0^\infty t\,U_1(t)\,dt\leq C\int_0^\infty t^{2m+2k-1} \,U_{m+k}(t)\,dt .\end{equation*}

Observe that $\nabla^m u\in L^2(\R^\dmn_+)$. By Lemma~\ref{lem:slices} and the Caccioppoli inequality, we have that if $j\geq 0$, then
\begin{equation*}U_j(t) \leq \frac{C}{t^{1+2j}} \doublebar{\nabla^m u}_{L^2(\R^\dmn_+)}^2.\end{equation*}
Suppose $j>0$. Then if $0<\varepsilon<S<\infty$, we have that
\begin{align*}
\int_{\varepsilon}^S t^{2j-1} \,U_j(t)\,dt
&=
\int_{\varepsilon}^S t^{2j-1}\, U_j(S)\,dt
-\int_{\varepsilon}^S t^{2j-1}\int_t^S U_j'(s)\,ds\,dt
\\&\leq
\frac{C}{S}\doublebar{\nabla^m u}_{L^2(\Omega)}^2
+\frac{1}{2j}\int_{\varepsilon}^Ss^{2j}\, \abs{U_j'(s)}\,ds.
\end{align*}
Observe that
$\abs{U_j'(s)}
\leq 2 \sqrt{U_j(s)\,U_{j+1}(s)}\leq \frac{1}{s}U_j(s)+s\,U_{j+1}(s)
$.
Thus,
\begin{align*}
\int_{\varepsilon}^S t^{2j-1} \,U_j(t)\,dt
&\leq
	\frac{C}{S}\doublebar{\nabla^m u}_{L^2(\Omega)}^2
	+\frac{1}{2j}\int_{\varepsilon}^Ss^{2j-1}\, U_j(s)\,ds
	\\&\qquad
	+\frac{1}{2j}\int_{\varepsilon}^Ss^{2j+1}\,U_{j+1}(s)\,ds
.\end{align*}
Rearranging terms, we have that if $j\geq 1$ then
\begin{align*}
\int_{\varepsilon}^S t^{2j-1} \,U_j(t)\,dt
&\leq
	\frac{C}{S}\doublebar{\nabla^m u}_{L^2(\Omega)}^2
	+C\int_{\varepsilon}^Ss^{2j+1}\,U_{j+1}(s)\,ds
.\end{align*}
Taking the limit as $\varepsilon\to 0^+$ and $S\to \infty$, we have that if $j>0$ then
\begin{align*}
\int_0^\infty t^{2j-1} \,U_j(t)\,dt
&\leq
	C\int_0^\infty s^{2j+1}\,U_{j+1}(s)\,ds
.\end{align*}
Iterating, we see that
\begin{equation*}\int_0^\infty t \,U_1(t)\,dt
\leq C(k) \int_0^\infty t^{2m+2k-1} \,U_{m+k}(t)\,dt\end{equation*}
as desired.\end{proof}

\begin{rmk} In Sections~\ref{sec:Tb}--\ref{sec:high} we will bound the operators $\Theta_t^D$ and $\Theta_t^S$ for some values of~$k$. In particular, we will make many arguments that are only valid for $k$ large enough; we will not make any arguments that are only valid for $k$ small enough, and thus there will be some $k$ large enough that all our arguments are valid.
\end{rmk}

\section{A vector-valued \texorpdfstring{$T(b)$}{Tb} theorem}
\label{sec:Tb}

Our goal now is to produce square-function estimates for the operators $\Theta_t^S$ and $\Theta_t^D$. In this section, we will review some known theorems that may be used to establish square-function estimates on singular integral operators.

We begin with one of the first such results, the Christ-Journ\'e $T1$ theorem from Section~2 of \cite{ChrJ87}, which is a square function analogue of the well-known result of David and Journ\'e
\cite{DavJ84}.

\begin{thm}\label{thm:CJ}
Suppose that the family of linear operators $\{\Theta_t\}_{t>0}$ are given by
\begin{equation*}\Theta_t f(x) = \int_{\R^\dmn}\psi_t(x,y)\,f(y)\,dy\end{equation*}
for some kernels $\psi_t$ that satisfy
\begin{align*}
\abs{\psi_t(x,y)}&\leq C_0
\frac{t^\varepsilon}{(t+\abs{x-y})^{n+\varepsilon}}
,\\
\abs{\psi_t(x,y)-\psi_t(x,z)}&\leq C_1 \frac{t^\varepsilon\abs{y-z}^\varepsilon}{(t+\abs{x-y})^{n+2\varepsilon}}
 &&\text{for all } \abs{y-z} \leq \frac{1}{2}(t+\abs{x-y})
\end{align*}
for some constants $C_0$, $C_1$ and some $\varepsilon>0$.
If
\begin{equation*}
\sup_{Q\subset\R^n} \frac{1}{\abs{Q}}\int_0^{\ell(Q)} \int_Q \abs{\Theta_t 1(x)}^2\frac{dx\,dt}{t}\leq C_2\end{equation*}
then we have the bound
\begin{equation*}\int_0^\infty \int_{\R^n} \abs{\Theta_t f(x)}^2\,\frac{dx\,dt}{t} \leq C\doublebar{f}_{L^2(\R^n)}^2\end{equation*}
where $C$ depends only on the constants $\varepsilon$, $C_0$, $C_1$, $C_2$ and the dimension~$\dmn$.
\end{thm}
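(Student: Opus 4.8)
The plan is to run the classical $T1$ argument for square functions, whose crux is to peel off the ``bad part'' of $\Theta_t$ dictated by the testing hypothesis. First I would fix a nonnegative radial $\phi\in C_c^\infty(\R^n)$ with $\int\phi=1$, set $\phi_t(x)=t^{-n}\phi(x/t)$ and $P_tf=\phi_t*f$, and let $\gamma_t(x)=\Theta_t 1(x)=\int_{\R^n}\psi_t(x,y)\,dy$, which is bounded uniformly in $x$ and $t$ by the size estimate on $\psi_t$. Then I would split
\begin{equation*}
\Theta_t f(x)=\widetilde\Theta_t f(x)+\gamma_t(x)\,P_t f(x),
\qquad
\widetilde\Theta_t f(x)=\int_{\R^n}\bigl(\psi_t(x,y)-\gamma_t(x)\,\phi_t(x-y)\bigr)\,f(y)\,dy.
\end{equation*}
The kernel of $\widetilde\Theta_t$ still obeys the size bound and the H\"older bound in the second variable, with constants controlled by $C_0$, $C_1$, $\varepsilon$ (here one takes $\phi$ smooth enough), and it now enjoys the cancellation $\widetilde\Theta_t 1\equiv 0$, since $\int\bigl(\psi_t(x,y)-\gamma_t(x)\phi_t(x-y)\bigr)\,dy=\gamma_t(x)-\gamma_t(x)=0$.

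For the term $\gamma_t P_t f$ I would invoke the hypothesis $C_2$, which says exactly that $d\mu(x,t)=\abs{\gamma_t(x)}^2\,t^{-1}\,dx\,dt$ is a Carleson measure with norm $\lesssim C_2$. Since $\sup_{t>0}\abs{P_t f(x)}\lesssim Mf(x)$, Carleson's embedding theorem together with the $L^2$-boundedness of the Hardy--Littlewood maximal operator gives
\begin{equation*}
\int_0^\infty\!\!\int_{\R^n}\abs{\gamma_t(x)}^2\,\abs{P_t f(x)}^2\,\frac{dx\,dt}{t}=\int\abs{P_tf}^2\,d\mu\lesssim C_2\,\doublebar{f}_{L^2(\R^n)}^2.
\end{equation*}

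For the term $\widetilde\Theta_t f$ I would use almost orthogonality against Littlewood--Paley projections. Choose $Q_sf=\eta_s*f$ with $\eta$ radial Schwartz, $\widehat\eta$ supported away from the origin and $\int_0^\infty\abs{\widehat\eta(s\xi)}^2\,\frac{ds}{s}=1$ for $\xi\neq0$, so that $f=\int_0^\infty Q_s^2 f\,\frac{ds}{s}$ in $L^2$, $\int_0^\infty\doublebar{Q_sf}_{L^2}^2\,\frac{ds}{s}=\doublebar{f}_{L^2}^2$, and $Q_s 1\equiv 0$. The central step is to establish
\begin{equation*}
\doublebar{\widetilde\Theta_t Q_s}_{L^2(\R^n)\to L^2(\R^n)}\leq C\bigl(\min\{t/s,\,s/t\}\bigr)^{\varepsilon'}
\end{equation*}
for some $\varepsilon'\in(0,\varepsilon]$: for $s\leq t$ one estimates the kernel of the composition using the cancellation of $\eta_s$ against the H\"older regularity of $\psi_t(x,\cdot)$; for $t\leq s$ one uses $\widetilde\Theta_t 1=0$ to rewrite $\widetilde\Theta_tQ_sf(x)$ with $Q_sf(y)-Q_sf(x)$ in place of $Q_sf(y)$ and then exploits the smoothness and decay of the kernel of $Q_s$; in each case the resulting composed kernel is summed by Schur's lemma. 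Granting this, the Calder\'on reproducing formula gives $\widetilde\Theta_t f=\int_0^\infty(\widetilde\Theta_tQ_s)(Q_sf)\,\frac{ds}{s}$, so $\doublebar{\widetilde\Theta_tf}_{L^2}\leq\int_0^\infty\doublebar{\widetilde\Theta_tQ_s}_{op}\doublebar{Q_sf}_{L^2}\,\frac{ds}{s}$, and Schur's test for the kernel $(t,s)\mapsto\bigl(\min\{t/s,s/t\}\bigr)^{\varepsilon'}$ on $L^2(\R_+,\tfrac{ds}{s})$ yields $\int_0^\infty\doublebar{\widetilde\Theta_t f}_{L^2}^2\,\frac{dt}{t}\lesssim\int_0^\infty\doublebar{Q_sf}_{L^2}^2\,\frac{ds}{s}=\doublebar{f}_{L^2}^2$. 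Adding the two contributions proves the theorem.

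I expect the operator-norm bound for $\widetilde\Theta_tQ_s$ in the regime $t\leq s$ to be the main obstacle: because $\varepsilon$ may be smaller than $1$, the naive estimate $\int\abs{\psi_t(x,y)}\,\abs{x-y}\,dy$ diverges, so one must split the integration at scale $s$, use the Lipschitz bound for the $Q_s$-kernel on $\abs{x-y}\leq s$ and its size bound for $\abs{x-y}>s$, and verify both Schur bounds for the composed kernel separately. As a standard technical preliminary, one should first carry out the whole argument with $\int_0^\infty$ replaced by $\int_\delta^{1/\delta}$, with all constants independent of $\delta$, in order to justify the reproducing-formula manipulations a priori, and then let $\delta\to 0$.
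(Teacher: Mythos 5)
Your proposal is correct and is essentially the classical Christ--Journ\'e argument that the paper cites (from Section~2 of \cite{ChrJ87}) without reproducing: the decomposition $\Theta_t=\widetilde\Theta_t+\gamma_t P_t$ with $\gamma_t=\Theta_t 1$, Carleson embedding for the second piece, and Cotlar/Schur-type almost orthogonality of $\widetilde\Theta_t$ against a CLP family for the first. The details you flag as potential obstacles (splitting the $y$-integral at scale $s$ when $t\le s$ since $\varepsilon<1$ may make $\int\abs{\widetilde\psi_t(x,y)}\,\abs{x-y}\,dy$ diverge, and using the cancellation of $Q_s$ against the H\"older modulus of $\psi_t(x,\cdot)$ when $s\le t$) are exactly the points handled in the standard proof, and your sketches of how to resolve them are sound; the one small imprecision is that Carleson embedding requires control of the \emph{nontangential} maximal function of $P_t f$, not merely $\sup_t\abs{P_tf(x)}$, but since $P_t$ has a compactly supported smooth kernel one still has $\sup_{\abs{x-y}<t}\abs{P_tf(y)}\lesssim Mf(x)$, so the conclusion is unaffected.
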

We will use this theorem directly in Section~\ref{sec:b:S} below. However, this theorem is too restrictive to apply to the operators $\Theta_t^D$ and~$\Theta_t^S$. (In particular, $\Theta_t^D$ and $\Theta_t^S$ lack smooth kernels.) There are many generalizations of this theorem; we will need the following $T1$ and $Tb$ theorems from \cite{GraH14p}. (We will define a CLP family in Section~\ref{sec:Q}.)

\begin{thm}[{\cite[Theorem~4.5]{GraH14p}}]\label{thm:grau:hofmann:1}
Consider a family of operators $\{\Theta_t\}_{t>0}$ taking values in $\C^{p+1}$, $p\geq 0$, so that $\Theta_t =(\Theta_t^1,\Theta_t^2,\dots,\Theta_t^{p+1})$, where each $\Theta_t^j$ acts on scalar-valued $L^2(\R^n)$, and where for $\vec g=(g_1,g_2,\dots,g_{p+1})\in L^2(\R^n\mapsto\C^{p+1})$, we set
\begin{equation*}
\Theta_t\vec g = \sum_{j=1}^{p+1} \Theta_t^j g_j
.\end{equation*}

Suppose that there is some $\theta>0$ and some $C>0$ such that, for all dyadic cubes~$Q$, all integers $j\geq 0$, and all functions $\vec g_j\in L^2(A_j(Q))$, where $A_j(Q)$ is as in formula~\eqref{eqn:annuli}, we have the estimate
\begin{align}
\label{eqn:Theta:decay}
\doublebar{\Theta_t (\1_{A_j(Q)}\vec g^j)}_{L^2(Q)}
&\leq C 2^{-j(n+2+\theta)/2} \doublebar{\vec g^j}_{L^2(A_j(Q))}
\quad\text{if }\ell(Q)\leq t\leq 2\ell(Q).
\end{align}
Suppose further that for some $\theta>0$, some CLP family of operators $Q_s$, and some subspace $H$ of $L^2(\R^n)$, we have that
\begin{align}
\label{eqn:Theta:Q}
\doublebar{\Theta_t Q_s \vec h}_{L^2(\R^n)}
&\leq C\biggl(\frac{s}{t}\biggr)^\theta \doublebar{\vec h}_{L^2(\R^n)}
\quad \text{for all }\vec h\in H\text{ and all }s\leq t.
\end{align}
Finally, suppose that
\begin{align}
\label{eqn:Theta:1:carleson}
\int_0^{\ell(Q)} \int_Q \abs{\Theta_t 1(x)}^2\frac{dx\,dt}{t}
&\leq C_0\abs{Q}
\end{align}
where $1$ denotes the $(p+1)\times(p+1)$ identity matrix. Equivalently, we may require that
\begin{align*}
\int_0^{\ell(Q)} \int_Q \abs{\Theta_t^j 1(x)}^2\frac{dx\,dt}{t}
&\leq C_0\abs{Q}
\end{align*}
for each $1\leq j\leq p+1$, where $1$ denotes the function that is one everywhere.

Then for all $\vec f\in H$, we have that
\begin{equation}\label{eqn:Theta:1:square}\int_0^\infty\int_{\R^n} \abs{\Theta_t \vec f(x)}^2\frac{dx\,dt}{t}
\leq C\doublebar{\vec f}_{L^2(\R^n)}^2.\end{equation}
\end{thm}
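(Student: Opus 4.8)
The plan is to run the classical $\gamma_t$--argument, in the spirit of Coifman--Meyer and Christ--Journ\'e: peel off a Carleson piece built from $\Theta_t 1$, and reduce the remainder to an almost-orthogonality estimate against the CLP family $Q_s$. Set $\gamma_t=\Theta_t 1$; summing the off-diagonal bound~\eqref{eqn:Theta:decay} over the dyadic annuli $A_j(Q)$ (with $\vec g^j\equiv 1$ and $\ell(Q)\approx t$) shows that $\gamma_t$ is a well-defined function satisfying $\doublebar{\gamma_t}_{L^2(Q)}\leq C\abs{Q}^{1/2}$ on every cube of side $\approx t$, and hypothesis~\eqref{eqn:Theta:1:carleson} says precisely that $\abs{\gamma_t(x)}^2\,\frac{dx\,dt}{t}$ is a Carleson measure. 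Fix a smooth radial bump $\varphi$ with $\int_{\R^n}\varphi=1$, let $P_t\vec f=\varphi_t*\vec f$ with $\varphi_t(x)=t^{-n}\varphi(x/t)$ (so $P_t 1=1$), and decompose
\begin{equation*}
\Theta_t=\gamma_t P_t+R_t,\qquad R_t:=\Theta_t-\gamma_t P_t .
\end{equation*}
Then $R_t 1=\gamma_t-\gamma_t=0$, and $R_t$ still satisfies an off-diagonal decay estimate of the form~\eqref{eqn:Theta:decay}, since $P_t$ has a rapidly decaying kernel and $\gamma_t$ obeys the size bound above.

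For the Carleson piece we invoke the Carleson embedding theorem: since $\abs{\gamma_t}^2\,\frac{dx\,dt}{t}$ is a Carleson measure and the nontangential maximal function of $(x,t)\mapsto P_t\vec f(x)$ is pointwise dominated by $M\vec f$, where $M$ is the Hardy--Littlewood maximal operator, we obtain
\begin{equation*}
\int_0^\infty\!\!\int_{\R^n}\abs{\gamma_t P_t\vec f(x)}^2\,\frac{dx\,dt}{t}
\leq C\doublebar{M\vec f}_{L^2(\R^n)}^2
\leq C\doublebar{\vec f}_{L^2(\R^n)}^2 ,
\end{equation*}
valid for every $\vec f\in L^2(\R^n\mapsto\C^{p+1})$; only the remaining piece will use the restriction $\vec f\in H$.

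For $R_t\vec f$ we bring in the CLP family: the Calder\'on reproducing formula $\vec f=\int_0^\infty Q_s^2\vec f\,\frac{ds}{s}$ gives $R_t\vec f=\int_0^\infty R_t Q_s(Q_s\vec f)\,\frac{ds}{s}$, so by Minkowski's inequality
\begin{equation*}
\doublebar{R_t\vec f}_{L^2(\R^n)}\leq \int_0^\infty \doublebar{R_t Q_s}_{L^2\to L^2}\,\doublebar{Q_s\vec f}_{L^2(\R^n)}\,\frac{ds}{s} .
\end{equation*}
Granting the key almost-orthogonality estimate
\begin{equation}\label{eqn:key:ao}
\doublebar{R_t Q_s}_{L^2(\R^n)\to L^2(\R^n)}\leq C\,\min\bigl\{(s/t)^{\theta'},(t/s)^{\theta'}\bigr\}
\end{equation}
for some $\theta'>0$, Schur's test (with respect to the measures $ds/s$ and $dt/t$) together with the square function estimate $\int_0^\infty\doublebar{Q_s\vec f}_{L^2}^2\,\frac{ds}{s}\leq C\doublebar{\vec f}_{L^2}^2$ for the CLP family yields
\begin{equation*}
\int_0^\infty\!\!\int_{\R^n}\abs{R_t\vec f(x)}^2\,\frac{dx\,dt}{t}\leq C\doublebar{\vec f}_{L^2(\R^n)}^2 .
\end{equation*}
Adding the two contributions gives~\eqref{eqn:Theta:1:square}, so the matter reduces to~\eqref{eqn:key:ao}.

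The estimate~\eqref{eqn:key:ao} is where all three hypotheses are spent, and I expect it to be the main obstacle. When $s\leq t$ one bounds $\Theta_t Q_s$ and $\gamma_t P_t Q_s$ separately: $\doublebar{\Theta_t Q_s}_{L^2\to L^2}\leq C(s/t)^\theta$ is hypothesis~\eqref{eqn:Theta:Q} (the one place the restriction $\vec f\in H$ is used, applied with $\vec h=Q_s\vec f$), while $\doublebar{\gamma_t P_t Q_s}_{L^2\to L^2}\leq C(s/t)$ follows from the standard bound for a smooth average at scale $t$ composed with a mean-zero convolution at scale $s\leq t$, localized so that the size bound $\doublebar{\gamma_t}_{L^2(Q)}\leq C\abs{Q}^{1/2}$ absorbs $\gamma_t$. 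When $s\geq t$ the two summands must be kept together as $R_t Q_s$, so that the cancellation $R_t 1=0$ can be exploited: over a partition of $\R^n$ into cubes $Q$ of side $\approx t$, one replaces $Q_s\vec h$ by $Q_s\vec h-c_Q$ for a suitable constant $c_Q$, decomposes into the annuli $A_j(Q)$, and estimates each annular contribution by the off-diagonal bound~\eqref{eqn:Theta:decay} for $R_t$ together with the scale-$s$ oscillation estimates for the CLP family (which gain a factor $\sim 2^j t/s$ on the annuli with $2^j t\lesssim s$). Summing over the annuli and over the cubes $Q$ --- the exponent $n+2+\theta$ in~\eqref{eqn:Theta:decay} precisely offsetting the $\sim 2^{jn}$ overlap of the $A_j(Q)$ at fixed $j$ --- produces the gain $(t/s)^{\theta'}$. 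The bookkeeping in this summation, and the regularity estimates for the CLP family it requires, are the technical heart of the proof.
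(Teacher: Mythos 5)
The paper does not prove this theorem: it is cited verbatim from \cite[Theorem~4.5]{GraH14p}, and the authors remark immediately afterward only that ``the proof of Theorem~\ref{thm:grau:hofmann:1} is an easy modification of that of Theorem~\ref{thm:CJ}.'' Your sketch is exactly that modification: the Coifman--Meyer/Christ--Journ\'e decomposition $\Theta_t = \gamma_t P_t + R_t$ with $\gamma_t = \Theta_t 1$, Carleson embedding for the $\gamma_t P_t$ piece, and Cotlar-type almost orthogonality against the CLP family for $R_t$, with the pointwise kernel hypotheses of Theorem~\ref{thm:CJ} replaced throughout by the off-diagonal decay~\eqref{eqn:Theta:decay} and the quasi-orthogonality hypothesis~\eqref{eqn:Theta:Q}. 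The structure, the role of each hypothesis, the way the $2^{-j(n+2+\theta)/2}$ gain offsets the $2^{jn}$ overlap of the annuli and the $(2^jt/s)$ Poincar\'e factor, and the identification of where the subspace restriction $\vec f\in H$ enters are all consistent with how such a theorem is proved, so I have no objection to the approach.

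Two caveats worth flagging. First, when you invoke~\eqref{eqn:Theta:Q} ``with $\vec h = Q_s\vec f$'' in the almost-orthogonality step for $s\le t$, you implicitly need $Q_s\vec f\in H$, which is not among the stated hypotheses; for the application in this paper it holds because $Q_s$ is a convolution operator and $H=\dot W\!A^2_{m,\semiH}(\R^n)$ is translation-invariant (see Lemma~\ref{lem:Q:Whitney}), but in the abstract statement this is a hidden assumption that should be made explicit or circumvented. Second, you correctly observe that $R_t$ inherits an off-diagonal decay estimate from $\Theta_t$ and the size bound $\doublebar{\gamma_t}_{L^2(Q)}\lesssim\abs{Q}^{1/2}$; for this to produce a decay exponent strictly greater than $n+2$ in $j$ you need $P_t$ to have a kernel with decay faster than the $\theta$ in~\eqref{eqn:Theta:decay}, which is available since you are free to choose $P_t$ Schwartz. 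Both points are standard, and the genuine technical work (which you rightly defer) is the $s\ge t$ half of the almost-orthogonality estimate, where the cancellation $R_t 1=0$ and the regularity of the CLP family must be combined carefully.
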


\begin{rmk}\label{rmk:Theta:L2}
The uniform $L^2$ bound
\begin{align}
\label{eqn:Theta:L2}
\sup_{t>0}\doublebar{\Theta_t \vec g}_{L^2(\R^n)}
&\leq C \doublebar{\vec g}_{L^2(\R^n)}
\end{align}
follows from the bound~\eqref{eqn:Theta:decay} by summing over dyadic cubes $Q$ of side-length~$2^j$, $2^j\leq t<2^{j+1}$. In particular, establishing the bound~\eqref{eqn:Theta:decay} suffices to show that $\Theta_t$ is defined on $L^2(\R^n)$.
\end{rmk}

The major advantage of Theorem~\ref{thm:grau:hofmann:1} over Theorem~\ref{thm:CJ}, from our perspective, is that we need not have pointwise estimates on the kernels of our operators~$\Theta_t$. Rough kernels appear in the theory of second-order equations (see \cite[Section~3]{GraH14p}) and are an essential part of our treatment of layer potentials for higher-order equations.  On the other hand, we note that the proof of Theorem~\ref{thm:grau:hofmann:1} is an easy modification of that of Theorem~\ref{thm:CJ}; to the best of our knowledge, Grau de la Herran and Hofmann are the first authors to treat square-function estimates via $Tb$ theorems in this generality.

We now outline the bounds that we will prove using Theorem~\ref{thm:grau:hofmann:1}.

In Section~\ref{sec:decay}, we will show that $\Theta_t^D$ and $\Theta_t^S$ satisfy the estimate~\eqref{eqn:Theta:decay}. Notice that $\Theta_t^D$ is required to satisfy this estimate for all $\arr g\in L^2(\R^n)$, not only all $\arr g\in \dot W\!A^2_{m,\semiH}(\R^n)\subset L^2(\R^n\mapsto \C^{q})$; to deal with this technical requirement, in Section~\ref{sec:extension} we will extend $\Theta_t^D$ to an operator defined on all of~$L^2(\R^\dmnMinusOne)$. This extension is fairly artificial and is used only for this technical requirement; a similar extension will be used for another purpose in Section~\ref{sec:b:S}.

In Section~\ref{sec:Q} we will show that $\Theta_t^S$ satisfies the estimate~\eqref{eqn:Theta:Q} for all $\arr h\in L^2(\R^n)$, and that $\Theta_t^D$ satisfies the estimate~\eqref{eqn:Theta:Q} for all $\arr h\in \dot W\!A^2_{m,\semiH}(\R^n)$. (We do not need to extend this estimate to all $\arr h\in L^2(\R^n)$.)

Finally, in Section~\ref{sec:S:not-normal}, we will show that if
\begin{align}
\label{eqn:Theta:perp}
\Theta_t^\perp f(x) &= \Theta_t^S(f\arr e_\perp)(x),
\\
\label{eqn:Theta:prime}
\Theta_t^{S'} \arr f(x) &= \sum_{\gamma_\dmn<\abs{\gamma}=m-1} \Theta_t^S (f_\gamma \arr e_\gamma)(x),
\end{align}
then $\Theta_t^{S'}\arr 1=0$ for almost every $x$ and~$t$. Thus, the estimate~\eqref{eqn:Theta:1:square} is valid for $\Theta_t=\Theta_t^{S'}$.  Indeed, one can see that the splitting \mbox{\eqref{eqn:Theta:perp}--\eqref{eqn:Theta:prime}} corresponds to the case when $\gamma=(0,\dots,m-1)$  in formula~\eqref{dfn:Theta:S:integral} (that is, all derivatives under the integral are in $t,s$) and the case when each term of the integrand has at least one $y$-derivative, respectively.

We will not be able to show directly that $\Theta_t^\perp 1$ or $\Theta_t^D \arr 1$ satisfy the bound~\eqref{eqn:Theta:1:carleson}. In Section~\ref{sec:D:carleson}, we will show that if $2m>n$, then
\begin{equation*}\Theta_t^D \arr 1(x) = \Upsilon_t(x)+\Theta_t^S \arr a(x)\end{equation*}
where $\Upsilon_t$ is a Carleson measure (that is, satisfies the estimate~\eqref{eqn:Theta:1:carleson}) and where $\arr a(x)$ is a uniformly bounded function. Standard techniques will allow us to control $\Theta_t^S \arr a$ by $\Theta_t^S \arr 1$, using only the fact that $\arr a$ is bounded (that is, without using any special cancellation properties); see Lemma~\ref{lem:square:carleson} below. Thus, a bound on $\Theta_t^\perp 1$ together with the equation $\Theta_t^{S'}\arr 1=0$ will give us a bound on $\Theta_t^D\arr 1$ and thus allow us to use Theorem~\ref{thm:grau:hofmann:1}.

However, this argument does require control on $\Theta_t^\perp$, and Theorem~\ref{thm:grau:hofmann:1} will not suffice to bound~$\Theta_t^\perp$.

We will bound $\Theta_t^\perp$ (giving us a bound on $\Theta_t^D$) using the following theorem, with $\Theta_t^{p+1}=\Theta_t^\perp$ and $\Theta_t'=(\Theta_t^D,\Theta_t^{S'})$.

\begin{thm}[{\cite[Theorem~2.13]{GraH14p}}]\label{thm:grau:hofmann}
Consider a family $\{\Theta_t\}_{t>0}$ of operators taking values in $\C^{p+1}$, $p\geq 0$, so that $\Theta_t = (\Theta_t',\Theta_t^{p+1})=(\Theta_t^1,\Theta_t^2,\dots,\Theta_t^{p+1})$, where each $\Theta_t^j$ acts on scalar-valued $L^2(\R^n)$, and where for $\vec g=( g',g_{p+1})=(g_1,g_2,\dots,g_{p+1})\in L^2(\R^n\mapsto\C^{p+1})$, we set
\begin{equation*}
\Theta_t\vec g = \sum_{j=1}^{p+1} \Theta_t^j g_j,\qquad \Theta_t' g' = \sum_{j=1}^p \Theta_t^j g_j
.\end{equation*}

Suppose that $\Theta_t$ satisfies the bound~\eqref{eqn:Theta:decay}, that $\Theta_t^{p+1}$ satisfies the bound~\eqref{eqn:Theta:Q} for all $h\in L^2(\R^n)$, and that there is some subspace $H'\subset L^2(\R^n\mapsto \C^p)$ such that $\Theta_t'$ satisfies the bound~\eqref{eqn:Theta:Q} for all $\arr h\in H'$.

We define the $\mathcal{C},\delta$-norm as
\begin{equation*}\doublebar{\Upsilon_t}_{\mathcal{C},\delta}^2 = \sup_{\ell(Q)>\delta} \frac{1}{\abs{Q}} \int_\delta^{\min(\ell(Q),1/\delta)} \int_Q \abs{\Upsilon_t(x)}^2\frac{dx\,dt}{t}.\end{equation*}
Suppose that
\begin{align}
\label{eqn:Theta:T1}
\doublebar{\Theta'_t 1}_{\mathcal{C},\delta}
&\leq C_1 + C_1\doublebar{\Theta^{p+1}_t 1}_{\mathcal{C},\delta}
\qquad\text{for all $\delta>0$ small enough}
\end{align}
where $1$ denotes either the $p\times p$ identity matrix or the number one.

Suppose that for each dyadic cube $Q\subset\R^n$, we have a measure $\mu_Q$ such that
\begin{equation}
\label{eqn:mu}
d\mu_Q=\phi_Q\,dx,\qquad \doublebar{\nabla\phi_Q}_{L^\infty(\R^n)}\leq C_0 \ell(Q)^{-1}, \qquad \frac{1}{C_0}\leq \phi_Q \text{ on }Q.\end{equation}

Suppose further that for each such $Q$ there exists a vector-valued function $\arr b_Q=(\arr b_Q',b_Q^{p+1})\in L^2(\R^n)\times H'$ such that
\begin{align}
\label{eqn:b:carleson}
\int_0^{\ell(Q)} \int_Q \abs{\Theta_t \arr b_Q(x)}^2\frac{dx\,dt}{t}
&\leq C_0\abs{Q}
,\\
\label{eqn:b:L2}
\int_{\R^n} \abs{\arr b_Q(x)}^2\,dx
&\leq C_0\abs{Q}
,\\
\label{eqn:b:below}
\re \fint_Q b_Q^{p+1} \,d\mu_Q &\geq \sigma
,\\
\label{eqn:b:above}
\abs[bigg]{\fint_Q \arr b_Q' \,d\mu_Q} &\leq \eta \sigma, && \eta\leq 1/(2C_1+4)
.\end{align}

Then for all $ f\in H'\times L^2(\R^n)$,
\begin{equation}\label{eqn:Theta:square}
\int_0^\infty\int_{\R^n} \abs{\Theta_t  f(x)}^2\frac{dx\,dt}{t}
\leq C\doublebar{f}_{L^2(\R^n)}^2
.\end{equation}
\end{thm}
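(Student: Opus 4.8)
The plan is to deduce the square function estimate \eqref{eqn:Theta:square} from the $T1$ theorem, Theorem~\ref{thm:grau:hofmann:1}, applied to the full operator $\Theta_t$ on the subspace $H=H'\times L^2(\R^n)$. Indeed, the off-diagonal bound \eqref{eqn:Theta:decay} is assumed for $\Theta_t$, and the quasi-orthogonality estimate \eqref{eqn:Theta:Q} holds on $H$, since it is assumed for $\Theta_t^{p+1}$ on all of $L^2(\R^n)$ and for $\Theta_t'$ on $H'$; thus the conclusion \eqref{eqn:Theta:1:square} of Theorem~\ref{thm:grau:hofmann:1}, which is exactly \eqref{eqn:Theta:square}, will follow once we establish the Carleson condition \eqref{eqn:Theta:1:carleson}, that is, $\int_0^{\ell(Q)}\int_Q \abs{\Theta_t 1}^2\,\frac{dx\,dt}{t}\le C\abs{Q}$ for all dyadic cubes~$Q$, where $1$ stands for the identity matrix. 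By hypothesis \eqref{eqn:Theta:T1} it is enough to bound $\doublebar{\Theta^{p+1}_t 1}_{\mathcal{C},\delta}$ by a constant independent of~$\delta$ and then let $\delta\to0^+$: the bound on the last component controls $\doublebar{\Theta'_t 1}_{\mathcal{C},\delta}$ through \eqref{eqn:Theta:T1}, and letting $\delta\to 0$ removes the truncation. So the whole argument reduces to a \emph{uniform in $\delta$} Carleson bound for the single scalar function $\Theta^{p+1}_t 1$, and this is where the pseudo-accretive system $\{\arr b_Q\}$ enters.

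I would first record an a priori bound: by Remark~\ref{rmk:Theta:L2} the operators $\Theta_t$ are uniformly bounded on $L^2(\R^n)$, and since the $\mathcal{C},\delta$-norm only involves $t$ in the range $(\delta,1/\delta)$, splitting this range into $O(\log(1/\delta))$ dyadic blocks and combining the $L^2$ bounds with the off-diagonal decay \eqref{eqn:Theta:decay} (to handle the part of $1$ far from a given cube) shows that $\doublebar{\Theta^{p+1}_t 1}_{\mathcal{C},\delta}<\infty$ for each fixed $\delta>0$, with a constant that may blow up as $\delta\to0$. This finiteness is what makes the absorption step below legitimate.

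Next comes the core estimate, a local $Tb$/stopping-time argument. Fix a dyadic cube~$Q$. Using \eqref{eqn:b:below} and \eqref{eqn:b:L2}, the average $\lambda_Q=\fint_Q b_Q^{p+1}\,d\mu_Q$ satisfies $\re\lambda_Q\ge\sigma$ and $\abs{\lambda_Q}\le C$. The algebraic identity $\lambda_Q\,\Theta^{p+1}_t 1=\Theta_t\arr b_Q-\Theta_t(\arr b_Q-\arr c_Q)-\Theta'_t\bigl(\fint_Q\arr b_Q'\,d\mu_Q\bigr)$, where $\arr c_Q$ is the constant vector with last component $\lambda_Q$ and first $p$ components $\fint_Q\arr b_Q'\,d\mu_Q$, reduces the estimate of $\Theta^{p+1}_t 1$ over the Carleson box of~$Q$ to three pieces. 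The term $\Theta_t\arr b_Q$ is controlled directly by \eqref{eqn:b:carleson}. The wrong-slot term $\Theta'_t\bigl(\fint_Q\arr b_Q'\,d\mu_Q\bigr)$ is $\Theta'_t$ applied to a \emph{constant} vector of length at most $\eta\sigma$ by \eqref{eqn:b:above}; hence, in the $L^2(\frac{dx\,dt}{t})$ sense over the box, it is bounded by $\eta\sigma\doublebar{\Theta'_t 1}_{\mathcal{C},\delta}\abs{Q}^{1/2}$, which by \eqref{eqn:Theta:T1} is $\le \eta\sigma C_1(1+\doublebar{\Theta^{p+1}_t 1}_{\mathcal{C},\delta})\abs{Q}^{1/2}$. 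Finally, the oscillation term $\Theta_t(\arr b_Q-\arr c_Q)$ — together with the passage from the genuine constant $\arr c_Q$ to a compactly supported, $\mu_Q$-adapted replacement, which is where the weights of \eqref{eqn:mu} are used — is handled by a stopping-time decomposition of~$Q$ adapted to $\arr b_Q$ (stopping when a dyadic subcube's $\mu_Q$-average of $\abs{\arr b_Q}$ or of $\re b_Q^{p+1}$ first misbehaves, or when its side length drops below~$\delta$); a John--Nirenberg/Chebyshev argument using \eqref{eqn:b:L2} and \eqref{eqn:mu} gives the stopping cubes a packing bound with constant $<1$, and the quasi-orthogonality \eqref{eqn:Theta:Q} combined with a Cotlar--Stein/tent-space argument bounds the corresponding square function over the sawtooth by $C\doublebar{\arr b_Q}_{L^2}^2\le CC_0\abs{Q}$, using crucially that $\arr b_Q$ has its first $p$ components in $H'$ and its last in $L^2(\R^n)$, i.e. $\arr b_Q\in H$, so that \eqref{eqn:Theta:Q} applies. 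Collecting the three pieces, pairing against $\Theta^{p+1}_t 1$ itself so that the dependence on $\doublebar{\Theta'_t 1}_{\mathcal{C},\delta}$ stays \emph{linear}, and using $\abs{\lambda_Q}\ge\sigma$, one obtains an inequality of the form $\doublebar{\Theta^{p+1}_t 1}_{\mathcal{C},\delta}\le C+\eta C_1\doublebar{\Theta^{p+1}_t 1}_{\mathcal{C},\delta}$; since $\eta\le 1/(2C_1+4)$ forces $\eta C_1<\tfrac12$, the a priori finite quantity $\doublebar{\Theta^{p+1}_t 1}_{\mathcal{C},\delta}$ may be absorbed, giving a bound independent of~$\delta$. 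As explained, Theorem~\ref{thm:grau:hofmann:1} then yields \eqref{eqn:Theta:square}.

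I expect the main obstacle to be this last, core step: organizing the stopping time so that the three effects — the localization error incurred in replacing the true constant by the $\mu_Q$-adapted one, the oscillation of $\arr b_Q$, and the feedback from the wrong-slot component — close \emph{simultaneously}, with the feedback entering the relevant Carleson-type norms only linearly and with the small coefficient $\eta C_1<1$ coming from \eqref{eqn:b:above} and \eqref{eqn:Theta:T1}. Making the oscillation and localization estimates precise requires a careful Littlewood--Paley decomposition adapted to the sawtooth region over the stopping cubes, and it is here that the full force of the hypotheses — the smooth weights $\mu_Q$ of \eqref{eqn:mu}, the $L^2$ normalization \eqref{eqn:b:L2}, the Carleson bound \eqref{eqn:b:carleson}, and the smallness in \eqref{eqn:b:above} — is needed, and where the constant-chasing must be done with care.
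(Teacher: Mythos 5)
The paper does not actually prove this theorem: it is quoted verbatim as Theorem~2.13 of \cite{GraH14p} and used as a black box, so there is no in-house proof against which to compare your attempt. What you have written is a plausible outline of the Hofmann--Grau de la Herran local $Tb$ machinery, and your overall strategy — reduce the conclusion to the Carleson hypothesis of Theorem~\ref{thm:grau:hofmann:1}, test $\Theta_t$ against $\arr b_Q$, split off a constant $\arr c_Q$ whose last slot is $\lambda_Q=\fint_Q b_Q^{p+1}\,d\mu_Q$ and whose first $p$ slots are $\fint_Q\arr b_Q'\,d\mu_Q$, bound the wrong-slot contribution by $\eta\sigma$ times the Carleson norm of $\Theta_t'1$, run a stopping-time/John--Nirenberg argument on the oscillation, and absorb — is the correct shape of the argument and correctly identifies the role of each hypothesis \eqref{eqn:mu}--\eqref{eqn:b:above}.

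That said, two points in your write-up are not yet closed and are more than bookkeeping. First, your final self-improvement inequality $\doublebar{\Theta_t^{p+1}1}_{\mathcal{C},\delta}\le C+\eta C_1\doublebar{\Theta_t^{p+1}1}_{\mathcal{C},\delta}$ omits the contribution of the Carleson boxes sitting over the stopping cubes, which (by the packing bound and the definition of the truncated Carleson norm as a supremum over all dyadic cubes) feeds back as an additional $\epsilon\doublebar{\Theta_t^{p+1}1}_{\mathcal{C},\delta}$ term (or $\epsilon\doublebar{\Theta_t^{p+1}1}_{\mathcal{C},\delta}^2$ in the quadratic version), where $\epsilon<1$ is the packing constant. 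Absorption therefore needs $\eta C_1+\epsilon<1$, not just $\eta C_1<1/2$; one must actually arrange $\epsilon<1/2$ from the stopping thresholds, and if one uses the naive triangle inequality rather than the pairing you describe, the constant $3\cdot(\eta C_1)^2$ is not automatically $<1$ — so the pairing step is not optional, and the $\epsilon$ must appear explicitly. Second, the estimate $\doublebar{\Theta_t(\arr b_Q-\arr c_Q)}^2_{L^2(\text{sawtooth},\,dx\,dt/t)}\le C\doublebar{\arr b_Q}_{L^2}^2$ is the heart of the matter and is asserted rather than proved; it is a genuine tent-space/Carleson-embedding lemma that needs the quasi-orthogonality \eqref{eqn:Theta:Q} on $H'\times L^2$, the off-diagonal decay \eqref{eqn:Theta:decay}, and a careful treatment of the error made in replacing the true constant $\arr c_Q$ by a $\mu_Q$-weighted, compactly supported version (this is where the Lipschitz bound on $\phi_Q$ in \eqref{eqn:mu} is used). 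You correctly flag this as the main obstacle, but as written it remains a gap rather than a proof.
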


This theorem is a local $Tb$ theorem; that is, we may test $\Theta_t \arr b_Q$ near~$Q$, for some $\arr b_Q$ adapted to our particular cube~$Q$, rather than testing $\Theta_t \arr 1$ in an arbitrary cube. There is an extensive body of work devoted to generalizing $T1$ theorems to $Tb$ theorems and local $Tb$ theorems; see, for example, the survey paper \cite{Hof10}, and in particular \cite{McIM85,DavJS85,Sem90,Chr90} for a few of the important milestones of the theory.

As mentioned above, we will establish the bound~\eqref{eqn:Theta:T1} in Section~\ref{sec:D:carleson}, for $\Theta_t^{p+1}=\Theta_t^\perp$ and $\Theta_t'=(\Theta_t^D,\Theta_t^{S'})$, provided $2m>n$. We will construct the measure $\mu_Q$ and test functions $\arr b_Q=(\arr b_Q^S,\arr b_Q^D)$ at the beginning of Section~\ref{sec:b}, and therein will establish the estimates~\eqref{eqn:b:carleson}; we will establish the bounds~\eqref{eqn:b:L2}, \eqref{eqn:b:below} and~\eqref{eqn:b:above} in Sections~\ref{sec:b:D} and~\ref{sec:b:S}. The assumption $2m>n$ will be useful in Section~\ref{sec:b} as well as Section~\ref{sec:D:carleson}. This will allow us to bound $\Theta_t^\perp$, and so together with Lemma~\ref{lem:square:Theta} will complete the proof of Theorem~\ref{thm:square} in the case $2m>n$. We will extend to the case $2m\leq n$ in Section~\ref{sec:high}.

\section{The decay estimate \myeqref{eqn:Theta:decay}}
\label{sec:decay}

In this section, we will show that the operators $\Theta_t^S$ and $\Theta_t^D$ satisfy the bound~\eqref{eqn:Theta:decay} for all $\arr g^j$ in $L^2(A_j(Q))$.

By formula~\eqref{dfn:Theta:S:integral} for $\Theta_t^S$,
\begin{align*}
\int_Q \abs{ \Theta_t^S\arr g^j}^2
&=
	\int_Q t^{2k} \abs[bigg]{\sum_{\abs\gamma=m-1}\int_{A_{j}(Q)} \partial_t^{m+k} \partial^\gamma_{y,s} E^L(x,t,y,0)\,g_\gamma(y)\,dy}^2\,dx
.\end{align*}
By H\"older's inequality
\begin{align*}
\int_Q \abs{ \Theta_t^S\arr g^j}^2
&\leq
	C\doublebar{\arr g}_{L^2(A_j(Q))}\int_Q t^{2k} \int_{A_{j}(Q)} \abs{\partial_t^{m+k} \nabla^{m-1}_{y,s} E^L(x,t,y,0)}^2\,dy\,dx
.\end{align*}
Finally, by the bound~\eqref{eqn:fundamental:slices} on the fundamental solution,
\begin{align*}
\int_Q \abs{ \Theta_t^S\arr g^j}^2
&\leq
	C 2^{-j(2k-1+\varepsilon)}
	\doublebar{\arr g}_{L^2(A_j(Q))}^2
.\end{align*}
Thus, if $k$ is large enough then the operator $\Theta_t^S$ satisfies bound~\eqref{eqn:Theta:decay}.

\begin{rmk}
Suppose that $\arr g^j(z)=\partial_{z_i} \arr h^j(z)$ for some $1\leq i\leq n$ and some $\arr h^j$ supported in $A_{j,1}(Q)$. Then
\begin{equation*}
\int_Q \abs{ \Theta_t^S(\partial_i \arr h^j)}^2
=
	\int_Q t^{2k} \abs[bigg]{\sum_{\abs\gamma=m-1}\int_{A_{j,1}(Q)} \partial_t^{m+k} \partial^\gamma_{y,s} E^L(x,t,y,0)\,\partial_{y_i} h_\gamma^j(y)\,dy}^2\,dx
.\end{equation*}
Integrating by parts in~$y_i$, and applying the bound~\eqref{eqn:fundamental:slices}, we see that
\begin{equation*}
\int_Q \abs{ \Theta_t^S(\partial_i \arr h^j)}^2
\leq
	\frac{C}{t^2}2^{-j(2k+1+\varepsilon)}\doublebar{\arr h^j}_{L^2(A_{j,1}(Q))}^2
.\end{equation*}
In particular, for any $\arr h\in L^2(\R^n)$, we have the uniform $L^2$ estimate
\begin{equation}\label{eqn:S:div}
\doublebar{\Theta_t^S (\partial_{i} \arr h)}_{L^2(\R^n)} \leq \frac{C}{t} \doublebar{\arr h}_{L^2(\R^n)},\qquad 1\leq i\leq n.\end{equation}
This formula will be useful in Section~\ref{sec:Q}.
\end{rmk}

We now wish to show that $\Theta_t^D$ satisfies the decay estimate~\eqref{eqn:Theta:decay}, that is, that
\begin{align*}
\doublebar{\Theta_t^D \arr f^j}_{L^2(Q)}
&\leq C 2^{-j(n+2+\theta)/2} \doublebar{\arr f^j}_{L^2(A_j(Q))}
\qquad\text{for all }\ell(Q)\leq t\leq 2\ell(Q)
\end{align*}
for {all} $\arr f^j\in L^2(\R^n)$ and supported in $A_j(Q)$.

Choose some dyadic cube $Q$ and some $t$ with $\ell(Q)\leq t<2\ell(Q)$.
Suppose first that $\arr f^j\in \mathfrak{D}$, where $\mathfrak{D}$ is as in Remark~\ref{rmk:potentials:defined}, and is supported in $A_{j,1}(Q)$. Recall that $\widetilde\D^{\mat A}\arr f^j$ (and thus $\Theta_t^D\arr f^j$) is defined in terms of extensions $F^j$ of~$\arr f^j$. Thus, we begin by choosing an appropriate extension. Let $H^j$ be the function given by Lemma~\ref{lem:extension}; we then have that
\begin{equation*}\Tr_{m,\semiH}^- H^j=\arr f^j,
\qquad \sup_{t<0}\doublebar{\nabla^m H^j(\,\cdot\,,t)}_{L^2(\R^n)}
\leq C \doublebar{\arr f^j}_{L^2(\R^n)}.\end{equation*}
We may assume without loss of generality that $\Trace F^j\equiv 0$ outside of $2^{j+2}Q$.
Let $\eta_j(x,t)$ be smooth and satisfy the bound $\abs{\nabla^i\eta_j(x,t)}\leq C_i (2^{j}\ell(Q))^{-i}$, with
\begin{align*}
\eta_j(x,t)&= 1
	\text{ if }x\in 2^{j+2}Q\text{ and }-2^j\ell(Q)<t<2^j\ell(Q),
\\
\eta_j(x,t)&= 0 \text{ if }x\notin 2^{j+3}Q \text{ or } \abs{t}>2^{j+1}\ell(Q).
\end{align*}
Then $\Tr_{m,\semiH}^-(\eta_j H^j)= \Tr_{m,\semiH}^- H^j =\arr f^j$. We take $F^j=\eta_j\,H^j$. Observe that if $j\geq 2$, then $\nabla^m F^j(x,t) = \nabla^m H^j(x,t) =0$ if $\abs{t}<\dist(x,\R^n\setminus 2^{j-1}Q)$. Furthermore, we still have the bound
\begin{equation*}\sup_{t<0}\doublebar{\nabla^m F^j(\,\cdot\,,t)}_{L^2(\R^n)}
\leq C \doublebar{\arr f^j}_{L^2(\R^n)}.\end{equation*}

Now, by the definition~\eqref{dfn:Theta:D} of~$\Theta_t^D$ and by formulas~\eqref{dfn:D:tilde}, and~\eqref{eqn:D:fundamental}
\begin{equation*}\int_Q \abs{\Theta_t^D \arr f^j}^2
=
t^{2k}\int_Q \abs[bigg]{\sum_{\abs{\beta}=m} \int_{\R^\dmn_-} \partial_t^{m+k} \partial_{y,s}^\beta E^L(x,t,y,s)\,(\mat A\nabla^m F^j)_\beta (y,s)\,ds\,dy}^2\,dx
.\end{equation*}

Applying the bound~\eqref{eqn:fundamental:slices}, we see that
\begin{align*}
\int_Q \abs{\Theta_t^D \arr f^j}^2
&\leq
	C 2^{-j(2k+\varepsilon)}
	\doublebar{\arr f^j}_{L^2(\R^n)}^2
\end{align*}
for all $\arr f^j\in \mathfrak{D}$ supported in $A_{j,1}(Q)$; by density we may extend to all $\arr f \in \dot W\!A^2_{m,\semiH}(\R^n)$ supported in $A_{j,1}(Q)$.

\subsection{Extending \texorpdfstring{$\Theta_t^D$}{} to all of \texorpdfstring{$L^2(\R^n)$}{L2}}
\label{sec:extension}

We now must extend $\Theta_t^D$ to an operator defined on all of $L^2(\R^n)$ that still satisfies the estimate~\eqref{eqn:Theta:decay}. Essentially, this argument consists of defining a projection operator from $L^2(\R^n)$ to $\dot W\!A^2_{m,\semiH}(\R^n)$, the space on which $\Theta_t^D$ naturally acts.

Because $L^2(\R^n)$ is a Hilbert space, there is an orthogonal projection operator $O_W:L^2(\R^n) \mapsto \dot W\!A^2_{m,\semiH}(\R^n)$. For example, if $m=1$ then $O_W\vec f= \nabla_\pureH u$, where $\Delta_\pureH u= \nabla_\pureH \cdot \vec f$. This is the most natural mapping from $L^2(\R^n)$ to $\dot W\!A^2_{m,\semiH}(\R^n)$; however, this mapping does not satisfy adequate decay estimates. Thus, we must refine this mapping by applying cutoffs before and after projecting.

Let $W_j$ be the closure in $L^2(\R^n)$ of
\begin{equation*}\{\1_{2^{j+2}Q} \Tr_{m-1}\varphi + (1-\1_{2^{j+2}Q}) \arr f: \varphi \in C^\infty_0,\>\arr f\in L^2(\R^n)\}.\end{equation*}
Loosely, elements of $W_j$ are higher-order traces in the cube $2^{j+2}Q$ and are merely arbitrary $L^2$ arrays outside of that cube. Let $O_j$ denote orthogonal projection from $L^2(\R^n)$ onto the subspace $W_j$; observe that $O_j \arr f=\arr f$ outside of $2^{j+2}Q$. Furthermore, if $\varphi$ is a nice function then $O_j (\Tr_{m-1}\varphi)=\Tr_{m-1}\varphi$.

Let $\eta_j$ be a smooth partition of unity; that is, $\sum_j\eta_j(x,t)=1$ for $t$ near zero, with $\eta_j$ supported in $A_{j,1}(Q)\times (-2^j\ell(Q),2^j\ell(Q))$ and satisfying $\abs{\nabla^i\eta_j(x,t)}\leq C 2^{-ij}\ell(Q)^{-i}$ for all $(x,t)\in\R^\dmn$.

Define $\pi_j:W_j \mapsto \dot W\!A^2_{m,\semiH}(\R^n)$ as follows. Suppose that $\arr f=\Tr_{m,\semiH}\varphi$ in $2^{j+2}Q$ for some smooth function~$\varphi$.
We may renormalize $\varphi$ so that $\int_{Q} \Trace\partial^\zeta\varphi=0$ for all $\abs{\zeta}\leq m-1$. Let $\pi_j\arr f=\Tr_{m,\semiH}(\eta_j\varphi)$. We remark that $\pi_j\arr f$ is well-defined, that is, $\Tr_{m,\semiH}(\eta_j\varphi)$ depends only on $\Tr_{m,\semiH}\varphi$. Furthermore, observe that $\pi_j\arr f$ is supported in $A_{j,1}(Q)$. Finally, by the Poincar\'e inequality
\begin{equation*}\doublebar{\pi_j\arr f}_{L^2(A_{j,1}(Q))}
\leq C 2^{jn/2}\doublebar{\Tr_{m,\semiH}\varphi}_{L^2(2^{j+2}Q)}
= C 2^{jn/2}\doublebar{\arr f}_{L^2(2^{j+2}Q)}.\end{equation*}

We will extend to an operator on all of $L^2(\R^n)$ using the orthogonal projection operators $O_j$. Observe, first, that $\pi_j O_j\arr f=0$ outside of $A_{j,1}(Q)$, and second, that if $\arr f=0$ in $2^{j+2}Q$ then $O_j\arr f=\arr f$ and so $\pi_jO_j\arr f=0$.

If $\varphi$ is smooth and compactly supported, and renormalized as above, then
\begin{equation*}\Tr_{m,\semiH}\varphi
= \sum_{j=0}^\infty \Tr_{m,\semiH}(\eta_j\varphi)
= \sum_{j=0}^\infty \pi_j (\Tr_{m-1}\varphi)
= \sum_{j=0}^\infty \pi_j (O_j(\Tr_{m-1}\varphi))
.\end{equation*}

We define $\Theta_t^D \arr f = \sum_{j=0}^\infty\Theta_t^D (\pi_j O_j\arr f)$. Now, if $\arr f^j$ is supported in $A_j(Q)$, observe that $\pi_i O_i\arr f=0$ for all $i\leq j-2$, and furthermore that $\doublebar{\pi_i O_i\arr f^j}_{L^2(\R^n)}\leq C2^{in/2}\doublebar{\arr f^j}_{L^2(A_j(Q))}$ for all $i\geq j-1$. Then
\begin{align*}\doublebar{\Theta_t^D \arr f^j}_{L^2(Q)}
&\leq
	\sum_{i=j-1}^\infty \doublebar{\Theta_t^D (\pi_i O_i\arr f^j)}_{L^2(Q)}
\leq
	C \sum_{i=j-1}^\infty 2^{-i(k+\varepsilon/2)} \doublebar{\pi_i O_i\arr f^j}_{L^2(A_{i,1}(Q))}
\\&\leq
	C \sum_{i=j-1}^\infty 2^{-i(k+\varepsilon/2)} 2^{in/2}\doublebar{\arr f^j}_{L^2(A_j(Q))}\end{align*}
and so $\Theta_t^D$ satisfies the decay estimate~\eqref{eqn:Theta:decay} provided $k$ is large enough.

\section{The quasi-orthogonality estimate \myeqref{eqn:Theta:Q}}
\label{sec:Q}

A family of operators $\{Q_s\}_{s>0}$ is defined to be a Calder\'on-Littlewood-Paley family, or CLP family, if
\begin{equation*}Q_s f(x) = \int_{\R^n} s^{-n}\,\varphi(y/s) \,f(x-y)\,dy\end{equation*}
for some $\varphi\in L^1(\R^n)$ that satisfies the conditions
\begin{equation}\label{eqn:Q:kernel}
\abs{\widehat\varphi(\xi)}\leq C\min(\abs{\xi}^\sigma,\abs{\xi}^{-\sigma}),
\qquad \abs{\varphi(x)}\leq C(1+\abs{x})^{-n-\sigma}\end{equation}
for some $\sigma>0$, where $\widehat\varphi$ denotes the Fourier transform of~$\varphi$,
and such that $Q$ satisfies the conditions
\begin{align}
\label{eqn:Q:bounds}
\doublebar{Q_sf}_{L^2(\R^n)}+\doublebar{s\nabla Q_sf}_{L^2(\R^n)}&\leq C \doublebar{f}_{L^2(\R^n)}\quad\text{for all }s>0
,\\ \label{eqn:Q:square}
\int_{\R^n}\int_0^\infty \abs{Q_s f(x)}^2 \frac{ds\,dx}{s}&\leq C\doublebar{f}_{L^2(\R^n)}^2,
\\ \label{eqn:Q:identity}
\int_0^\infty Q_s^2\frac{ds}{s}=I
\end{align}
where convergence to the identity in the last formula is in the strong operator topology on $\mathcal{B}(L^2(\R^n))$.

We remark that, up to multiplying $\varphi$ by a constant, it suffices to require that $\varphi$ satisfy the bounds \ref{eqn:Q:kernel}, the bound $\abs{\widehat\varphi(\xi)}\leq C\abs{\xi}^{-1}$, be radial, and be real-valued and not identically zero.

Specifically, notice that
\begin{equation*}\widehat{Q_s f}(\xi) = \widehat\varphi(s\,\xi)\,\widehat f(\xi).\end{equation*}
Given this relation, the estimates \eqref{eqn:Q:bounds} and~\eqref{eqn:Q:square} follow from the estimate $\abs{\widehat\varphi(\xi)}\leq C\min(\abs{\xi}^\sigma,\abs{\xi}^{-1})$ by Plancherel's theorem. To establish the identity~\eqref{eqn:Q:identity}, we normalize $\varphi$ as follows. Because $\varphi$ is real-valued, so is~$\widehat\varphi$. Thus, the integral
\begin{equation*}\int_0^\infty \widehat \varphi(s\,\xi)^2\, \frac{1}{s}\,ds\end{equation*}
is independent of~$\xi$ provided $\xi\neq 0$. If $\varphi$ is both radial and real-valued, then $\widehat\varphi$ is real-valued, and so this integral is positive. We may normalize $\varphi$ so that this integral equals~$1$.
It is then straightforward to establish the condition~\eqref{eqn:Q:identity}.

In this section, let $\widehat\varphi$ be bounded, radial and supported in $B(0,2)\setminus B(0,1/2)$. (In Section~\ref{sec:kato} we will use a CLP family again; in that section it will be more convenient to take~$\varphi$, rather than~$\widehat\varphi$, compactly supported.)
We wish to establish the bound~\eqref{eqn:Theta:Q}, for the operators $\Theta_t=\Theta_t^S$ or $\Theta_t=\Theta_t^D$.
We proceed as in \cite{GraH14p}.

We begin with $\Theta_t^S$. Fix some $\arr h\in L^2(\R^n)$; we wish to bound $\Theta_t^S Q_s\arr h$.
For each $1\leq j\leq n$, let $f^j_\gamma$ satisfy
\begin{equation*}\widehat {f^j_\gamma}(\xi) = \frac{\xi_j}{2\pi i\abs{\xi}^2} \widehat{Q_s h_{\gamma}}(\xi) = \frac{\xi_j}{2\pi i\abs{\xi}^2} \, \widehat{\varphi}(s\,\xi)\,\widehat h_{\gamma}(\xi).\end{equation*}
Then $Q_s h_{\gamma} = \sum_{j=1}^n \partial_j f^j_\gamma$, and so
\begin{equation*}\Theta_t^S Q_s\arr h(x)=\sum_{j=1}^n \Theta_t^S (\partial_j \arr f^j)(x).\end{equation*}
By the bound~\eqref{eqn:S:div},
\begin{equation*}\doublebar{\Theta_t^S Q_s\arr h}_{L^2(\R^n)}
\leq \sum_{j=1}^n \doublebar{\Theta_t^S (\partial_j \arr f^j)}_{L^2(\R^n)}
\leq C\frac{1}{t}\sum_{j=1}^n \doublebar{\arr f^j}_{L^2(\R^n)}
.\end{equation*}
Notice that $\abs{\widehat\varphi(\xi)}\leq C\abs{\xi}$, and so $\doublebar{\arr f}_{L^2(\R^n)}\leq C s \doublebar{\arr h}_{L^2(\R^n)}$. Thus,
\begin{equation*}\doublebar{\Theta_t^S Q_s\arr h}_{L^2(\R^n)}
\leq C\frac{s}{t}\doublebar{\arr h}_{L^2(\R^n)}.\end{equation*}
Therefore, $\Theta_t^S$ satisfies the bound~\eqref{eqn:Theta:Q} for $\theta=1$ (and thus for any $\theta\leq 1$).

We now consider $\Theta_t^D$.
Let the subspace $H$ be $\dot W\!A^2_{m,\semiH}(\R^n)$; recall that this is the natural space upon which $\widetilde \D^{\mat A}$ and $\Theta_t^D$ act. It suffices to show that
\begin{equation*}\doublebar{\Theta_t^D Q_s \Tr_{m,\semiH}\varphi}_{L^2(\R^n)}\leq C\biggl(\frac{s}{t}\biggr)^\theta \doublebar{\Tr_{m,\semiH}\varphi}_{L^2(\R^n)}\end{equation*}
for some $\theta>0$ and for all smooth, compactly supported functions~$\varphi$.
To establish this bound, we begin with the following lemma.
\begin{lem}\label{lem:Q:Whitney} If $\varphi$ is smooth and compactly supported, then
\begin{equation*}Q_s(\Tr_{m,\semiH}\varphi) = \Tr_{m,\semiH} \Phi_s\end{equation*}
for some function $\Phi_s$ that satisfies
\begin{equation*}\doublebar{\nabla^m\Phi_s}_{L^2(\R^\dmn_-)}\leq C \sqrt{s} \doublebar{\Tr_{m,\semiH}\varphi}_{L^2(\R^n)}.\end{equation*}
\end{lem}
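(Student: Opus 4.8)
The plan is to exploit that $Q_s$ is a convolution operator acting only in the tangential variables $x\in\R^n$, so that it commutes with the Dirichlet trace $\Tr_{m-1}$ and with each tangential derivative $\partial_{x_j}$, $1\le j\le n$. Accordingly, I would \emph{define} $\Phi_s$ to be the extension into $\R^\dmn_-$ furnished by Lemma~\ref{lem:extension} applied to the Dirichlet array $Q_s(\Tr_{m-1}\varphi)$, so that $\Tr_{m-1}\Phi_s=Q_s(\Tr_{m-1}\varphi)$ and $\Phi_s$ is smooth. Then $\Tr_{m,\semiH}\Phi_s$ is classically defined, and for each index $\beta$ with $\abs{\beta}=m$ and $\beta_\dmn<m$ we may pick $j\le n$ with $\beta_j>0$ and compute, using the definition~\eqref{eqn:Dirichlet:tangential} of $\Tr_{m,\semiH}$ and the commutation of $\partial_{x_j}$ with $Q_s$,
\[
(\Tr_{m,\semiH}\Phi_s)_\beta=\partial_{x_j}\Trace\partial^{\beta-\vec e_j}\Phi_s
=\partial_{x_j}(\Tr_{m-1}\Phi_s)_{\beta-\vec e_j}
=\partial_{x_j}Q_s(\Tr_{m-1}\varphi)_{\beta-\vec e_j}
=Q_s(\Tr_{m,\semiH}\varphi)_\beta .
\]
This is precisely the desired identity $Q_s(\Tr_{m,\semiH}\varphi)=\Tr_{m,\semiH}\Phi_s$, so it remains only to verify the bound on $\nabla^m\Phi_s$.

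The heart of the matter is a single frequency-localization estimate. By the bound~\eqref{eqn:extension:L2:space} applied to $\arr f=Q_s(\Tr_{m-1}\varphi)$, and since $\widehat{Q_s g}(\xi)=m_s(\xi)\,\widehat g(\xi)$ for the Fourier multiplier $m_s$ of $Q_s$,
\[
\doublebar{\nabla^m\Phi_s}_{L^2(\R^\dmn_-)}^2
\le C\int_{\R^n}\abs{\xi}\,\abs{m_s(\xi)}^2\,\abs{\widehat{\Tr_{m-1}\varphi}(\xi)}^2\,d\xi .
\]
Now $m_s$ is bounded and, because in this section the kernel of $Q_s$ has Fourier transform supported in $B(0,2)\setminus B(0,1/2)$, $m_s$ vanishes off the annulus $\{1/2\le s\abs{\xi}\le 2\}$, on which $\abs{\xi}\le 2s\abs{\xi}^2$. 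Hence
\[
\doublebar{\nabla^m\Phi_s}_{L^2(\R^\dmn_-)}^2
\le Cs\int_{\R^n}\abs{\xi}^2\,\abs{\widehat{\Tr_{m-1}\varphi}(\xi)}^2\,d\xi
= Cs\,\doublebar{\nabla_\pureH\Tr_{m-1}\varphi}_{L^2(\R^n)}^2 .
\]
Finally, for $1\le j\le n$ and $\abs{\gamma}=m-1$ the function $\partial_{x_j}(\Tr_{m-1}\varphi)_\gamma$ is exactly the component $(\Tr_{m,\semiH}\varphi)_{\gamma+\vec e_j}$, and each index of $\Tr_{m,\semiH}\varphi$ arises in at most $n$ such ways, so $\doublebar{\nabla_\pureH\Tr_{m-1}\varphi}_{L^2(\R^n)}\le C\doublebar{\Tr_{m,\semiH}\varphi}_{L^2(\R^n)}$; taking square roots gives the asserted estimate.

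The one point that requires care — and which I expect to be the main, purely technical, obstacle — is that $Q_s(\Tr_{m-1}\varphi)$ is not literally the Dirichlet trace of a compactly supported function, so Lemma~\ref{lem:extension} does not apply verbatim. However $Q_s(\Tr_{m-1}\varphi)=\Tr_{m-1}(Q_s\varphi)$, and $Q_s\varphi$ is smooth on $\R^\dmn$ and decays, together with all its derivatives, at the (polynomial) rate of the kernel of $Q_s$ in the tangential variables; the explicit mollified extension and the Plancherel-based estimates in the proof of Lemma~\ref{lem:extension} go through unchanged for such data. (Equivalently, one observes that the linear map $\arr f\mapsto\Phi_s$ there extends by density to all arrays $\arr f$ with $\int_{\R^n}\abs{\xi}\,\abs{\widehat{\arr f}(\xi)}^2\,d\xi<\infty$, which holds here since $\Tr_{m-1}\varphi$ is a Schwartz function and $m_s$ is bounded.) Apart from this bookkeeping there is no analytic difficulty: the gain of the factor $\sqrt{s}$ is entirely a consequence of the Fourier support of the kernel of $Q_s$ lying in an annulus of radius comparable to $1/s$, which is what produces the gain in passing from the $L^2$ norm of $\Tr_{m,\semiH}\varphi$ to the fractional-order norm on the right-hand side of~\eqref{eqn:extension:L2:space}.
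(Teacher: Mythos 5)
Your proposal is correct and follows essentially the same route as the paper: extend $Q_s(\Tr_{m-1}\varphi)$ via Lemma~\ref{lem:extension}, verify by commutation that this pushes $Q_s$ past $\Tr_{m,\semiH}$, and extract the $\sqrt{s}$ gain from the annular Fourier support of the kernel of $Q_s$ together with the estimate~\eqref{eqn:extension:L2:space}. The only cosmetic difference is that the paper does the Fourier bookkeeping component-by-component (through the functions $f^j=\partial_\dmn^j\varphi(\cdot,0)$ with the weights $\abs{\xi}^{2m-2j-1}$), while you absorb those weights into $\widehat{\Tr_{m-1}\varphi}$ and run the argument on the whole array; and your closing paragraph addresses head-on the mild issue of applying Lemma~\ref{lem:extension} to noncompactly supported data, which the paper disposes of by citing general Sobolev-extension theorems.
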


\begin{proof}
For each $j$ with $0\leq j\leq m-1$, let $f^j(x)=\partial_\dmn^j \varphi(x,0)$. Observe that if $\beta$ is a multiindex and $\beta_\dmn<\abs{\beta}=m$, then
\begin{equation*}(\Tr_{m,\semiH}\varphi)_\beta
= \partial_x^{\beta_\pureH}f^{\beta_\dmn}(x,0).
\end{equation*}
$Q_s$ is a convolution operator and so commutes with horizontal derivatives, and thus
\begin{equation*}(Q_s\Tr_{m,\semiH}\varphi)_\beta
= \partial_x^{\beta_\pureH} Q_s f^{\beta_\dmn}(x,0).\end{equation*}
Let $g^j_s = Q_s f^j$. Then $\widehat{g}^j_s(\xi)=\widehat\varphi(s\,\xi)\,\widehat{f}^j(\xi)$ and so $\abs{\xi}^{-1}\,\abs{\widehat{g}^j_s(\xi)}^2\leq Cs\abs{\widehat{f}^j(\xi)}^2$. Thus,
\begin{equation*}\int_{\R^n} \abs{\widehat{g}^j_s(\xi)}^2\abs{\xi}^{2m-2j-1}\,d\xi
\leq Cs\int_{\R^n} \abs{\widehat{f}^j(\xi)}^2\abs{\xi}^{2m-2j}\,d\xi
=Cs\int_{\R^n} \abs{\nabla_\pureH^{m-j}{f}^j(x)}^2\,dx
.\end{equation*}
Because $\abs{\nabla_\pureH^{m-j} f^j}\leq \abs{\Tr_{m,\semiH} \varphi}$, we have that
\begin{equation*}\int_{\R^n} \abs{\widehat{g}^j_s(\xi)}^2\abs{\xi}^{2m-2j-1}\,d\xi
\leq Cs\doublebar{\Tr_{m,\semiH} \varphi}_{L^2(\R^n)}^2
.\end{equation*}
Let $\arr g_s=Q_s (\Tr_{m-1}\varphi)$; then $\arr g_s$ satisfies $(\arr g_s)_\gamma = \partial^{\gamma_\pureH} g^{\gamma_\dmn}_s$ for each $\abs{\gamma}=m-1$. We have established that
\begin{equation*}\int_{\R^n} \abs{\widehat{\arr g}_s(\xi)}^2\abs{\xi}\,d\xi
\leq Cs\doublebar{\Tr_{m,\semiH} \varphi}_{L^2(\R^n)}^2
.\end{equation*}
Extending $\arr g_s$ using Lemma~\ref{lem:extension} completes the proof. Notice that more general extension theorems, also appropriate in this case, are well known; see, for example, \cite[Theorem~2.9.3]{Tri78} or \cite[Theorem~2.7.2]{Tri83}.
\end{proof}

The estimate~\eqref{eqn:Theta:Q} for $\Theta_t^D$ follows quickly from Lemma~\ref{lem:Q:Whitney}.
By the definition~\eqref{dfn:Theta:D},
\begin{equation*}\Theta_t^D(Q_s \Tr_{m,\semiH}\varphi)(x)
=t^k \partial_t^{m+k} \D^{\mat A} (\Tr_{m-1}\Phi_s)(x,t).
\end{equation*}
But by the definition~\eqref{dfn:D:newton:exterior},
\begin{equation*}\Theta_t^D(Q_s \Tr_{m,\semiH}\varphi)(x)
=-t^k \partial_t^{m+k} \Pi^L (\1_-A\nabla^m\Phi_s)(x,t).
\end{equation*}
But $u=\Pi^L (\1_-A\nabla^m\Phi_s)$ satisfies $Lu=0$ in $\R^\dmn_+$; furthermore, by the bound~\eqref{eqn:newton:bound}, $\doublebar{\nabla^m u}_{L^2(\R^\dmn_+)}\leq C\doublebar{\nabla^m\Phi_s}_{L^2(\R^\dmn_-)}\leq C \sqrt{s} \doublebar{\Tr_{m,\semiH}\varphi}_{L^2(\R^n)}$. Applying the Caccioppoli inequality and Lemma~\ref{lem:slices} in small cubes of sidelength $t/C$ suffices to establish that $\Theta_t^D$ satisfies the bound~\eqref{eqn:Theta:Q} for $\theta=1/2$.

\section{The semi-horizontal single layer potential}
\label{sec:S:not-normal}

In this section we will prove the following theorem.

\begin{thm}\label{thm:S:horizontal}
Suppose that
$\gamma_{n+1}<\abs{\gamma}=m-1$.
Then we have the square-function estimate
\begin{align*}
\int_{\R^{n+1}_+}
	\abs{\Theta_t^{S} (g\arr e_{\gamma})(x)}^2\,\frac{1}{t}\,dx\,dt
&\leq C\doublebar{g}_{L^2(\R^n)}^2.
\end{align*}
\end{thm}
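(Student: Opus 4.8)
\emph{Proof proposal.} The plan is to apply the $T1$ theorem, Theorem~\ref{thm:grau:hofmann:1}, with $p=0$, to the scalar-valued operator $g\mapsto\Theta_t^S(g\,\arr e_\gamma)$ (which is already known to act on $L^2(\R^n)$, by the decay estimate of Section~\ref{sec:decay} and Remark~\ref{rmk:Theta:L2}). Two of the three hypotheses are in hand. The decay estimate~\eqref{eqn:Theta:decay} is the special case --- array supported only in the $\gamma$ component --- of the bound for $\Theta_t^S$ proven in Section~\ref{sec:decay}, valid once $k$ is large. The quasi-orthogonality estimate~\eqref{eqn:Theta:Q} is the special case of the bound for $\Theta_t^S$ proven in Section~\ref{sec:Q}, which holds for every $\arr h\in L^2(\R^n)$; thus we may take $H=L^2(\R^n)$. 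It remains to check the testing condition~\eqref{eqn:Theta:1:carleson}, and for this I would prove the stronger statement
\[\Theta_t^S(1\cdot\arr e_\gamma)(x)=0\qquad\text{for a.e.\ }(x,t)\in\R^\dmn_+,\]
where $1$ denotes the constant function; this is the cancellation recorded as $\Theta_t^{S'}\arr 1=0$ in the outline of Section~\ref{sec:Tb}. Granting it, Theorem~\ref{thm:grau:hofmann:1} gives the conclusion~\eqref{eqn:Theta:1:square}, which is the asserted square-function bound.

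To prove the vanishing, fix $(x,t)$ with $t>0$ for which the kernel formula~\eqref{dfn:Theta:S:integral} is valid, so that
\[\Theta_t^S(1\cdot\arr e_\gamma)(x)=t^k\int_{\R^n}\partial_t^{m+k}\partial_{y,s}^\gamma E^L(x,t,y,0)\,dy.\]
Because $\gamma_\dmn<\abs{\gamma}=m-1$, some horizontal index $i$ satisfies $\gamma_i\geq1$, so the integrand carries a $\partial_{y_i}$. The naive antiderivative $\partial_t^{m+k}\partial_{y,s}^{\gamma-\vec e_i}E^L$ has only $m-2$ derivatives in the $Y$ variable, below the range guaranteed by Theorem~\ref{thm:fundamental}; to avoid this I would first trade a vertical derivative using~\eqref{eqn:fundamental:vertical}, writing
\[\partial_t^{m+k}\partial_{y,s}^\gamma E^L(x,t,y,0)
=\partial_{y_i}\Bigl[-\,\partial_t^{m+k-1}\partial_{y,s}^{\gamma-\vec e_i+\vec e_\dmn}E^L(x,t,y,0)\Bigr].\]
For $k\geq1$ the bracketed function has order $m+k-1\geq m-1$ in $X$ and order $m-1$ in $Y$, hence is an honest locally $L^2$ function by Theorem~\ref{thm:fundamental}; moreover it is smooth in $y$ on all of $\R^n$ (since $t>0$ keeps $(y,0)$ off the diagonal), and for $k$ large the slice and far-field bounds~\eqref{eqn:fundamental:slices},~\eqref{eqn:fundamental:far} force it, and its $\partial_{y_i}$ derivative, to decay faster than $\abs{y}^{-(n-1)}$ as $\abs{y}\to\infty$. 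Applying the divergence theorem on $B(0,R)\subset\R^n$ and letting $R\to\infty$ then kills the $y$-integral of the total derivative $\partial_{y_i}[\,\cdot\,]$, which is the claim.

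The crux is this last step --- legitimising the integration by parts in $y_i$: one must know both that the quantity being differentiated is a genuine function (not merely defined modulo polynomials) and that the boundary term at infinity vanishes. Both are matters of having enough derivatives on $E^L$: the vertical-derivative identity~\eqref{eqn:fundamental:vertical} lifts the $Y$-order to the $m-1$ threshold of Theorem~\ref{thm:fundamental}, and enlarging $k$ supplies the extra $t$-decay that annihilates the boundary contribution (consistent with the convention, noted at the end of Section~\ref{sec:Tb}, that we only ever need $k$ large). After that the proof is a direct invocation of Theorem~\ref{thm:grau:hofmann:1}; unlike the genuinely vertical piece $\Theta_t^\perp$ handled later, no $Tb$-type testing functions are required here, precisely because the semi-horizontal index produces an exact $y$-derivative in the kernel.
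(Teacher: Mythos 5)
Your proposal matches the paper's proof essentially step for step: apply Theorem~\ref{thm:grau:hofmann:1} with $p=0$, quote Sections~\ref{sec:decay} and~\ref{sec:Q} for the hypotheses \eqref{eqn:Theta:decay} and~\eqref{eqn:Theta:Q}, use~\eqref{eqn:fundamental:vertical} to lift the $Y$-order to $m-1$ so the kernel becomes an exact $\partial_{y_i}$-derivative of a well-defined function, and conclude $\Theta_t1=0$. One small correction: with coefficients only bounded measurable, $E^L$ is not smooth in $y$ away from the pole and the pointwise decay rate $\abs{y}^{-(n-1)}$ you invoke is not directly available from the $L^2$-averaged bounds; the paper instead observes that for $k$ large the slice bound~\eqref{eqn:fundamental:slices} puts $\partial_t^{m+k-1}\partial_{y,s}^\zeta E^L(x,t,\cdot,0)$ into $L^1(\R^n)\cap\dot W^1_1(\R^n)$ for a.e.\ $(x,t)$, which already forces the integral of its $\partial_{y_i}$-derivative to vanish without any pointwise control.
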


\begin{proof} Let
$\Theta_t g=\Theta_t^S(g\arr e_{\gamma})$. We want to apply Theorem~\ref{thm:grau:hofmann:1}. As shown in Sections~\ref{sec:decay} and~\ref{sec:Q}, the bounds \eqref{eqn:Theta:decay} and~\eqref{eqn:Theta:Q} are valid for this choice of~$\Theta_t$. We are left with the estimate~\eqref{eqn:Theta:1:carleson}.

Recall that by formula~\eqref{dfn:Theta:S:integral},
\begin{align*}\Theta_t g(x) = t^k  \int_{\R^n} \partial_t^{m+k} \partial_{y,s}^\gamma E^L(x,t,y,0)\,g(y)\,dy
.\end{align*}
In particular,
\begin{align*}\Theta_t 1(x) = t^k  \int_{\R^n} \partial_t^{m+k} \partial_{y,s}^\gamma E^L(x,t,y,0)\,dy
.\end{align*}
Let $j$ satisfy $1\leq j\leq n$ and $\gamma_j>0$; by assumption on $\gamma$ such a $j$ exists. Let $\zeta=\gamma-\vec e_j + \vec e_\dmn$. By formula~\eqref{eqn:fundamental:vertical}, we have that
\begin{align*}\Theta_t 1(x) = -t^k
\int_{\R^n} \partial_{y_j} \bigl(\partial_t^{m+k-1} \partial_{y,s}^\zeta E^L(x,t,y,0)\bigr)\,dy
.\end{align*}
By the bound~\eqref{eqn:fundamental:slices}, for almost every $(x,t)\in\R^\dmn$, if $k$ is large enough then $v(y)=\partial_t^{m+k-1} \partial_{y,s}^\zeta E^L(x,t,y,0)$ lies in both $L^1(\R^n)$ and in~$\dot W^1_1(\R^n)$. Thus,
\begin{equation*}\int_{\R^n}\partial_{y_j} \bigl(\partial_t^{m+k-1} \partial_{y,s}^\zeta E^L(x,t,y,0)\bigr)\,dy=0\end{equation*}
for almost every $(x,t)\in\R^\dmn$. Thus, $\Theta_t 1=0$, and in particular the bound~\eqref{eqn:Theta:1:carleson} is valid.
\end{proof}

We are now left with the double layer potential $\Theta_t^D$ and the vertical single layer potential $\Theta_t^\perp f=\Theta_t^S (f\arr e_{\perp})$. In the following sections we will use the full force of Theorem~\ref{thm:grau:hofmann} to bound $\Theta_t^\perp$; we will bound $\Theta_t^D$ as a component of the auxiliary operator $\Theta_t'$ in Theorem~\ref{thm:grau:hofmann}.

\section{The Carleson estimate \myeqref{eqn:Theta:T1}} \label{sec:D:carleson}

We will let $\Theta_t=(\Theta_t^D,\Theta_t^S)$, with $\Theta_t^\perp$ denoting the purely vertical component of $\Theta_t^S$ (that is, $\Theta_t^\perp f = \Theta_t^S(f\arr e_\perp)$). The bounds \eqref{eqn:Theta:decay} and~\eqref{eqn:Theta:Q} are valid.
We wish to show that the bound~\eqref{eqn:Theta:T1} is also valid. Recall that this bound is given by
\begin{align*}
\doublebar{\Theta'_t 1}_{\mathcal{C},\delta}
&\leq C_1 + C_1\doublebar{\Theta^\perp_t 1}_{\mathcal{C},\delta}
&&\text{if $\delta>0$ small enough}.
\end{align*}
In Section~\ref{sec:S:not-normal}, we showed that $\Theta_t^S(1\arr e_{\gamma})=0$ whenever $\gamma_\dmn<\abs{\gamma}=m-1$; the bound~\eqref{eqn:Theta:T1}, with $\Theta_t'=\Theta_t^{S'}$,
follows immediately. Thus, we need only bound $\Theta_t^D 1$ by $\Theta_t^\perp 1$ and a constant.

Recall that by formulas~\eqref{dfn:Theta:D}, \eqref{dfn:D:tilde} and~\eqref{eqn:D:fundamental}, if $\arr f=\Tr_{m,\semiH}F$, then
\begin{align*}
\Theta_t^D \arr f(x)
&=
	-\sum_{\abs{\alpha}=\abs{\beta}=m} t^k\int_{\R^\dmn_-} \partial_t^{m+k} \partial_{y,s}^\alpha E^L(x,t,y,s)\,A_{\alpha\beta}(y,s) \, \partial^\beta F(y,s)\,ds\,dy.
\end{align*}
Using the bound~\eqref{eqn:fundamental:slices}, we see that if $\nabla^m F$ is bounded then the integral converges absolutely for almost every $(x,t)\in\R^\dmn_+$.

Recall that $\Theta_t^D$ acts on arrays of functions of the form $\arr\varphi=\Tr_{m,\semiH}\varphi$; these arrays $\arr\varphi$ are indexed by multiindices $\beta$ with $\beta_\dmn<\abs{\beta}=m$. Fix some such~$\beta$.
By choosing
\begin{equation*}
 F(x,t) = \frac{1}{\beta!}(x,t)^\beta
\end{equation*}
we see that
\begin{align}\label{eqn:D1}
\Theta_t^D \arr e_{\beta}(x)
&=
	-\sum_{\abs{\alpha}=m} t^k\int_{\R^\dmn_-} \partial_t^{m+k} \partial_{y,s}^\alpha E^L(x,t,y,s)\,A_{\alpha\beta}(y) \,ds\,dy.
\end{align}
We use formula~\eqref{eqn:fundamental:vertical} to convert one of the derivatives in $t$ into a derivative in~$s$; we evaluate the integral $ds$ to see that
\begin{align*}
\Theta_t^D \arr e_{\beta}(x)
&=
	\sum_{\abs{\alpha}=m}  t^k\int_{\R^\dmnMinusOne} \partial_t^{m+k-1} \partial_{y,s}^\alpha E^L(x,t,y,0)\,A_{\alpha\beta}(y) \,dy.
\end{align*}
Observe that this is a sum of terms depending on $\alpha$ and~$\beta$. In Section~\ref{sec:kato:mixed:alpha} we will bound the terms for which $\alpha_\dmn>0$. In Sections~\ref{sec:kato}--\ref{sec:kato:3}, we will bound the terms for which $\alpha_\dmn=\beta_\dmn=0$; this case is the most involved, and will closely parallel the argument in \cite[Section~3.1]{GraH14p}. Finally, in Section~\ref{sec:kato:mixed} we will bound the remaining terms, that is, the terms for which $\alpha_\dmn=0$ and $\beta_\dmn>0$; this bound will rely on the bound in the case $\alpha_\dmn=\beta_\dmn=0$.

\subsection{Terms with \texorpdfstring{$\alpha_\dmn>0$}{partly vertical derivatives}}
\label{sec:kato:mixed:alpha}

Observe that if $\alpha_\dmn>0$, then $\alpha=\gamma+\vec e_\dmn$ for a unique $\gamma$ with $\abs{\gamma}=m-1$. For any such~$\gamma$, let $\widetilde\gamma=\gamma+\vec e_\dmn$. Then by formula~\eqref{eqn:fundamental:vertical},
\begin{align*}
\sum_{\substack{\abs{\alpha}=m\\\alpha_\dmn>0}} \partial^\alpha_{y,s} \partial_t^{m+k-1} E^L(x,t,y,0)\,A_{\alpha\beta}(y)
&=
	-\sum_{\abs{\gamma}=m-1} \partial^\gamma_{y,s} \partial_t^{m+k} E^L(x,t,y,0)\,A_{\widetilde\gamma\beta}(y)
.\end{align*}
Thus,
\begin{align*}
\Theta_t^D \arr e_{\beta}(x)
&=
	\sum_{{\alpha_\dmn}=0} t^k\int_{\R^\dmnMinusOne} \partial_t^{m+k-1} \partial_{y,s}^\alpha E^L(x,t,y,0)\,A_{\alpha\beta}(y) \,dy
	\\&\qquad
	-\sum_{\abs{\gamma}=m-1} \llap{$t^k$}\int_{\R^\dmnMinusOne} \partial_t^{m+k} \partial_{y,s}^\gamma E^L(x,t,y,0)\,A_{\widetilde\gamma\beta}(y) \,dy
	.
\end{align*}
In this section we will bound the second sum; we will consider the first sum in Sections~\ref{sec:kato}--\ref{sec:kato:mixed}.

By formula~\eqref{dfn:Theta:S:integral}, the second sum is equal to $\Theta_t^S \arr a_\beta$, where $(a_\beta)_{\gamma}=A_{\widetilde\gamma,\beta}$. Notice that $\arr a_\beta$ is bounded.
We may control $\Theta_t^S \arr a_\beta$ using a standard technique in the study of $T1$ theorems. Let
\begin{equation*}P_t f(x)=\int_{\R^n} \frac{1}{t^n} \psi\biggl(\frac{x-y}{t}\biggr)\,f(y)\,dy\end{equation*}
where $\psi$ is smooth, nonnegative and satisfies $\int_{\R^n}\psi=1$. We do not require that $\psi$ be compactly supported. Fix some $\gamma$ with $\abs{\gamma}=m-1$, and let
\begin{equation*}\Psi_t a(x)
=\Theta_t^S( a\,\arr e_{\gamma})(x)-P_t a(x)\,\Theta_t^S \arr e_{\gamma}(x).\end{equation*}
Then
\begin{equation*}\doublebar{P_t (a_\beta)_\gamma\,\Theta_t^S \arr e_{\gamma}}_{\mathcal{C},\delta}
\leq \doublebar{\arr a}_{L^\infty} \doublebar{\Theta_t^S \arr e_{\gamma}}_{\mathcal{C},\delta}
.\end{equation*}
Either $\gamma=\gamma_\perp$ and so $\Theta_t^S \arr e_\gamma=\Theta_t^\perp 1$, or $\gamma_\dmn<\abs{\gamma}$ and so $\Theta_t^S \arr e_{\gamma}=0$. (See the proof of Theorem~\ref{thm:S:horizontal}.) So to bound $\doublebar{\Theta_t^S\arr a_\beta}_{\mathcal{C},\delta}$, we need only control~$\Psi_t a(x)$ for arbitrary bounded functions~$a$.

We wish to show that $\doublebar{\Psi_t a}_{\mathcal{C}}=\doublebar{\Psi_t a}_{\mathcal{C},0}\leq C\doublebar{a}_{L^\infty}$. (We will not need the $\doublebar{\Theta_t^\perp 1}_{\mathcal{C},\delta}$ term on the right-hand side; thus, we will simply bound the left-hand side for $\delta=0$ and observe that this gives our desired bound for any positive~$\delta$.)
We will do this by applying Theorem~\ref{thm:grau:hofmann:1} to~$\Psi_t$.
Observe that $\Psi_t 1(x)=0$, and so the bound~\eqref{eqn:Theta:1:carleson} is valid. We need only verify the bounds \eqref{eqn:Theta:decay} and~\eqref{eqn:Theta:Q} for the operator~$\Psi_t$; because these bounds have been verified for~$\Theta_t^S$, we need only consider $\Upsilon_t a(x)=P_t a(x)\,\Theta_t^S \arr e_{\gamma}(x)$.

We begin with the bound~\eqref{eqn:Theta:Q}. Observe that
\begin{equation*}\widehat{P_t Q_s h}(\xi) =\widehat\psi(t\,\xi)\,\widehat\varphi(s\,\xi) \,\widehat h(\xi)\end{equation*}
where $Q_s$ is the operator defined in Section~\ref{sec:Q}.
Recall that $\widehat\varphi$ is supported in $B(0,2)\setminus B(0,1/2)$.
If we require that $\widehat\psi$ be smooth and supported in $B(0,1/2)$, then $P_t Q_s h=0$ whenever $s\leq t$; thus $\Upsilon_t$ satisfies the bound~\eqref{eqn:Theta:Q}.

We now establish the bound~\eqref{eqn:Theta:decay}.
By Section~\ref{sec:decay}, we know that the operator $\Theta_t^S$ satisfies the decay estimate~\eqref{eqn:Theta:decay}. From this we may verify that, if $Q\subset\R^n$ is a cube with $\ell(Q)\leq t<2\ell(Q)$, then
\begin{equation*}\doublebar{\Theta_t^S\arr e_{\gamma}}_{L^2(Q)}\leq C \abs{Q}^{1/2}.\end{equation*}
If $\widehat\psi$ is smooth as well as being supported in $B(0,1/2)$, then $\psi$ is a Schwartz function and satisfies the estimate $\abs{\psi(y)}\leq C_N (1+\abs{y})^{-N}$ for any $N>0$. If $g_j$ is supported in $A_j(Q)$, and $\ell(Q)\leq t<2\ell(Q)$, then
\begin{equation*}\sup_{x\in Q}\abs{P_t g_j(x)}
\leq C_N t^{-n}2^{-jN}\doublebar{g_j}_{L^1(A_j(Q))}
\leq C_N t^{-n/2}2^{-j(N-n/2)}\doublebar{g_j}_{L^2(A_j(Q))}
\end{equation*}
and so if we choose $N$ large enough, then the operator $\Upsilon_t$ satisfies the bound~\eqref{eqn:Theta:decay}.

Thus, by Theorem~\ref{thm:grau:hofmann:1}, we have that the operators $\Psi_t$ satisfy the square-function estimate~\eqref{eqn:Theta:1:square}. To show that $\doublebar{\Psi_t a}_{\mathcal{C}}\leq C\doublebar{a}_{L^\infty(\R^n)}$, we need only show that the estimate~\eqref{eqn:Theta:1:square} for $L^2$ test functions implies an estimate for $L^\infty$ test functions.

\begin{lem}\label{lem:square:carleson} Suppose that the operators $\Psi_t$ satisfy the square-function estimate
\begin{equation*}\int_0^\infty\int_{\R^n} \abs{\Psi_t f(x)}^2\frac{dx\,dt}{t}
\leq C_0\doublebar{f}_{L^2(\R^n)}^2\end{equation*}
for all $f\in L^2(\R^n\mapsto\C)$, and that for some $\theta>-2$, $\Psi_t$ satisfies the off-diagonal decay estimate
\begin{align*}
\doublebar{\Psi_t  g_j}_{L^2(Q)}
&\leq C_1 2^{-j(n+2+\theta)/2} \doublebar{ g_j}_{L^2}
&&\text{if }\ell(Q)\leq t\leq 2\ell(Q)
\end{align*}
for all $j\geq 1$ and all $g_j$ supported in $A_j(Q)=2^{j+1}Q\setminus 2^jQ$.

Then there is some $C$ depending only on $C_0$, $C_1$ and~$\theta$ such that $\Psi_t$ satisfies the Carleson condition
\begin{equation*}\sup_Q \frac{1}{\abs{Q}}\int_Q \int_0^{\ell(Q)} \abs{\Psi_t b(x)}^2 \frac{dt\,dx}{t} \leq C \doublebar{b}_{L^\infty(\R^n)}^2\end{equation*}
for all bounded functions~$b$.
\end{lem}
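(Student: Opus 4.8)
The plan is to run the classical argument that upgrades an $L^2$ square-function bound to a Carleson-measure bound on $L^\infty$ inputs, the only new ingredient being off-diagonal decay in place of a smooth kernel. Fix a cube $Q\subset\R^n$ and a bounded function $b$, and split $b=b_0+\sum_{j\ge1}b_j$ with $b_0=\1_{2Q}b$ and $b_j=\1_{A_j(Q)}b$ for $j\ge1$. For the local piece the hypothesized square function estimate applies verbatim,
\[\int_Q\int_0^{\ell(Q)}\abs{\Psi_t b_0(x)}^2\frac{dt\,dx}{t}\le C_0\doublebar{b_0}_{L^2(\R^n)}^2\le C_0\abs{2Q}\doublebar{b}_{L^\infty(\R^n)}^2\le C\abs{Q}\doublebar{b}_{L^\infty(\R^n)}^2,\]
which is already of the required form, so the whole problem reduces to estimating the far pieces.

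First I would fix a height $0<t\le\ell(Q)$ and rescale the off-diagonal hypothesis down to scale $t$. Choose $k\ge0$ with $2^{-k}\ell(Q)\le t<2^{-k+1}\ell(Q)$ and partition $Q$ into the $2^{kn}$ dyadic subcubes $Q'$ of side length $2^{-k}\ell(Q)$, so that $\ell(Q')\le t\le2\ell(Q')$ and the decay hypothesis applies to each $Q'$. For $j\ge1$ the set $A_j(Q)$ lies at distance $\approx 2^j\ell(Q)$ from every point of $Q$, hence from each $Q'$, and $2^j\ell(Q)\approx 2^{j+k}\ell(Q')$; consequently $A_j(Q)\subset A_{j+k,c}(Q')$ for an absolute constant $c$, a bounded union of annuli of $Q'$ all of index $\ge1$. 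Applying the off-diagonal bound on each such annulus (and absorbing the $O(1)$ sum over the annuli into the constant) gives $\doublebar{\Psi_t b_j}_{L^2(Q')}\le C2^{-(j+k)(n+2+\theta)/2}\doublebar{b_j}_{L^2(\R^n)}$; squaring, summing over the $2^{kn}$ subcubes, and using $\doublebar{b_j}_{L^2(\R^n)}^2\le\abs{A_j(Q)}\doublebar{b}_{L^\infty(\R^n)}^2\le C2^{jn}\abs{Q}\doublebar{b}_{L^\infty(\R^n)}^2$, the powers of $2$ telescope to
\[\doublebar{\Psi_t b_j}_{L^2(Q)}^2\le C\,2^{-(j+k)(2+\theta)}\abs{Q}\doublebar{b}_{L^\infty(\R^n)}^2.\]
(The boundedly many pairs $(j,k)$ for which a subcube annulus of index $\le0$ would enter are harmless, being controlled by the uniform $L^2$ bound built into the decay hypothesis, cf.\ Remark~\ref{rmk:Theta:L2}.)

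Next I would sum in $j$ and integrate in $t$. Since $\theta>-2$ the exponent $2+\theta$ is positive, so $\sum_{j\ge1}\doublebar{\Psi_t b_j}_{L^2(Q)}\le C2^{-k(2+\theta)/2}\abs{Q}^{1/2}\doublebar{b}_{L^\infty(\R^n)}$; in particular the series converges in $L^2(Q)$ and $\Psi_t b$ is well defined on $Q$ for a.e.\ $t$. With $2^{-k}\approx t/\ell(Q)$ this reads $\doublebar{\Psi_t(b-b_0)}_{L^2(Q)}^2\le C(t/\ell(Q))^{2+\theta}\abs{Q}\doublebar{b}_{L^\infty(\R^n)}^2$, and because $2+\theta>0$ the weight is integrable at the origin:
\[\int_0^{\ell(Q)}\doublebar{\Psi_t(b-b_0)}_{L^2(Q)}^2\frac{dt}{t}\le\frac{C}{2+\theta}\abs{Q}\doublebar{b}_{L^\infty(\R^n)}^2.\]
Combining with the local estimate and dividing by $\abs{Q}$ yields the Carleson bound. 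The one step demanding genuine care is this rescaling: the hypothesis is stated only at the scale $t\approx\ell(Q)$ of the testing cube, and pushing it down to arbitrary $t\le\ell(Q)$ requires partitioning $Q$ into subcubes of side $\approx t$, correctly tracking the shifted annular index $j+k$, and checking that the geometric gain $2^{-(j+k)(n+2+\theta)}$ defeats simultaneously the number $2^{kn}$ of subcubes and the mass growth $2^{jn}$ of the annuli $A_j(Q)$ — which is precisely what the condition $\theta>-2$ guarantees.
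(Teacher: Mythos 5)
Your proposal is correct and follows exactly the same approach as the paper's proof: decompose $b$ into dyadic annular pieces $b_j$, handle $b_0$ by the square-function bound directly, and for $j\ge1$ tile $Q$ by subcubes of side comparable to $t$ so that the off-diagonal decay at the shifted index $j+k$ beats both the number of subcubes and the mass of $A_j(Q)$. The paper states the tiling step in one line ("apply the decay estimate in cubes $R\subset Q$ with side-length $t\le\ell(R)<2t$") and sums in $j$ after integrating in $t$, while you supply the full index-bookkeeping and sum in $j$ before integrating in $t$; these are purely cosmetic differences, and your parenthetical about low-index annuli (handled via Remark~\ref{rmk:Theta:L2}) is a sound clarification of a point the paper leaves implicit.
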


\begin{proof} Choose some cube~$Q$ and some bounded function~$b$.
Let $b_j=b\1_{A_j(Q)}$; recall that $b_0=b\1_{A_0(Q)}=b\1_{2Q}$.
Then by the square-function estimate,
\begin{equation*}\biggl(\int_Q \int_0^{\ell(Q)} \abs{\Psi_t b_0(x)}^2 \frac{dt\,dx}{t}\biggr)^{1/2} \leq C \doublebar{b_0}_{L^2(2Q)}\leq C \doublebar{b}_{L^\infty(\R^n)}\abs{Q}^{1/2}.\end{equation*}

Furthermore, if $j\geq 1$, then by the decay estimate applied in cubes $R\subset Q$ with side-length $t\leq\ell(R)<2t$,
\begin{equation*}\biggl(\int_0^{\ell(Q)} \int_Q \abs{\Psi_t b_j(x)}^2\,dx\, \frac{dt}{t}\biggr)^{1/2}
\leq C \doublebar{b}_{L^\infty(\R^n)}\abs{Q}^{1/2} 2^{-j(2+\theta)/2}.\end{equation*}

Summing in $j$, we see that
\begin{equation*}\biggl(\int_Q \int_0^{\ell(Q)} \abs{\Psi_t b(x)}^2 \frac{dt\,dx}{t}\biggr)^{1/2} \leq C \doublebar{b}_{L^\infty(\R^n)}\abs{Q}^{1/2}\end{equation*}
as desired.
\end{proof}

\subsection{Terms with \texorpdfstring{$\alpha_\dmn=\beta_\dmn=0$}{purely horizontal derivatives}: preliminaries}
\label{sec:kato}

We have now established that
\begin{align*}
\Theta_t^D \arr e_{\beta}(x)
&=
	\sum_{{\alpha_\dmn}=0} t^k\int_{\R^\dmnMinusOne} \partial_t^{m+k-1} \partial_{y,s}^\alpha E^L(x,t,y,0)\,A_{\alpha\beta}(y) \,dy
	-\Theta_t^S \arr a_\beta(x)
\end{align*}
and that $\Theta_t^S\arr a_\beta$ satisfies the desired estimate.
In order to show that the remaining sum satisfies the estimate~\eqref{eqn:Theta:T1}, we need only show that the operator $\Theta_t^\beta $, defined as
\begin{equation} \label{eqn:Theta:Carleson}
\Theta_t^\beta f(x) =
	\sum_{\alpha_{n+1}=0}  t^k\int_{\R^\dmnMinusOne}
	\partial_t^{m+k-1} \partial_{y}^\alpha E^L(x,t,y,0)\,A_{\alpha\beta}(y) \,f(y)\,dy,
\end{equation}
satisfies the Carleson-measure estimate
\begin{equation}
\label{eqn:kato}
\doublebar{\Theta_t^\beta 1}_{\mathcal{C},\delta} \leq C_1 + C_1 \doublebar{\Theta_t^\perp 1}_{\mathcal{C},\delta}
\end{equation}
for any multiindex $\beta$ with $\beta_\dmn<\abs{\beta}=m$.

We will make use of the horizontal operator $L_\pureH$, defined as follows.
Recall that $L$ is an operator acting on $\dot W^2_{m,loc}(\R^\dmn)$-functions. We may (formally) define the operator $L_\pureH$, acting on $\dot W^2_{m,loc}(\R^n)$-functions, as
\begin{equation}\label{eqn:L:parallel}
L_\pureH f = (-1)^m \sum_{\substack{\abs{\alpha}=\abs{\beta}=m\\\alpha_\dmn=\beta_\dmn=0}} \partial^\alpha (A_{\alpha\beta} \partial^\beta \! f)
\end{equation}
where $\partial^\alpha$, $\partial^\beta$ are understood to be derivatives in the $n$ horizontal directions. (The operator $L_\pureH$ has a weak formulation, as in formula~\eqref{eqn:L:integral}.)

To establish the bound~\eqref{eqn:kato} for $\beta_\dmn=0$, we will follow the argument of \cite[Section~3.1]{GraH14p}. We remark that the argument we will make in this section is valid only in the case where the order $2m$ of $L$ (and thus $L_\pureH$) satisfies the inequality $2m>n$. Thus, the argument of Sections~\ref{sec:D:carleson} and~\ref{sec:b} will only establish boundedness of $\Theta_t^D$ and $\Theta_t^\perp$ in the case of operators of very high order. In Section~\ref{sec:high} we will show that bounds on $\Theta_t^D$ and $\Theta_t^S$, for operators of high order, imply the corresponding bounds for operators of lower order, completing the proof of Theorem~\ref{thm:square}.

We will use some tools from the proof of the Kato conjecture, in particular from the paper \cite{AusHMT01}. The following lemma was established therein.

\begin{lem} Suppose that $2m\geq n$. There is some $W$ depending only on the standard constants such that, for each cube~$Q\subset\R^n$, there exist $W$ functions $f_{Q,w}$
that satisfy the estimates
\begin{align}
\label{eqn:kato:sobolev}
\int_R \abs{\nabla_\pureH^m f_{Q,w}}^2 &\leq C \abs{Q} \qquad \text{for any cube $R$ with $\ell(R)=\ell(Q)$},
\\
\label{eqn:kato:L}
\abs{L_\pureH f_{Q,w}(x)} &\leq \frac{C}{\ell(Q)^{m}}
,\end{align}
and such that, for any array~$\arr \gamma_t$,
\begin{multline}
\label{eqn:kato:test}
\sup_Q \frac{1}{\abs{Q}} \int_0^{\ell(Q)} \int_Q \abs{\arr \gamma_t(x)}^2 \frac{dx\,dt}{t}
\\\leq
	C \sum_{w=1}^W \sup_{Q} \frac{1}{\abs{Q}}
 	\int_0^{\ell(Q)} \int_Q \abs{\langle\arr \gamma_t(x), A_t^Q \nabla^m_\pureH f_{Q,w}(x)\rangle}^2 \frac{dx\,dt}{t}
\end{multline}
where
$A_t^Q f(x)=\fint_{Q'} f(y)\,dy$, for $Q'\subset Q$ the unique dyadic subcube that satisfies $x\in Q'$ and $t\leq \ell(Q')<2t$.
\end{lem}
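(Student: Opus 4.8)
The plan is to adapt the construction of Auscher, Hofmann, McIntosh and Tchamitchian directly to the operator $L_\pureH$ of \eqref{eqn:L:parallel}, which is elliptic of order $2m$ acting on functions of the $n$ variables~$x$ and which inherits a G\r{a}rding inequality from $\mat A$ (via $t$-independence and an approximation argument in \eqref{eqn:elliptic}); the hypothesis $2m\geq n$ guarantees that $\dot W^2_m(\R^n)$ embeds in $L^\infty(\R^n)$ and that solutions of $L_\pureH u=0$ obey the local bound $\abs{u(x)}\leq C(\fint_{B(x,r)}\abs u^2)^{1/2}$ (this is where $2m\geq n$ is used; compare Section~\ref{sec:high}). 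For a cube $Q\subset\R^n$ I would let $w$ run over the finitely many multiindices in $\N^n$ with $\abs w=m$, set $P_w(x)=x^w/w!$ so that $\nabla_\pureH^m P_w=\arr e_w$, and define $f_{Q,w}=(I+(\epsilon\ell(Q))^{2m} L_\pureH)^{-1}\phi_{Q,w}$, where $\phi_{Q,w}$ is a smooth truncation of $P_w$ to a fixed large dilate of~$Q$ and $\epsilon$ is a small constant to be fixed at the end; this mirrors the truncated coordinate functions of \cite{AusHMT01} (allowing a few values of~$\epsilon$ if convenient accounts for the number~$W$).

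The resolvent $(I+r^{2m}L_\pureH)^{-1}$ exists and is bounded on $L^2(\R^n)$ by Lax--Milgram, and the G\r{a}rding inequality yields the energy estimate $\doublebar{\nabla_\pureH^m (I+r^{2m}L_\pureH)^{-1}h}_{L^2(\R^n)}\leq C r^{-m}\doublebar h_{L^2(\R^n)}$ exactly as in \eqref{eqn:newton}--\eqref{eqn:newton:bound}, along with the usual off-diagonal decay. Since $\abs{P_w}\leq C\ell(Q)^m$ on any fixed dilate of $Q$, so that $\doublebar{\phi_{Q,w}}_{L^2}\leq C\ell(Q)^m\abs Q^{1/2}$, this gives the global bound $\doublebar{\nabla_\pureH^m f_{Q,w}}_{L^2(\R^n)}^2\leq C\abs Q$ and hence \eqref{eqn:kato:sobolev}. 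For \eqref{eqn:kato:L} I would write $L_\pureH f_{Q,w}=(\epsilon\ell(Q))^{-2m}(\phi_{Q,w}-f_{Q,w})$ and invoke the pointwise bound $\doublebar{f_{Q,w}}_{L^\infty}\leq C\ell(Q)^m$, which is exactly where local boundedness of $L_\pureH$-solutions (hence $2m\geq n$) is used: it promotes the $L^2$ resolvent estimate to an $L^\infty$ one, and then $\abs{L_\pureH f_{Q,w}(x)}\leq C\epsilon^{-2m}\ell(Q)^{-m}$.

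The substantive point is \eqref{eqn:kato:test}, the higher-order counterpart of the ``sector'' lemma of \cite{AusHMT01}. Following their stopping-time decomposition, and using only \eqref{eqn:kato:sobolev}, \eqref{eqn:kato:L} and the resolvent estimates above, I would show that for $\epsilon$ small enough the dyadic averages $A_t^Q\nabla_\pureH^m f_{Q,w}(x)$ are, for $(x,t)\in Q\times(0,\ell(Q))$ off a subset whose $\mathcal{C}$-norm over $Q$ can be made as small as we wish (uniformly in~$Q$), within any prescribed tolerance of the constant array~$\arr e_w$; hence on that good set the arrays $\{A_t^Q\nabla_\pureH^m f_{Q,w}(x)\}_{\abs w=m}$ form a frame with a uniform lower bound, so that $\sum_{w}\abs{\langle\arr \gamma_t(x),A_t^Q\nabla_\pureH^m f_{Q,w}(x)\rangle}^2\geq c\abs{\arr \gamma_t(x)}^2$ there. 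The contribution of the small bad set is then absorbed by the standard iteration (a John--Nirenberg / good-$\lambda$ argument) using \eqref{eqn:kato:sobolev}. I expect this last step to be the main obstacle, since it amounts to reproducing faithfully the AHLMcT machinery in the present setting of arrays of functions and of order~$2m$; the remaining estimates are routine once the resolvent bounds for $L_\pureH$ are available.
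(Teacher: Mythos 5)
The paper does not actually prove this lemma --- it is stated as a direct citation of \cite{AusHMT01}, with \eqref{eqn:kato:L} being their bound (2.19), \eqref{eqn:kato:sobolev} following from their (2.18) plus the observation that $\nabla_\pureH^m f_{Q,w}$ depends only on $\ell(Q)$, and \eqref{eqn:kato:test} being their Lemma~2.2. So your proposal is being measured against the AHLMcT argument rather than against a self-contained proof in this paper, and at the level of strategy (resolvent of degree-$m$ polynomials at scale $\epsilon\ell(Q)$, the Lax--Milgram/G\aa rding energy bound, local boundedness of solutions from $2m\geq n$ to promote $L^2$ to $L^\infty$ control, then the stopping-time/sector lemma for \eqref{eqn:kato:test}) it is the right reconstruction.

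There is, however, one concrete error in the first substantive estimate. You take $P_w(x)=x^w/w!$ and truncate to a fixed large dilate of $Q$, then assert $\abs{P_w}\leq C\ell(Q)^m$ on that dilate and deduce $\doublebar{\phi_{Q,w}}_{L^2}\leq C\ell(Q)^m\abs{Q}^{1/2}$. This fails whenever $Q$ is far from the origin: on a dilate of $Q$ one only has $\abs{P_w}\lesssim(\abs{c_Q}+\ell(Q))^m$, which is not $O(\ell(Q)^m)$. You must either center the monomial at the cube --- use $(x-c_Q)^w/w!$ (its $m$-th gradient is still $\arr e_w$) --- or, as in \cite{AusHMT01}, apply the resolvent directly to the untruncated polynomial, making sense of it via the exponential off-diagonal decay of the resolvent kernel (this is what makes $\nabla_\pureH^m f_{Q,w}$ depend only on $\ell(Q)$, the property the paper invokes to pass from $R=Q$ to general $R$ in \eqref{eqn:kato:sobolev}). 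With the centering fix your truncated construction is fine, but note that it gives $f_{Q,w}$ that genuinely depends on $Q$ rather than on $\ell(Q)$ alone; you then rely on the global $L^2$ bound $\doublebar{\nabla_\pureH^m f_{Q,w}}_{L^2(\R^n)}^2\leq C_\epsilon\abs{Q}$ to cover the ``any cube $R$ of the same side-length'' requirement in \eqref{eqn:kato:sobolev}, which is a legitimate alternative to the translation-invariance argument. The remainder of your sketch for \eqref{eqn:kato:test} is consistent with the AHLMcT stopping-time/cone decomposition, and you correctly flag it as the genuinely difficult part; that part is quoted verbatim from \cite{AusHMT01} in the paper.
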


Specifically, the bound~\eqref{eqn:kato:L} is the bound (2.19) in \cite{AusHMT01}. The bound~\eqref{eqn:kato:sobolev} follows from the bound (2.18) in~\cite{AusHMT01} (if $R=Q$) and the observation that, by Lemma~3.1 in \cite{AusHMT01} and the definition of $f_{Q,w}$ therein, $\nabla_\pureH^m f_{Q,w}=\nabla_\pureH^m f_{R,w}$ whenever $\ell(Q)=\ell(R)$. Finally, the bound~\eqref{eqn:kato:test} is simply Lemma~2.2 of \cite{AusHMT01}. The requirement that $2m\geq n$ is a sufficient condition (see \cite[Propositon~2.5]{AusHMT01} or \cite{Dav95,AusT98}) for $L_\parallel$ to satisfy a pointwise upper bound; this condition is assumed in the proofs of the above results.

Let $(\gamma_t)_\beta=\1_{\delta<t<1/\delta}\Theta_t^\beta 1$; notice that $\arr \gamma_t$ includes $\Theta_t^\beta 1$ provided $\beta_\dmn=0$.
Thus, the estimates~\eqref{eqn:kato} and thus~\eqref{eqn:Theta:T1}, for $\beta_\dmn=0$, follow from bounds on the quantity
\begin{equation*}\frac{1}{\abs{Q}}\int_\delta^{\min(1/\delta,\ell(Q))} \int_Q \abs[Big]{\sum_{\beta_\dmn=0} \Theta_t^\beta 1(x)\, A_t^Q \partial^\beta_\pureH \! f_{Q,w}(x)}^2 \frac{dx\,dt}{t}
\end{equation*}
for $\delta<\ell(Q)$ and $\delta<1$.

We will divide this quantity into a sum of controllable terms as follows.
Let $P_t f(x)=f*\psi_t(x)$, where $\psi_t(x)=t^{-n}\psi(x/t)$. We require that $\psi$ be smooth and nonnegative, that $\int\psi=1$, and that $\psi(x)=0$ whenever $\abs{x}>1/2$. (We will later impose some additional constraints on~$\psi$. Notice that it is convenient to use a different approximate identity $P_t$ in this section from that used in Section~\ref{sec:kato:mixed:alpha}.) Let
\begin{align*}
R_t^{1,\beta} F(x) &= \Theta_t^\beta 1(x)\, (A_t^Q F(x)-P_t F(x))
,\\
R_t^{2,\beta} F(x) &= \Theta_t^\beta 1(x)\, P_t F(x) - \Theta_t^\beta (P_t F)(x)
,\\
R_t^{3,\beta} F(x) &= \Theta_t^\beta (P_t F-F)(x)
,\\
R_t^{4,\beta} F(x) &= \Theta_t^\beta F(x)
\end{align*}
so that we seek to bound
\begin{equation*}{\sum_{\beta_\dmn=0} \Theta_t^\beta 1(x)\, A_t^Q \partial^\beta \! f_{Q,w}(x)}
	= \sum_{j=1}^4 \sum_{\beta_\dmn=0}
	R_t^{j,\beta} \partial^\beta\!f_{Q,w}(x).
\end{equation*}

We begin with $R_t^{4,\beta}$. Observe that by the definition~\eqref{eqn:Theta:Carleson} of $\Theta_t^\beta$,
\begin{equation*}\sum_{\beta_\dmn=0}
	R_t^{4,\beta} \partial^\beta\!f_{Q,w}(x)
=
	\sum_{\substack{\alpha_\dmn=0\\ \beta_\dmn=0}} t^k\int_{\R^\dmnMinusOne} \partial_y^\alpha \partial_t^{m+k-1} E^L(x,t,y,0)\,A_{\alpha\beta}(y) \, \partial^\beta\! f_{Q,w}(y) \,dy.
\end{equation*}
Using formula~\eqref{eqn:fundamental:vertical} and then integrating by parts in~$y$, we see that this quantity is equal to
\begin{equation*}\sum_{\beta_\dmn=0}
	R_t^{4,\beta} \partial^\beta\!f_{Q,w}(x)
=	(-1)^{k-1}t^k\int_{\R^\dmnMinusOne} \partial_t^{m}\partial_s^{k-1} E^L(x,t,y,0)\,
L_\pureH f_{Q,w}(y) \,dy
\end{equation*}
and by the bounds \eqref{eqn:kato:L} and~\eqref{eqn:fundamental:slices}, we see that if $k$ is large enough then
\begin{equation*}\frac{1}{\abs{Q}}\int_0^{\ell(Q)} \int_Q \abs[Big]{\sum_{\beta_\dmn=0}
	R_t^{4,\beta} \partial^\beta\!f_{Q,w}(x)}^2\frac{dx\,dt}{t}\leq C.\end{equation*}

\subsection{\texorpdfstring{The term $R_t^{1,\beta} \partial^\beta\!f_{Q,w}$}{The first term}}

Next, we bound $R_t^{1,\beta} \partial^\beta \! f_{Q,w}$. We begin by bounding $\Theta_t^\beta 1(x)$.
Recall the following special case of Morrey's inequality (see, for example, \cite[Section~5.6.3]{Eva98}): if $x\in Q\subset\R^n$, then
\begin{equation*}\abs{v(x)}\leq \sum_{i=0}^m C\ell(Q)^{i}\biggl(\fint_Q \abs{\nabla_\pureH^i v}^2\biggr)^{1/2}\quad\text{provided }2m>n.\end{equation*}
We apply this bound to the function $v(x)=\partial_y^\alpha \partial_t^{m+k-1} E^L(x,t,y,0)$, a locally Sobolev function for almost all $y$ and~$t$. Then by the bound~\eqref{eqn:fundamental:slices}, we have that if $\abs{\alpha}=m$ and either $\abs{t}=\ell(R)$ or $\abs{t}<\ell(R)$ and $j\geq 1$, then
\begin{equation}\label{eqn:fundamental:slices:higher}
\int_{A_j(R)} \abs{\partial_{y,s}^\alpha \partial_t^{m+k-1} E^L(x,t,y,0)}^2 \,dy
\leq C\ell(R)^{-n-2k} 2^{-j(n+2k)}
.\end{equation}
Observe that this bound is valid for any $k>1/2-n/2$; in this section we will need this bound only for $k$ large, but in Section~\ref{sec:b} we will need this bound for $k=0$ and $k=1$ as well. Also, by formula~\eqref{eqn:fundamental:symmetric} it is valid with the roles of $y$ and $x$ reversed.

Using H\"older's inequality and summing over~$j$, we see that if $k>0$ then
\begin{equation*}\abs{\Theta_t^\beta 1(x)} = \abs[bigg]{\sum_{\alpha_\dmn=0} t^k \int_{\R^\dmnMinusOne} \partial_y^\alpha \partial_t^{m+k-1} E^L(x,t,y,0)\,A_{\alpha\beta}(y) \,dy}\leq C\end{equation*}
and so
\begin{equation*}\abs{R_t^{1,\beta}\partial^\beta\!f_{Q,w}(x)} \leq C \abs{A_t^Q \partial^\beta\! f_{Q,w}(x) - P_t \partial^\beta \! f_{Q,w}(x)}.\end{equation*}
Thus, we need only bound $A_t^Q \partial^\beta\! f_{Q,w}(x) - P_t \partial^\beta \! f_{Q,w}(x)$. We will do this using a standard orthogonality argument.

Let $\widetilde R_t^{1}=A_t^Q-P_t$. Recall that the kernel $\psi$ of $P_t$ is supported in $B(0,1/2)$; thus, if $x\in Q$ and $t<\ell(Q)$, then $\widetilde R_t^{1} F(x)=\widetilde R_t^{1}(\1_{2Q} F)(x)$, and so
\begin{align*}
\int_0^{\ell(Q)} \! \int_Q \abs{R_t^{1,\beta} \partial^\beta\!f_{Q,w}(x)}^2 \frac{dx\,dt}{t}
&\leq
	C \int_0^\infty  \int_{\R^n}\abs{\widetilde R_t^{1} (\1_{2Q}\partial^\beta\!f_{Q,w})(x)}^2 \frac{dx\,dt}{t}
.\end{align*}
Let $\{Q_s\}$ be a CLP family, as in Section~\ref{sec:Q}, but with the kernel $\varphi$ (and not its Fourier transform~$\widehat\varphi$) supported in $B(0,1/2)$. By the identity~\eqref{eqn:Q:identity},
\begin{align*}
\int_0^{\ell(Q)} \! \int_Q \abs{R_t^{1,\beta} \partial^\beta\!f_{Q,w}(x)}^2 \frac{dx\,dt}{t}
&\leq
	C \int_0^\infty  \int_{\R^n}\abs[bigg]{\int_0^\infty\widetilde R_t^{1} Q_s^2 (\1_{2Q}\partial^\beta\!f_{Q,w})(x)\,\frac{ds}{s}}^2 \frac{dx\,dt}{t}
.\end{align*}
By H\"older's inequality, for any number $\varepsilon>0$,
\begin{multline*}
\int_0^{\ell(Q)} \int_Q \abs{R_t^{1,\beta} \partial^\beta\!f_{Q,w}(x)}^2 \frac{dx\,dt}{t}
\\\leq
	\frac{C}{\varepsilon} \int_0^\infty \int_0^\infty\int_{\R^n}\abs{\widetilde R_t^{1} Q_s^2 (\1_{2Q}\partial^\beta\!f_{Q,w})(x)}^2\, dx\, \max\biggl(\frac{s}{t},\frac{t}{s}\biggr)^\varepsilon \frac{ds}{s} \frac{dt}{t}
.\end{multline*}
We claim that
\begin{equation}\label{eqn:kato:orthogonal}
\doublebar{\widetilde R_t^{1} Q_s g}_{L^2(\R^n)}\leq C \min\biggl(\frac{s}{t},\frac{t}{s}\biggr)^{1/6} \doublebar{g}_{L^2(\R^n)}.
\end{equation}
Choose $\varepsilon=1/6$. Assuming validity of the bound~\eqref{eqn:kato:orthogonal}, we have that
\begin{multline*}
\int_0^{\ell(Q)} \int_Q \abs{R_t^{1,\beta} \partial^\beta\!f_{Q,w}(x)}^2 \frac{dx\,dt}{t}
\\\leq
	C \int_0^\infty \int_0^\infty
	\int_{\R^n}\abs{ Q_s(\1_{2Q}\partial^\beta\!f_{Q,w})(x)}^2 \,dx\, \min\biggl(\frac{s}{t},\frac{t}{s}\biggr)^{1/6} \frac{ds}{s} \frac{dt}{t}
.\end{multline*}
Interchanging the order of integration and evaluating the integral in~$t$, we see that
\begin{equation*}
\int_0^{\ell(Q)} \int_Q \abs{R_t^{1,\beta} \partial^\beta\!f_{Q,w}(x)}^2 \frac{dx\,dt}{t}
\leq
	C \int_0^\infty
	\int_{\R^n}\abs{ Q_s(\1_{2Q}\partial^\beta\!f_{Q,w})(x)}^2 \,dx\,\frac{ds}{s}
.\end{equation*}
By the bounds \eqref{eqn:Q:square} and~\eqref{eqn:kato:sobolev}, we have that
\begin{equation*}
\int_0^{\ell(Q)} \int_Q \abs{R_t^{1,\beta} \partial^\beta\!f_{Q,w}(x)}^2 \frac{dx\,dt}{t}
\leq
	C \doublebar{\partial^\beta\!f_{Q,w}}_{L^2(2Q)}^2\leq C\abs{Q}
\end{equation*}
as desired. Thus, to complete our bound on $R_t^{1,\beta} \partial^\beta\!f_{Q,w}$, we need only establish the estimate~\eqref{eqn:kato:orthogonal}.

Suppose first that $t\leq s$ and so $\min(s/t,t/s)=t/s$.
By definition of $\widetilde R_t^Q$ and~$Q_s$,
\begin{equation*}\widetilde R_t^1 Q_s g(x) =
\int_{\R^n} \int_{\R^n} \biggl(\frac{1}{\abs{Q'}}\1_{Q'}(y)-\psi_t(x-y)\biggr) \varphi_s(y-z) \, g(z)\,dy\,dz.\end{equation*}
Notice that $\int_{\R^n} \frac{1}{\abs{Q'}}\1_{Q'}(y)-\psi_t(x-y)\,dy=0$, and that the integrand is zero unless $\abs{x-y}<Ct$.
Thus
\begin{equation*}\widetilde R_t^1 Q_s g(x) \leq
\int_{\R^n} \int_{B(x,Ct)} \biggl(\frac{1}{\abs{Q'}}\1_{Q'}(y)-\psi_t(x-y)\biggr) \bigl(\varphi_s(y-z)-\varphi_s(x-z)\bigr) \,dy\,g(z)\,dz.\end{equation*}
Suppose that $y\in B(x,Ct)$. Because $\varphi$ is supported in $B(0,1/2)$ and $s\geq t$, if $\abs{x-z}>2Cs$, then $\varphi_s(y-z)-\varphi_s(x-z)=0$. Otherwise, \[\abs{\varphi_s(y-z)-\varphi_s(x-z)}\leq C s^{-n-1}\abs{y-x}.\]
Thus,
\begin{align*}\widetilde R_t^1 Q_s g(x)
&=
	C\int_{B(x,2Cs)} \frac{t}{s^{n+1}}
	\int_{B(x,Ct)} \abs[bigg]{\frac{1}{\abs{Q'}}\1_{Q'}(y)-\psi_t(x-y)}  \,dy\,\abs{g(z)}\,dz
\\&\leq
	C\frac{t}{s}\fint_{B(x,Cs)}
	\abs{g(z)}\,dz
\leq C\frac{t}{s} Mg(x)
\end{align*}
where $Mg$ denotes the Hardy-Littlewood maximal function of~$g$. It is well known that $M$ is bounded $L^p(\R^n)\mapsto L^p(\R^n)$ for any $1<p\leq\infty$, and so the estimate~\eqref{eqn:kato:orthogonal} is valid whenever $t<s$.

Recall that the kernel $\varphi_s$ of $Q_s$ also integrates to zero and that the kernel $\psi_t$ of $P_t$ is also smooth. Thus, by a similar argument, if $s\leq t$ then $\doublebar{P_t Q_s g}_{L^2(\R^n)} \leq C (s/t) \doublebar{g}_{L^2(\R^n)}$. Bounding $A_t^Q Q_s g$ is somewhat more involved, because the kernel $\frac{1}{\abs{Q'}}\1_{Q'}$ of $A_t^Q$ is not smooth.

Suppose $s\leq t$. Let $\eta=\eta_{t,s,x}$ be a smooth cutoff function that is identically $1$ in $Q'$ and is supported in $(1+\sqrt{s/t})Q'$. Then $\abs{\nabla\eta}\leq C /\sqrt{st}$. Let
\begin{equation*}B_{t,s}^Q G(x)=\frac{1}{\abs{Q'}}\int \eta(y)\,G(y)\,dy.\end{equation*}
By the same argument as above, we may show that if $s\leq t$ then $\abs{B_{t,s}^Q Q_s g(x)}\leq C(s/t)^{1/2}Mg(x)$. To conclude the argument, notice that
\begin{align*}\abs{A_t^Q Q_s g(x)-B_{t,s}^Q Q_s g(x)}
&\leq \frac{C}{t^n} \int_{\supp\eta\setminus Q'} \abs{Q_s g}
.\end{align*}
Notice that $\abs{\supp \eta\setminus Q'}\leq C t^n \sqrt{s/t}$. We apply H\"older's inequality to see that
\begin{align*}
\abs{A_t^Q Q_s g(x)-B_{t,s}^Q Q_s g(x)}
&\leq
	C\biggl(\frac{1}{t^n}\int_{\supp \eta}\abs{Q_s g}^{3/2}\biggr)^{2/3} \biggl(\frac{s}{t}\biggr)^{1/6}
\\&\leq
	C \biggl(\frac{s}{t}\biggr)^{1/6} M(\abs{Q_s g}^{3/2})(x)^{2/3}
.\end{align*}
Because $3/2<2$, the estimate~\eqref{eqn:kato:orthogonal} is valid for $s\leq t$ as well as $t\leq s$. This establishes our desired bound on $R_t^{1,\beta} \partial^\beta\!f_{Q,w}$.

\subsection{\texorpdfstring{The term $R_t^{2,\beta} \partial^\beta\!f_{Q,w}$}{The second term}}
\label{sec:kato:2}

Next, we bound $R_t^{2,\beta} \partial^\beta\!f_{Q,w}$. We will use the following lemma from \cite{AlfAAHK11}; this is a square-function $T1$ theorem that is somewhat simpler than Theorem~\ref{thm:grau:hofmann:1} but has more stringent requirements.
\begin{lem}[Lemna~3.5(ii) in \cite{AlfAAHK11}] \label{lem:AAAHKT1}
Suppose that $\{R_t\}_{t>0}$ is a family of operators satisfying
\begin{equation}\label{eqn:R1}
\doublebar{R_t(F\1_{A_j(Q)})}_{L^2(Q)}^2 \leq C 2^{-nj} \biggl(\frac{t}{2^j\ell(Q)}\biggr)^{4}\doublebar{F}_{L^2(A_j(Q))}^2
\end{equation}
for all $0<t<\ell(Q)$ and all $j\geq1$. 
Suppose further that for all $t>0$, all $F\in L^2(\R^n)$, and all well-behaved vector-valued functions~$\vec F$, we have the bounds
\begin{equation}\label{eqn:R1:div}
\doublebar{R_t F}_{L^2(\R^n)}\leq C \doublebar{F}_{L^2(\R^n)}
,\qquad
\doublebar{R_t\Div_\pureH \vec F}_{L^2(\R^n)}\leq \frac{C}{t}\doublebar{\vec F}_{L^2(\R^n)}.\end{equation}
Finally, suppose that $R_t 1=0$ for all $t>0$.

Then
\begin{equation*}\int_{\R^n}\int_0^\infty \abs{R_t F(x)}^2\frac{dt\,dx}{t}\leq C\doublebar{F}_{L^2(\R^n)}^2\end{equation*}
for all $F\in L^2(\R^n)$.
\end{lem}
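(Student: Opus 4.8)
The plan is to prove Lemma~\ref{lem:AAAHKT1} by the classical quasi-orthogonality (almost-orthogonality) method, as in the proof of Theorem~\ref{thm:CJ} and in \cite{AlfAAHK11,GraH14p}. Fix a CLP family $\{Q_s\}_{s>0}$ of the type constructed in Section~\ref{sec:Q}, with $\widehat\varphi$ smooth, radial, and supported in $B(0,2)\setminus B(0,1/2)$; recall that then $\int_0^\infty Q_s^2\,\frac{ds}{s}=I$ in the strong operator topology, that $\doublebar{Q_sg}_{L^2(\R^n)}+\doublebar{s\nabla Q_sg}_{L^2(\R^n)}\leq C\doublebar{g}_{L^2(\R^n)}$, and that $\int_{\R^n}\int_0^\infty\abs{Q_sg(x)}^2\,\frac{ds\,dx}{s}\leq C\doublebar{g}_{L^2(\R^n)}^2$. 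Because $\widehat\varphi$ is supported away from the origin, we may in addition write $Q_sg=\Div_\pureH\vec G_s$, where $\widehat{(G_s)_j}(\xi)=\frac{\xi_j}{2\pi i\abs{\xi}^2}\widehat\varphi(s\xi)\widehat g(\xi)$ and $\doublebar{\vec G_s}_{L^2(\R^n)}\leq Cs\doublebar{g}_{L^2(\R^n)}$.

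Writing $F=\int_0^\infty Q_s^2F\,\frac{ds}{s}$, so that $R_tF=\int_0^\infty R_tQ_s(Q_sF)\,\frac{ds}{s}$, the proof reduces to the quasi-orthogonality estimate
\[
\doublebar{R_tQ_sg}_{L^2(\R^n)}\leq C\min\biggl(\frac st,\frac ts\biggr)^{\theta}\doublebar{g}_{L^2(\R^n)}\qquad\text{for some fixed }\theta>0.
\]
Indeed, granting this, Cauchy--Schwarz in $s$ against the measure $\min(s/t,t/s)^\theta\,\frac{ds}{s}$ (whose total mass is finite and independent of $t$) gives $\doublebar{R_tF}_{L^2(\R^n)}^2\leq C\int_0^\infty\min(s/t,t/s)^\theta\doublebar{Q_sF}_{L^2(\R^n)}^2\,\frac{ds}{s}$; integrating in $\frac{dt}{t}$, interchanging the order of integration, using $\int_0^\infty\min(s/t,t/s)^\theta\,\frac{dt}{t}=C_\theta<\infty$, and invoking the square-function bound for $Q_s$ then yields the conclusion of the lemma.

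For the range $s\leq t$ the hypothesis $R_t1=0$ is not used: by the second bound in~\eqref{eqn:R1:div}, $\doublebar{R_tQ_sg}_{L^2}=\doublebar{R_t\Div_\pureH\vec G_s}_{L^2}\leq\frac Ct\doublebar{\vec G_s}_{L^2}\leq C\frac st\doublebar{g}_{L^2}$, which, combined with the trivial bound $\doublebar{R_tQ_sg}_{L^2}\leq C\doublebar{Q_sg}_{L^2}\leq C\doublebar{g}_{L^2}$, gives the estimate with $\theta=1$ on this range. For $s\geq t$ we use $R_t1=0$ together with the off-diagonal decay~\eqref{eqn:R1}. Cover $\R^n$ by a grid of dyadic cubes $Q$ with $t\leq\ell(Q)<2t$ (so that~\eqref{eqn:R1} applies), and for each such $Q$ set $c_Q=\fint_{2Q}Q_sg$; since $R_t1=0$ and $\sum_{j\geq0}\1_{A_j(Q)}=1$, for $x\in Q$ we have $R_tQ_sg(x)=R_t(Q_sg-c_Q)(x)=\sum_{j\geq0}R_t\bigl(\1_{A_j(Q)}(Q_sg-c_Q)\bigr)(x)$. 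The $j=0$ term is controlled by the uniform $L^2$ bound and Poincar\'e's inequality on $2Q$, using $\doublebar{\nabla Q_sg}_{L^2(\R^n)}\leq\frac Cs\doublebar{g}_{L^2}$; summing its squares over the grid, with the bounded overlap of the cubes $2Q$, contributes $\leq C\frac ts\doublebar{g}_{L^2}$. For $j\geq1$ one bounds $\doublebar{Q_sg-c_Q}_{L^2(A_j(Q))}$ by Poincar\'e on $2^{j+1}Q$ plus a telescoping sum over the scales $2^0\ell(Q),\dots,2^j\ell(Q)$ of the differences of dyadic averages of $Q_sg$ — again controlled through $\doublebar{\nabla Q_sg}_{L^2}\leq\frac Cs\doublebar{g}_{L^2}$ — and then applies~\eqref{eqn:R1}, whose decay $2^{-nj/2}(t/2^j\ell(Q))^2$ absorbs the at-most-polynomial growth in $2^j$ of this bound while leaving the gain $\ell(Q)/s\approx t/s$; summing in $j$ and over the grid (using $\sum_Q\doublebar{h}_{L^2(A_j(Q))}^2\lesssim 2^{nj}\doublebar{h}_{L^2(\R^n)}^2$) produces $\doublebar{R_tQ_sg}_{L^2(\R^n)}\leq C\frac ts\doublebar{g}_{L^2(\R^n)}$, completing the quasi-orthogonality estimate.

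The routine parts are the case $s\leq t$ and the Schur-type final assembly. I expect the main obstacle to be the $s\geq t$ analysis: tracking the growth in $j$ of $\doublebar{Q_sg-c_Q}_{L^2(A_j(Q))}$ (which depends on $n$ through the telescoping sum of dyadic averages) and verifying that the decay in~\eqref{eqn:R1} dominates it, together with the overlap bookkeeping needed to pass from the local estimates on each cube $Q$ of the grid to a global $L^2(\R^n)$ bound.
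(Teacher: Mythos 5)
The lemma is cited in the paper from \cite{AlfAAHK11} without proof, so there is no in-paper argument to compare against; your proof fills in exactly the kind of quasi-orthogonality argument one expects. Your approach is correct and standard: reduce, via the Calder\'on reproducing formula $\int_0^\infty Q_s^2\,\frac{ds}{s}=I$ and a Schur-type assembly, to the almost-orthogonality bound $\doublebar{R_tQ_s}_{L^2\to L^2}\lesssim\min(s/t,t/s)^\theta$; prove the range $s\leq t$ by writing $Q_sg=\Div_\pureH\vec G_s$ (possible because $\widehat\varphi$ is supported away from the origin) and invoking the $R_t\Div_\pureH$ bound; and prove the range $s\geq t$ by tiling $\R^n$ with cubes of side $\approx t$, subtracting $c_Q=\fint_{2Q}Q_sg$ via $R_t1=0$, then combining the off-diagonal decay~\eqref{eqn:R1} with Poincar\'e/telescoping estimates on $\doublebar{Q_sg-c_Q}_{L^2(A_j(Q))}$.

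I carried out the $j$-bookkeeping that you flag as the main potential obstacle and it does close, but it is worth recording where the margin comes from. The telescoping bound $|c_Q-c_{2^{j+1}Q}|\leq\sum_{i=1}^j|c_{2^iQ}-c_{2^{i+1}Q}|$ with $|c_{2^iQ}-c_{2^{i+1}Q}|\lesssim 2^{i(1-n/2)}\ell(Q)^{1-n/2}\doublebar{\nabla Q_sg}_{L^2(2^{i+1}Q)}$ keeps track of which dilate of $Q$ each gradient norm lives on; after Cauchy--Schwarz with geometric weights in $i$, multiplying by $|A_j(Q)|$, by the decay $2^{-nj}(t/2^j\ell(Q))^4$ from~\eqref{eqn:R1}, and then summing over the grid (which costs $2^{in}$ at scale $i$, not $2^{jn}$), the contributions are summable in $i\leq j$ and in $j$, and one obtains $\doublebar{R_tQ_sg}_{L^2}\lesssim t\doublebar{\nabla Q_sg}_{L^2}\lesssim(t/s)\doublebar{g}_{L^2}$. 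If instead one crudely bounds $|c_Q-c_{2^{j+1}Q}|$ by a single Poincar\'e estimate on $2^{j+1}Q$ (so that all gradient norms are attached to the top scale $2^{j+1}Q$), the double-counted overlap produces a factor $2^{jn}$ that the $2^{-nj}2^{-4j}$ decay does not absorb when $n\geq3$; your stated telescoping route is the one that works, and the phrase ``at-most-polynomial growth in $2^j$'' should be understood at the level of the telescoped contributions, not of the crude single-scale bound. With that reading, the argument is complete and correct, with $\theta=1$ in both regimes.
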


By the definitions of $\Theta_t^\beta$ and $R_t^{2,\beta}$, we have that
\begin{align*}
R_t^{2,\beta} F(x)
&= \Theta_t^\beta 1(x)\, P_t F(x) - \Theta_t^\beta (P_t F)(x)
\\&=\sum_{\alpha_{n+1}=0}  t^k\int_{\R^\dmnMinusOne}
	\partial_t^{m+k-1} \partial_{y}^\alpha E^L(x,t,y,0)\,A_{\alpha\beta}(y) \bigl(P_t F(x)-P_t F(y)\bigr)\,dy.
\end{align*}
Let
\begin{equation}
\label{eqn:Rt2}
\widetilde R_t^{2,\alpha,\beta} F(x)=t^k\int_{\R^\dmnMinusOne} \partial_y^\alpha \partial_t^{m+k-1} E^L(x,t,y,0)\,A_{\alpha\beta}(y) \, (P_t F(x) - P_t F(y)) \,dy.
\end{equation}
In this section we need only bound $\widetilde R_t^{2,\alpha,\beta}$ for $\alpha_\dmn=0$; in Section~\ref{sec:kato:3} we will need an estimate on $\widetilde R_t^{2,\alpha,\beta}$ in the case where $\alpha_\dmn>0$.

Observe that $P_t 1(x)=P_t1(y)=1$ and so $\widetilde R_t^{2,\alpha,\beta}1 = 0$.

Now, recall that $P_t F(x) = \int t^{-n}\psi((x-y)/t)\,F(y)\,dy$ for some smooth, compactly supported function~$\psi$; then
\begin{equation*}\doublebar{P_t F}_{L^2(\R^n)} \leq C \doublebar{F}_{L^2(\R^n)} \quad \text{and} \quad \doublebar{P_t (\Div_\pureH\vec F)}_{L^2(\R^n)} \leq \frac{C}{t} \doublebar{\vec F}_{L^2(\R^n)}.\end{equation*}
We must use this fact to establish the bounds \eqref{eqn:R1} and~\eqref{eqn:R1:div}.

Let $Q$ be a cube and let $0<t<\ell(Q)$.
If $j\geq 1$ and $F$ is supported in $A_{j}(Q)$, observe that $P_t F(x)$ is supported in $A_{j,1}(Q)$. Thus, by the bound~\eqref{eqn:fundamental:slices},
\begin{equation*}
\doublebar{\widetilde R_t^{2,\alpha,\beta} (F\1_{A_j(Q)})}_{L^2(Q)} \leq
C \frac{t^{k}}{\ell(Q)^{k}} 2^{-j(k-1/2+\varepsilon/2)}
\doublebar{P_t (F\1_{A_j(Q)})}_{L^2(\R^n)}
.\end{equation*}
(In the case $j=1$ some extra care must be taken to establish this estimate; however, it may be done by considering the cases $t>\ell(Q)/2$ and $t\leq \ell(Q)/2$ separately.)

This implies the bound~\eqref{eqn:R1}. We are left with the uniform $L^2$ bounds~\eqref{eqn:R1:div}.

Suppose that $F$ is supported in $8Q$ and that $\ell(Q)/2<t\leq\ell(Q)$. Then $P_t F(y)=0$ for all $y\notin 16Q$ and so
\begin{align*}
\abs{\widetilde R_t^{2,\alpha,\beta} F(x)}
&\leq
	Ct^k \int_{\R^\dmnMinusOne}
	\abs{\partial_y^\alpha \partial_t^{m+k-1} E^L(x,t,y,0)}\abs{P_t F(x)-P_t F(y)} \,dy
\\&\leq
	Ct^k\int_{16Q}
	\abs{\partial_y^\alpha \partial_t^{m+k-1} E^L(x,t,y,0)}\abs{P_t F(x)-P_t F(y)} \,dy
	\\&\qquad
	+ Ct^k \sum_{j=1}^\infty \abs{P_t F(x)}\int_{A_j(Q)}\abs{\partial_y^\alpha \partial_t^{m+k-1} E^L(x,t,y,0)}\,dy
.\end{align*}
Applying the bound~\eqref{eqn:fundamental:slices:higher}, we see that
\begin{align*}
\abs{\widetilde R_t^{2,\alpha,\beta} F(x)}
&\leq
	C \abs{P_t F(x)} + C t^{-n/2} \doublebar{P_t F}_{L^2(16Q)}
.\end{align*}
Thus,
\begin{equation*}\doublebar{R_t(F\1_{8Q})}_{L^2(Q)} \leq C \doublebar{P_t (F\1_{8Q})}_{L^2(\R^n)}.\end{equation*}
We sum over cubes of side-length~$t$; this yields the bound
\begin{equation*}\doublebar{R_tF}_{L^2(Q)} \leq C \doublebar{P_t F}_{L^2(\R^n)}\end{equation*}
and, combined with the existing bounds on $P_t F$ and $P_t\Div_\pureH F$, yields the desired estimates~\eqref{eqn:R1:div}.

Thus, Lemma~\ref{lem:AAAHKT1} applies and we may bound the operator~$\widetilde R_t^{2,\alpha,\beta}$.
In particular, using the bound~\eqref{eqn:kato:sobolev} and arguing as in the proof of Lemma~\ref{lem:square:carleson}, we have the desired Carleson bound on~$ R_t^{2,\beta} \partial^\beta\!f_{Q,w}$.

\subsection{\texorpdfstring{The term $R_t^{3,\beta} \partial^\beta\!f_{Q,w}$}{The third term}}
\label{sec:kato:3}

Finally, we consider the term $R_t^{3,\beta} \partial^\beta\!f_{Q,w}$.
As in the case of $R_t^{4,\beta}$, but unlike $R_t^{1,\beta}$ and $R_t^{2,\beta}$, we will not be able to bound the individual terms $R_t^{3,\beta} \partial^\beta\!f_{Q,w}$; we will only be able to bound
\begin{align*}\widetilde R_t^3 f_{Q,w} (x)
&=
	\sum_{\beta_\dmn=0} R_t^{3,\beta} \partial^\beta\!f_{Q,w} (x)
\\&=
	\sum_{\substack{\alpha_\dmn=0\\ \beta_\dmn=0}} \! t^k\int_{\R^\dmnMinusOne} \partial_y^\alpha \partial_t^{m+k-1} E^L(x,t,y,0)\,A_{\alpha\beta}(y) \, \partial^\beta (P_t f_{Q,w}-f_{Q,w})(y) \,dy
.\end{align*}
Another complication of this section is that we will only be able to establish the bound
\begin{equation*}\doublebar{\widetilde R_t^3 f_{Q,w}}_{\mathcal{C},\delta} \leq C_1+C_1\doublebar{\Theta_t^\perp 1}_{\mathcal{C},\delta}\end{equation*}
and not simply a bound of the form $\doublebar{\widetilde R_t^3 f_{Q,w}}_{\mathcal{C}}\leq C_1$.

Let $f=(f_{Q,w}-p_{Q,w})\eta_Q$, where $p_{Q,w}$ is an appropriate polynomial of degree $m-1$ and where $\eta_Q$ is a smooth cutoff function that is identically 1 in $2Q$ and is supported in $4Q$. By the bound~\eqref{eqn:kato:sobolev},
\begin{equation*}\doublebar{\nabla^m_\pureH f}_{L^2(\R^n)}\leq C \abs{Q}^{1/2}.\end{equation*}
Furthermore, $\partial^\beta f=\partial^\beta f_{Q,w}$ in $2Q$ whenever $\abs{\beta}=m$.
Using the bound~\eqref{eqn:fundamental:slices:higher} on $E^L$ and the bound~\eqref{eqn:kato:sobolev} on $\nabla^m_\pureH f_{Q,w}$, we may show that
\begin{equation*}\frac{1}{\abs{Q}}\int_Q \int_0^{\ell(Q)} \abs{\widetilde R_t^3 f_{Q,w} (x)-\widetilde R_t^3 f (x)}^2\frac{dt\,dx}{t}\leq C\end{equation*}
and so we need only bound~$\widetilde R_t^3 f(x)$.

By the definition~\eqref{eqn:L:parallel} of $L_\parallel$, and by formula~\eqref{eqn:fundamental:symmetric},
\begin{equation*}
\widetilde R_t^3 f(x) =
	t^k\int_{\R^\dmnMinusOne}
	\partial_t^{m+k-1} \overline{L_\pureH^* E^{L^*}(y,0,x,t) }
	\, (P_t f-f)(y) \,dy
\end{equation*}
where $L_\pureH^*$ is taken in the $y$ variable.
Recalling that $L^*_{y,s}(E^{L^*}(y,s,x,t))=0$ away from $(x,t)$, we see that
\begin{equation*}
\overline{L_\pureH^* E^{L^*}(y,0,x,t)}
= (-1)^{m+1} \sum_{\xi_\dmn+\zeta_\dmn\geq 1}
\partial^\zeta_{y,s}( {A_{\xi\zeta}(y)}\, \partial^\xi_{y,s} {E^{L}(x,t,y,0)} )
.\end{equation*}
Thus, we need only bound the quantities
\begin{equation*}
\widetilde R_t^{3,\xi,\zeta} f(x)
= (-1)^m t^k\int_{\R^\dmnMinusOne}
	\partial^\zeta_{y,s}( {A_{\xi\zeta}(y)} \, \partial^\xi_{y,s} \partial_t^{m+k-1} {E^{L}(x,t,y,0)} )
	\, (P_t f-f)(y) \,dy
\end{equation*}
where at least one of $\zeta_\dmn$ and $\xi_\dmn$ is positive.

For each multiindex $\zeta$, we write $\zeta=\zeta_\pureH+\zeta_\perp\vec e_\perp$, where $\zeta_\perp=\zeta_\dmn$ and where $\zeta_\pureH$ is a multiindex with $(\zeta_\pureH)_\dmn=0$. Integrating by parts and applying formula~\eqref{eqn:fundamental:vertical}, we see that
\begin{equation*}
\widetilde R_t^{3,\xi,\zeta} f (x)
=
t^k\int_{\R^\dmnMinusOne}
	{A_{\xi\zeta}(y)} \,\partial^\xi_{y,s} \partial_t^{m+k-1+\zeta_\perp} {E^{L}(x,t,y,0)}
	\, \partial^{\zeta_\pureH}(P_t f-f)(y) \,dy
.\end{equation*}
We wish to bound $\int_Q \abs{\widetilde R_t^{3,\xi,\zeta} f}^2$ for $0<t<\ell(Q)$. Let $S\subset Q$ be a dyadic subcube with $t/2<\ell(S)\leq t$.
Let $\sum_{j=0}^\infty \eta_j$ be a smooth partition of unity with $\eta_j$ supported in $A_{j,1}(S)$ and with the usual bounds on the derivatives of~$\eta_j$. Let $f_j=\eta_j\, f$.
Then
\begin{equation*}
\widetilde R_t^{3,\xi,\zeta} f (x)
=
\sum_{j=0}^\infty t^k\int_{\R^n}
	{A_{\xi\zeta}(y)} \,\partial^\xi_{y,s} \partial_t^{m+k-1+\zeta_\perp} {E^{L}(x,t,y,0)}
	\, \partial^{\zeta_\pureH}(P_t f_j-f_j)(y) \,dy
.\end{equation*}
By the bound~\eqref{eqn:fundamental:slices:higher}, if $x\in S$ then
\begin{multline*}\abs[bigg]{t^k\int_{\R^n}
	{A_{\xi\zeta}(y)} \,\partial^\xi_{y,s} \partial_t^{m+k-1+\zeta_\perp} {E^{L}(x,t,y,0)}
	\, \partial^{\zeta_\pureH}(P_t f_j-f_j)(y) \,dy}^2
\\\leq
	Ct^{-n-2\zeta_\perp}2^{-j(n+2k+2\zeta_\perp)}
	\int_{A_{j,2}(S)}
	\abs{ \partial^{\zeta_\pureH}(P_t f_j-f_j)(y)}^2 \,dy
.\end{multline*}
Thus,
\begin{equation*}\int_Q \abs{\widetilde R_t^{3,\xi,\zeta} f(x)}^2\,dx
\leq
	\sum_{S\subset Q} \sum_{j=0}^\infty
	Ct^{-2\zeta_\perp}2^{-j(n+2k+2\zeta_\perp)}
	\int_{A_{j,2}(S)}
	\abs{ \partial^{\zeta_\pureH}(P_t f_j-f_j)(y)}^2 \,dy.
	\end{equation*}
Summing carefully, we see that
\begin{equation*}\int_Q \abs{\widetilde R_t^{3,\xi,\zeta} f(x)}^2\,dx
\leq Ct^{-2\zeta_\perp}
	\int_{\R^n}
	\abs{ \partial^{\zeta_\pureH}(P_t f-f)(y)}^2 \,dy.\end{equation*}
Now, by Plancherel's theorem,
\begin{multline*}\int_0^\infty t^{-2\zeta_\perp}\int_{\R^n}
	\abs{\partial^{\zeta_\pureH}(P_t f-f)(y)}^2\,dy \,\frac{dt}{t}
\\\leq
	\int_{\R^n} \abs{\omega}^{2\abs{\zeta_\pureH}}\,
	\abs{\widehat{f}(\omega)}^2\,
	\int_0^\infty t^{-2\zeta_\perp}(1-\widehat\psi(t\omega))^2\,\frac{dt}{t}
	\,d\omega
\end{multline*}
where $\psi_t(x)=t^{-n}\psi(x/t)$ is the convolution kernel of~$P_t$. We require that $\psi$ be radial and make the change of variables $s=t\abs{\omega}$. Then
\begin{multline*}\int_0^\infty t^{-2\zeta_\perp} \int_{\R^n}
	\abs{\partial^{\zeta_\pureH}(P_t f-f)(y)}^2\,dy \,\frac{dt}{t}
\\\leq
	\int_{\R^n} \abs{\omega}^{2\abs{\zeta}} \,
	\abs{\widehat{f}(\omega)}^2\,
	\int_0^\infty s^{-2\zeta_\perp} (1-\widehat\psi(s))^2\,\frac{ds}{s}
	\,d\omega
.\end{multline*}
We require that $\int \psi=1$, and that the higher moments are zero, that is, that $\int x^\theta \,\psi(x)\,dx=0$ for all $\abs{\theta}$ small enough. This implies that $1-\widehat\psi(s)$ is small for $s$ small and so $s^{-2\zeta_\perp-1}(1-\widehat\psi(s))^2$ is integrable near zero. Because $\psi$ is smooth and compactly supported, we have that $\widehat\psi(s)$ is bounded. If $\zeta_\perp>0$ then the integral in $s$ converges. (We have that $\widehat\psi(s)\to 0$ as $s\to \infty$, and so the integral must diverge if $\zeta_\perp=0$.) Because $\doublebar{\nabla_\pureH^m f}_{L^2(\R^n)}\leq C\sqrt{\abs{Q}}$, we have that
\begin{equation*}\int_0^{\ell(Q)} \int_Q\abs{\widetilde R_t^{3,\xi,\zeta} f(x) }^2\,\frac{dx\,dt}{t}
\leq C \abs{Q}\end{equation*}
whenever $\zeta_\perp>0$.

We are left with the terms $\widetilde R_t^{3,\xi,\zeta} f$ for $\zeta_\perp=0$; recall that we need only consider $\zeta_\perp+\xi_\perp\geq 1$ and so we may assume $\xi_\perp\geq 1$. Because $\zeta_\perp=0$, we have that
\begin{equation*}
\widetilde R_t^{3,\xi,\zeta} f (x)
=
t^k\int_{\R^\dmnMinusOne}
	{A_{\xi\zeta}(y)} \,\partial^\xi_{y,s} \partial_t^{m+k-1} {E^{L}(x,t,y,0)}
	\, (\partial^{\zeta}P_t f(y)-\partial^\zeta f(y)) \,dy
.\end{equation*}
Recall from Section~\ref{sec:kato:2} that the operator $\widetilde R_t^{2,\xi,\zeta}$, given by formula~\eqref{eqn:Rt2}, satisfies square-function estimates. Thus, we need only bound
\begin{multline*}\widetilde R_t^{3,\xi,\zeta} f (x) + \widetilde R_t^{2,\xi,\zeta} f (x)
\\=
t^k\int_{\R^\dmnMinusOne}
	{A_{\xi\zeta}(y)} \,\partial^\xi_{y,s} \partial_t^{m+k-1} {E^{L}(x,t,y,0)}
	\, (\partial^{\zeta}P_t f(x)-\partial^\zeta f(y)) \,dy
.\end{multline*}
Let $\gamma=\xi-\vec e_\perp$. We use formula~\eqref{eqn:fundamental:vertical}; we then see that
\begin{multline*}\widetilde R_t^{3,\xi,\zeta} f (x) + \widetilde R_t^{2,\xi,\zeta} f (x)
\\=
-t^k\int_{\R^\dmnMinusOne}
	{A_{\xi\zeta}(y)} \,\partial^\gamma_{y,s} \partial_t^{m+k} {E^{L}(x,t,y,0)}
	\, (\partial^{\zeta}P_t f(x)-\partial^\zeta f(y)) \,dy
.\end{multline*}
We recognize the integrand as being much like the kernel of the single layer potential. By formula~\eqref{dfn:Theta:S:integral} for $\Theta_t^S$, we have that
\begin{equation*}\widetilde R_t^{3,\xi,\zeta} f (x) + \widetilde R_t^{2,\xi,\zeta} f (x)
=
-P_t (\partial^{\zeta}f)(x)\,\Theta_t^S (A_{\xi\zeta}\arr e_\gamma)(x)
	+ \Theta_t^S (A_{\xi\zeta} \,\partial^\zeta \!f\,\arr e_\gamma )(x)
.\end{equation*}

If $\gamma\neq \gamma_\perp$, then the operator $\Theta_t$ given by $\Theta_t f= \Theta_t^S(f\arr e_\gamma)$ satisfies the bound~\eqref{eqn:Theta:1:square} (see Section~\ref{sec:S:not-normal}).
If $\gamma=\gamma_\perp$, then
\begin{equation*}\Theta_t^S (A_{\xi\zeta} \,\partial^\zeta \!f\,\arr e_\gamma ) = \Theta_t^\perp (A_{\xi\zeta} \,\partial^\zeta \!f ).\end{equation*} The operator $\Theta_t^\delta = 1_{\delta<t<1/\delta} \Theta_t^\perp$ satisfies the conditions of Theorem~\ref{thm:grau:hofmann:1}, albeit with constants depending on $\doublebar{\Theta_t^\perp 1}_{\mathcal{C},\delta}$, and so also satisfies the bound~\eqref{eqn:Theta:1:square}. Thus, in either case, we have the bound
\begin{equation*}\int_{\R^n} \int_\delta^{1/\delta} \abs{\Theta_t^S (A_{\xi\zeta} \,\partial^\zeta \!f\,\arr e_\gamma )(x)}^2\frac{dx\,dt}{t}
\leq (C+C\doublebar{\Theta_t^\perp}_{\mathcal{C},\delta}) \abs{Q}.
\end{equation*}

To bound $P_t (\partial^{\zeta}f)(x)\,\Theta_t^S (A_{\xi\zeta}\arr e_\gamma)(x)$, recall Carleson's lemma (see, for example, \cite[Chapter II, Section 2.2]{Ste93}).
\begin{lem}
Let $F(x,t)$ be a function and $d\mu$ be a measure defined on $\R^\dmn_+$. Then
\begin{equation*}\abs[bigg]{\int_{\R^\dmn_+} F(x,t)\,d\mu(x,t)} \leq C \biggl(\sup_{R\subset\R^n} \frac{1}{\abs{R}}\int_R \int_0^{\ell(R)} \abs{d\mu}\biggr) \biggl( \int_{\R^n} \sup_{\abs{x-y}<t} \abs{F(y,t)} \,dx\biggr)\end{equation*}
provided the right-hand side is finite, where the supremum is taken over cubes $R\subset\R^n$.
\end{lem}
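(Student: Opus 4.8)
The plan is to use the classical tent-decomposition argument for Carleson measures. Write $\mathcal N F(x)=\sup_{\abs{x-y}<t}\abs{F(y,t)}$ for the non-tangential maximal function appearing on the right-hand side and $\doublebar{\mu}_{\mathcal C}=\sup_{R\subset\R^n}\frac{1}{\abs R}\int_R\int_0^{\ell(R)}\abs{d\mu}$ for the Carleson norm, so that the claimed bound reads $\abs[bigg]{\int_{\R^\dmn_+}F\,d\mu}\le C\,\doublebar{\mu}_{\mathcal C}\int_{\R^n}\mathcal N F$. Since $\abs[bigg]{\int_{\R^\dmn_+}F\,d\mu}\le\int_{\R^\dmn_+}\abs F\,d\abs{\mu}=\int_0^\infty\abs{\mu}\bigl(\{(y,t):\abs{F(y,t)}>\lambda\}\bigr)\,d\lambda$ by the layer-cake formula, and since $\int_0^\infty\abs{\{x:\mathcal N F(x)>\lambda\}}\,d\lambda=\int_{\R^n}\mathcal N F$ by the same formula applied to $\mathcal N F$, the whole statement reduces to the single estimate
\[
\abs{\mu}\bigl(\{(y,t)\in\R^\dmn_+:\abs{F(y,t)}>\lambda\}\bigr)\le C\,\doublebar{\mu}_{\mathcal C}\,\abs{E_\lambda},\qquad E_\lambda:=\{x\in\R^n:\mathcal N F(x)>\lambda\}.
\]

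First I would check that $E_\lambda$ is open: if $\mathcal N F(x_0)>\lambda$ there is $(y_0,t_0)$ with $\abs{x_0-y_0}<t_0$ and $\abs{F(y_0,t_0)}>\lambda$, and the strict inequality $\abs{x-y_0}<t_0$ persists for $x$ near $x_0$. Next, the ``bad set'' $\{(y,t):\abs{F(y,t)}>\lambda\}$ is contained in the tent over $E_\lambda$: if $\abs{F(y,t)}>\lambda$ then every $x$ with $\abs{x-y}<t$ has $\mathcal N F(x)>\lambda$, so the ball $B(y,t)$ lies in $E_\lambda$. Then I would take a Whitney decomposition $E_\lambda=\bigcup_j Q_j$ into pairwise disjoint dyadic cubes with $\diam Q_j\le\dist(Q_j,\R^n\setminus E_\lambda)\le 4\diam Q_j$; if $(y,t)$ is in the bad set and $y\in Q_j$, then $B(y,t)\subset E_\lambda$ forces $t\le\dist(y,\R^n\setminus E_\lambda)\le C\,\ell(Q_j)$, so the bad set is covered by $\bigcup_j CQ_j\times(0,C\ell(Q_j))$. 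Applying the Carleson condition with $R=CQ_j$, which is again a cube of side length $\ell(CQ_j)=C\ell(Q_j)$, gives $\abs{\mu}\bigl(CQ_j\times(0,C\ell(Q_j))\bigr)\le\doublebar{\mu}_{\mathcal C}\abs{CQ_j}=C^n\doublebar{\mu}_{\mathcal C}\abs{Q_j}$; summing over the disjoint $Q_j$ yields $\abs{\mu}(\text{bad set})\le C^{n+1}\doublebar{\mu}_{\mathcal C}\sum_j\abs{Q_j}=C^{n+1}\doublebar{\mu}_{\mathcal C}\abs{E_\lambda}$, the desired estimate. Integrating in $\lambda$ and combining the two layer-cake identities above finishes the proof; the hypothesis that the right-hand side is finite makes every Fubini/Tonelli interchange legitimate.

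I do not expect a genuine obstacle here, as this is a standard fact (see, e.g., \cite[Chapter~II, Section~2.2]{Ste93}); the only points that require a little care are the openness of $E_\lambda$, the containment of the bad set in the tent of $E_\lambda$, and the bookkeeping of the dilation constants when the Whitney cubes $Q_j$ are replaced by the enlarged cubes $CQ_j$ on which the Carleson condition is invoked.
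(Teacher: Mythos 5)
Your proof is correct. The paper gives no proof of this lemma at all; it simply cites Stein's book (Chapter~II, Section~2.2), and your argument is the standard tent/Whitney-decomposition proof found there, so your writeup is exactly what the paper's citation defers to.
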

We wish to bound
\begin{equation*}\frac{1}{\abs{Q}} \int_Q \int_\delta^{\min(1/\delta,\ell(Q))} \abs{P_t (\partial^{\zeta}f)(x)}^2 \, \abs{\Theta_t^S (A_{\xi\zeta}\arr e_\gamma)(x)}^2\,\frac{1}{t}\,dt\,dx\end{equation*}
for $\delta$ small enough.
Let $F(x,t)= \abs{P_t (\partial^{\zeta}f)(x)}^2$; because $P_t$ is a smooth identity with a convolution kernel it is elementary to show that $\sup_{\abs{x-y}<t} \abs{F(y,t)} \leq C M(\partial^\zeta f)(x)$. Let
\begin{equation*}d\mu(x,t) = \1_{\delta<t<1/\delta} \abs{\Theta_t^S (A_{\xi\zeta}\arr e_\gamma)(x)}^2\,\frac{1}{t}\,dt\,dx.\end{equation*}
By Lemma~\ref{lem:square:carleson} and the preceding remarks, we have that
\begin{equation*}\sup_R \frac{1}{\abs{R}} \int_R \int_0^{\ell(R)} \abs{d\mu} \leq C + C \doublebar{\Theta_t^\perp 1}_{\mathcal{C},\delta}^2.\end{equation*}
This establishes the desired bound on $\widetilde R_t^{3,\xi,\zeta} f$.

\subsection{Terms with \texorpdfstring{$\alpha_\dmn=0$ and $\beta_\dmn>0$}{partly vertical derivatives}}
\label{sec:kato:mixed}

We conclude this section by bounding $\Theta_t^\beta 1$ for multiindices $\beta$ with $\beta_\dmn>0$. Recall that
\begin{equation*}\Theta_t^\beta f(x)=\sum_{\alpha_{n+1}=0}  t^k\int_{\R^\dmnMinusOne}
\partial_t^{m+k-1} \partial_{y}^\alpha E^L(x,t,y,0)\,A_{\alpha\beta}(y)\,f(y) \,dy.\end{equation*}

By a well known argument of Fefferman and Stein \cite{FefS72}, using decay of the kernel
of $\Theta^\beta_t $ (that is, the bound~\eqref{eqn:fundamental:slices}), we find that if $k$ is large enough then
\begin{equation*}\doublebar{\Theta^\beta_t 1}_{\mathcal{C}} \leq C + \sup_Q\frac{C}{|Q|}\int_0^{\ell(Q)}\!\!\int_Q
|\Theta^\beta_t (\1_{4Q})(x)|^2\frac{dx\,dt}{t}
.\end{equation*}

Let $( F_\beta)_\alpha:= A_{\alpha\beta} \, \1_{4Q}$; then $\arr F_\beta$ is an $L^2$ array-valued function.
More precisely, let $q$ be the number of multiindices $\zeta\in\N^n$ of length~$m$; alternatively, $q$ is the number of multiindices $\zeta\in\N^\dmn$ of length $m$ with $\zeta_\dmn=0$. We will think of $\C^q$ as the vector space of arrays of numbers indexed by such multiindices. Then for each~$\beta$, $\arr F_\beta $ is a function in $L^2(\R^n\mapsto\C^q)$.

Now, observe that
\begin{equation}
\label{eqn:Hodge}
\mat A_\pureH \nabla_\pureH^m L_\pureH^{-1}\Div_{m,\pureH} : L^2(\R^n\mapsto\C^q)\mapsto L^2(\R^n\mapsto\C^q)\end{equation}
is a bounded operator, where $\nabla_\pureH^m$ and $\Div_{m,\pureH}$ are as defined in Section~\ref{sec:dfn},
and so we have a Hodge decomposition of $L^2(\R^n\mapsto\C^q)$. Specifically, if $\arr F\in L^2(\R^n\mapsto\C^q)$, then
\begin{equation*}
\arr F = \arr H + \mat A_\pureH\nabla^m\Phi
\end{equation*}
for some $\arr H\in L^2(\R^n\mapsto\C^q)$ and some $\Phi\in \dot W^2_m(\R^n)$, with $\Div_{m,\pureH}\arr H=0$ and with
\begin{equation*} 
\|\arr H\|_{L^2(\R^n\mapsto\C^q)} \, +\, \|\Phi\|_{ \dot{W}^{m,2}(\R^n\mapsto\C)}\,\leq C\, \|\arr F\|_{L^2(\R^n\mapsto\C^q)} \,.
\end{equation*}

Applying the Hodge decomposition to $\arr F_\beta$, we see that
\begin{equation*}
\Theta_t^\beta \1_{4Q}(x)=\sum_{\alpha_{n+1}=0}  t^k\int_{\R^\dmnMinusOne}
\partial_t^{m+k-1} \partial_{y}^\alpha E^L(x,t,y,0)\,\Big(\arr H_\beta+\mat A_\pureH\nabla^m_\pureH\Phi_\beta\Big)_\alpha \,dy.
\end{equation*}
But because $\Div_{m,\pureH}\arr H_\beta=0$, we have that
\begin{align*}
\Theta_t^\beta \1_{4Q}(x)
&=
	\sum_{\alpha_{n+1}=0}  t^k\int_{\R^\dmnMinusOne}
	\partial_t^{m+k-1} \partial_{y}^\alpha E^L(x,t,y,0)\,\Big(\mat A_\pureH \nabla_\pureH^m\Phi_\beta\Big)_\alpha \,dy.
\end{align*}
We may extend $\Phi_\beta$ to a function defined on $\R^\dmn$ by letting $\Phi_\beta(y,s)=\Phi_\beta(y)$. Observe that $\partial^\zeta\Phi_\beta=0$ unless $\zeta_\dmn=0$. Also, if $\alpha_\dmn=0$, then $(\mat A_\pureH\nabla_\pureH^m \Phi_\beta)_\alpha=(\mat A\nabla^m \Phi_\beta)_\alpha$. Thus,
\begin{align*}
\Theta_t^\beta \1_{4Q}(x)
&=
	\sum_{\alpha_{n+1}=0}\sum_{\abs{\zeta}=m}  t^k\int_{\R^\dmnMinusOne}
	\partial_t^{m+k-1} \partial_{y}^\alpha E^L(x,t,y,0)\,A_{\alpha\zeta}(y)\,\partial^\zeta\Phi_\beta(y) \,dy
.\end{align*}
Recall from formulas \eqref{dfn:Theta:D} and~\eqref{eqn:D:fundamental} that
\begin{align*}
\Theta_t^D (\Tr_{m,\semiH}\Phi_\beta)(x)
&=
	-\!\!\sum_{\abs{\alpha}=\abs{\zeta}=m} \!\! t^k\int_{\R^\dmn_-}
	\partial_t^{m+k} \partial_{y,s}^\alpha E^L(x,t,y,s)\,A_{\alpha\zeta}(y)\,\partial^\zeta\Phi_\beta(y) \,dy\,ds
.\end{align*}
Using the identity~\eqref{eqn:fundamental:vertical} and integrating in~$s$, we see that
\begin{align*}
\Theta_t^D (\Tr_{m,\semiH}\Phi_\beta)(x)
&=
	\sum_{\abs{\alpha}=\abs{\zeta}=m}  t^k\int_{\R^n}
	\partial_t^{m+k-1} \partial_{y,s}^\alpha E^L(x,t,y,0)\,A_{\alpha\zeta}(y)\,\partial^\zeta\Phi_\beta(y) \,dy
.\end{align*}
Thus,
\begin{align*}
\Theta_t^\beta \1_{4Q}(x)
&=
	-\sum_{\alpha_{n+1}>0}\sum_{\abs{\zeta}=m}  t^k\int_{\R^\dmnMinusOne}
	\partial_t^{m+k-1} \partial_{y,s}^\alpha E^L(x,t,y,0)\,A_{\alpha\zeta}(y)\,\partial^\zeta\Phi_\beta(y) \,dy
	\\&\qquad+\Theta_t^D(\Tr_{m,\semiH}\Phi_\beta)(x)
.\end{align*}
If $\alpha_\dmn>0$, then $\alpha=\gamma+\vec e_\dmn$ for some multiindex $\gamma$ with $\abs{\gamma}=m-1$. Conversely, if $\abs{\gamma}=m-1$, let $\widetilde \gamma=\gamma+\vec e_\dmn$. We may write
\begin{align*}
\Theta_t^\beta \1_{4Q}(x)
&=
	\sum_{\abs{\gamma}=m-1}\sum_{\abs{\zeta}=m}  t^k\int_{\R^\dmnMinusOne}
	\partial_t^{m+k} \partial_{y,s}^\gamma E^L(x,t,y,0)\,A_{\widetilde\gamma\zeta}(y)\,\partial^\zeta\Phi_\beta(y) \,dy
	\\&\qquad
	+\Theta_t^D(\Tr_{m,\semiH}\Phi_\beta)(x)
.\end{align*}
By formula~\eqref{dfn:Theta:S:integral} for $\Theta_t^S$, we see that
\begin{align*}
\Theta_t^\beta \1_{4Q}(x)
&=
	\Theta_t^S \arr G_\beta (x)
	+\Theta_t^D(\Tr_{m,\semiH}\Phi_\beta)(x)
\end{align*}
where $(G_\beta)_\gamma = \sum_\zeta A_{\widetilde\gamma\zeta} \partial^\zeta \Phi_\beta$.

Observe that $\arr G_\beta\in L^2(\R^n)$. We may bound the term $\Theta_t^S\arr G_\beta (x)$ as usual.

Recall that $\Theta_t^D$ acts on the space $\dot W\!A^2_{m,\semiH}(\R^n)$, the completion of $\{\Tr_{m,\semiH}\varphi:\varphi\in C^\infty_0(\R^\dmn)\}$ under the $L^2$ norm. Consider the subspace $W$, the completion of
\begin{equation*}\{\Tr_{m,\semiH}\varphi:\varphi\in C^\infty_0(\R^\dmn),\>\partial_\dmn^j\varphi(x,0)=0\text{ for all }x\in\R^n\text{ and all }j\geq 1\}\end{equation*}
under the $L^2$ norm. We may let $\Theta_t^{D\pureH}$ denote the restriction of $\Theta_t^D$ to the space~$W$. Notice that $\Tr_{m,\semiH}\Phi_\beta=\nabla_\pureH^m \Phi_\beta\big\vert_{\R^n}$, and so $\Theta_t^D(\Tr_{m,\semiH}\Phi_\beta)(x) =\Theta_t^{D,\pureH}(\Tr_{m,\semiH} \Phi_\beta)(x)$.
As we established in Sections~\ref{sec:kato}--\ref{sec:kato:3},
\begin{equation*}\doublebar{\Theta_t^{D\pureH} 1}_{\mathcal{C},\delta} \leq C + C\doublebar{\Theta_t^\perp 1}_{\mathcal{C},\delta}\end{equation*}
and so we may control $\Theta_t^{D,\pureH}(\Tr_{m,\semiH} \Phi_\beta)(x)$. This completes the argument that $\Theta_t^D 1$ satisfies a Carleson estimate.

\section{Test functions \texorpdfstring{$\arr b_Q$}{b}}
\label{sec:b}

In this section we will choose test functions $\arr b_Q$ such that we may apply Theorem~\ref{thm:grau:hofmann} to bound $\Theta_t^\perp$ and~$\Theta_t^D$. (The remaining components of $\Theta_t^S$ were bounded in Section~\ref{sec:S:not-normal}.) We will follow the example of \cite{GraH14p}, which considers the case $m=1$.

As in Section~\ref{sec:D:carleson}, we will make the assumption $2m>n$. Again, by Morrey's inequality, this implies that functions locally in~$\dot W^2_m(\R^n)$ are locally H\"older continuous. By Lemma~\ref{lem:slices}, if $2m>n$ then solutions to elliptic equations are locally in $L^2(\R^n\times\{t\})$ for constants~$t$, and thus are also locally H\"older continuous. (See also \cite[Appendix~B]{AlfAAHK11}, in which a similar argument is made.)

Fix some dyadic cube~$Q$. Let $y_Q$ be its midpoint.
Let
\begin{align}\label{eqn:F}
F_s(x,t) &=
	\partial_s^{m-1} E^L(x,t,y_Q,s)
\end{align}
and let $F_\pm = F_{\pm \kappa\ell(Q)}$
for some small positive number $\kappa$ to be chosen later.
By the bound~\eqref{eqn:fundamental:slices:higher} and the symmetry relation~\eqref{eqn:fundamental:symmetric}, we may see that if $\dmn\geq 3$ then $F_\pm(x,t)\in \dot W^2_m(\R^\dmn_\mp)$; furthermore, by Theorem~\ref{thm:fundamental} we see that $LF_\pm=0$ in $\R^\dmn_\mp$.
Thus, by the higher-order Green's formula~\eqref{eqn:green}, if $t>0$ then
\begin{align*}
\partial_t^m F_-(x,t)
&=
	-\partial_t^m \D^{\mat A} (\Tr_{m-1}^+F_-)(x,t)
	+\partial_t^m \s^{\mat A} (\M_{\mat A}^+ F_-)(x,t)
\end{align*}
and by the corresponding formula in $\R^\dmn_-$, if $t>0$ then
\begin{align*}
0
&=
	\partial_t^m \D^{\mat A} (\Tr_{m-1}^-F_+)(x,t)
	+\partial_t^m \s^{\mat A} (\M_{\mat A}^- F_+)(x,t)
.\end{align*}
Adding and applying the definition~\eqref{dfn:D:tilde} of~$\widetilde \D$, we see that
\begin{align*}
\partial_t^m F_-(x,t)
&=
	\partial_t^m \widetilde\D^{\mat A} (\Tr_{m,\semiH}F_+ -\Tr_{m,\semiH}F_-)(x,t)
	\\&\qquad
	+\partial_t^m \s^{\mat A} (\M_{\mat A}^+ F_- + \M_{\mat A}^- F_+)(x,t)
.\end{align*}
Thus, by the definitions \eqref{dfn:Theta:S} and~\eqref{dfn:Theta:D} of $\Theta_t^S$ and~$\Theta_t^D$,
\begin{multline}\label{eqn:F:green}
t^k\partial_t^{m+k} F_-(x,t)
\\=
	\Theta_t^D (\Tr_{m,\semiH}F_+ -\Tr_{m,\semiH}F_-)(x)
	+\Theta_t^S (\M_{\mat A}^+ F_- + \M_{\mat A}^- F_+)(x)
.\end{multline}
Let
\begin{equation}\label{eqn:b:D}
\arr b_Q^D = \abs{Q}(\Tr_{m,\semiH}F_+ -\Tr_{m,\semiH}F_-)
.\end{equation}

Recall that $\M_{\mat A}^+ u$ is only defined as a linear functional on~$\dot W\!A^2_{m-1/2}(\R^n)$, that is, as an operator acting on $m-1$th-order traces of $\dot W^2_m$-functions. Let $\arr b_Q^S$ be a representative of the operator $\abs{Q}(\M_{\mat A}^+ F_- + \M_{\mat A}^- F_+)$; that is, $\arr b_Q^S$ is an array of functions that satisfies
\begin{align}\label{eqn:b:S}
\langle \Tr_{m-1}\varphi, \arr b_Q^S\rangle_{\R^n}
&=
	\abs{Q}
	\langle \Tr_{m-1}\varphi, \M_{\mat A}^+ F_- + \M_{\mat A}^- F_+\rangle_{\R^n}
\\\nonumber
&=\abs{Q} \langle \nabla^m\varphi, A\nabla^m F_-\rangle_{\R^{n+1}_+}
	+ \abs{Q} \langle \nabla^m\varphi, A\nabla^m F_+\rangle_{\R^{n+1}_-}
\end{align}
for all smooth, compactly supported functions~$\varphi$. In Section~\ref{sec:b:S} we will show that there is some such array of functions that in addition lies in $L^2(\R^n)$.

Now, by formula~\eqref{eqn:fundamental:vertical} and by definition of $\arr b_Q^D$, $\arr b_Q^S$ and $F_-$,
\begin{align*}\Theta_t(\arr b_Q^D,\arr b_Q^S)
&=\Theta_t^D\arr b_Q^D(x)+\Theta_t^S\arr b_Q^S(x)
=\abs{Q}t^k\partial_t^{m+k} F_-(x,t)
\\&=\abs{Q}t^k\partial_t^{m} \partial_s^{m+k-1} E^L(x,t,y_Q,-\kappa\ell(Q))
.\end{align*}
An application of the bound~\eqref{eqn:fundamental:slices:higher}, with the roles of $x$ and $y$ reversed, reveals that the bound ~\eqref{eqn:b:carleson} is valid for this choice of $\arr b_Q=(\arr b_Q^S,\arr b_Q^D)$, albeit with constant $C_0$ that depends on our choice of~$\kappa$. 

We thus need only show that this choice of $\arr b_Q$ satisfies the bounds~\eqref{eqn:b:L2}, \eqref{eqn:b:below} and~\eqref{eqn:b:above}, with the distinguished component $b_Q^{p+1}=b_Q^\perp$ in~\eqref{eqn:b:below} the $\arr e_\perp$-component of $\arr b_Q^S$.

\begin{rmk}
Although we will not make use of this fact, we observe that by the definition
\eqref{eqn:F} of $F_\pm$, the symmetry property~\eqref{eqn:fundamental:symmetric}, and formula~\eqref{eqn:D:fundamental} for the double layer potential, we have that
\begin{align*}
\langle  \arr b_Q^S, \Tr_{m-1}\varphi\rangle_{\R^n}
&=
	\abs{Q}\partial_\dmn^{m-1} \D^{A^*}(\Tr_{m-1}\varphi)(y_Q,-\kappa\ell(Q))
	\\&\qquad
	- \abs{Q}	\partial_\dmn^{m-1} \D^{A^*}(\Tr_{m-1}\varphi)(y_Q,\kappa\ell(Q))
.\end{align*}
Thus, $\arr b_Q^S$ may be viewed as the kernel of the double layer potential. If $m=1$, then the classic jump relation $\Trace^-\D^{\mat A^*} f-\Trace^+\D^{\mat A^*} f=f$ is well known. In the higher-order case, the analogous jump relation (see \cite{Bar15p}) is
\begin{equation*}\Tr_{m-1}^- \D^{\mat A^*}\arr f-\Tr_{m-1}^+ \D^{\mat A^*}\arr f=\arr f.\end{equation*}
Thus, if $\kappa$ is small enough and $\varphi$ is smooth, then $\langle  \arr b_Q^S, \Tr_{m-1}\varphi\rangle_{\R^n}$ is approximately $\abs{Q}\partial_\dmn^{m-1} \varphi(y_Q,0)$. Thus, we expect $\arr b_Q^S$ to be approximately equal to $\arr e_\perp$ near~$Q$, and so it is reasonable to expect the bounds \eqref{eqn:b:below} and~\eqref{eqn:b:above} to be valid for~$\arr b_Q^S$.
\end{rmk}

\begin{rmk} Recall from formula~\eqref{eqn:fundamental:unique} that if $2m\geq\dmn$, precisely the case considered in this section, then the fundamental solution $E^L$ in the definition~\eqref{eqn:F} of $F_s$ is only determined up to adding polynomials. However, note the presence of the vertical derivative $\partial_s^{m-1}$ in the definition of~$F_s$; this vertical derivative suffices to remove the terms of the form $f_\zeta(x,t) \, (y,s)^\zeta$ in formula~\eqref{eqn:fundamental:unique}, leaving $F_s(x,t)$ well-defined up to adding polynomials in $x$ and~$t$. The function $F_s$ is a tool used to define $\arr b_Q^S$ and $\arr b_Q^D$; notice from formulas \eqref{eqn:b:D} and~\eqref{eqn:b:S} that these quantities depend only on the higher-order derivatives of $F_s$, and so the lower-order terms in formula~\eqref{eqn:fundamental:unique} do not affect our results.
\end{rmk}

\begin{rmk} \label{rmk:2d} The conclusions of this section are also valid if $\dmn=2$; the analysis is somewhat more complicated because $F_{\pm}$ is no longer in $\dot W^2_m(\R^\dmn_\pm)$.

By Morrey's inequality, Lemma~\ref{lem:slices}, and the bound~\eqref{eqn:fundamental:far:lowest:2}, we have that if $\dmn=2$ and $R$ is a cube of side-length $\abs{s-t}$ then
\begin{equation}\label{eqn:fundamental:slices:higher:2}
\int_{A_j(R)} \abs{\nabla_{x,t}^m \partial_s^{m-1} E^L(x,t,y,s)}^2 \,dy
\leq \frac{C(\delta)}{\abs{s-t}} 2^{-j(1-\delta)}
\end{equation}
for any $\delta>0$. In particular,
\begin{equation*}\int_{\R^1} \abs{\nabla^{m} F_s(x,t)}^2\,dx \leq \frac{C}{\abs{t-s}}
.\end{equation*}
This is similar to the proof of the bounds \eqref{eqn:fundamental:slices} and~\eqref{eqn:fundamental:slices:higher}, but we must use the bound~\eqref{eqn:fundamental:far:lowest:2} instead of the bound~\eqref{eqn:fundamental:far} in order to take $m-1$ derivatives in the  variable $s$ rather than the variables~$(x,t)$.

We may use the bound~\eqref{eqn:fundamental:slices:higher} with the roles of $x$ and $y$ reversed to show that if $t<s<\sigma$ or $t>s>\sigma$, then
\begin{equation*}\int_{\R^1} \abs{\nabla^m F_s(x,t)-\nabla^m F_\sigma(x,t)}^2\,dx \leq C\frac{\abs{\sigma-s}^2}{\abs{t-s}^3}
.\end{equation*}
Thus, if $0<s<\sigma$, then $F_{\pm s}-F_{\pm \sigma} \in \dot W^2_m(\R^\dmn_\mp)$, and so we may apply the Green's formula~\eqref{eqn:green} and the equivalent in $\R^\dmn_-$ to see that
\begin{multline*}
t^k\partial_t^{m+k} F_{-s}(x,t) -t^k\partial_t^{m+k} F_{-\sigma}(x,t)
\\\begin{aligned}
&=
	\Theta_t^D (
	\Tr_{m,\semiH}F_s -\Tr_{m,\semiH}F_{-s}
	-\Tr_{m,\semiH}F_\sigma +\Tr_{m,\semiH}F_{-\sigma})(x,t)
	\\&\qquad
	+\Theta_t^S
	(
	\M_{\mat A}^+ F_{-s} + \M_{\mat A}^- F_s
	-\M_{\mat A}^+ F_{-\sigma} - \M_{\mat A}^- F_\sigma
	)(x,t)
.\end{aligned}\end{multline*}
Fix some $t>0$ and let $s=\kappa\,\ell(Q)$. Observe that if we take the limit as $\sigma\to\infty$, then the left-hand side approaches $t^k \partial_t^{m+k} F_{-s}(\,\cdot\,,t)$ in $L^2(\R^n)$. Furthermore, $\Tr_{m,\semiH}F_{\pm\sigma}\to 0$ in $L^2(\R^n)$. In Lemma~\ref{lem:b:L2} below, we will see that $\arr b_Q^S\to 0$ as the implied constant $\kappa\to\infty$; by definition of~$\arr b_Q^S$, this implies that $\M_{\mat A}^+ F_{- \sigma}+\M_{\mat A}^- F_{-\sigma} \to 0$ in $L^2(\R^n)$ as $\sigma\to \infty$. Thus, by the bound~\eqref{eqn:Theta:L2}, we have that
\begin{multline*}
t^k\partial_t^{m+k} F_{-s}(\,\cdot\,,t)
\\\begin{aligned}
&=
	\Theta_t^D (
	\Tr_{m,\semiH}F_s -\Tr_{m,\semiH}F_{-s}
	)(\,\cdot\,,t)
	+\Theta_t^S
	(
	\M_{\mat A}^+ F_{-s} + \M_{\mat A}^- F_s
	)(\,\cdot\,,t)
\end{aligned}\end{multline*}
as $L^2(\R^n)$-functions. Applying the Caccioppoli inequality, Lemma~\ref{lem:slices} and Morrey's inequality, we see that this equality must be true pointwise as well. Thus, formula~\eqref{eqn:F:green} is still valid if $\dmn=2$ and we may proceed as above.
\end{rmk}

\subsection{Bounds on \texorpdfstring{$\arr b_Q^D$}{bD}}
\label{sec:b:D}

By the bounds \eqref{eqn:fundamental:slices:higher} or~\eqref{eqn:fundamental:slices:higher:2}, we have that
\begin{equation*}\int_{\R^n} \abs{\arr b_Q^D}^2
\leq \frac{C}{\kappa^n}\abs{Q}.
\end{equation*}
Thus, $\arr b_Q^D$ satisfies the bound~\eqref{eqn:b:L2} with constant $C_0=C\kappa^{-n}$.

We now show that $\arr b_Q^D$ satisfies the bound~\eqref{eqn:b:above}.
Following \cite[Section~3]{GraH14p}, we fix a small positive constant $\omega$. Let $\phi_Q$ be supported on $(1+\omega)Q$ with $\phi_Q =1$ on $(1/2)Q$. We may choose $\phi_Q$ such that $\abs{\nabla\phi_Q(x)}<2/\ell(Q)$ for all~$x$, and such that $\phi_Q>\omega$ on~$Q$. We then set $d\mu_Q = \phi_Q\,dx$. Observe that the conditions~\eqref{eqn:mu} are valid for $C_0=\max(2,1/\omega)$.

Then by definition of $\arr b_Q^D$, if we let $F_Q=F_+-F_-$, then
\begin{align*}
\fint_Q \arr b_Q^D\,d\mu_Q
&=
\int_Q \Tr_{m,\semiH} F_Q\, \phi_Q
\\&=
	\int_{\R^n} \Tr_{m,\semiH} F_Q\, \phi_Q
	-\int_{\R^n\setminus Q} \Tr_{m,\semiH} F_Q\, \phi_Q
.\end{align*}
Recall that each component of $\Tr_{m,\semiH} F_Q$ may be written as $\partial^\beta F_Q(x,0)$ for some $\beta$ with $\beta_\dmn<\abs{\beta}=m$. In particular, $\beta=\vec e_j+\gamma$ for some $1\leq j\leq n$ and some multiindex~$\gamma$.
Integrating by parts, we see that
\begin{equation*}\abs[bigg]{\int_{\R^n} \Tr_{m,\semiH} F_Q\, \phi_Q}\leq \int_{\R^n} \abs{\Tr_{m-1} F_Q}\, \abs{\nabla\phi_Q}.\end{equation*}
Recalling the regions on which $\phi_Q$ and $\nabla\phi_Q$ are supported, we see that
\begin{align*}
\abs[bigg]{\fint_Q \arr b_Q^D\,d\mu_Q }
&\leq
	\frac{C}{\ell(Q)}\int_{(1+\omega)Q\setminus(1/2)Q}\abs{ \nabla^{m-1} F_Q}
	+
	\int_{(1+\omega)Q\setminus Q} \abs{\nabla^m F_Q}
.\end{align*}
Now, observe that $LF_Q=0$ away from $(y_Q,\pm \kappa\ell(Q))$. Thus, we may apply H\"older's inequality and Lemma~\ref{lem:slices} to see that
\begin{align*}
\abs[bigg]{\fint_Q \arr b_Q^D\,d\mu_Q }
&\leq
	\frac{C\abs{Q}^{1/2}}{\ell(Q)^{3/2}}
	\biggl(\int_{2Q\setminus(1/4)Q}\int_{-\ell(Q)}^{\ell(Q)}\abs{ \nabla^{m-1} F_Q}^2\biggr)^{1/2}
	\\&\qquad+
	C\frac{\abs{Q}^{1/2}}{\ell(Q)^{1/2}}\biggl(\int_{(3/2)Q\setminus(1/2)Q} \int_{-\ell(Q)/2}^{\ell(Q)/2} \abs{\nabla^m F_Q}^2\biggr)^{1/2}
.\end{align*}
We may use the Caccioppoli inequality (Lemma~\ref{lem:Caccioppoli}) to control the second term by the first term. For ease of notation let $S=(2Q\setminus(1/4)Q)\times(-\ell(Q),\ell(Q))$. Recalling the definition of~$F_Q$, we see that
\begin{align*}
\abs[bigg]{\fint_Q \arr b_Q^D\,d\mu_Q }^2
&\leq
	\frac{C\abs{Q}}{\ell(Q)^{3}}
	\int_S
	\abs[bigg]{
	\int_{-\kappa\ell(Q)}^{\kappa \ell(Q)} \nabla_{x,t}^{m-1} \partial_s^{m} E^L(x,t,y_Q,s)\,ds}^2
	\,dx\,dt
.\end{align*}
Applying H\"older's inequality again, we see that
\begin{align*}
\abs[bigg]{\fint_Q \arr b_Q^D\,d\mu_Q }^2
&\leq
	\frac{C\abs{Q}\kappa}{\ell(Q)^2}
	\int_S
	\int_{-\kappa\ell(Q)}^{\kappa \ell(Q)}
	\abs{  \nabla_{x,t}^{m-1} \partial_s^{m} E^L(x,t,y_Q,s)}^2 ds
	\,dx\,dt
\end{align*}
Using formula~\eqref{eqn:fundamental:vertical},~\eqref{eqn:fundamental:symmetric} and the bound~\eqref{eqn:fundamental:slices:higher}, we see that
\begin{align*}
\abs[bigg]{\fint_Q \arr b_Q^D\,d\mu_Q }^2
&\leq
	C\kappa^2
\end{align*}
Thus, if we choose $\kappa$ small enough, then $\arr b_Q^D$ satisfies the bound~\eqref{eqn:b:above}.
Notice that $\kappa$ may be chosen depending only on the constant $C_1$ in the bound~\eqref{eqn:Theta:T1}, that is, on the numbers determined in Section~\ref{sec:D:carleson}. It is acceptable for the numbers $C_0$ in the bounds~\eqref{eqn:mu}, \eqref{eqn:b:carleson} and~\eqref{eqn:b:L2} to grow as $\kappa\to 0$. In particular, recall that $\arr b_Q^D$ satisfies the bound~\eqref{eqn:b:L2} with a constant $C_0(\kappa)=C\kappa^{-n}$; this growth is acceptable.

Thus, $\arr b_Q^D$ satisfies all the conditions of Theorem~\ref{thm:grau:hofmann}.

\subsection{Bounds on \texorpdfstring{$\arr b_Q^S$}{bS}}
\label{sec:b:S}

To conclude the proof of Theorem~\ref{thm:square}, at least in the case $2m>n$, we need only show that $\arr b_Q^S$ satisfies the bounds~\eqref{eqn:b:L2}, \eqref{eqn:b:below} and~\eqref{eqn:b:above}.

The most involved argument of this section will be the proof of the following lemma.
\begin{lem}\label{lem:b:L2}
Suppose that $2m>n$.
If $\arr g\in \dot W\!A^2_{m-1}(\R^n)$, then
\begin{equation}
\label{eqn:b:L2:general}
\abs{\langle \arr g, \arr b_Q^S\rangle_{\R^n}}
	\leq \frac{C}{\kappa^{n/2}} {\sqrt{\abs{Q}}}
	\doublebar{\arr g}_{L^2(\R^n)}.\end{equation}
Furthermore, if $\arr g=0$ in $(1/4)Q$ and $\kappa<1/16$, then we have a better estimate:
\begin{equation}
\label{eqn:b:L2:close}
\abs{\langle \arr g, \arr b_Q^S\rangle_{\R^n}}
	\leq C\,{\kappa}{\sqrt{\abs{Q}}}
	\doublebar{\arr g}_{L^2(\R^n)}.\end{equation}
\end{lem}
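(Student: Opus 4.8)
The plan is to prove both inequalities when $\arr g=\Tr_{m-1}\varphi$ for a smooth, compactly supported $\varphi$; since the right-hand sides of \eqref{eqn:b:L2:general} and \eqref{eqn:b:L2:close} involve only $\doublebar{\arr g}_{L^2(\R^n)}$, and such $\arr g$ are dense in $\dot W\!A^2_{m-1}(\R^n)$, this proves the lemma and at the same time shows that the functional $\arr b_Q^S$ is represented by an element of $L^2(\R^n)$, so that \eqref{eqn:b:L2} holds (with $C_0=C\kappa^{-n}$). For such $\arr g$, formula \eqref{eqn:b:S} gives $\langle\arr g,\arr b_Q^S\rangle_{\R^n}=\abs Q\,I_++\abs Q\,I_-$, where $I_+=\langle\nabla^m\varphi,\mat A\nabla^m F_-\rangle_{\R^\dmn_+}$ and $I_-=\langle\nabla^m\varphi,\mat A\nabla^m F_+\rangle_{\R^\dmn_-}$. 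The change of variables $(x,t)\mapsto(x,-t)$ sends $L$ to an operator of the same form with the same ellipticity constants and carries $I_-$ into a term of the same shape as $I_+$, so for \eqref{eqn:b:L2:general} it suffices to show $\abs{I_+}\le C\kappa^{-n/2}\abs Q^{-1/2}\doublebar{\arr g}_{L^2}$.

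First I would record the size of $F_-=\partial_s^{m-1}E^L(\,\cdot\,,y_Q,-\kappa\ell(Q))$. From \eqref{eqn:fundamental:far}, the Caccioppoli inequality, Lemma~\ref{lem:slices} and \eqref{eqn:fundamental:symmetric} (or \eqref{eqn:fundamental:slices:higher:2} and Remark~\ref{rmk:2d} when $\dmn=2$), one obtains, for every $t>0$ and every integer $i\ge 0$,
\[\doublebar{\nabla^{m+i}F_-(\,\cdot\,,t)}_{L^2(\R^n)}\le C\,(t+\kappa\ell(Q))^{-(n+2i)/2},\]
together with an off-diagonal refinement in which the $L^2$ norm over $\R^n\setminus B(y_Q,\rho)$ carries an extra factor decaying in $\rho/(t+\kappa\ell(Q))$; in particular $F_-\in\dot W^2_m(\R^\dmn_+)$ and $LF_-=0$ there. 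Likewise, writing $F_+-F_-=\int_{-\kappa\ell(Q)}^{\kappa\ell(Q)}\partial_s^m E^L(\,\cdot\,,y_Q,s)\,ds$, the same bounds give
\[\doublebar{\nabla^{m+i}(F_+-F_-)(\,\cdot\,,t)}_{L^2(\R^n)}\le C\,\kappa\ell(Q)\,(\abs t+\kappa\ell(Q))^{-(n+2+2i)/2},\]
i.e.\ an extra factor $\kappa\ell(Q)$ and one more power of decay relative to $F_-$, again with the off-diagonal refinement.

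For \eqref{eqn:b:L2:general}: since $LF_-=0$ in $\R^\dmn_+$, the inner product $I_+$ depends only on $\Tr_{m-1}^+\varphi=\arr g$, so I may replace $\varphi$ by the extension $H$ of $\arr g$ furnished by Lemma~\ref{lem:extension} (which is moreover smooth in the open half-space), getting $I_+=\langle\nabla^m H,\mat A\nabla^m F_-\rangle_{\R^\dmn_+}$. I then integrate by parts in the transverse variable $t$ — legitimate with $\mat A$ merely bounded precisely because $\mat A$ is $t$-independent, so $\partial_t$ commutes with multiplication by $\mat A$ — transferring every transverse derivative off $H$ onto $F_-$; for the terms of $\langle\nabla^m H,\mat A\nabla^m F_-\rangle$ carrying only tangential derivatives of $H$ and of $F_-$, I first use $LF_-=0$, which gives $L_\pureH F_-=(-1)^{m+1}\sum_{\alpha_\dmn+\beta_\dmn\ge1}\partial^\alpha(A_{\alpha\beta}\partial^\beta F_-)$ with $L_\pureH$ as in \eqref{eqn:L:parallel}, and then again integrate by parts in $t$ in the resulting expression. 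The outcome is that $I_+$ becomes a finite sum of terms, each a pairing, on a horizontal slice, of $\nabla^i H$ with $i\le m-1$ — controlled uniformly by $\doublebar{\nabla^{m-1}H(\,\cdot\,,t)}_{L^2}\le\doublebar{\arr g}_{L^2}$ via \eqref{eqn:extension:L2:slices} — against $\nabla^{i'}F_-$ with $i'\ge m$, of one of two kinds: boundary terms at $t=0$, bounded by $C\doublebar{\arr g}_{L^2}\doublebar{\nabla^m F_-(\,\cdot\,,0)}_{L^2}\le C\doublebar{\arr g}_{L^2}(\kappa\ell(Q))^{-n/2}$; and bulk terms, bounded by $C\doublebar{\arr g}_{L^2}\int_0^\infty\doublebar{\nabla^{m+1}F_-(\,\cdot\,,t)}_{L^2}\,dt\le C\doublebar{\arr g}_{L^2}\int_0^\infty(t+\kappa\ell(Q))^{-(n+2)/2}\,dt=C\doublebar{\arr g}_{L^2}(\kappa\ell(Q))^{-n/2}$, the last integral converging since $n+2>2$. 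As $\ell(Q)^{-n/2}=\abs Q^{-1/2}$, summing gives \eqref{eqn:b:L2:general}. The main technical point — and the place where care is needed — is the bookkeeping in these integrations by parts, namely checking that the boundary contributions at $t=0$ assemble into pairings against $\Tr_{m-1}H=\arr g$ rather than against lower-order Dirichlet data of $\varphi$ (which are not controlled by $\doublebar{\arr g}_{L^2}$); this is forced, since the total $I_+$ depends only on $\arr g$, so the $\varphi$-dependent pieces cancel. (Alternatively, one may split the $t$-integral at $t=\kappa\ell(Q)$, treat $t\ge\kappa\ell(Q)$ directly by Cauchy--Schwarz with weight $t$ using \eqref{eqn:extension:square} and the decay of $\nabla^m F_-$, and restrict the integration-by-parts argument to $0<t<\kappa\ell(Q)$.)

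For the improved bound \eqref{eqn:b:L2:close}, suppose $\arr g=0$ on $(1/4)Q$ and $\kappa<1/16$. Now I take $H\in\dot W^2_m(\R^\dmn)$ a two-sided extension of $\arr g$ — as in Lemma~\ref{lem:extension}, but built in both half-spaces — that equals a polynomial of degree at most $m-2$ in $\{(x,t):\abs t<\dist(x,\R^n\setminus(1/4)Q)\}$; since $\kappa<1/16$, this region contains a fixed dimensional ball about the pole $(y_Q,-\kappa\ell(Q))$ of $F_-$ and about the pole $(y_Q,\kappa\ell(Q))$ of $F_+$. As before $I_+=\langle\nabla^m H,\mat A\nabla^m F_-\rangle_{\R^\dmn_+}$ and $I_-=\langle\nabla^m H,\mat A\nabla^m F_+\rangle_{\R^\dmn_-}$, so
\[\frac{1}{\abs Q}\langle\arr g,\arr b_Q^S\rangle_{\R^n}=\langle\nabla^m H,\mat A\nabla^m F_-\rangle_{\R^\dmn}+\langle\nabla^m H,\mat A\nabla^m(F_+-F_-)\rangle_{\R^\dmn_-}.\]
The first term vanishes: $F_-$ is $L$-harmonic off its pole and $\nabla^m H=0$ near that pole, so integrating by parts over $\R^\dmn\setminus B\bigl((y_Q,-\kappa\ell(Q)),\varepsilon\bigr)$ and letting $\varepsilon\to 0$ gives $\langle\nabla^m H,\mat A\nabla^m F_-\rangle_{\R^\dmn}=0$. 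For the second term, $F_+-F_-$ is $L$-harmonic in $\R^\dmn_-$ away from the pole of $F_-$, where $\nabla^m H$ again vanishes, so I re-run verbatim the integration-by-parts argument of the previous paragraph on $\langle\nabla^m H,\mat A\nabla^m(F_+-F_-)\rangle_{\R^\dmn_-}$; every resulting term is now a pairing of a slice or boundary value of $\nabla^{\le m-1}H$ (which vanishes near $y_Q$, hence is supported where $\abs{x-y_Q}\gtrsim\ell(Q)$) against $\nabla^{\ge m}(F_+-F_-)$, so that by the off-diagonal decay recorded above each such term carries the extra factor $\kappa\ell(Q)$ and an extra power of decay; for instance the boundary term at $t=0$ is bounded by $C\doublebar{\arr g}_{L^2}\doublebar{\1_{\R^n\setminus(1/4)Q}\nabla^m(F_+-F_-)(\,\cdot\,,0)}_{L^2}\le C\kappa\ell(Q)\doublebar{\arr g}_{L^2}\bigl(\int_{\abs{x-y_Q}\gtrsim\ell(Q)}\abs{x-y_Q}^{-2n-2}\,dx\bigr)^{1/2}\le C\kappa\,\ell(Q)^{-n/2}\doublebar{\arr g}_{L^2}$, and the bulk terms are estimated the same way using $\int_{\R}\doublebar{\1_{\{\abs{x-y_Q}\gtrsim\ell(Q)\}}\nabla^{m+1}(F_+-F_-)(\,\cdot\,,t)}_{L^2}\,dt\le C\kappa\,\ell(Q)^{-n/2}$. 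Summing yields \eqref{eqn:b:L2:close}.
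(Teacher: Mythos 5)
Your overall set-up matches the paper's: you reduce to $\arr g = \Tr_{m-1}\varphi$, use the extension from Lemma~\ref{lem:extension}, record uniform-slice and square-function bounds on $F_-$ and $F_+-F_-$, and handle the terms of $\langle \nabla^m H, \mat A \nabla^m F_-\rangle_{\R^\dmn_+}$ with at least one transverse derivative on $H$ by integrating once by parts in $t$. The $\kappa$-gain in the close estimate via disjoint supports and the vanishing of $\langle\nabla^m H,\mat A\nabla^m F_-\rangle_{\R^\dmn}$ when $\nabla^m H=0$ near the pole also parallel the paper.

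However, there is a real gap for the terms with $\alpha_\dmn=0$, i.e.\ the terms $\int_{\R^\dmn_+}\partial^\alpha\bar H\,A_{\alpha\beta}\partial^\beta F_-$ in which all $m$ derivatives on $H$ are horizontal. There is no $t$-derivative on $H$ to transfer, and the device you propose — integrating by parts in $x$ to produce $\int\bar H\,L_\pureH F_-$ and then invoking $LF_-=0$ — leaves $\bar H$ (or after further $t$-integrations by parts, $\partial_t^j H$ with $j$ small) undifferentiated in the integrand. You have no control on $\doublebar{\nabla^j H(\cdot,t)}_{L^2}$ for $j<m-1$: Lemma~\ref{lem:extension} only gives uniform slice bounds on $\nabla^{m-1}H$, a weighted square-function bound on $\nabla^m H$, and (via~\eqref{eqn:extension:L2:slices:2}) a slice bound on $\nabla^m H$ in terms of $\nabla_\pureH\arr g$, which is one derivative too many. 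The appeal to ``the $\varphi$-dependent pieces cancel since $I_+$ depends only on $\arr g$'' doesn't rescue this: the issue is not well-definedness but that the individual pieces in your decomposition are not controlled by $\doublebar{\arr g}_{L^2}$. A direct Cauchy--Schwarz with weight $t$ on the $\alpha_\dmn=0$ terms also fails, since $\int_0^\infty t^{-1}\doublebar{\nabla^m F_-(\cdot,t)}_{L^2}^2\,dt$ diverges at $t=0$; and your alternative of splitting at $t=\kappa\ell(Q)$ only repairs the large-$t$ region.

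The paper's proof gets around exactly this by working not with $H$ but with the mollified extension $G(x,t)=\int\varphi(z)H(x-zt,t)\,dz$. For $\alpha_\dmn=0$ it writes $\int\partial^\alpha G\,w_\alpha=-\int t\,\partial_t(\partial^\alpha G\,w_\alpha)$, splits into $-\int t\,\partial^\alpha G\,\partial_t w_\alpha$ (handled by the square-function bounds~\eqref{eqn:extension:square} and~\eqref{eqn:w:square}) and $-\int t\,\partial^\alpha\partial_t G\,w_\alpha$. The crucial point is that, because $G$ is a mollification, $\partial^\alpha\partial_t G=\partial_{j_\alpha}\partial^{\zeta_\alpha}G$ with $\abs{\zeta_\alpha}=m$ can be expressed via~\eqref{eqn:G:gradient} as $\sum_\beta\partial_{j_\alpha}T_{\zeta_\alpha,\beta}\partial^\beta H$, i.e.\ the extra derivative falls on the kernel $J_{\zeta_\alpha,\beta}$ and produces a factor $t^{-1}$ rather than an $(m+1)$-th derivative of $H$; the resulting kernel $t^{-n}(\partial_{j_\alpha}J_{\zeta_\alpha,\beta})((x-z)/t)$ has mean zero and is then handled with the Christ--Journ\'e $T1$ theorem (Theorem~\ref{thm:CJ}) together with $w_\alpha=-\int_t^\infty\partial_r w_\alpha\,dr$ and~\eqref{eqn:wt:decay}. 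Your proposal omits both the mollification and the $T1$ step, and the asserted reduction ``$I_+$ becomes a finite sum of slice pairings of $\nabla^{\le m-1}H$ against $\nabla^{\ge m+1}F_-$'' is not achievable without them. The same gap carries over to your treatment of~\eqref{eqn:b:L2:close}, which reuses the same bookkeeping.
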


The bound~\eqref{eqn:b:L2:general} is valid even for $\kappa$ large; recall that this bound was used in Remark~\ref{rmk:2d} to show that the bound~\eqref{eqn:b:carleson} is valid even in dimension $\dmn=2$.

Notice that this implies that $\arr b_Q^S$ is a bounded linear functional on $\dot W\!A^2_{m-1}(\R^n)$; if $m\geq 2$ then this is a proper subspace of $L^2(\R^n)$. Thus, $\arr b_Q^S$ lies in a quotient space of $L^2(\R^n)$. Once this lemma is proven we may extend $\arr b_Q^S$ to a bounded linear functional on $L^2(\R^n)$ (establishing the bound~\eqref{eqn:b:L2}), for example by orthogonal projection; we will need to select our projection carefully to ensure that $\arr b_Q^S$, after projection, satisfies the bound~\eqref{eqn:b:above}.

\begin{proof}[Proof of Lemma~\ref{lem:b:L2}]
It suffices to prove this lemma for all $\arr g$ such that $\arr g=\Tr_{m-1} \eta$ for some smooth, compactly supported function~$\eta$. Recall that
\begin{align*}\langle \arr g, \arr b_Q^S\rangle_{\R^n}
&=
	\abs{Q}\langle \arr g, \M_{\mat A}^+ F_-\rangle_{\R^n}
	+ \abs{Q}\langle \arr g, \M_{\mat A}^- F_+\rangle_{\R^n}
\\&=
	\abs{Q}\langle \nabla^m G, \mat A\nabla^m F_-\rangle_{\R^\dmn_+}
	+ \abs{Q}\langle \nabla^m G, \mat A\nabla^m F_+\rangle_{\R^\dmn_-}
\end{align*}
for any extension $G$ of $\arr g$. 


We will need to construct our extension $G$ of $\arr g$ carefully.
Let $H$ be the extension of~$\arr g$ given by Lemma~\ref{lem:extension}. Recall that $H$ satisfies the estimates \eqref{eqn:extension:L2:slices} and~\eqref{eqn:extension:square}, and that if $\arr g=0$ in $(1/4)Q$ then $\nabla^{m-1} H(x,t)=0$ whenever $\abs{t}<\dist(x,\R^n\setminus(1/4)Q)$.
Let $\varphi$ be smooth, supported in $B(0,1/2)$ and integrate to~$1$. Suppose further that the higher moments are zero, that is, $\int_{\R^n} x^\zeta\,\varphi(x)\,dx=0$ for all $1\leq \abs{\zeta}\leq m$.
Let
\begin{equation*}G(x,t)=\int_{\R^n} \frac{1}{t^n} \varphi\biggl(\frac{x-z}{t}\biggr)\,H(z,t)\,dz=\int_{\R^n} \varphi(z)\,H(x-zt,t)\,dz.\end{equation*}
To study the derivatives of~$G$, observe that for some constants $C_{\zeta,\xi}$,
\begin{equation*}\partial^\zeta G(x,t) =
\sum_{{\abs{\xi}=\abs{\zeta},\> \xi_\parallel \geq \zeta_\parallel}} C_{\zeta,\xi}
\int_{\R^n} z^{\xi_\parallel-\zeta_\parallel} \varphi(z)\,\partial^\xi H(x-zt,t)\,dz
\end{equation*}
where $\zeta_\pureH$ denotes the horizontal part of~$\zeta$, that is, $\zeta_\pureH = (\zeta_1,\dots,\zeta_n)$.
Let $J_{\zeta,\xi}(z)= C_{\zeta,\xi}z^{\xi_\parallel-\zeta_\parallel} \varphi(z)$, so that
\begin{equation}
\label{eqn:G:gradient}
\partial^\zeta G(x,t) = \sum_{\abs{\xi}=\abs{\zeta}} \int_{\R^n} \frac{1}{t^n} J_{\zeta,\xi}\biggl(\frac{x-z}{t}\biggr) \,\partial^\xi H(z,t)\,dz.
\end{equation}
By our moment condition on $\varphi$, we have that
\begin{equation*}\int_{\R^n} J_{\zeta,\xi}(z,t)\,dz=1\quad\text{if $\zeta=\xi$},
\qquad  \int_{\R^n} J_{\zeta,\xi}(z,t)\,dz=0\quad\text{otherwise}.\end{equation*}
Furthermore, $J_{\zeta,\xi}$ is a smooth cutoff function, and so  $\Tr_{m-1} G=\arr g$.
So we need only bound
\begin{equation*}
\abs{Q}\langle \nabla^m G, \mat A\nabla^m F_-\rangle_{\R^\dmn_+}
	+ \abs{Q}\langle \nabla^m G, \mat A\nabla^m F_+\rangle_{\R^\dmn_-}
.\end{equation*}

We will need some special arguments to establish the bound~\eqref{eqn:b:L2:close}. Arguing as in the proof of Lemma~\ref{lem:extension}, we see that if $\arr g=0$ in $(1/4)Q$ then $\nabla^{m-1} G(x,t)=0$ whenever $2\abs{t}<\dist(x,\R^n\setminus(1/4)Q)$. In particular, if $\kappa<1/8$ then $\nabla^{m-1} G=0$ near $(y_Q,\pm\kappa\ell(Q))$. (We require $\kappa<1/16$ so that $\nabla^{m-1} G=0$ everywhere within a fixed radius of $(y_Q,\pm\kappa\ell(Q))$.) Observe that $LF_+=0$ away from these points, and so
\begin{equation*}
0
=\langle \nabla^m G, \mat A\nabla^m F_+\rangle_{\R^\dmn}
=\langle \nabla^m G, \mat A\nabla^m F_+\rangle_{\R^\dmn_+}
	+\langle \nabla^m G, \mat A\nabla^m F_+\rangle_{\R^\dmn_-}.\end{equation*}
Thus, if $\arr g=0$ in $(1/4)Q$, we need only bound
\begin{equation*}
\abs{Q}\langle \nabla^m G, \mat A\nabla^m (F_- - F_+)\rangle_{\R^\dmn_+}
.\end{equation*}

We now introduce some notation. Let $\mathcal{G}=\{(x,t):t>(1/2)\dist(x,\R^n\setminus(1/4)Q)\}$, so if $\arr g=0$ in $(1/4)Q$ then $\supp\nabla^{m-1} G\cap\R^\dmn_+\subset\mathcal{G}$.
If $\abs{\alpha}=m$, let
\begin{align*}
w_\alpha^s(x,t)&= ({\mat A(x)}\nabla^m F_s(x,t))_\alpha = \sum_{\abs\beta=m} A_{\alpha\beta}(x)\,\partial^\beta F_s(x,t)
\end{align*}
with $w_\alpha^\pm=w_\alpha^{\pm \kappa\ell(Q)}$. Let $\widetilde w_\alpha=w_\alpha^+-w_\alpha^-$.
To establish the bound~\eqref{eqn:b:L2:close}, we need only bound
\begin{equation*}
\sum_{\abs{\alpha}=m} \int_{\mathcal{G}} \partial^\alpha G(x,t)\,\widetilde w_\alpha(x,t)\,dx\,dt
.\end{equation*}

To establish the bound~\eqref{eqn:b:L2:general}, we need only bound the quantity
\begin{equation*}\sum_{\abs{\alpha}=m} \int_{\R^\dmn_+} \partial^\alpha G(x,t)\,w_\alpha^-(x,t)\,dx\,dt+\int_{\R^\dmn_-} \partial^\alpha G(x,t)\,w_\alpha^+(x,t)\,dx\,dt.\end{equation*}
The second integral is similar to the first integral; thus, we will present the argument only for the first integral. In other words, we will work only in $\R^\dmn_+$, not $\R^\dmn_-$, whether our goal is to establish the bound \eqref{eqn:b:L2:general} or~\eqref{eqn:b:L2:close}.

We will need to bound $w_\alpha^s$; it will also help to bound vertical derivatives of $w_\alpha^s$. Let $j\geq 0$ be an integer.
Observe that by formula~\eqref{eqn:fundamental:vertical} and the definition of~$F_s$,
\begin{align*}
\int_{\R^n} \abs{\partial_t^j w_\alpha^s(x,t)}^2\,dx
&=
	\int_{\R^n} \abs{ \partial_s^{m+j-1} \nabla_{x,t}^m E^L(x,t,y_Q,s)}^2\,dx.
\end{align*}
By the bounds \eqref{eqn:fundamental:slices:higher} or~\eqref{eqn:fundamental:slices:higher:2}, if $2m>n$ and $j\geq 0$ then
\begin{equation*}\int_{\R^n} \abs{\partial_t^j w_\alpha^s(x,t)}^2\,dx \leq C \abs{t-s}^{-n-2j}.\end{equation*}
Thus, we have the bounds
\begin{align}
\label{eqn:w:uniform}
\sup_{t>0}\int_{\R^n} \abs{w_\alpha^-(x,t)}^2\,dx
&\leq \frac{C}{\kappa^n\abs{Q}}
,\\
\label{eqn:w:square}
\int_{\R^n}\int_0^\infty \abs{\partial_t w_\alpha^-(x,t)}^2\,t\,dt\,dx
&\leq \frac{C}{\kappa^n\abs{Q}}
,\\
\label{eqn:wt:decay}
\int_0^\infty \biggl(\int_{\R^n} \abs{\partial_t w_\alpha^-(x,t)}^2\,dx\biggr)^{1/2}\,dt
&\leq \frac{C}{\sqrt{\abs{Q}} \kappa^{n/2}}
.\end{align}

Now, by formula~\eqref{eqn:fundamental:vertical}, $\partial_t^j w_\alpha^s(x,t)=(-1)^j \partial_s^j w_\alpha^s(x,t)$. Furthermore, if $t>0$ $\abs{s}<\kappa\ell(Q)$, then by the bound~\eqref{eqn:fundamental:slices:higher}
\begin{equation*}
\int_{\R^n}\1_{\mathcal{G}}(x,t)\, \abs{\partial_s w_\alpha^s(x,t)}^2\,dx
\leq
	C(\ell(Q)+{t})^{-n-2}
.\end{equation*}
Thus, recalling that $\widetilde w_\alpha(x,t)=w_\alpha^+(x,t)-w_\alpha^-(x,t)$, we have that
\begin{align}
\label{eqn:w:uniform:close}
\sup_t\int_{\R^n} \1_{\mathcal{G}}(x,t)\, \abs{\widetilde w_\alpha(x,t)}^2\,dx
	&\leq \frac{C\kappa^2}{\abs{Q}}
,\\
\label{eqn:w:square:close}
\int_{\R^n}\int_0^\infty \1_{\mathcal{G}}(x,t)\,\abs{\partial_t \widetilde w_\alpha(x,t)}^2\,t\,dt\,dx
&\leq \frac{C\kappa^2}{\abs{Q}}
,\\
\label{eqn:wt:decay:close}
\int_0^\infty \biggl(\int_{\R^n} \1_{\mathcal{G}}(x,t)\,\abs{\partial_t \widetilde w_\alpha(x,t)}^2\,dx\biggr)^{1/2}\,dt
&\leq \frac{C\kappa}{\sqrt{\abs{Q}} }
.\end{align}

Recall that we wish to bound
\begin{equation*}\int_{\R^\dmn_+} \partial^\alpha G(x,t)\,\widetilde w_\alpha(x,t)\,dx\,dt
\qquad\text{or}\qquad
\int_{\R^\dmn_+} \partial^\alpha G(x,t)\,w_\alpha^-(x,t)\,dx\,dt.\end{equation*}

We will essentially proceed by integrating by parts to move one derivative from $G$ to $w_\alpha$; we will need separate arguments in the case where we integrate by parts in~$t$ (possible only if $\alpha_\dmn>0$) and in the case where we integrate by parts in a horizontal variable~$x_j$ (possible only if $\alpha_\dmn<m$).

First, if $\alpha_\dmn>0$, then $\alpha=\gamma+\vec e_\dmn$ for some multiindex $\gamma$ with $\abs{\gamma}=m-1$. So
\begin{equation*}
{\int_{\R^\dmn_+} \partial^\alpha G(x,t)\,w_\alpha^-(x,t)\,dx\,dt}
=
{\int_{\R^\dmn_+} \partial_t\partial^\gamma G(x,t)\,w_\alpha^-(x,t)\,dx\,dt}
.\end{equation*}
Integrating by parts in~$t$, we see that
\begin{align*}
{\int_{\R^\dmn_+} \partial^\alpha G(x,t)\,w_\alpha^-(x,t)\,dx\,dt}
&=
	-\lim_{t\to 0^+}{\int_{\R^n} \partial^\gamma G(x,t)\, w_\alpha^-(x,t)\,dx}
	\\&\qquad
	-{\int_{\R^\dmn_+} \partial^\gamma G(x,t)\,\partial_t w_\alpha^-(x,t)\,dx\,dt}
.\end{align*}
Recall that $H$ satisfies the uniform $L^2$ bound~\eqref{eqn:extension:L2:slices}; by formula~\eqref{eqn:G:gradient}, the same is true of~$G$.
We may control the first term using the estimate~\eqref{eqn:w:uniform}  and the second term using the estimate~\eqref{eqn:wt:decay}. This yields the bound
\begin{align*}
\abs[bigg]{\int_{\R^\dmn_+} \partial^\alpha G(x,t)\,w_\alpha^-(x,t)\,dx\,dt}
&\leq
	\frac{C}{\kappa^{n/2}\sqrt{\abs{Q}}}\doublebar{\arr g}_{L^2(\R^n)}
.\end{align*}

Similarly,
\begin{align*}
{\int_{\R^\dmn_+} \partial^\alpha G(x,t)\,\widetilde w_\alpha(x,t)\,dx\,dt}
&=
	-\lim_{t\to 0^+}{\int_{\R^n} \partial^\gamma G(x,t)\,\widetilde w_\alpha(x,t)\,dx}
	\\&\qquad
	-{\int_{\R^\dmn_+} \partial^\gamma G(x,t)\, \partial_t \widetilde w_\alpha(x,t)\,dx\,dt}
.\end{align*}
If $\arr g=0$ in $(1/4)Q$ then we may integrate over $\mathcal{G}$ rather than $\R^\dmn_+$. By the bounds \eqref{eqn:wt:decay:close} and~\eqref{eqn:w:uniform:close} on $\widetilde w_\alpha$ and the uniform $L^2$ estimate on $\nabla^{m-1} G$, we have that
\begin{align*}
{\int_{\R^\dmn_+} \partial^\alpha G(x,t)\,\widetilde w_\alpha(x,t)\,dx\,dt}
&\leq
	\frac{C\kappa}{\sqrt{\abs{Q}}}\doublebar{\arr g}_{L^2(\R^n)}.
\end{align*}

Now, we turn to the case where $\alpha_\dmn<\abs{\alpha}=m$. We still integrate by parts in~$t$. We see that, if $w_\alpha=\widetilde w_\alpha$ or $w_\alpha=w_\alpha^-$, then
\begin{multline*}
{\int_{\R^\dmn_+} \partial^\alpha G(x,t)\,w_\alpha(x,t)\,dx\,dt}
\\\begin{aligned}
&=
	-
	{\int_{\R^\dmn_+} t\,\partial_t\bigl(\partial^\alpha G(x,t)\,w_\alpha(x,t)\bigr)\,dx\,dt}
\\&=
	-
	{\int_{\R^\dmn_+} t\,\partial^\alpha \partial_t G(x,t)\,w_\alpha(x,t)\,dx\,dt}
	-
	{\int_{\R^\dmn_+} t\,\partial^\alpha G(x,t)\,\partial_t w_\alpha(x,t)\,dx\,dt}
.\end{aligned}\end{multline*}
Recall that $G$ as well as~$H$ satisfies the estimate~\eqref{eqn:extension:square}, and so by the bounds \eqref{eqn:w:square} and~\eqref{eqn:w:square:close},
\begin{align*}
\abs[bigg]{\int_{\R^\dmn_+} t\,\partial^\alpha G(x,t)\,\partial_t w_\alpha^-(x,t)\,dx\,dt}
&\leq
	\frac{C}{\kappa^{n/2}\sqrt{\abs{Q}}}
	\doublebar{\arr g}_{L^2(\R^n)}
,\\
\abs[bigg]{\int_{\mathcal{G}} t\,\partial^\alpha G(x,t)\,\partial_t \widetilde w_\alpha(x,t)\,dx\,dt}
&\leq
	\frac{C \kappa}{\sqrt{\abs{Q}}}
	\doublebar{\arr g}_{L^2(\R^n)}
.\end{align*}

We are left with the term
\begin{equation*}{\int_{\R^\dmn_+} t\,\partial^\alpha \partial_t G(x,t)\,w_\alpha(x,t)\,dx\,dt},\qquad \alpha_\dmn<\abs{\alpha}=m.\end{equation*}
We have square-function estimates on~$\partial_\dmn w_\alpha$ rather than $w_\alpha$; thus, we write
\begin{equation*}{\int_{\R^\dmn_+} t\,\partial^\alpha \partial_t G(x,t)\,w_\alpha(x,t)\,dx\,dt}
=
	{\int_{\R^\dmn_+} t\,\partial^\alpha \partial_t G(x,t)\,\int_t^\infty \partial_r w_\alpha(x,r)\,dr\,dx\,dt}
.\end{equation*}
Observe that if $r>t>0$ and $\arr g=0$ in $(1/4)Q$, then $\1_{\mathcal{G}}(x,r) \,\partial^\alpha \partial_t G(x,t)=\partial^\alpha \partial_tG(x,t)$; this is true because, if $\1_{\mathcal{G}}(x,r)\neq 1$, then $\nabla^m G(x,t)=0$.
Let $v_\alpha(x,r) = \partial_r w_\alpha^-(x,r)$ or $\1_{\mathcal{G}}(x,r)\,\partial_r \widetilde w_\alpha(x,r)$, depending on whether we seek to establish the bound \eqref{eqn:b:L2:general} or~\eqref{eqn:b:L2:close}.
Then
\begin{equation*}{\int_{\R^\dmn_+} t\,\partial^\alpha \partial_t G(x,t)\,w_\alpha(x,t)\,dx\,dt}
=
	{\int_{\R^\dmn_+} t\,\partial^\alpha \partial_t G(x,t)\,\int_t^\infty v_\alpha(x,r)\,dr\,dx\,dt}
.\end{equation*}

If $\alpha_\dmn<\abs{\alpha}=m$, let $j=j_\alpha$ be any integer such that $j\leq n$ and $\alpha_j>0$, and let $\zeta=\zeta_\alpha=\alpha- \vec e_j+\vec e_\dmn$. (For the sake of definiteness we may let $j_\alpha$ be the smallest such integer.) Then
\begin{equation*}{\int_{\R^\dmn_+} t\,\partial^\alpha \partial_t G(x,t)\,w_\alpha(x,t)\,dx\,dt}
=
	{\int_{\R^\dmn_+} t\,\partial_{j_\alpha} \partial^{\zeta_\alpha} G(x,t)\,\int_t^\infty v_\alpha(x,r)\,dr\,dx\,dt}
.\end{equation*}
For each pair of multiindices $\alpha$ and $\beta$ with $\abs{\alpha}=\abs{\beta}=m$, define the linear operator $T_{\alpha,\beta}$ by the relation
\begin{equation*}T_{\alpha,\beta} F(x,t)= \int_{\R^n}
	\frac{1}{t^n} J_{\alpha,\beta}\biggl(\frac{x-z}{t}\biggr) F(z,t) \,dz\end{equation*}
where $J_{\alpha,\beta}$ is as defined above in the discussion of $\nabla^m G$ and~$\nabla^m H$. Then
\begin{multline*}
{\int_{\R^\dmn_+} t\,\partial^\alpha \partial_t G(x,t)\,w_\alpha(x,t)\,dx\,dt}
\\=
	\sum_{\abs{\beta}=m}
	{\int_{\R^\dmn_+} t\,\partial_{j_\alpha} T_{\zeta_\alpha,\beta}\partial^\beta H(x,t)\,\int_t^\infty v_\alpha(x,r)\,dr\,dx\,dt}
.\end{multline*}
We may rearrange the terms of the integral to see that
\begin{multline*}
{\int_{\R^\dmn_+} t\,\partial_{j_\alpha} T_{\zeta_\alpha,\beta}\partial^\beta H(x,t)\,\int_t^\infty v_\alpha(x,r)\,dr\,dx\,dt}
\\\begin{aligned}
&=
	\int_0^\infty
	\int_{\R^n} \int_t^\infty \int_{\R^n} \frac{1}{t^n}(\partial_{j_\alpha}J_{\zeta_\alpha,\beta})\biggl(\frac{x-z}{t}\biggr) v_\alpha(x,r)\,dx\,dr \,\partial^\beta H(z,t)
	\,dz\,dt
.\end{aligned}\end{multline*}
We will use the Christ-Journ\'e $T1$ theorem (Theorem~\ref{thm:CJ} above) to bound
\begin{equation*}W_{\alpha,r}(z,t)=\int_{\R^n} \frac{1}{t^n}(\partial_{j_\alpha}J_{\zeta_\alpha,\beta})\biggl(\frac{x-z}{t}\biggr) v_\alpha(x,r)\,dx.\end{equation*}
Let $\psi_t(z,x)=\frac{1}{t^n}(\partial_{j_\alpha}J_{\zeta_\alpha,\beta}) \left(\frac{x-z}{t}\right) $. We then have that
\begin{equation*}\abs{\psi_t(z,x)}\leq \frac{C}{t^n},\qquad \abs{\nabla_x \psi_t(z,x)} \leq \frac{C}{t^\dmn}\end{equation*}
and both terms are zero if $\abs{x-z}>t/2$.
Finally, observe that $\int \psi_t(z,x)\,dx=0$ and so $\Theta_t1(x)=0$; thus, we have the estimate
\begin{equation*}\int_0^\infty \int_{\R^n} \abs{W_{\alpha,r}(z,t)}^2\,\frac{dz\,dt}{t} \leq C\doublebar{v_\alpha(\,\cdot\,,r)}_{L^2(\R^n)}^2.\end{equation*}
Thus,
\begin{multline*}
\abs[bigg]{\int_{\R^\dmn_+} t\,\partial_{j_\alpha} T_{\zeta_\alpha,\beta}\partial^\beta H(x,t)\,\int_t^\infty v_\alpha(x,r)\,dr\,dx\,dt}
\\\begin{aligned}
&=
	\abs[bigg]{\int_0^\infty \int_0^s
	\int_{\R^n} W_{\alpha,r}(z,t) \,\partial^\beta H(z,t)
	\,dz\,dt\,dr}
.\end{aligned}\end{multline*}
By the bound~\eqref{eqn:extension:square} and the above bound on $W_{\alpha,r}$,
\begin{equation*}
\abs[bigg]{\int_0^\infty \int_0^s
	\int_{\R^n} W_{\alpha,r}(z,t) \,\partial^\beta H(z,t)
	\,dz\,dt\,dr}
\leq
	\int_0^\infty
	\doublebar{v_\alpha(\,\cdot\,,r)}_{L^2(\R^n)}
	\doublebar{\arr g}_{L^2(\R^n)}
	\,dr
\end{equation*}
and by the bounds \eqref{eqn:wt:decay} and~\eqref{eqn:wt:decay:close},
\begin{align*}
\abs[bigg]{\int_{\R^\dmn_+} t\,\partial^\alpha \partial_t G(x,t)\,w_\alpha^-(x,t)\,dx\,dt}
&\leq
	\frac{C}{\kappa^{n/2} \sqrt{\abs{Q}}}\doublebar{\arr g}_{L^2(\R^n)}
,\\
\abs[bigg]{\int_{\R^\dmn_+} t\,\partial^\alpha \partial_t G(x,t)\,\widetilde w_\alpha(x,t)\,dx\,dt}
&\leq
	\frac{C\kappa}{\sqrt{\abs{Q}}} \doublebar{\arr g}_{L^2(\R^n)}
\end{align*}
as desired.
\end{proof}

We now have that $\arr b_Q^S$ is a bounded linear operator on $\dot W\!A^2_{m-1}(\R^n)$, a subspace of $L^2(\R^n)$.
We extend $\arr b_Q^S$ using a similar projection as in Section~\ref{sec:extension}; the difference in this case is that we use only two projection operators rather than countably many.

Let $W_n$ and $W_f$ be the closure in $L^2(\R^n)$ of, respectively,
\begin{align*}
\widetilde W_n &= \{\1_{(1/2)Q} \Tr_{m-1}\varphi + (1-\1_{(1/2)Q}) \arr f: \varphi \in C^\infty_0,\>\arr f\in L^2(\R^n)\}
,\\
\widetilde W_f &= \{\1_{(1/4)Q} \arr f + (1-\1_{(1/4)Q}) \Tr_{m-1}\varphi : \varphi \in C^\infty_0,\>\arr f\in L^2(\R^n)\}
.\end{align*}
Let $O_n$ and $O_f$ denote orthogonal projection from $L^2(\R^n)$ onto the subspaces $W_n$ and~$W_f$; observe that $O_n \arr f=\arr f$ outside of $(1/2)Q$ and that $O_f \arr f = \arr f$ inside $(1/4)Q$. Furthermore, $O_n (\Tr_{m-1}\varphi)=O_f (\Tr_{m-1}\varphi)=\Tr_{m-1}\varphi$ for any nice function~$\varphi$.

Let $\eta$ be smooth, supported in $(1/2)Q\times(-\ell(Q)/4,\ell(Q)/4)$ and identically equal to 1 in $(1/4)Q\times(-\ell(Q)/8,\ell(Q)/8)$, with the usual bounds on the derivatives of~$\eta$.

Define $\pi_n:W_n \mapsto \dot W\!A^2_{m-1}(\R^n)$ and $\pi_f:W_f \mapsto \dot W\!A^2_{m-1}(\R^n)$ as follows. Suppose that $\arr f=\Tr_{m-1}\varphi$ in $(1/2)Q$ or $\R^n\setminus(1/4)Q$ for some smooth function~$\varphi$.
We may renormalize $\varphi$ so that $\int_{(1/2)Q\setminus(1/4)Q} \Trace\partial^\zeta\varphi=0$ for all $\abs{\zeta}\leq m-1$. Let $\pi_n\arr f=\Tr_{m-1}(\eta\varphi)$ and let $\pi_f\arr f=\Tr_{m-1}((1-\eta)\varphi)$.
Observe that $\pi_n$ and $\pi_f$ are well-defined, that $\pi_n \arr f=0$ outside $(1/2)Q$ and that $\pi_f\arr f=0$ in $(1/4)Q$, and that by the Poincar\'e inequality $\pi_n:W_n \mapsto \dot W\!A^2_{m-1}(\R^n)$ and $\pi_f:W_f \mapsto \dot W\!A^2_{m-1}(\R^n)$ are bounded operators. Finally, notice that $\pi_n(\Tr_{m-1}\varphi)+\pi_f(\Tr_{m-1}\varphi)=\Tr_{m-1}\varphi$ for any smooth, compactly supported~$\varphi$.

We let $\arr b_Q^S$ satisfy
\begin{equation*}\langle\arr f, \arr b_Q^S \rangle_{\R^n} = \langle \pi_n O_n\arr f+ \pi_f O_f\arr f, \arr b_Q^S \rangle_{\R^n}\end{equation*}
where the right-hand side is given by formula~\eqref{eqn:b:S}. Notice that if $\arr f=\Tr_{m-1}\varphi$, then $O_n \arr f= O_f \arr f=\arr f$. Thus, this definition is consistent with formula~\eqref{eqn:b:S}.

By the bound~\eqref{eqn:b:L2:general} and boundedness of $\pi_n $ and $\pi_f $, we see that
\begin{equation*}\abs{\langle \arr f,\arr b_Q^S\rangle_{\R^n} }\leq \frac{C}{\kappa^{n/2}} \doublebar{\arr f}_{L^2(\R^n)}  \sqrt{\abs{Q}}\end{equation*}
and so the bound~\eqref{eqn:b:L2} is established with $C_0 = C \kappa^{-n}$. We are left with the bounds \eqref{eqn:b:below} and~\eqref{eqn:b:above}.

Observe that by the bound~\eqref{eqn:b:L2:close},
\begin{equation*}\abs{\langle \pi_f O_f \arr f,\arr b_Q^S\rangle_{\R^n} }\leq C \doublebar{\arr f}_{L^2(\R^n)} \kappa \sqrt{\abs{Q}}.\end{equation*}
Furthermore, if $\arr f=0$ in $(1/2)Q$ then $\pi_n O_n \arr f=0$; thus, we have that
\begin{equation}\label{eqn:b:far}\doublebar{\arr b_Q^S}_{L^2(\R^n\setminus(1/2)Q)} \leq C  \kappa \sqrt{\abs{Q}}.\end{equation}

Fix some~$\gamma$ with $\abs{\gamma}=m-1$, and let $b_Q^{\gamma}=(b_Q^S)_{\gamma}$ for some $\abs{\gamma}=m-1$. Then $b_Q^\perp=b_Q^{\gamma_\perp}$.
We seek to show that
\begin{equation*}
\re \frac{1}{\int_Q \phi_Q} \int_Q b_Q^{\gamma}(x) \phi_Q(x)\,dx\geq \sigma
\quad\text{or}\quad
\abs[bigg]{ \frac{1}{\int_Q \phi_Q} \int_Q b_Q^\gamma(x) \phi_Q(x)\,dx}
\leq\eta \sigma\end{equation*}
for some constant $\sigma$ independent of~$Q$ and some $\eta$ depending on~$C_1$. Notice that ${\int_Q \phi_Q}=c\abs{Q}$ for some constant~$c$ depending on $\phi_Q$, with $1/2\leq c \leq (1+\omega)^n$.

Let
\begin{equation*}\Phi_Q^\gamma(x,t) = \frac{1}{\gamma!} (x-y_Q,t)^\gamma \phi_Q(x)\,\rho(t)\end{equation*}
where $\rho(t)=1$ for $\abs{t}<\ell(Q)$ and $\rho(t)=0$ for $\abs{t}>2\ell(Q)$.

Notice that if $x\in (1/2)Q$ and $\abs{t}<\ell(Q)$, then
$\nabla^{m-1} \Phi_Q^\gamma(x,t)= \arr e_\gamma =\phi_Q(x) \arr e_\gamma$. In particular, $b_Q^\gamma \phi_Q = \Tr_{m-1}\Phi_Q^\gamma \cdot \arr b_Q$ in $(1/2)Q$. Furthermore, $b_Q^\gamma\, \phi_Q=0$ and $\Tr_{m-1}\Phi_Q^\gamma \cdot \arr b_Q=0$ outside of~$(1+\omega)Q$. Thus, by the bound~\eqref{eqn:b:far},
\begin{equation*}\abs[bigg]{
\frac{1}{\abs{Q}} \int_Q b_Q^\gamma(x) \phi_Q(x)\,dx
-\frac{1}{\abs{Q}} \int_{\R^n} \Tr_{m-1}\Phi_Q^\gamma\cdot \arr b_Q}
\leq
	\frac{C}{\abs{Q}} \int_{(1+\omega)Q\setminus(1/2)Q} \abs{\arr b_Q}
\leq C \kappa.\end{equation*}
Thus, to establish the bounds \eqref{eqn:b:below} and~\eqref{eqn:b:above}, it suffices to bound the quantity
\begin{equation*}\frac{1}{\abs{Q}}\int_{\R^n} \Tr_{m-1}\Phi_Q^\gamma\cdot \arr b_Q^S\end{equation*}
from above or from below.

Applying the definition of $\arr b_Q^S$, we see that
\begin{align*}
\frac{1}{\abs{Q}} \int_{\R^n} \Tr_{m-1}\Phi_Q^\gamma\cdot \arr b_Q
&=
	\int_{\R^\dmn_+} \nabla^m \Phi_Q^\gamma(x,t)\cdot \mat A(x)\nabla^m F_-(x,t)\,dx\,dt
	\\&\qquad
	+\int_{\R^\dmn_-} \nabla^m \Phi_Q^\gamma(x,t)\cdot \mat A(x)\nabla^m F_+(x,t)\,dx\,dt
.\end{align*}
Now, observe that $\nabla^m \Phi_Q^\gamma=0$ in $(1/2) Q \times (-\ell(Q),\ell(Q))$. Applying the definition of $F_\pm$ and the bounds \eqref{eqn:fundamental:slices:higher} or~\eqref{eqn:fundamental:slices:higher:2}, we see that
\begin{equation*}\abs[bigg]{\int_{\R^\dmn_-} \nabla^m \Phi_Q^\gamma(x,t)\cdot \mat A(x)
\bigl(\nabla^m F_+(x,t)-\nabla^m F_-(x,t)\bigr)\,dx\,dt}
\leq {C\kappa}
\end{equation*}
and so we may consider
\begin{equation*}
	\int_{\R^\dmn_+} \nabla^m \Phi_Q^\gamma\cdot \mat A\nabla^m F_-
	+\int_{\R^\dmn_-} \nabla^m \Phi_Q^\gamma\cdot \mat A\nabla^m F_-
=\int_{\R^\dmn} \nabla^m \Phi_Q^\gamma\cdot \mat A\nabla^m F_-
.\end{equation*}

Now, recall that
\begin{multline*}\int_{\R^\dmn} \nabla^m \Phi_Q^\gamma(x,t)\cdot \mat A\nabla^m F_-(x,t)\,dx\,dt
\\\begin{aligned}
&=
	\int_{\R^\dmn} \nabla^m \Phi_Q^\gamma(x,t)\cdot \mat A(x)\nabla_{x,t}^m
	 \partial_s^{m-1} E^L(x,t,y_Q,-\kappa\ell(Q))
	\,dx\,dt
.\end{aligned}\end{multline*}
Applying the symmetry property~\eqref{eqn:fundamental:symmetric}, we see that
\begin{multline*}\int_{\R^\dmn} \nabla^m \Phi_Q^\gamma(x,t)\cdot \mat A\nabla^m F_-(x,t)\,dx\,dt
\\\begin{aligned}
&=
	\int_{\R^\dmn} \overline{
	\partial_s^{m-1} \nabla^m_{x,t}
	 E^{L^*}(y_Q,-\kappa\ell(Q),x,t)
	\cdot
	\mat A^*(x)\nabla^m \Phi_Q^\gamma(x,t)
	 }
	\,dx\,dt
.\end{aligned}\end{multline*}
By formula~\eqref{eqn:fundamental:2},
\begin{equation*}\int_{\R^\dmn} \nabla^m \Phi_Q^\gamma(x,t)\cdot \mat A\nabla^m F_-(x,t)\,dx\,dt
=
	\overline{
	\partial_s^{m-1}  \Pi^{L^*}(\mat A^*\nabla^m \Phi_Q^\gamma)(y_Q,-\kappa\ell(Q))
	 }
.\end{equation*}
Recall (formula~\eqref{eqn:newton:identity}) that $\Pi^{L^*}(\mat A^*\nabla^m \Phi_Q^\gamma)=\Phi_Q^\gamma$. Thus
\begin{equation*}\int_{\R^\dmn} \nabla^m \Phi_Q^\gamma(x,t)\cdot \mat A\nabla^m F_-(x,t)\,dx\,dt
=
	\partial_s^{m-1}  \Phi_Q^\gamma(y_Q,-\kappa\ell(Q))
.\end{equation*}
The right-hand side is equal to one if $\gamma=\gamma_\perp$ and is zero otherwise, and so the bounds \eqref{eqn:b:below} and~\eqref{eqn:b:above} are established.

\section{Reduction to operators of higher order}
\label{sec:high}

We have now shown that $\Theta_t^D$ and $\Theta_t^S$ satisfy the bounds \eqref{eqn:Theta:decay} and~\eqref{eqn:Theta:Q}.
We have established that whenever $2m>n$,
the condition~\eqref{eqn:Theta:T1} is valid, and there exist functions $\arr b_Q$ such that the conditions \eqref{eqn:b:carleson}, \eqref{eqn:b:L2}, \eqref{eqn:b:below} and~\eqref{eqn:b:above} are valid.

Thus, if $2m>n$, then by Theorem~\ref{thm:grau:hofmann}, $\Theta_t^D$ and $\Theta_t^S$ satisfy the bound~\eqref{eqn:Theta:square}; this implies that the bounds \eqref{eqn:S:square} and~\eqref{eqn:D:tilde:square} are valid.

We now must establish these bounds for operators of order $2m\leq n$.
We use a fairly standard technique in the theory of higher-order differential equations; see \cite[Section~2.2]{AusHMT01} and \cite[Section~5.4]{Bar14p}.

Fix some operator $L$ of order $2m\leq n$, and choose some number $M$ such that $2m+4M>n$. Now, there are constants $a_\zeta$ such that
\begin{equation*}\Delta^M = \sum_{\abs{\zeta}=M} a_\zeta\, \partial^{2\zeta}.\end{equation*}
In fact, $a_\zeta = m!/\zeta!$, and so we have that $a_\zeta\geq 1$ for all $\abs{\zeta}=M$.

Define the differential operator $\widetilde L=\Delta^M \! L\Delta^M$; that is, $\langle \varphi, \widetilde L \psi\rangle = \langle \Delta^M \varphi, L \Delta^M\psi\rangle$ for all nice test functions $\varphi$ and~$\psi$. We remark that $\widetilde L$ is associated to coefficients~$\widetilde{\mat A}$ that satisfy
\begin{equation}\label{eqn:A:higher}
\widetilde A_{\delta\varepsilon}(x)
= \sum_{\substack{\alpha+2\zeta=\delta\\\beta+2\xi=\varepsilon}} a_\zeta\,a_\xi\, A_{\alpha\beta}(x)
=
	\sum_{\substack{\abs{\zeta}=M,\>2\zeta<\delta \\\abs{\xi}=M,\>2\xi<\varepsilon}} a_\zeta\,a_\xi\, A_{(\delta-2\zeta)(\varepsilon-2\xi)}(x)
.\end{equation}
Observe that $\widetilde{\mat A}$ is $t$-independent and satisfies the bounds \eqref{eqn:elliptic} and~\eqref{eqn:elliptic:bounded}. It was shown in the proof of \cite[Theorem~62]{Bar14p} that
\begin{equation*}E^L(x,t,y,s) = \sum_{\abs{\zeta}=\abs{\xi}=M} a_\zeta \, a_\xi \, \partial^{2\zeta}_{x,t} \partial^{2\xi}_{y,s} E^{\widetilde L}(x,t,y,s).\end{equation*}


Now, by formula~\eqref{eqn:S:fundamental}, if $\abs{\alpha}=m$ then
\begin{align*}
\partial^\alpha \s^{\mat A} \arr g(x,t)
&=
	\sum_{\abs{\gamma}=m-1} \int_{\R^n} \partial_{x,t}^\alpha \partial_{y,s}^\gamma E^L(x,t,y,0)\,g_{\gamma}(y)\,dy
\\&=
	\sum_{\abs{\gamma}=m-1}
	\sum_{\abs{\zeta}=M}\sum_{\abs{\xi}=M}
	a_\zeta \int_{\R^n} \partial^{\alpha+2\zeta}_{x,t} \partial^{\gamma+2\xi}_{y,s} E^{\widetilde L}(x,t,y,0)\, a_\xi  \,g_{\gamma}(y)\,dy
.\end{align*}
Let $\widetilde g_\varepsilon(x) = \sum_{\gamma+2\xi=\varepsilon} a_\xi\,g_\gamma(x)$. Notice that $\abs{\arr{\widetilde g}(x)}\leq C \abs{\arr g(x)}$. Then
\begin{align}
\partial^\alpha \s^{\mat A} \arr g(x,t)
&=
	\sum_{\abs{\zeta}=M}
	a_\zeta \partial^{\alpha+2\zeta} \s^{\widetilde{\mat A}} \arr{ \widetilde{g}}(x,t)
.\end{align}
Thus, because the bound~\eqref{eqn:S:square} is valid for operators $\widetilde L$ of order $2m+4M$ for $M$ large enough, we have that
\begin{align*}
\int_{\R^n}\int_0^\infty \abs{\nabla^m \partial_t\s^{\mat A} \arr g(x,t)}^2\,{t}\,dt\,dx
	&\leq C \int_{\R^n}\int_0^\infty \abs{\nabla^{m+2M} \partial_t\s^{\widetilde{\mat A}} \arr{\widetilde g} (x,t)}^2\,{t}\,dt\,dx
	\\&\leq C \doublebar{\arr {\widetilde g}}_{L^2(\R^n)}^2
	\leq C \doublebar{\arr g}_{L^2(\R^n)}^2
\end{align*}
and so the bound~\eqref{eqn:S:square} is valid even for operators of order $2m\leq n$.

The argument for $\D^{\mat A}$ is somewhat more involved. In this case we will use Theorem~\ref{thm:grau:hofmann:1}; observe that $\Theta_t^D$ satisfies the bounds \eqref{eqn:Theta:decay} and~\eqref{eqn:Theta:Q}, and so we need only establish the bound~\eqref{eqn:Theta:1:carleson}, that is, to bound $\Theta_t^D \arr e_\beta$ for multiindices~$\beta$ with $\abs{\beta}=m$.

Recall from formula~\eqref{eqn:D1} that
\begin{align*}
\Theta_t^D \arr e_{\beta}(x)
&=
	-\sum_{\abs{\alpha}=m} t^k\int_{\R^\dmn_-} \partial_t^{m+k} \partial_{y,s}^\alpha E^L(x,t,y,s)\,A_{\alpha\beta}(y) \,ds\,dy
\end{align*}
and so
\begin{align*}
\Theta_t^D \arr e_{\beta}(x)
&=
	-\sum_{\abs{\zeta}=M}a_\zeta\sum_{\abs{\delta}=m+2M} t^k\int_{\R^\dmn_-} \partial_t^{m+k}
	\partial^{2\zeta}_{x,t} \partial^{\delta}_{y,s} E^{\widetilde L}(x,t,y,s)\,B_{\delta\beta}(y) \,ds\,dy
\end{align*}
where
\begin{align*}
B_{\delta\beta}(y)
&=\sum_{\alpha+2\xi=\delta} a_\xi\, A_{\alpha\beta}(y)
= \sum_{\abs{\xi}=M,\>2\xi<\delta} a_\xi \,A_{(\delta-2\xi)\beta}(y).
\end{align*}
Recall our formula~\eqref{eqn:A:higher} for the coefficients $\widetilde A$ of~$\widetilde L$.
We then have that
\begin{equation*}\widetilde A_{\delta\varepsilon}(y)=\sum_{{\abs{\xi}=m,\>2\xi<\varepsilon}} a_\xi\,B_{\delta(\varepsilon-2\xi)}(y).\end{equation*}
We claim that there exist numbers $b_{\beta\varepsilon}$ such that
\begin{equation*}B_{\delta\beta}
=\sum_{\abs{\varepsilon}=m+2M} b_{\beta\varepsilon} \, \widetilde A_{\delta\varepsilon}.\end{equation*}
To see this, define the operator $\Psi$ by $\Psi(\arr F)_\varepsilon=\sum_{{\abs{\xi}=M,\>2\xi<\varepsilon }} a_\xi\, F_{\varepsilon-2\xi}$. It suffices to show that we may recover $\arr F$ from $\Psi(\arr F)$.
Begin with indices $\alpha$ such that $\alpha_j\leq 1$ for all but one value $j_0$ of~$j$. Let $\xi=M\vec e_{j_0}$ and let $\varepsilon=\alpha+2\xi$. Then $\xi$ is the only multiindex with $\abs{\xi}=M$ and with $2\xi<\varepsilon$. Thus $\Psi(\arr F)_\varepsilon = a_\xi \, F_\alpha = F_\alpha$; in other words, we can recover $F_\alpha$ from $\Psi(\arr F)$. Next, consider indices $\alpha$ with $\alpha_{j_0}$ arbitrary, $2\leq\alpha_{j_1}\leq 3$, and $\alpha_j\leq 1$ for all other values of~$j$. Let $\xi$ be as before and let $\varepsilon=\alpha+2\xi$. Then $\Psi(\arr F)_\varepsilon = a_\xi \, F_\alpha + a_\zeta \, F_{\varepsilon-2\zeta}$, where $\zeta=\vec e_{j_1}+(M-1)\vec e_{j_0}$. Since $F_{\varepsilon-2\zeta}$ is known we may recover $F_\alpha$. Continuing in this fashion, we may recover $F_\alpha$ from $\Psi(\arr F)$ for all multiindices~$\alpha$.

Thus
\begin{equation*}
\Theta_t^D \arr e_{\beta}(x)
=
	-\sum_{\abs{\zeta}=M}a_\zeta
	\sum_{\substack{\abs{\delta}=m+2M \\ \abs{\varepsilon}=m+2M}} b_{\beta\varepsilon}\,
	t^k\int_{\R^\dmn_-} \partial_t^{m+k}
	\partial^{2\zeta}_{x,t} \partial^{\delta}_{y,s} E^{\widetilde L}(x,t,y,s)\, \widetilde A_{\delta\varepsilon}(y) \,ds\,dy
.\end{equation*}
By formula~\eqref{eqn:D:fundamental} for $\D$ and~\eqref{dfn:D:tilde} for $\widetilde\D$, this equals
\begin{equation*}
\Theta_t^D \arr e_{\beta}(x)
=
	\sum_{\abs{\zeta}=M}a_\zeta\sum_{\abs{\varepsilon}=m+2M} b_{\beta\varepsilon}\,
	t^k\partial_t^{m+k} \,
	\partial^{2\zeta}_{x,t}
	\widetilde D^{\mat{\widetilde A}}\arr e_\varepsilon(x,t).
\end{equation*}
Recall from formula~\eqref{dfn:Theta:D} that
$\Theta_t^D \arr f(x) = t^k \partial_t^{m+k} \widetilde D^{\mat A}\arr f(x,t)$. Define
\begin{equation*}
\widetilde\Theta_t^D \arr f(x) = t^{k'} \partial_t^{m+2M+k'} \widetilde D^{\widetilde {\mat A}}\arr f(x,t)
\end{equation*}
for some $k'$ to be chosen momentarily.

Because $m+2M>n$, if $k'$ is large enough then we have that $\widetilde\Theta_t^D$ satisfies the estimates \eqref{eqn:Theta:square} and~\eqref{eqn:Theta:decay}, and so by Lemma~\ref{lem:square:carleson},
\begin{equation*}\sup_Q\frac{1}{\abs{Q}}\int_Q \int_0^{\ell(Q)} \abs{t^{k'} \partial_t^{m+2M+k'} \widetilde D^{\widetilde {\mat A}}\arr e_\varepsilon(x,t)}^2\frac{dt\,dx}{t}\leq C.\end{equation*}

Fix some cube~$Q$ and observe that
\begin{equation*}\int_0^{\ell(Q)}\int_Q \abs{\Theta_t^D \arr e_\beta(x)}^2 \frac{dx\,dt}{t}
\leq
C\sum_{\abs{\varepsilon}=m+2M}
\int_0^{\ell(Q)}\int_Q \abs{t^k\partial_t^{m+k}
	\nabla^{2M} \widetilde D^{\widetilde{\mat A}} \arr e_\varepsilon(x,t)}^2 \frac{dx\,dt}{t}
.\end{equation*}
Applying the Caccioppoli inequality in Whitney boxes, we see that
\begin{equation*}\int_0^{\ell(Q)}\int_Q \abs{\Theta_t^D \arr e_\beta(x)}^2 \frac{dx\,dt}{t}
\leq
C\sum_{\abs{\varepsilon}=m+2M}
\int_0^{2\ell(Q)}\!\int_{2Q} \abs{t^{k-2M}\partial_t^{m+k}
	\widetilde D^{\widetilde{\mat A}} \arr e_\varepsilon(x,t)}^2 \frac{dx\,dt}{t}
.\end{equation*}
If we let $k=2M+k'$, we see that
\begin{equation*}\int_0^{\ell(Q)}\int_Q \abs{\Theta_t^D \arr e_\beta(x)}^2 \frac{dx\,dt}{t}\leq C\abs{Q}\end{equation*}
and so $\Theta_t^D$ satisfies the bound~\eqref{eqn:Theta:1:carleson}. Thus, by Theorem~\ref{thm:grau:hofmann:1} we have that $\Theta_t^D$ satisfies the bound~\eqref{eqn:Theta:1:square}. Thus, by Lemma~\ref{lem:square:Theta} we have that $\widetilde D^{\mat A}$ satisfies the bound~\eqref{eqn:D:tilde:square}, as desired.


\begin{thebibliography}{HKMP15b}

\bibitem[AA11]{AusA11}
Pascal Auscher and Andreas Axelsson, \emph{Weighted maximal regularity
  estimates and solvability of non-smooth elliptic systems {I}}, Invent. Math.
  \textbf{184} (2011), no.~1, 47--115. \MR{2782252}

\bibitem[AAA{\etalchar{+}}11]{AlfAAHK11}
M.~Angeles Alfonseca, Pascal Auscher, Andreas Axelsson, Steve Hofmann, and
  Seick Kim, \emph{Analyticity of layer potentials and {$L^2$} solvability of
  boundary value problems for divergence form elliptic equations with complex
  {$L^\infty$} coefficients}, Adv. Math. \textbf{226} (2011), no.~5,
  4533--4606. \MR{2770458}

\bibitem[AAH08]{AusAH08}
Pascal Auscher, Andreas Axelsson, and Steve Hofmann, \emph{Functional calculus
  of {D}irac operators and complex perturbations of {N}eumann and {D}irichlet
  problems}, J. Funct. Anal. \textbf{255} (2008), no.~2, 374--448. \MR{2419965
  (2009h:35079)}

\bibitem[AAM10]{AusAM10}
Pascal Auscher, Andreas Axelsson, and Alan McIntosh, \emph{Solvability of
  elliptic systems with square integrable boundary data}, Ark. Mat. \textbf{48}
  (2010), no.~2, 253--287. \MR{2672609 (2011h:35070)}

\bibitem[Agm57]{Agm57}
Shmuel Agmon, \emph{Multiple layer potentials and the {D}irichlet problem for
  higher order elliptic equations in the plane. {I}}, Comm. Pure Appl. Math
  \textbf{10} (1957), 179--239. \MR{0106323 (21 \#5057)}

\bibitem[Agr07]{Agr07}
M.~S. Agranovich, \emph{On the theory of {D}irichlet and {N}eumann problems for
  linear strongly elliptic systems with {L}ipschitz domains}, Funktsional.
  Anal. i Prilozhen. \textbf{41} (2007), no.~4, 1--21, 96, English translation:
  Funct. Anal. Appl. \textbf{41} (2007), no.~4, 247--263. \MR{2411602
  (2009b:35070)}

\bibitem[Agr09]{Agr09}
\bysame, \emph{Potential-type operators and conjugation problems for
  second-order strongly elliptic systems in domains with a {L}ipschitz
  boundary}, Funktsional. Anal. i Prilozhen. \textbf{43} (2009), no.~3, 3--25.
  \MR{2583636 (2011b:35362)}

\bibitem[AHMT01]{AusHMT01}
Pascal Auscher, Steve Hofmann, Alan McIntosh, and Philippe Tchamitchian,
  \emph{The {K}ato square root problem for higher order elliptic operators and
  systems on {$\mathbb{R}^n$}}, J. Evol. Equ. \textbf{1} (2001), no.~4,
  361--385, Dedicated to the memory of Tosio Kato. \MR{1877264 (2003a:35046)}

\bibitem[AM14]{AusM14}
Pascal Auscher and Mihalis Mourgoglou, \emph{Boundary layers, {R}ellich
  estimates and extrapolation of solvability for elliptic systems}, Proc. Lond.
  Math. Soc. (3) \textbf{109} (2014), no.~2, 446--482. \MR{3254931}

\bibitem[AP98]{AdoP98}
Vilhelm Adolfsson and Jill Pipher, \emph{The inhomogeneous {D}irichlet problem
  for {$\Delta^2$} in {L}ipschitz domains}, J. Funct. Anal. \textbf{159}
  (1998), no.~1, 137--190. \MR{1654182 (99m:35048)}

\bibitem[AQ00]{AusQ00}
P.~Auscher and M.~Qafsaoui, \emph{Equivalence between regularity theorems and
  heat kernel estimates for higher order elliptic operators and systems under
  divergence form}, J. Funct. Anal. \textbf{177} (2000), no.~2, 310--364.
  \MR{1795955 (2001j:35057)}

\bibitem[AR12]{AusR12}
Pascal Auscher and Andreas Ros{\'e}n, \emph{Weighted maximal regularity
  estimates and solvability of nonsmooth elliptic systems, {II}}, Anal. PDE
  \textbf{5} (2012), no.~5, 983--1061. \MR{3022848}

\bibitem[AT98]{AusT98}
Pascal Auscher and Philippe Tchamitchian, \emph{Square root problem for
  divergence operators and related topics}, Ast\'erisque (1998), no.~249,
  viii+172. \MR{1651262 (2000c:47092)}

\bibitem[Bar]{Bar15p}
Ariel Barton, \emph{Layer potentials for higher-order elliptic systems with
  rough coefficients}, in preparation.

\bibitem[Bar13]{Bar13}
\bysame, \emph{Elliptic partial differential equations with almost-real
  coefficients}, Mem. Amer. Math. Soc. \textbf{223} (2013), no.~1051, vi+108.
  \MR{3086390}

\bibitem[Bar14]{Bar14p}
\bysame, \emph{Gradient estimates and the fundamental solution for higher-order
  elliptic systems with rough coefficients}, ArXiv e-prints (2014).

\bibitem[BM]{BarM13p}
Ariel Barton and Svitlana Mayboroda, \emph{Layer potentials and boundary-value
  problems for second order elliptic operators with data in {B}esov spaces},
  Mem. Amer. Math. Soc., to appear.

\bibitem[BM13a]{BarM13}
\bysame, \emph{The {D}irichlet problem for higher order equations in
  composition form}, J. Funct. Anal. \textbf{265} (2013), no.~1, 49--107.
  \MR{3049881}

\bibitem[BM13b]{BreM13}
Kevin Brewster and Marius Mitrea, \emph{Boundary value problems in weighted
  {S}obolev spaces on {L}ipschitz manifolds}, Mem. Differ. Equ. Math. Phys.
  \textbf{60} (2013), 15--55. \MR{3288169}

\bibitem[BM15]{BarM15p}
Ariel Barton and Svitlana Mayboroda, \emph{Boundary-value problems for
  higher-order elliptic equations in non-smooth domains}, Harmonic Analysis and
  Operator Theory: Cora Sadosky Memorial Seminar in Analysis \textbf{1} (2015),
  to appear.

\bibitem[Cam80]{Cam80}
S.~Campanato, \emph{Sistemi ellittici in forma divergenza. {R}egolarit\`a
  all'interno}, Qua\-der\-ni. [Publications], Scuola Normale Superiore Pisa,
  Pisa, 1980. \MR{668196 (83i:35067)}

\bibitem[CFK81]{CafFK81}
Luis~A. Caffarelli, Eugene~B. Fabes, and Carlos~E. Kenig, \emph{Completely
  singular elliptic-harmonic measures}, Indiana Univ. Math. J. \textbf{30}
  (1981), no.~6, 917--924. \MR{632860 (83a:35033)}

\bibitem[CG83]{CohG83}
Jonathan Cohen and John Gosselin, \emph{The {D}irichlet problem for the
  biharmonic equation in a {$C^{1}$} domain in the plane}, Indiana Univ. Math.
  J. \textbf{32} (1983), no.~5, 635--685. \MR{711860 (85b:31004)}

\bibitem[CG85]{CohG85}
\bysame, \emph{Adjoint boundary value problems for the biharmonic equation on
  {$C^1$} domains in the plane}, Ark. Mat. \textbf{23} (1985), no.~2, 217--240.
  \MR{827344 (88d:31006)}

\bibitem[Chr90]{Chr90}
Michael Christ, \emph{A {$T(b)$} theorem with remarks on analytic capacity and
  the {C}auchy integral}, Colloq. Math. \textbf{60/61} (1990), no.~2, 601--628.
  \MR{1096400 (92k:42020)}

\bibitem[CJ87]{ChrJ87}
Michael Christ and Jean-Lin Journ{\'e}, \emph{Polynomial growth estimates for
  multilinear singular integral operators}, Acta Math. \textbf{159} (1987),
  no.~1-2, 51--80. \MR{906525 (89a:42024)}

\bibitem[CMM82]{CoiMM82}
R.~R. Coifman, A.~McIntosh, and Y.~Meyer, \emph{L'int\'egrale de {C}auchy
  d\'efinit un op\'erateur born\'e sur {$L^{2}$} pour les courbes
  lipschitziennes}, Ann. of Math. (2) \textbf{116} (1982), no.~2, 361--387.
  \MR{672839 (84m:42027)}

\bibitem[Dav95]{Dav95}
E.~B. Davies, \emph{Uniformly elliptic operators with measurable coefficients},
  J. Funct. Anal. \textbf{132} (1995), no.~1, 141--169. \MR{1346221
  (97a:47074)}

\bibitem[DJ84]{DavJ84}
Guy David and Jean-Lin Journ{\'e}, \emph{A boundedness criterion for
  generalized {C}alder\'on-{Z}ygmund operators}, Ann. of Math. (2) \textbf{120}
  (1984), no.~2, 371--397. \MR{763911 (85k:42041)}

\bibitem[DJS85]{DavJS85}
G.~David, J.-L. Journ{\'e}, and S.~Semmes, \emph{Op\'erateurs de
  {C}alder\'on-{Z}ygmund, fonctions para-accr\'etives et interpolation}, Rev.
  Mat. Iberoamericana \textbf{1} (1985), no.~4, 1--56. \MR{850408 (88f:47024)}

\bibitem[DK87]{DahK87}
Bj{\"o}rn E.~J. Dahlberg and Carlos~E. Kenig, \emph{Hardy spaces and the
  {N}eumann problem in {$L^p$} for {L}aplace's equation in {L}ipschitz
  domains}, Ann. of Math. (2) \textbf{125} (1987), no.~3, 437--465. \MR{890159
  (88d:35044)}

\bibitem[DKV86]{DahKV86}
B.~E.~J. Dahlberg, C.~E. Kenig, and G.~C. Verchota, \emph{The {D}irichlet
  problem for the biharmonic equation in a {L}ipschitz domain}, Ann. Inst.
  Fourier (Grenoble) \textbf{36} (1986), no.~3, 109--135. \MR{865663
  (88a:35070)}

\bibitem[DV90]{DahV90}
Bj{\"o}rn E.~J. Dahlberg and Greg Verchota, \emph{Galerkin methods for the
  boundary integral equations of elliptic equations in nonsmooth domains},
  Harmonic analysis and partial differential equations ({B}oca {R}aton, {FL},
  1988), Contemp. Math., vol. 107, Amer. Math. Soc., Providence, RI, 1990,
  pp.~39--60. \MR{1066469 (91i:35060)}

\bibitem[Eva98]{Eva98}
Lawrence~C. Evans, \emph{Partial differential equations}, Graduate Studies in
  Mathematics, vol.~19, American Mathematical Society, Providence, RI, 1998.
  \MR{1625845 (99e:35001)}

\bibitem[FJK84]{FabJK84}
Eugene~B. Fabes, David~S. Jerison, and Carlos~E. Kenig, \emph{Necessary and
  sufficient conditions for absolute continuity of elliptic-harmonic measure},
  Ann. of Math. (2) \textbf{119} (1984), no.~1, 121--141. \MR{736563
  (85h:35069)}

\bibitem[FMM98]{FabMM98}
Eugene Fabes, Osvaldo Mendez, and Marius Mitrea, \emph{Boundary layers on
  {S}obolev-{B}esov spaces and {P}oisson's equation for the {L}aplacian in
  {L}ipschitz domains}, J. Funct. Anal. \textbf{159} (1998), no.~2, 323--368.
  \MR{1658089 (99j:35036)}

\bibitem[FS72]{FefS72}
C.~Fefferman and E.~M. Stein, \emph{{$H^{p}$} spaces of several variables},
  Acta Math. \textbf{129} (1972), no.~3-4, 137--193. \MR{0447953 (56 \#6263)}

\bibitem[GdlHH]{GraH14p}
Ana Grau de~la Herran and Steve Hofmann, \emph{A local ${Tb}$ theorem with
  vector-valued testing functions}, preprint.

\bibitem[HKMP15a]{HofKMP15B}
Steve Hofmann, Carlos Kenig, Svitlana Mayboroda, and Jill Pipher, \emph{The
  regularity problem for second order elliptic operators with complex-valued
  bounded measurable coefficients}, Math. Ann. \textbf{361} (2015), no.~3-4,
  863--907. \MR{3319551}

\bibitem[HKMP15b]{HofKMP15A}
\bysame, \emph{Square function\slash non-tangential maximal function estimates
  and the {D}irichlet problem for non-symmetric elliptic operators}, J. Amer.
  Math. Soc. \textbf{28} (2015), no.~2, 483--529. \MR{3300700}

\bibitem[HMM13]{HofMitMor}
Steve Hofmann, Marius Mitrea, and Andrew Morris, \emph{The method of layer
  potentials in ${L}^p$ and endpoint spaces for elliptic operators with
  ${L}^\infty$ coefficients}, ArXiv e-prints (2013).

\bibitem[HMM15]{HofMayMou15}
Steve Hofmann, Svitlana Mayboroda, and Mihalis Mourgoglou, \emph{Layer
  potentials and boundary value problems for elliptic equations with complex
  {$L^\infty$} coefficients satisfying the small {C}arleson measure norm
  condition}, Adv. Math. \textbf{270} (2015), 480--564. \MR{3286542}

\bibitem[Hof10]{Hof10}
Steve Hofmann, \emph{Local {$T(b)$} theorems and applications in {PDE}},
  Harmonic analysis and partial differential equations, Contemp. Math., vol.
  505, Amer. Math. Soc., Providence, RI, 2010, pp.~29--52. \MR{2664559
  (2011e:42024)}

\bibitem[JK81]{JerK81A}
David~S. Jerison and Carlos~E. Kenig, \emph{The {D}irichlet problem in
  nonsmooth domains}, Ann. of Math. (2) \textbf{113} (1981), no.~2, 367--382.
  \MR{607897 (84j:35076)}

\bibitem[KKPT00]{KenKPT00}
C.~Kenig, H.~Koch, J.~Pipher, and T.~Toro, \emph{A new approach to absolute
  continuity of elliptic measure, with applications to non-symmetric
  equations}, Adv. Math. \textbf{153} (2000), no.~2, 231--298. \MR{1770930
  (2002f:35071)}

\bibitem[KP93]{KenP93}
Carlos~E. Kenig and Jill Pipher, \emph{The {N}eumann problem for elliptic
  equations with nonsmooth coefficients}, Invent. Math. \textbf{113} (1993),
  no.~3, 447--509. \MR{1231834 (95b:35046)}

\bibitem[KR09]{KenR09}
Carlos~E. Kenig and David~J. Rule, \emph{The regularity and {N}eumann problem
  for non-symmetric elliptic operators}, Trans. Amer. Math. Soc. \textbf{361}
  (2009), no.~1, 125--160. \MR{2439401 (2009k:35050)}

\bibitem[KS11]{KilS11b}
Joel Kilty and Zhongwei Shen, \emph{The {$L^p$} regularity problem on
  {L}ipschitz domains}, Trans. Amer. Math. Soc. \textbf{363} (2011), no.~3,
  1241--1264. \MR{2737264 (2012a:35072)}

\bibitem[Mit08]{Mit08}
Dorina Mitrea, \emph{A generalization of {D}ahlberg's theorem concerning the
  regularity of harmonic {G}reen potentials}, Trans. Amer. Math. Soc.
  \textbf{360} (2008), no.~7, 3771--3793. \MR{2386245 (2009b:35046)}

\bibitem[MM85]{McIM85}
Alan McIntosh and Yves Meyer, \emph{Alg\`ebres d'op\'erateurs d\'efinis par des
  int\'egrales singuli\`eres}, C. R. Acad. Sci. Paris S\'er. I Math.
  \textbf{301} (1985), no.~8, 395--397. \MR{808636 (87b:47053)}

\bibitem[MM11]{MitM11}
Dorina Mitrea and Irina Mitrea, \emph{On the regularity of {G}reen functions in
  {L}ipschitz domains}, Comm. Partial Differential Equations \textbf{36}
  (2011), no.~2, 304--327. \MR{2763343}

\bibitem[MM13a]{MitM13B}
Irina Mitrea and Marius Mitrea, \emph{Boundary value problems and integral
  operators for the bi-{L}aplacian in non-smooth domains}, Atti Accad. Naz.
  Lincei Rend. Lincei Mat. Appl. \textbf{24} (2013), no.~3, 329--383.
  \MR{3097019}

\bibitem[MM13b]{MitM13A}
\bysame, \emph{Multi-layer potentials and boundary problems for higher-order
  elliptic systems in {L}ipschitz domains}, Lecture Notes in Mathematics, vol.
  2063, Springer, Heidelberg, 2013. \MR{3013645}

\bibitem[MMS10]{MazMS10}
V.~Maz'ya, M.~Mitrea, and T.~Shaposhnikova, \emph{The {D}irichlet problem in
  {L}ipschitz domains for higher order elliptic systems with rough
  coefficients}, J. Anal. Math. \textbf{110} (2010), 167--239. \MR{2753293
  (2011m:35088)}

\bibitem[MMW11]{MitMW11}
I.~Mitrea, M.~Mitrea, and M.~Wright, \emph{Optimal estimates for the
  inhomogeneous problem for the bi-{L}aplacian in three-dimensional {L}ipschitz
  domains}, J. Math. Sci. (N. Y.) \textbf{172} (2011), no.~1, 24--134, Problems
  in mathematical analysis. No. 51. \MR{2839870 (2012h:35056)}

\bibitem[Nad63]{Nad63}
A.~Nadai, \emph{Theory of {F}low and {F}racture of {S}olids}, vol.~II,
  McGraw-Hill, 1963.

\bibitem[PV92]{PipV92}
Jill Pipher and Gregory Verchota, \emph{The {D}irichlet problem in {$L^p$} for
  the biharmonic equation on {L}ipschitz domains}, Amer. J. Math. \textbf{114}
  (1992), no.~5, 923--972. \MR{1183527 (94g:35069)}

\bibitem[PV95a]{PipV95A}
J.~Pipher and G.~C. Verchota, \emph{Maximum principles for the polyharmonic
  equation on {L}ipschitz domains}, Potential Anal. \textbf{4} (1995), no.~6,
  615--636. \MR{1361380 (96i:35021)}

\bibitem[PV95b]{PipV95B}
Jill Pipher and Gregory~C. Verchota, \emph{Dilation invariant estimates and the
  boundary {G}\aa rding inequality for higher order elliptic operators}, Ann.
  of Math. (2) \textbf{142} (1995), no.~1, 1--38. \MR{1338674 (96g:35052)}

\bibitem[Ros13]{Ros13}
Andreas Ros{\'e}n, \emph{Layer potentials beyond singular integral operators},
  Publ. Mat. \textbf{57} (2013), no.~2, 429--454. \MR{3114777}

\bibitem[Rul07]{Rul07}
David~J. Rule, \emph{Non-symmetric elliptic operators on bounded {L}ipschitz
  domains in the plane}, Electron. J. Differential Equations (2007), No. 144,
  1--8. \MR{2366037 (2008m:35070)}

\bibitem[Sem90]{Sem90}
Stephen Semmes, \emph{Square function estimates and the {$T(b)$} theorem},
  Proc. Amer. Math. Soc. \textbf{110} (1990), no.~3, 721--726. \MR{1028049
  (91h:42018)}

\bibitem[She06a]{She06A}
Zhongwei Shen, \emph{The {$L^p$} {D}irichlet problem for elliptic systems on
  {L}ipschitz domains}, Math. Res. Lett. \textbf{13} (2006), no.~1, 143--159.
  \MR{2200052 (2007f:35067)}

\bibitem[She06b]{She06B}
\bysame, \emph{Necessary and sufficient conditions for the solvability of the
  {$L^p$} {D}irichlet problem on {L}ipschitz domains}, Math. Ann. \textbf{336}
  (2006), no.~3, 697--725. \MR{2249765 (2008e:35059)}

\bibitem[She07a]{She07B}
\bysame, \emph{The {$L^p$} boundary value problems on {L}ipschitz domains},
  Adv. Math. \textbf{216} (2007), no.~1, 212--254. \MR{2353255 (2009a:35064)}

\bibitem[She07b]{She07A}
\bysame, \emph{A relationship between the {D}irichlet and regularity problems
  for elliptic equations}, Math. Res. Lett. \textbf{14} (2007), no.~2,
  205--213. \MR{2318619 (2008c:35043)}

\bibitem[Ste93]{Ste93}
Elias~M. Stein, \emph{Harmonic analysis: real-variable methods, orthogonality,
  and oscillatory integrals}, Princeton Mathematical Series, vol.~43, Princeton
  University Press, Princeton, NJ, 1993, With the assistance of Timothy S.
  Murphy, Monographs in Harmonic Analysis, III. \MR{1232192 (95c:42002)}

\bibitem[Swe09]{Swe09}
Guido Sweers, \emph{A survey on boundary conditions for the biharmonic},
  Complex Var. Elliptic Equ. \textbf{54} (2009), no.~2, 79--93. \MR{2499118
  (2010d:35065)}

\bibitem[Tri78]{Tri78}
Hans Triebel, \emph{Interpolation theory, function spaces, differential
  operators}, North-Holland Mathematical Library, vol.~18, North-Holland
  Publishing Co., Amsterdam, 1978. \MR{503903 (80i:46032b)}

\bibitem[Tri83]{Tri83}
\bysame, \emph{Theory of function spaces}, Monographs in Mathematics, vol.~78,
  Birkh\"auser Verlag, Basel, 1983. \MR{781540 (86j:46026)}

\bibitem[Ver84]{Ver84}
Gregory Verchota, \emph{Layer potentials and regularity for the {D}irichlet
  problem for {L}aplace's equation in {L}ipschitz domains}, J. Funct. Anal.
  \textbf{59} (1984), no.~3, 572--611. \MR{769382 (86e:35038)}

\bibitem[Ver90]{Ver90}
\bysame, \emph{The {D}irichlet problem for the polyharmonic equation in
  {L}ipschitz domains}, Indiana Univ. Math. J. \textbf{39} (1990), no.~3,
  671--702. \MR{1078734 (91k:35073)}

\bibitem[Ver96]{Ver96}
Gregory~C. Verchota, \emph{Potentials for the {D}irichlet problem in
  {L}ipschitz domains}, Potential theory---{ICPT} 94 ({K}outy, 1994), de
  Gruyter, Berlin, 1996, pp.~167--187. \MR{1404706 (97f:35041)}

\bibitem[Ver05]{Ver05}
\bysame, \emph{The biharmonic {N}eumann problem in {L}ipschitz domains}, Acta
  Math. \textbf{194} (2005), no.~2, 217--279. \MR{2231342 (2007d:35058)}

\bibitem[Ver10]{Ver10}
\bysame, \emph{Boundary coerciveness and the {N}eumann problem for 4th order
  linear partial differential operators}, Around the research of {V}ladimir
  {M}az'ya. {II}, Int. Math. Ser. (N. Y.), vol.~12, Springer, New York, 2010,
  pp.~365--378. \MR{2676183 (2011h:35065)}

\bibitem[Zan00]{Zan00}
Daniel~Z. Zanger, \emph{The inhomogeneous {N}eumann problem in {L}ipschitz
  domains}, Comm. Partial Differential Equations \textbf{25} (2000), no.~9-10,
  1771--1808. \MR{1778780 (2001g:35056)}

\end{thebibliography}

\newcommand{\etalchar}[1]{$^{#1}$}
\providecommand{\bysame}{\leavevmode\hbox to3em{\hrulefill}\thinspace}
\providecommand{\MR}{\relax\ifhmode\unskip\space\fi MR }
\providecommand{\MRhref}[2]{%
  \href{http://www.ams.org/mathscinet-getitem?mr=#1}{#2}
}
\providecommand{\href}[2]{#2}

\end{document}